\documentclass[reqno]{amsart}

\usepackage{mymacro}
\usepackage{bookmark}
\usepackage{fancyhdr}
\usepackage{stmaryrd}

\setlist[enumerate, 1]{label = \roman*., ref = \roman*}
\setlist[enumerate, 2]{label = \theenumi.\alph*}

\newcommand{\Hphi}{\mathsf{H}(\varphi)}
\newcommand{\Tphi}{\mathsf{T}(\varphi)}

\makeatletter
\@namedef{subjclassname@2020}{%
  \textup{2020} Mathematics Subject Classification}
\makeatother

\author[Joaqu\'in Brum]{Joaqu\'in Brum}
\address{\hskip-\parindent{}
	IMERL, Facultad de Ingeniería, Universidad de la República, Julio Herrera y Reissig 565, Montevideo, Uruguay.}
\email{joaquinbrum@fing.edu.uy}

\author[Mart\'in Gilabert Vio]{Mart\'in Gilabert Vio}
\address{\hskip-\parindent{}
	Institut Camille Jordan, Université Claude Bernard Lyon 1, 43 boulevard du 11 novembre
1918, 69622 Villeurbanne, France.}
\email{gilabert@math.univ-lyon1.fr // martingilabertvio@gmail.com}

\author[Nicol\'as Matte Bon]{Nicol\'as Matte Bon}
\address{\hskip-\parindent{}
	CNRS \& Institut Camille Jordan, Université Claude Bernard Lyon 1, 43 boulevard du 11 novembre
1918, 69622 Villeurbanne, France.}
\email{mattebon@math.univ-lyon1.fr}

\subjclass[2020]{Primary 20F60; Secondary 37C15, 03E15}
\keywords{Group actions on the line, Borel complexity}
\date{\today}

\title{Groups with classifiable actions on the line}

\begin{document}
	
\begin{abstract}
We motivate and study the class $\cC$ of countable groups $G$ such that the conjugacy relation between minimal actions of $G$ on $\R$ by orientation-preserving homeomorphisms is smooth --- that is, admits a Borel transversal. No example of amenable group outside of $\cC$ is known.   We show a number of stability properties of $\cC$ under group-theoretic operations and that $\cC$ contains all finitely generated groups of piecewise affine homeomorphisms of the interval. We exhibit a finitely generated group $G$ that is not in $\cC$, such that $G$ is amenable if and only if Thompson's group $F$ is amenable. We also prove that the semiconjugacy relation among cocompact actions of a countable group $G$ is smooth if and only if $G \in \cC$, and that it is essentially countable even when $G$ is not finitely generated. In the Appendix, we show that there is no good analogue of the space of harmonic actions for a countable non-finitely generated group. 
\end{abstract}
	
\maketitle

\section{Introduction} \label{section:introduction}

\subsection{Context} One of the central goals in dynamical systems is to identify explicit invariants that classify systems in a given class up to conjugacy.  The dynamical systems we consider here are group actions by orientation-preserving homeomorphisms on connected one-manifolds, i.e. the interval and the circle. 

The situation on the circle is somewhat better-behaved, at least modulo actions that have a finite orbit (whose study essentually reduces to study actions on the line).  First, an action $\varphi \colon G \to \mathrm{Homeo}_0(S^1)$ always admits a unique \emph{minimal set} $\Lambda$, that is, a non-empty closed $\varphi$-invariant subset $\Lambda$ of $S^1$ such that action of $\varphi(G)$ on $\Lambda$ has only dense orbits, and after applying a semiconjugacy one can  reduce to the case where $\Lambda = S^1$, i.e. when $\varphi$ is minimal. Second, a theorem of É. Ghys \cite{Ghys1987} shows that two minimal actions on the circle are conjugate if and only if they have the same bounded Euler class. S. Matsumoto \cite{Matsumoto1986} gives a more elementary interpretation of this result.

 The existence of an explicit invariant that classifies a class of actions up to conjugacy --- regardless of the invariant --- is a result in its own right that can be formalized in the framework of Borel equivalence relations. Here we are thus led to consider the Polish space $\mathrm{Rep}_\mathrm{min}(G, S^1)$ of minimal actions of $G$ on $S^1$ and the action of the group $\mathrm{Homeo}_0(S^1)$ of orientation-preserving homeomorphisms of $S^1$ on $\mathrm{Rep}_\mathrm{min}(G,S^1)$ by conjugation. 
It is then apparent from \cite{Matsumoto1986} that there is a Borel map from $\mathrm{Rep}_\mathrm{min}(G, S^1)$ to a standard Borel space such that its fibers are exactly the conjugacy classes. The conjugacy relation on $\mathrm{Rep}_\mathrm{min}(G, S^1)$ is thus said to be \emph{smooth}.

The case of the real line is more complicated and is the subject of this article. Firstly, even when an action by orientation-preserving homeomorphisms of $\R$ has no global fixed points it may fail to admit a minimal set. This phenomenon cannot happen when $G$ is finitely generated, for which we can still reduce to minimal actions (up to a semi-conjugacy). But, secondly, even  in this case there may be no Borel map from the space $\mathrm{Rep}_\mathrm{min}(G, \R)$ of minimal actions  $\varphi\colon G\to \mathrm{Homeo}_0(\R)$ to a standard Borel space whose fibers are the conjugacy classes. For instance, the free group on two generators admits no such map (see Subsection \ref{subsection:introBorel} below), and this rules out an analogue of the Ghys--Matsumoto result. 

On the other hand, there are several classes of groups whose minimal actions can be explicitly classified. A foundational result of O. Hölder \cite{Holder1901} implies that all minimal actions of countable abelian groups on the real line are conjugate to actions by a dense group of translations. This remains true for all groups containing no non-abelian free semigroups, due to work of J. Plante \cite{Plante1975}. For another example see \cite{Rivas2010}, where C. Rivas proves that every solvable Baumslag-Solitar group $\mathrm{BS}(1,n), n \in \N_+$ admits only two minimal actions on $\R$ up to conjugacy.

Motivated by this discussion, in this paper we study the following class of groups. As mentioned above, we denote by $\mathrm{Rep}_\mathrm{min}(G, \R)$ the space of all minimal actions $\varphi\colon G\to \mathrm{Homeo}_0(\R)$. It is a Polish space with respect to pointwise convergence in the compact-open topology of $\mathrm{Homeo}_0(\R)$ (see Subsection \ref{subsection:spaces}), and hence a standard Borel space. 

\begin{defn}
	Define $\cC$ as the class of countable groups $G$ such that the conjugacy relation on $\mathrm{Rep}_\mathrm{min}(G, \R)$ is smooth.
\end{defn}

Hence a group $G$ is in $\cC$ if and only if there exists a Borel function $\mathrm{e}(\cdot)$ on $\mathrm{Rep}_\mathrm{min}(G, \R)$, with values in a standard Borel space, such that two minimal actions  $\varphi_1,\varphi_2$ are topologically conjugate if and only if $\mathrm{e}(\varphi_1)=\mathrm{e}(\varphi_2)$. A more concrete (and checkable) characterisation is provided in Theorem \ref{cor:criterio} below.

It follows from the preceding examples that all groups $G$ not containing a free semigroup on two generators (in particular, all abelian groups) belong to $\cC$. The class $\cC$ is also known to contain various examples of groups of exponential growth such as $\mathrm{BS}(1,n), n \in \N_+$. The following questions arise naturally.

\begin{qnnum} \label{qn:amenable} \ 
\begin{enumerate}
    \item \label{i-amenable} Is every finitely generated amenable group in $\cC$?
	\item  \label{i-elementary-amenable} Is every finitely generated elementary amenable group in $\cC$?
\end{enumerate}
\end{qnnum}
We do not know the answer to either question. A positive answer to Question \ref{qn:amenable}~\eqref{i-amenable} would give a strengthening of Witte-Morris' theorem that a finitely generated amenable group acting non-trivially on the line has a non-trivial homomorphism to $(\R, +)$ \cite{WitteMorris2006}. This connection becomes transparent using the \emph{space of harmonic actions} of Deroin, Kleptsyn, Navas and Parwani \cite{DKNP}: we explain it in Subsection \ref{subsection:deroinborel}.

\subsection{Main results} Our first main result is a stability property of the class $\cC$, that implies that many groups are in $\cC$. Recall that a subgroup $H\le G$ is \emph{commensurated} if $gHg^{-1}\cap H$ has finite index in $H$ for every $g\in G$. We denote by $\langle\langle H \rangle \rangle$ its normal closure.

\begin{thmA} \label{teo:stability}
	Let $G$ be a finitely generated group. If there is a commensurated subgroup $H \le G$ such that $H$ and $G/ \langle \langle H \rangle \rangle$ belong to $\cC$, then $G \in \cC$.
\end{thmA}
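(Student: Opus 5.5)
Given a minimal action $\varphi\in\mathrm{Rep}_\mathrm{min}(G,\R)$, I would split according to how the commensurated subgroup $H$ acts, and build a Borel complete invariant on each of the resulting Borel pieces. The basic dichotomy concerns $\varphi(H)$. Either $\varphi(H)$ acts trivially, or it acts without global fixed points, or its fixed-point set is a nonempty proper closed subset. In the trivial case, $\varphi$ factors through $G/\langle\langle H\rangle\rangle$. The assumption that this quotient is in $\cC$ then gives a Borel invariant classifying such actions up to conjugacy, since conjugacy of $G$-actions factoring through the quotient is the same as conjugacy of the quotient actions.

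The key dynamical step is to rule out the intermediate case, or reduce it to the others, using commensuration together with minimality. Suppose $\mathrm{Fix}(\varphi(H))$ is nonempty and proper. For each $g$, the set $\mathrm{Fix}(\varphi(H\cap gHg^{-1}))$ contains both $\mathrm{Fix}(\varphi(H))$ and its $g$-translate. Since $H\cap gHg^{-1}$ has finite index in $H$, and an orientation-preserving action of a finite-index subgroup on a component of the support has no fixed points whenever the whole group doesn't (a finite-index subgroup of a group acting on an interval without fixed points also has none), we get $\mathrm{Fix}(\varphi(H\cap gHg^{-1}))=\mathrm{Fix}(\varphi(H))$. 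Hence $\mathrm{Fix}(\varphi(H))$ is a nonempty proper closed $\varphi(G)$-invariant set, which contradicts minimality. So either $H$ acts trivially or $H$ has no global fixed points.

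In the case without fixed points, I would consider the restriction $\varphi|_H$. If $H$ is not finitely generated, this restriction need not have a minimal set, and the stated semiconjugacy result for cocompact actions (smooth iff the group is in $\cC$, essentially countable in general) is exactly what handles it. The plan is to show that $\varphi|_H$ is cocompact, and then use commensuration plus minimality of $\varphi$ to show that $\varphi|_H$ is actually minimal. The argument would go as follows. A unique minimal invariant closed set, or more precisely the canonical closed set attached to the semiconjugacy class of a cocompact action, is preserved by each finite-index subgroup $H\cap gHg^{-1}$. It is therefore $\varphi(G)$-invariant, hence all of $\R$. Once $\varphi|_H$ is known to be minimal, $H\in\cC$ gives a Borel invariant $e_H(\varphi|_H)$. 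One then needs to recover $\varphi$ from $\varphi|_H$. Because $\varphi(H)$ is minimal, its centralizer in $\mathrm{Homeo}_0(\R)$ is trivial or cyclic/abelian by Hölder-type rigidity, so a conjugacy between $\varphi_1|_H$ and $\varphi_2|_H$ is nearly unique. For a minimal $H$-action, any conjugacy $h$ from $\varphi_1|_H$ to $\varphi_2|_H$ automatically intertwines the finite-index subgroups. Using that $\varphi_i(g)$ conjugates $\varphi_i|_{H\cap g^{-1}Hg}$ to $\varphi_i|_{H\cap gHg^{-1}}$, and that finite-index subgroups of a minimal action are still minimal with small centralizers, one pins down $\varphi(g)$ up to a countable ambiguity. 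Concretely, I would first pick a Borel selection of the conjugacy, normalising $\varphi|_H$ to a chosen representative $\psi=\sigma(e_H(\varphi|_H))$ via a Borel transversal. I would then record the resulting conjugated images of the finitely many generators of $G$, modulo the countable/abelian centralizer of $\psi(H)$. Finite generation of $G$ enters here, so that the invariant lives in a standard Borel space.

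I expect the main obstacle to be this last extension step. Two issues arise. One must handle the case where the centralizer of $\psi(H)$ is a nontrivial group of translations, for instance when $\varphi|_H$ is conjugate to translations. One must also make the normalising conjugacy a Borel function of $\varphi$, presumably via the concrete criterion Theorem~\ref{cor:criterio} rather than abstract transversals. For the centralizer issue, I would quotient the generator data by the centralizer action and check that this quotient relation is smooth, which holds because the centralizer acts by a locally compact (at most $\R$) group.
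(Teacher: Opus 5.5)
Your first two steps are correct: the reduction when $\varphi(H)$ is trivial is fine, and your exclusion of a non-empty proper $\mathrm{Fix}(\varphi(H))$ is Proposition~\ref{prop:fixComm}. The gap is in the case where $\varphi(H)$ has no global fixed point. There you claim that $\varphi|_H$ is cocompact and in fact minimal. This is false, and the cases it discards are exactly where the content of the theorem lies.

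Take $G=\mathrm{BS}(m,n)$ and $H=\langle t\rangle$. These satisfy the hypotheses: $H\cong\Z$ is commensurated and $G/\langle\langle H\rangle\rangle\cong\Z$. The affine action in which $t$ is the unit translation is minimal and of type III, and so are the actions of Subsection~\ref{subsection:BaumslagSolitar}. Yet in all of them $\varphi|_H$ is cyclic with discrete orbits. Every orbit closure of $\varphi(H)$ is then a minimal set, so your ``canonical closed set preserved by each $H\cap gHg^{-1}$'' does not exist. That argument only rules out an \emph{exceptional} minimal set, which is the only use the paper makes of it.

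When $H$ is not finitely generated, $\varphi|_H$ can even fail to be cocompact. For example, in the standard action of the Brin--Navas group, the normal subgroup generated by the $w_n$ has the intervals $f^k((x,y))$, $k\ge 0$, as a cofinal family of wandering intervals. This is the situation of Lemma~\ref{lema:noMinimal}. In these cases $\varphi|_H$ carries essentially no information (for instance, all cyclic actions of $\Z$ are conjugate up to orientation). So no version of ``classify $\varphi|_H$ using $H\in\cC$, then recover $\varphi$'' can work; the rigidity has to come from how $G$ acts.

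The missing ingredient is Lemma~\ref{lema:noIII}, together with the reduction that makes it usable. By Theorem~\ref{cor:criterio}, you only need to show that for $\varphi$ minimal of type III, every almost centralizing sequence $(f_n)$ tends to $\mathrm{id}_\R$. If $\varphi|_H$ is trivial, use $G/\langle\langle H\rangle\rangle\in\cC$. If $\varphi|_H$ is minimal of type III, then $(f_n)$ also almost centralizes $\varphi|_H$, and $H\in\cC$ concludes at once, with no Borel selection of conjugacies. In every remaining case one proves $f_n\to\mathrm{id}_\R$ \emph{without} using $H\in\cC$:
\begin{itemize}
\item A type II restriction is impossible. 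The generator of $\mathrm{Cent}_\varphi(H)$, described through sequences contracting intervals, is unchanged on passing to $H\cap gHg^{-1}$, so it would centralize $\varphi(G)$.
\item A type I restriction yields, through its Conrad homomorphism, a character $\kappa\colon G\to\R^\ast_+$ as in Lemma~\ref{lema:struct1}. It must be non-trivial, since otherwise some $\varphi(h)$ with $h\in H$ would be central. Choosing $\kappa(g)>1$ gives an open condition $\psi(h).x>\psi(g).x$ that $\varphi$ satisfies but that fails at every point far to the right. This forbids $f_n^{-1}(x)\to\infty$.
\item A restriction without minimal set makes $\varphi$ focal by Lemma~\ref{lema:noMinimal}. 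A similar open-condition argument then applies, using the bounded displacement of Remark~\ref{rem:bounded-displacement}.
\end{itemize}

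Separately, your assertion that quotienting generator data by the centralizer is smooth ``because the centralizer is locally compact'' is unjustified. Orbit relations of $\R$-flows are in general not smooth, which is precisely the phenomenon producing $\cE_0$ in this paper.
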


\begin{cor} \label{cor:stability}
	    \ 
\begin{enumerate}

	\item \label{i-stability-extension} If $G$ is a finitely generated group  having a normal subgroup $N \unlhd G$ such that $N$ and $G/ N$ belong to $\cC$, then $G \in \cC$. 
	\item \label{i-stability-wreath} If $H, K\in \cC$ are finitely generated, then the wreath product $H\wr K$ is in $\cC$.
	\item \label{i-stability-HNN} If $\Phi \colon H \to K$ is an isomorphism between finite-index subgroups $H,K$ of  a finitely generated group $G\in \cC$, then the HNN-extension $G^\Phi$ is in $\cC$.
	\item \label{i-stability-amalgamated} If $H, G \in \cC$ are finitely generated groups containing $K$ as a finite-index subgroup, then the amalgamated product $G \ast_K H$ is in $\cC$.
\end{enumerate}
	\end{cor}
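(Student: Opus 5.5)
Each item will follow from Theorem~\ref{teo:stability} by choosing a suitable commensurated subgroup $H$ and checking that $H$ and $G/\langle\langle H\rangle\rangle$ lie in $\cC$. Two basic facts about $\cC$ will be needed; I would establish them first.

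(a) \emph{Finite groups are in $\cC$.} A finite group admits no minimal action on $\R$: every orientation-preserving homeomorphism of $\R$ of finite order is trivial. So the conjugacy relation lives on the empty space and is trivially smooth.

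(b) \emph{$\cC$ is closed under passing to finite-index subgroups and to finite-index overgroups.} This is where I expect the real work to be. The intended use of (b) is to handle a group $\Gamma$ containing a member of $\cC$ as a finite-index subgroup. I would try to obtain this directly from Theorem~\ref{teo:stability}, taking the finite-index subgroup itself as the commensurated $H$. Every finite-index subgroup is commensurated, and $\Gamma/\langle\langle H\rangle\rangle$ is finite, hence in $\cC$ by (a). So $\Gamma\in\cC$ as soon as $H\in\cC$ and $\Gamma$ is finitely generated. This reduces (b) to the statement we actually need, with no separate argument.

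Item (\ref{i-stability-extension}) is immediate: a normal subgroup is commensurated and equals its normal closure.

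For (\ref{i-stability-wreath}), write $H\wr K=\bigoplus_K H\rtimes K$, which is finitely generated since $H$ and $K$ are. I would take the copy of $H$ in the base group indexed by the identity of $K$ as the commensurated subgroup. This is not quite right: conjugates $kHk^{-1}$ for $k\neq 1$ intersect $H$ trivially, so this copy is not commensurated. The better choice is the normal subgroup $N=\bigoplus_K H$, with quotient $K\in\cC$. The remaining issue is whether $N\in\cC$, which is delicate since $N$ is not finitely generated. I would therefore apply Theorem~\ref{teo:stability} once more, with the commensurated subgroup taken to be the single coordinate copy of $H$. Since that copy is not commensurated, I would fall back on $H\times K$-type reasoning: for $\varphi$ minimal, the copies of $H$ commute pairwise, so by standard commuting-actions arguments at most one coordinate acts without fixed points in a minimal way and the rest act trivially. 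I expect this to be the main obstacle, and I would resolve it using whichever criterion for membership in $\cC$ the paper proves (Theorem~\ref{cor:criterio}).

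For (\ref{i-stability-HNN}), let $t$ be the stable letter. Then $G$ is commensurated in $G^\Phi$, because $tGt^{-1}\cap G\supseteq K$ and $K$ has finite index; the same holds for every element by induction on word length. The normal closure of $G$ contains $t^{-1}Gt\cdot G$, and the quotient $G^\Phi/\langle\langle G\rangle\rangle\cong\mathbb Z$ is abelian, hence in $\cC$. Since $G^\Phi$ is finitely generated, Theorem~\ref{teo:stability} gives $G^\Phi\in\cC$.

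For (\ref{i-stability-amalgamated}), take $H$ itself as the commensurated subgroup of $G\ast_K H$. For $g\in G$ we have $gHg^{-1}\cap H\supseteq gKg^{-1}\cap K$, which has finite index in $K$, hence in $H$, because $K$ is commensurated in $G$ (having finite index). Induction on normal forms extends this to all elements. The normal closure of $H$ contains $K$, so the quotient is a quotient of $G/\langle\langle K\rangle\rangle$, which is finite. It is therefore in $\cC$ by (a), and Theorem~\ref{teo:stability} applies.
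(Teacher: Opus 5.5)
Items (i), (iii) and (iv) are correct and follow the paper's approach. The paper gets (iii) and (iv) from Corollary~\ref{cor:BassSerre}: commensuration of vertex groups comes from local finiteness of the Bass--Serre tree, and the quotient is computed via the isolator. Your two ingredients do the same job: the commensurator is a subgroup containing the generators, and the quotients are computed directly, namely $G^\Phi/\langle\langle G\rangle\rangle\cong\Z$ and $(G\ast_K H)/\langle\langle H\rangle\rangle$ equal to $G$ modulo the normal closure of $K$, which is finite. In (b) you claim that $\cC$ passes to finite-index subgroups without proof. You never use it, so drop it.

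The genuine gap is in (ii). You correctly reduce, via (i), to showing $N=\bigoplus_K H\in\cC$, but you leave that step open.

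\begin{itemize}
\item Applying Theorem~\ref{teo:stability} to $N$ is impossible because $N$ is not finitely generated. Commensuration is not the obstruction: inside $N$ each coordinate copy of $H$ is a direct factor, hence normal.
\item Your fallback claim is false as stated. It is not true that for every minimal $\varphi$ all but one coordinate act trivially: $\Z\oplus\Z$ acts minimally by the translations $x\mapsto x+1$ and $x\mapsto x+\sqrt2$, with both coordinates non-trivial.
\end{itemize}

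What is true, and enough by Theorem~\ref{cor:criterio}, is the type III version (Proposition~\ref{prop:producto}). Let $\varphi\in\mathrm{Rep}_\mathrm{III}(\bigoplus_n H_n)$ with each $H_n$ finitely generated, and suppose $\varphi(H_m)\neq 1$.

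\begin{enumerate}
\item $\varphi(H_m)$ is a non-trivial finitely generated normal subgroup, so it has no global fixed point by Proposition~\ref{prop:fixComm}. It therefore has a minimal set.
\item By the dichotomy for normal subgroups quoted in that proof, $\varphi(H_m)$ either acts minimally or is cyclic and central. The central case contradicts the triviality of the centralizer of $\varphi$.
\item In the minimal case, $\varphi(\bigoplus_{n\neq m}H_n)$ lies in the centralizer of a minimal action, so it is abelian. It is then central in $\varphi(\bigoplus_n H_n)$, hence trivial.
\end{enumerate}

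So $\varphi$ factors through a type III action $\varphi_m$ of $H_m$. Any sequence almost centralizing $\varphi$ also almost centralizes $\varphi_m$, and it tends to $\mathrm{id}_\R$ because $H\in\cC$. This argument uses the finite generation of $H$ in step 1, to get a minimal set for $\varphi(H_m)$; your sketch never invokes that hypothesis.
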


\begin{ejm}
From item \eqref{i-stability-extension} it follows by induction that if $G$ is a finitely generated group having a series of normal subgroups
\[\{e\}=G_0\unlhd \cdots \unlhd G_m=G\]
such that $G_{i}/G_{i-1}\in \cC$ for $i=1,\ldots, m$ (e.g. if $G_{i}/G_{i-1}$ does not contain a free semigroup on two generators), then $G\in \cC$. In particular, $\cC$ contains all finitely generated virtually solvable groups: this could also be deduced from the results on actions of solvable groups on the line in \cite[Theorem B]{BrumMatteBonRivasTriestino2025}, but not the stability of $\cC$ under extensions. 

\end{ejm}
In view of Corollary \ref{cor:stability} \eqref{i-stability-extension}, an affirmative answer to Question \ref{qn:amenable} \eqref{i-elementary-amenable} would follow if $\cC$ were also stable under direct limits. Unfortunately this is not true: we provide a counterexample in Proposition \ref{prop:nolimites} (which is however not elementary amenable). It is nevertheless true that an infinite \emph{direct sum} of finitely generated groups in $\cC$ is still in $\cC$ (Proposition \ref{prop:producto}), which gives the stability under wreath product in item \eqref{i-stability-wreath}. This is a rich source of examples in view of the fact that the wreath product $H\wr K$ of any two countable groups acting faithfully on $\R$ always has \emph{minimal} faithful actions on $\R$ (see \cite[Example 8.1.8]{BMRT}).

Items \eqref{i-stability-HNN} 
and \eqref{i-stability-amalgamated} in Corollary \ref{cor:stability} are both special cases of a statement about fundamental groups of graphs of groups (Corollary \ref{cor:BassSerre}), which in turn follows from Theorem \ref{teo:stability}.

\begin{ejm}
Item \eqref{i-stability-HNN} implies that all Baumslag-Solitar groups $\mathrm{BS}(m,n) = \langle a,b \mid ab^m a^{-1} = b^n \rangle$ for $m,n \in \Z \setminus \{0\}$ belong to $\cC$. When $n, m \ge 1$ groups are known to admit faithful minimal actions on $\R$ by a construction of Farb and Franks \cite{FarbFranks2020}.  In  Subsection \ref{subsection:BaumslagSolitar} we observe that they admit an uncountable family of non-conjugate minimal actions when $2 \leq m <n$ (in contrast with the solvable case $\mathrm{BS}(1, n)$ \cite{Rivas2010}).
\end{ejm}

From \cite[Section 16.3.1]{BMRT} it was already known that Thompson's group $F$ of piecewise dyadically affine homeomorphisms of the interval belongs to $\cC$. We extend this result to any finitely generated group $G$ which can be embedded in the group $\mathrm{PProj}_0(\R)$ of orientation-preserving piecewise projective homeomorphisms of the line, with no assumptions on the embedding. Here, a homeomorphism $f \in \mathrm{Homeo}_0(\R)$ belongs to $\mathrm{PProj}_0(\R)$ if, outside a finite subset of $\R$, it is locally of the form $x \mapsto (ax + b)/(cx + d)$ where $a,b,c,d \in \R$ and $ad - bc = 1$.

\begin{thmA} \label{teo:PL}
	Let $G \subseteq \mathrm{PProj}_0(\R)$ be finitely generated. Then $G \in \cC$. In particular, any finitely generated group of piecewise affine homeomorphisms of $\R$ belongs to $\cC$.
\end{thmA}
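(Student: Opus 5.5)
The plan is to combine the criterion of Theorem \ref{cor:criterio} with the stability result Theorem \ref{teo:stability}, and to exploit the known structure of actions of groups of piecewise projective homeomorphisms.

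\textbf{Step 1: structure of $G$.} Let $G\subseteq \mathrm{PProj}_0(\R)$ be finitely generated. Up to conjugating by a piecewise projective map, we may regard $G$ as acting on an interval $(a,b)$. The support of each element is a finite union of open intervals. Since $G$ is finitely generated, the union of the supports of a finite generating set has finitely many connected components, and $G$ preserves each of them. So $G$ embeds in a finite direct product of groups acting on the individual components.

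\textbf{Step 2: filtration by germs.} For a single component $I=(c,d)$, consider the germ homomorphisms at the endpoints. Each takes values in germs of projective maps fixing a point, which form an affine-type (solvable) group. Their images are therefore solvable and lie in $\cC$. The kernel $G_c$ consists of elements that are trivial near both ends, so it is a directed union of subgroups with compact support in $I$. The key claim to aim for is that, for a minimal action $\varphi$ of $G$, either $\varphi$ factors through the germ quotient (hence through a solvable group, reducing to the example above, i.e.\ $G/\langle\langle H\rangle\rangle\in\cC$), or $\varphi$ is conjugate to the standard action on $I$ (or on some component). This is in the spirit of the classification of actions of Thompson-like groups in \cite{BMRT}, which gives a countable and Borel-recognisable list of possibilities. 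To organise this inside the framework of Theorem \ref{teo:stability}, I would take $N$ to be a normal subgroup such as $[G_c,G_c]$ or $G_c$ itself.

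\textbf{Step 3: the main obstacle.} The hard step is controlling actions in which the compactly supported subgroup acts non-trivially. I would argue as follows: for a minimal action $\varphi$, the subgroup $G_c$ either acts trivially or acts without fixed points in a way that makes $\varphi$ semiconjugate to the standard action. This should follow from a local rigidity argument: piecewise projective elements with disjoint supports commute, which forces $\varphi$-supports to nest compatibly. Once this is established, two such actions are conjugate exactly when they are determined by a countable invariant, for instance which component is used and the conjugacy class of its germ representation. The invariant can be read off in a Borel way by checking dynamical relations between images of finitely many elements, which verifies the criterion of Theorem \ref{cor:criterio}.

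\textbf{Step 4: the piecewise affine case.} This is an immediate special case, since affine maps are projective.

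The main risk is that Step 3 may require a genuinely new argument, namely a Witte-Morris-type "compact support implies standard or trivial" dichotomy in the absence of finite presentability. I would first try to prove it directly from commutation relations among elements with disjoint supports.
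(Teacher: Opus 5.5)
There is a genuine gap: the dichotomy that Steps 2 and 3 rely on is false, so no argument from commutation of disjointly supported elements can prove it. Micro-supported groups such as Thompson's group $F$ have faithful minimal actions on $\R$ that are not semiconjugate to the standard action. By the results of \cite{BMRT} recalled in Lemma \ref{lema:BMRT}, these actions are laminar. In them $G_c$ acts non-trivially, so they neither factor through the solvable quotient $G/[G_c,G_c]$ nor are standard.

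Aiming for a countable list of conjugacy classes is also the wrong target. The groups $\mathrm{BS}(m,n)$, $2\le m<n$, lie in $\cC$ yet have uncountably many non-conjugate minimal actions (Subsection \ref{subsection:BaumslagSolitar}). Theorem \ref{cor:criterio} only asks that almost-centralizing sequences of type III actions tend to $\mathrm{id}_\R$.

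The missing idea is to handle these exotic laminar actions directly through that criterion, without classifying them. This is Lemma \ref{lema:laminarGgood}. Suppose a $G$-good interval generates a lamination for $\varphi\in\mathrm{Rep}_\mathrm{III}(G)\cap\mathrm{Harm}(G)$. The uniform displacement bound $\delta_S$ for harmonic actions (Remark \ref{rem:bounded-displacement}) lets you pick an element $g$ of some $[G_K,G_K]$ moving a point by more than $\delta_S$. The structure of the lamination then shows that $\mathrm{Fix}(\varphi(g))$ is $\delta_S$-dense outside a bounded set. This forces any $t_n$ with $\Psi^{t_n}(\varphi)\to\varphi$ to be bounded, hence $t_n\to 0$. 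Together with the easy case where $\varphi$ is itself micro-supported, this gives Proposition \ref{prop:micro}: $G\in\cC$ iff $G/[G_c,G_c]\in\cC$. The latter group is abelian-by-solvable, hence in $\cC$ by Theorem \ref{teo:extension}.

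Step 1 has a second, independent problem. $G$ is only a subdirect product of its restrictions to the components of its support. The class $\cC$ is not closed under subgroups: $F_2\notin\cC$, while non-solvable Baumslag--Solitar groups contain $F_2$ and lie in $\cC$. Moreover, a minimal action of $G$ need not factor through any single restriction.

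The paper instead inducts on the number of components. Let $N_I$, $N_J$ be the kernels of the restrictions to the leftmost component $I$ and to the convex hull $J$ of the others. If $\varphi(N_I)$ or $\varphi(N_J)$ has a global fixed point, it is trivial (Proposition \ref{prop:fixComm}), and $\varphi$ factors through $G_I$ or $G_J$, which are covered by induction. Otherwise both act irreducibly and commute with each other. Then $\varphi|_{N_I}$ is not of type III, and Lemma \ref{lema:noIII} shows that almost-centralizing sequences are trivial.

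Finally, even with a single component the standard action may have an exceptional minimal set, so $G$ itself need not be micro-supported. Proposition \ref{prop:irreduciblePL} deals with this by passing to the kernel $N$ of the action on the minimal set. It proves $G/[N,N]\in\cC$ via Proposition \ref{prop:micro} applied to $G/N$ and Theorem \ref{teo:extension}. It then shows that when $\varphi([N,N])\neq 1$, one of the half-lines $(x,\infty)$, $(-\infty,x)$ generates a lamination, so Lemma \ref{lema:laminarGgood} applies again.
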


Groups of piecewise affine homeomorphisms are a quite rich source of examples of (non-virtually solvable) elementary amenable groups acting on the line, by work of M. Brin \cite{Brin2005} (see also \cite{Navas2004}) and of Bleak-Brin-Moore \cite{BleakBrinMoore2021}. These groups all belong to $\cC$ by Theorem \ref{teo:PL}, giving some encouraging evidence towards an affirmative answer to Question \ref{qn:amenable}~\eqref{i-elementary-amenable}. 

The amenability of groups of piecewise affine homeomorphisms of $\R$ is an outstanding open problem, the most well-known special case being the amenability of Thompson's group $F$ (in contrast Monod showed that there are non-amenable groups of piecewise projective homeomorphisms \cite{Monod2013}). Theorem \ref{teo:PL} shows that they belong to $\cC$ in any case. This might give hope that Question \ref{qn:amenable}~\eqref{i-amenable} could be answer positively without having to decide the amenability of $F$. However, this is not the case:

\begin{thmA} \label{teo:noc}
	There is a finitely generated group $G \not \in \cC$, such that $G$ is amenable if and only if Thompson's group $F$ is amenable.
\end{thmA}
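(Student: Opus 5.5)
A natural approach is to build $G$ from $F$ by amenability-preserving operations, chosen so that $G$ has a rich, non-smooth family of minimal actions. Amenability is preserved under extensions by amenable groups, subgroups, direct limits and wreath products with amenable groups. I would take $G$ to be a finitely generated group containing $F$ together with an amenable "twisting" piece, for example an extension or a wreath-type product $F \wr \Z$, or a group generated by $F$ and a translation acting on $\R$. If $G$ is assembled from $F$ and virtually solvable groups by extensions and increasing unions, then $G$ is amenable if and only if $F$ is. Conversely, if $F$ embeds in $G$ as a subgroup, amenability of $G$ implies that of $F$.

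Corollary~\ref{cor:stability} shows that extensions and finitely generated wreath products stay inside $\cC$. So non-smoothness cannot come from those constructions alone; it must come from a non-finitely-generated ingredient. The paper mentions that $\cC$ is not closed under direct limits (Proposition~\ref{prop:nolimites}). I therefore plan to first produce a countable, non-finitely-generated group $L$ built from copies of $F$ (or of its commutator subgroup $[F,F]$, which is simple and a direct limit of copies of $F$) whose minimal actions are not classifiable. Then I embed $L$ into a finitely generated group $G$ in a way that transfers the non-smoothness. The concrete candidate is $G = F \wr \Z$, or more naturally a finitely generated subgroup of $\mathrm{Homeo}_0(\R)$ generated by $F$ supported on $(0,1)$ and the translation $t\colon x\mapsto x+1$. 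Such a group contains $\bigoplus_\Z F$ as a normal subgroup with quotient $\Z$, so it is amenable if and only if $F$ is.

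To show $G \notin \cC$, I would exhibit a continuous embedding into $\mathrm{Rep}_\mathrm{min}(G,\R)$ of a non-smooth relation such as $E_0$ on $2^\N$. The plan is to assign to each $\omega \in 2^\N$ a minimal action $\varphi_\omega$ in which the $n$-th copy of $F$ acts via one of two non-conjugate actions of $F$ (for instance the standard action versus a "blown-up" or exotic minimal action of $F$ from \cite{BMRT}), according to $\omega(n)$. These choices must be glued along $\R$ so that the shift $t$ still acts and the whole action is minimal. I then need to show that $\varphi_\omega$ and $\varphi_{\omega'}$ are conjugate exactly when $\omega E_0 \omega'$, or at least that conjugacy is a relation with meagre classes that is generically ergodic. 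Generic ergodicity suffices to rule out smoothness.

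The main obstacle is the last step: controlling conjugacies between the glued actions. The "if" direction (finite changes give conjugate actions) should come from a local modification that a conjugating homeomorphism can absorb. The "only if" direction, or the generic ergodicity argument, requires rigidity: any conjugacy must map the support of each copy of $F$ to a support of some copy, preserving the local type. I would try to extract this from the criterion of Theorem~\ref{cor:criterio} and from the structure theory of minimal $F$-actions in \cite{BMRT} (locally rigid versus non-locally-rigid actions). If rigidity is hard to prove, I would instead aim only for generic ergodicity, showing that tail-modifications act by conjugacy while each conjugacy class is meagre.
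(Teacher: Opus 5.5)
\paragraph{The candidate group is already in $\cC$.} The group generated by a copy of $F$ supported on $(0,1)$ and the translation $t\colon x\mapsto x+1$ is exactly $F\wr\Z$, because the conjugates $t^nFt^{-n}$ have disjoint supports $(n,n+1)$. It lies in $\cC$ twice over:
\begin{itemize}
\item by Corollary~\ref{cor:stability}\eqref{i-stability-wreath}, since $F$ and $\Z$ are finitely generated and in $\cC$ (you note this yourself one paragraph earlier);
\item by Theorem~\ref{teo:PL}, since it is a finitely generated group of piecewise affine homeomorphisms of $\R$.
\end{itemize}
So no Borel reduction of $\cE_0$ to its conjugacy relation can exist, and no generic-ergodicity argument can work either.

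\paragraph{The construction is also inconsistent.} In any action $\varphi$ of $F\wr\Z$ we have $\varphi(t^ngt^{-n})=\varphi(t)^n\varphi(g)\varphi(t)^{-n}$. Hence all copies of $F$ act by mutually conjugate actions, and you cannot choose their ``type'' independently according to $\omega(n)$.

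\paragraph{The transfer step is missing.} The step ``embed $L$ into a finitely generated $G$ so as to transfer non-smoothness'' is left empty, and it is the real difficulty. A minimal action of a subgroup need not extend to the overgroup, and non-membership in $\cC$ does not pass to overgroups. For instance, $F_2\le \mathrm{PSL}_2(\Z)$, but $\mathrm{PSL}_2(\Z)$ is generated by torsion, so it has no non-trivial action on $\R$ and lies in $\cC$ trivially.

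\paragraph{The missing idea.} You need a mechanism producing a single minimal type III action $\rho$ of a finitely generated group, together with $t_n\to\infty$ such that $\Psi^{-t_n}(\rho)\to\rho$. The sequence of translations by $t_n$ then violates criterion~\eqref{i-approxbounded} of Theorem~\ref{cor:criterio}, with no need to build an $\cE_0$-family by hand.

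The paper obtains this from a minimal suspension flow $\Phi$ on $Y$. A group of homeomorphisms of $Y$ moving points along $\Phi$-orbits yields a continuous family of orbitwise actions $\rho_y$ with $\rho_{\Phi^t(y)}=\Psi^{-t}(\rho_y)$, and minimality of $\Phi$ supplies the recurrence.

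To keep finite generation, and amenability equivalent to that of $F$, the paper takes $G=\langle f, F_{X,J}\rangle$. Here $f$ is locally PL dyadic in charts except at one fixed point $z$; it is built from Lemma~\ref{lema:sequencePL}, with breakpoints accumulating at $z$. Then $G=G^0\rtimes\langle f\rangle$ with $G^0\subseteq \Tphi^0_z$, and every finitely generated subgroup of $G^0$ embeds in $F$.

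By Theorem~\ref{teo:PL}, some such genuinely non-PL ingredient is unavoidable. Any construction that, like yours, stays inside finitely generated groups of PL homeomorphisms of $\R$ automatically lands in $\cC$.
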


In particular, an affirmative answer to Question \ref{qn:amenable}~\eqref{i-amenable} would imply the non-amenability of $F$, and thus might be extremely hard to decide. It is still possible (and perhaps more likely) that a negative answer can be obtained without deciding the amenability of $F$: nevertheless this seems to require to find new sources of examples of amenable groups acting on the line. In either case, we believe that Question \ref{qn:amenable}~\eqref{i-amenable} is interesting.

\subsection{Beyond the class $\cC$: the Borel complexity of conjugacy}

When leaving the class $\cC$, one can still try to rank groups according to the richness of their space of minimal actions on the line. This problem can be framed within the theory of Borel reducibility of equivalence relations, which allows to compare different classification problems in mathematics formulated as Borel or analytic equivalence relations on standard Borel spaces. We refer to \cite{Hjorth2000, KB} for a presentation of the subject.

Given equivalence relations $E_1, E_2$ defined on standard Borel spaces $Z_1, Z_2$ respectively, we say that \emph{$E_1$ is reducible to $E_2$} if there exists a Borel map $r \colon Z_1 \to Z_2$ such that for all $x,y \in Z_1$, we have $(x,y) \in E_1$ if and only if $(r(x), r(y)) \in E_2$. This notion expresses in a rigorous way the idea that deciding if elements of $Z_2$ are $E_2$-related is at least as hard as deciding if elements of $Z_1$ are $E_1$-related. We say two relations are \emph{bireducible} if one is reducible to the other and vice-versa. A Borel equivalence relation is \emph{smooth} if it is reducible to the identity on some standard Borel space, so such relations are the simplest ones from this point of view.

The largest natural space of actions of a countable group $G$ on $\R$ that one may expect to understand is the space $\mathrm{Rep}_\mathrm{irr}(G)$ of actions with no global fixed point on $\R$, called \emph{irreducible} actions. A classical construction by A. Denjoy \cite{Denjoy1932} by blowing up orbits of an irreducible action $\varphi$ allows to produce many actions with roughly the same dynamics as $\varphi$ but not conjugate to it. The equivalence relation generated by the blow-up procedure coincides with semiconjugacy: two irreducible actions $\varphi_1, \varphi_2$ are \emph{semiconjugate} if there exists a non-decreasing (and not necessarily continuous) map $h \colon \R \to \R$ that intertwines $\varphi_1$ with $\varphi_2$. Semiconjugacy among irreducible actions is a natural equivalence relation in this context, and it boils down to conjugacy when both actions are minimal.

Write $\R \curvearrowright^\Psi \mathrm{Rep}_\mathrm{irr}(G)$ for the \emph{translation flow} conjugating an irreducible action by translations. Following \cite{DKNP, deroinAlmost}, if $G$ is finitely generated the study of symmetric random walks on $G$ allows one to construct a compact $\Psi$-invariant subspace $\mathrm{Harm}(G) \subseteq \mathrm{Rep}_\mathrm{irr}(G)$, the \emph{space of harmonic actions} of $G$, composed of minimal actions and actions of $G$ by translations together with a continuous retraction map $r \colon \mathrm{Rep}_\mathrm{irr}(G) \to \mathrm{Harm}(G)$ that reduces semiconjugacy on $\mathrm{Rep}_\mathrm{irr}(G)$ to the orbit equivalence relation of $\Psi$ on $\mathrm{Harm}(G)$. It was noticed in \cite[Section 14.4]{BMRT} that the existence of these objects, along with a theorem of V. Wagh \cite{Wagh1988} shows the following.

\begin{prop}[{\cite[Section 14.4]{BMRT}}]
	Let $G$ be a finitely generated group. The semiconjugacy relation between irreducible actions of $G$ on the line is essentially hyperfinite, and it is smooth if and only if $G \in \cC$.
\end{prop}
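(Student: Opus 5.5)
The plan is to take for granted the two objects from \cite{DKNP, deroinAlmost} recalled just above: the compact $\Psi$-invariant space $\mathrm{Harm}(G)$ and the continuous retraction $r\colon \mathrm{Rep}_\mathrm{irr}(G)\to\mathrm{Harm}(G)$. The key property of $r$ is that two irreducible actions $\varphi_1,\varphi_2$ are semiconjugate if and only if $r(\varphi_1)$ and $r(\varphi_2)$ lie in the same $\Psi$-orbit. Thus $r$ is a Borel (indeed continuous) reduction of semiconjugacy on $\mathrm{Rep}_\mathrm{irr}(G)$ to the orbit equivalence relation $E_\Psi$ of the flow on $\mathrm{Harm}(G)$. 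Conversely, $E_\Psi$ is the restriction of semiconjugacy to the invariant Borel subset $\mathrm{Harm}(G)$, since actions in $\mathrm{Harm}(G)$ are minimal or by translations. Hence the two relations are bireducible, and both claims reduce to statements about $E_\Psi$.

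\emph{Hyperfiniteness.} The flow $\Psi$ is a Borel action of $\R$ on a Polish space. By Wagh's theorem \cite{Wagh1988}, the orbit equivalence relation of a Borel $\R$-flow is essentially hyperfinite. Concretely, one picks a Borel complete lacunary section, restricts $E_\Psi$ to it to obtain a countable equivalence relation induced by a Borel $\Z$-action, and notes that this restriction is bireducible with $E_\Psi$. Composing with $r$ shows that semiconjugacy is essentially hyperfinite.

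\emph{Smoothness.} Suppose semiconjugacy on $\mathrm{Rep}_\mathrm{irr}(G)$ is smooth. Conjugacy on $\mathrm{Rep}_\mathrm{min}(G,\R)$ is the restriction of semiconjugacy to a Borel subset, because semiconjugacy between minimal actions is conjugacy. A restriction of a smooth relation is smooth, so $G\in\cC$.

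Conversely, suppose $G\in\cC$, with a Borel invariant $\mathrm{e}$ on minimal actions. Split $\mathrm{Harm}(G)$ into the Borel, $\Psi$-invariant pieces $\mathrm{Harm}_\mathrm{min}$ (non-translation minimal actions) and $\mathrm{Harm}_\mathrm{tr}$ (actions by translations).
\begin{itemize}
\item On $\mathrm{Harm}_\mathrm{min}$, $E_\Psi$ is contained in conjugacy. I must show it equals conjugacy, i.e. that two conjugate harmonic minimal actions differ by a translation. I expect this from the uniqueness part of the DKNP construction: the harmonic representative of a semiconjugacy class is unique up to translation, because the stationary Radon measure is unique up to scaling and the normalization fixes the scale. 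Granting this, $\mathrm{e}$ restricted to $\mathrm{Harm}_\mathrm{min}$ is a Borel reduction of $E_\Psi$ to equality.
\item On $\mathrm{Harm}_\mathrm{tr}$, an action by translations is given by a homomorphism $\tau\colon G\to\R$. The translation flow fixes it, and semiconjugacy identifies $\tau$ with its positive multiples. The normalization in $\mathrm{Harm}(G)$ presumably already picks one representative per ray. Otherwise I would take $\tau/\|\tau\|$, using a norm on $\mathrm{Hom}(G,\R)$, which is finite-dimensional since $G$ is finitely generated. Either way this gives a Borel invariant.
\end{itemize}
Gluing the two invariants, and composing with $r$, gives a reduction of semiconjugacy to equality, so semiconjugacy is smooth.

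The main obstacle is the first bullet: verifying that conjugacy within $\mathrm{Harm}(G)$ coincides with $\Psi$-orbit equivalence. This is exactly the content of the statement that $r$ reduces semiconjugacy to $E_\Psi$, so I would cite that property from \cite{DKNP, deroinAlmost} rather than reprove it. A secondary check is that $\mathrm{Harm}_\mathrm{min}$ and $\mathrm{Harm}_\mathrm{tr}$ are Borel. This holds because the minimal actions form a $G_\delta$ set and the translation actions form a closed set.
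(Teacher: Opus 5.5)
Your proof is correct. For essential hyperfiniteness it follows the route the paper indicates: reduce semiconjugacy to the orbit relation $E_\Psi$ on $\mathrm{Harm}(G)$ via the retraction $r$ of Theorem~\ref{teo:existenciaDeroin}, then apply Wagh's theorem. For ``smooth iff $G\in\cC$'' your route is genuinely different.

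\textbf{Comparison.} The paper only cites \cite[Section 14.4]{BMRT}, and its surrounding discussion ties smoothness to two things: the Glimm--Effros dichotomy for the $F_\sigma$ relation $E_\Psi$, and the recurrence criterion of Proposition~\ref{prop:recurrenteIntro}. You instead transfer a Borel invariant directly. You split $\mathrm{Harm}(G)$ into the $\Psi$-fixed points and a Borel subset of $\mathrm{Rep}_\mathrm{min}(G)$ on which $E_\Psi$ coincides with conjugacy. Your argument is more elementary and needs no Glimm--Effros. The paper's route additionally yields the concrete dynamical criterion that recurrent points are periodic, which is what is later used as item~\eqref{i-approxderoin} of Theorem~\ref{cor:criterio}.

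\textbf{Simplifications.} Several of your steps are easier than you make them.
\begin{itemize}
\item The ``main obstacle'' you flag is immediate. $r$ is the identity on $\mathrm{Harm}(G)$, so two elements of $\mathrm{Harm}(G)$ are semiconjugate if and only if they lie in the same $\Psi$-orbit. No uniqueness of stationary measures from \cite{DKNP} is needed.
\item The same retraction property is the correct justification for your first-paragraph claim that $E_\Psi$ is the restriction of semiconjugacy to $\mathrm{Harm}(G)$. The fact that harmonic actions are minimal or by translations is not what gives it.
\item On $\mathrm{Harm}_\mathrm{tr}$ no normalisation $\tau/\|\tau\|$ is required. These are $\Psi$-fixed points, so $E_\Psi$ is already equality there.
\item When gluing the two invariants, it suffices that both pieces are $\Psi$-invariant and that you tag each invariant with the piece it comes from.
\end{itemize}
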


Here, a relation is \emph{essentially hyperfinite} if it is bireducible to the orbit equivalence relation of a Borel automorphism on a standard Borel space. These form a strictly larger class of relations than the smooth ones, albeit one of low complexity (see Subsection \ref{subsection:introBorel} below).

An irreducible action $\varphi \in \mathrm{Rep}_\mathrm{irr}(G)$ is said to be \emph{cocompact} if there is a compact subset of $\R$ intersecting all $\R$-orbits. Equivalently, the action $\varphi$ admits a minimal set (see \cite[Proposition 2.1.2]{Navas2011}). 

In Appendix \ref{subsection:deroinFinf}, we show that when $G$ is not finitely generated, there is in general no full analogue of the space $\mathrm{Harm}(G)$ that controls all cocompact actions of $G$ in a similar way as in the finitely generated case. We are able to show nonetheless that semiconjugacy among cocompact actions is \emph{essentially countable}, that is, bireducible to a Borel equivalence relation where every equivalence class is countable. In particular, it is a Borel equivalence relation, which is not evident \emph{a priori}.
\begin{thmA} \label{teo:complexity}
	Let $G$ be a countable group. The semiconjugacy relation between cocompact actions of $G$ on the line is essentially countable, and it is smooth if and only if $G \in \cC$.
\end{thmA}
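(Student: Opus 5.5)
Every cocompact action $\varphi$ of $G$ has a unique minimal invariant closed set, and after collapsing the gaps of that set we get a canonical representative of its semiconjugacy class. This representative is either minimal or conjugate to a cyclic action by translations. So the plan is to reduce semiconjugacy on cocompact actions to a relation built from conjugacy of minimal actions, and then to reduce conjugacy of minimal actions to an orbit relation with countable classes.

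\textbf{Step 1 (Borel collapse map).} Fix an enumeration $G=\{g_n\}$. I will build a Borel map $c$ from the space of cocompact actions to $\mathrm{Rep}_\mathrm{min}(G,\R)\sqcup\{\text{cyclic actions}\}$ such that $\varphi_1,\varphi_2$ are semiconjugate if and only if $c(\varphi_1),c(\varphi_2)$ are conjugate. The construction goes as follows.
\begin{itemize}
\item Cocompactness is the condition that some compact interval $[-k,k]$ meets every orbit. This can be expressed with countably many quantifiers over rationals and group elements, so the set of cocompact actions is Borel.
\item The minimal set $\Lambda_\varphi$ is the closure of the $\varphi$-orbit of any of its points. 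Membership $x\in\Lambda_\varphi$ can be tested as follows: for every rational $y$ and $\varepsilon$, the orbit of $y$ comes $\varepsilon$-close to $x$. This gives a Borel description of $\Lambda_\varphi$.
\item Let $h_\varphi$ be the monotone map collapsing the gaps of $\Lambda_\varphi$, normalized with a Borel choice of basepoint (for instance the least point of $\Lambda_\varphi$ that is $\ge 0$, or via a measurable choice). Then $c(\varphi)$ is the induced action.
\end{itemize}
Checking that semiconjugacy of cocompact actions matches conjugacy of the collapsed actions is standard, because semiconjugacies send minimal sets to minimal sets.

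\textbf{Step 2 (countability).} It then suffices to prove that conjugacy on $\mathrm{Rep}_\mathrm{min}(G,\R)$ is essentially countable. The point is that a minimal action is determined up to conjugacy by its combinatorics.
\begin{itemize}
\item The order type of the orbit of a point $x$, that is, the induced $G$-invariant preorder on $G$ given by $g\preceq h \iff g.x\le h.x$, recovers the minimal action up to conjugacy. The action is the dynamical realization of that preorder.
\item Changing the base point inside the orbit amounts to translating the preorder by an element of $G$. This is a countable group action.
\item Base points outside that orbit give other preorders. To avoid uncountably many choices, I would restrict to a Borel countable complete section. Take the space $Z$ of pairs $(\varphi,x)$ modulo conjugacy, that is, the space of preorders whose dynamical realization is minimal. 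On $Z$, call two preorders equivalent if they come from the same action. I need each class to be countable, or at least the relation to reduce to a countable one.
\end{itemize}
The classes are not countable as they stand: different points of $\R$ give continuum many preorders. The fix is to pick the base point canonically up to countable ambiguity, for example using a countable Borel family of "special" points. Concretely, for $\varphi$ minimal, either $\varphi$ is conjugate to translations (handled by Hölder, and smooth), or some $g$ has a fixed point. I would then take as base points the boundary points of components of supports $\mathrm{Fix}(g)^c$ over all $g\in G$. This is a countable, conjugation-equivariant set of points, nonempty in the non-translation case. This yields a Borel map to preorders with countable fibers per class, so the relation is essentially countable. This is the main obstacle: making the canonical countable set of points nonempty and Borel in all non-abelian-like minimal cases. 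If some minimal action has no element with fixed points but is not a translation action, I would invoke Plante/Hölder-type results to rule this out (a free action is conjugate to translations).

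\textbf{Step 3 (smoothness).} If $G\in\cC$, compose $c$ with the smooth invariant of minimal conjugacy; the cyclic and translation parts are smooth, so semiconjugacy is smooth. Conversely, minimal actions form a subclass on which semiconjugacy equals conjugacy. Restricting a smooth reduction to them shows $G\in\cC$.
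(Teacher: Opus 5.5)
Your Steps 2 and 3 coincide with the paper's argument. The paper also splits off the type~I part (reducing it to actions of $G/[G,G]$, smooth by Corollary~\ref{cor:abeliano}) and the cyclic part (reduced to equality in $\mathrm{Hom}(G,\Z)$). Its complete countable Borel section consists precisely of the preorders based at an endpoint of a component of the support of some $\phi_\preceq(g)$, which are your special base points, nonempty by H\"older. It concludes with Lusin--Novikov, and it derives the smoothness equivalence from the bireducibility of semiconjugacy on $\mathrm{Rep}_\mathrm{cc}(G)$ with conjugacy on $\mathrm{Rep}_\mathrm{min}(G)\cup\mathrm{Rep}_\mathrm{cyc}(G)$.

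The gap is Step~1. That is where the Borel content of the theorem sits, since it is not evident a priori that semiconjugacy on cocompact actions is even Borel, and your sketch does not produce a Borel map. There are three problems.

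(a) Your test for $x\in\Lambda_\varphi$ is false. Blow up one orbit of a minimal action as in Denjoy's construction, letting the stabiliser of each inserted interval act minimally on it. For example, blow up the orbit of $0$ under the affine action of $\Q\rtimes\Q^\ast_{>0}$, with the stabiliser acting on each inserted interval by translations by the logarithm of the dilation factor. Then conjugate so that $\Lambda_\varphi\cap\Q=\emptyset$. Every rational now has a dense orbit, so your test returns all of $\R$. A correct test, for non-discrete $\Lambda_\varphi$, is that an open interval $U$ meets $\Lambda_\varphi$ if and only if $\varphi(G).U=\R$, which is Borel in $\varphi$.

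(b) When the minimal set is a discrete orbit it is not unique, so cyclic-type actions need a separate Borel treatment.

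(c) Most importantly, fixing a basepoint does not determine a homeomorphism from the collapsed line $\R/\!\sim$ onto $\R$. To land in $\mathrm{Rep}_\mathrm{min}(G)$ you need a Borel choice of a global coordinate on the quotient, for instance a Borel assignment to each $\varphi$ of a non-atomic measure with support $\Lambda_\varphi$. The phrase ``via a measurable choice'' does not supply one.

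The paper avoids all three issues by passing through preorders. The map $\varphi\mapsto(\preceq_\varphi)_\ast$ replaces ``collapse the gaps of the minimal set'' by ``quotient by the maximal proper convex subgroup $H_\preceq$''. This handles the cyclic and non-cyclic cases uniformly, and it is Borel because $\preceq\mapsto H_\preceq$ is Borel (Lemma~\ref{lema:convexoMax}, via Arsenin--Kunugui uniformization with $K_\sigma$ sections). The coordinate is then supplied by the explicit midpoint dynamical realization $\phi_\preceq$. Its Borelness rests on the fact that every dyadic rational lies either in $\iota_\preceq(G)$ or outside its closure (Lemma~\ref{lema:iota}).

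The same framework settles the Borelness of the section, which you flagged as the main obstacle but left open. Being based at an endpoint of a support component of some $\phi_\preceq(g)$ has a purely order-theoretic description in terms of some $g\in[1]_\preceq$ and the iterates $g^nx$. Note also that the Lusin--Novikov step requires the relation to be Borel. This comes from conjugacy on $\mathrm{Rep}_\mathrm{min}(G)$ being $F_\sigma$ (Proposition~\ref{prop:fsigma}).
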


\begin{qnnum}
Let $G$ be a countable group. Is it true that the semiconjugacy relation between cocompact actions of $G$ on the line  is always essentially hyperfinite?
\end{qnnum}

To the best of our knowledge these questions are unrelated to the study of the Borel complexity of a countable group acting on its space of left orders, which has been undertaken in \cite{CalderoniClay2024}.

\subsection{On proofs} The main criterion we use to verify membership in $\cC$ is Theorem \ref{cor:criterio}, which asserts that a group $G$ is in $\cC$ if and only if for every sequence $(f_n)_{n \geq 0} \subseteq \mathrm{Homeo}_0(\R)$ and every proximal minimal action $\varphi$ of $G$ such that $\lim_{n \to \infty} f_n.\varphi = \varphi$ we have $\lim_{n \to \infty} f_n = \mathrm{id}_\R$ (that is, any sequence of homeomorphisms almost centralizing $\varphi$ is asymptotically trivial). Equipped with this criterion, the proof of Theorem \ref{teo:stability} relies on the structural theory of group actions on the line and the notion of laminar actions introduced in \cite[Chapter 8]{BMRT} in a fundamental way. The proof of Theorem \ref{teo:PL} also involves structure theory for actions of micro-supported groups from \cite[Chapter 9]{BMRT}.

Even though our main results deal with finitely generated groups, we do not formulate the definition of the class $\cC$ in terms of harmonic spaces $\mathrm{Harm}(G)$. We do so since our definition of $\cC$ is natural, and to prove closure properties of $\cC$ it is crucial to consider groups that are not finitely generated (see also Appendix \ref{subsection:deroinFinf}).

\subsection{Organization of the article} Section \ref{section:preliminaries} collects the necessary preliminaries to read this paper. In Section \ref{section:spaces} we prove Theorem \ref{cor:criterio}, which is the main criterion we use to decide membership of a group in $\cC$.  Section \ref{section:actionsCommensurated} records some restrictions on groups acting minimally coming from commensurated subgroups, to be used in the proof of Theorem \ref{teo:stability}. Sections \ref{section:stability} proves Theorem \ref{teo:stability} and Corollary \ref{cor:stability}. Sections \ref{subsection:micro}, \ref{section:noC} and \ref{section:complexity} prove Theorems \ref{teo:PL}, \ref{teo:noc} and \ref{teo:complexity} respectively.
Finally, in Appendix \ref{subsection:deroinFinf} we show that it is not possible to define an analogue of the space of harmonic actions on the line for a group wich is not finitely generated.

\subsubsection*{Acknowledgements} We thank Adrien Le Boudec and Todor Tsankov for many useful conversations. Martín Gilabert Vio acknowledges support from the ECOS project 23003 “Small spaces under actions”. Nicol\'as Matte Bon is partially supported by ANR project PLAGE ANR-24-CE40-3137. 
\tableofcontents
\section{Preliminaries} \label{section:preliminaries} 
We review the theory of group actions on the line and some basic vocabulary on Borel equivalence relations. For more details on this material, see \cite{DeroinNavasRivas2016, Navas2011} and \cite{KB}. In this section $G$ will always denote a countable group.

\subsubsection*{Conventions}
A \emph{Radon measure on $\R$} is a non-zero regular Borel measure on $\R$, and thus gives finite mass to compact sets. We write $\mathrm{Homeo}_0(\R)$ for the group of orientation-preserving homeomorphisms of the real line. Given an action $\varphi \colon G \to \mathrm{Homeo}_0(\R)$ and a subgroup $H \subseteq G$ we write $\mathrm{Fix}_\varphi(H)$ for the closed set of $x \in \R$ such that $\varphi(h).x = x$ for all $h \in H$, and $\mathrm{supp}_\varphi(H) = \R \setminus \mathrm{Fix}_\varphi(H)$. The \emph{centralizer} of $\varphi(H)$ is the group $\mathrm{Cent}_\varphi(H) \subseteq \mathrm{Homeo}_0(\R)$ of homeomorphisms commuting with all elements in $\varphi(H)$.

If $K, H \subseteq G$ are subgroups, we say that $K$ and $H$ are \emph{commensurate} if $K\cap H$ has finite index in $K$ and in $H$, and we say that $H$ is \emph{commensurated in $G$} if for all $g \in G$, $H \cap gH g^{-1}$ has finite index in $H$ and in $gHg^{-1}$. For us, conjugation by some $a \in G$ is the map $g \in G \mapsto aga^{-1}$. We denote by $\langle \langle H \rangle \rangle$ the normal subgroup of $G$ generated by $H \subseteq G$.

\subsection{Actions on the line} 
An action $\varphi \in \mathrm{Hom}(G, \mathrm{Homeo}_0(\R))$ is said to be \emph{irreducible} if it has no global fixed points on $\R$, and we denote the set of irreducible actions of $G$ by $\mathrm{Rep}_\mathrm{irr}(G)$.
Two actions $\varphi_1, \varphi_2 \in \mathrm{Rep}_\mathrm{irr}(G)$ are \emph{semiconjugate}  if there is a non-decreasing map $h\colon \R\to \R$,  called a \emph{semiconjugacy}, such that $h \circ \varphi_1(g) = \varphi_2(g) \circ h$ for every $g \in G$. Semiconjugacy is an equivalence relation on $\mathrm{Rep}_\mathrm{irr}(G)$. Notice that the map $h$ is not necessarily continuous, however it is automatically a homeomorphism  when both actions are minimal. When the map $h$ is continuous, we shall refer to it as a \emph{continuous semiconjugacy from} $\varphi_1$ \emph{to} $\varphi_2$ (this defines a transitive but not symmetric relation).

We say that a closed non-empty subset $\Lambda \subseteq \R$ is a \emph{minimal set} for an action $\varphi \in \mathrm{Rep}_\mathrm{irr}(G)$ if $\Lambda$ is $\varphi$-invariant and every $\varphi$-orbit in $\Lambda$ is dense in $\Lambda$. The set $\Lambda$ may be either
\begin{itemize}
        \item a discrete orbit, in which case $\varphi$ is semiconjugate to an action that factors through $\Z$, or
        \item all of $\R$, or
        \item a perfect, totally disconnected and unbounded subset of $\R$, in which case by collapsing the intervals in $\R \setminus \Lambda$ we obtain a continuous semiconjugacy from $\varphi$ to a minimal action of $G$ on $\R$.
\end{itemize}
When $\Lambda$ is not a discrete orbit it is the unique minimal set, and if $G$ is finitely generated $\varphi$ always admits a minimal set (see \cite[Lemma 3.5.18]{DeroinNavasRivas2016}). In general, the existence of a minimal set for $\varphi$ is equivalent to $\varphi$ being \emph{cocompact}, that is, such that there is a compact subset of $\R$ intersecting every $\varphi$-orbit (see \cite[Proposition 2.1.2]{Navas2011}). In this case, any closed $\varphi$-invariant subset of $\R$ contains a minimal set.

Given $\varphi \in \mathrm{Rep}_\mathrm{irr}(G)$, two points $x,y \in \R$ are said to be \emph{proximal for $\varphi$} if there is $z \in \R$ and a sequence $(g_n)_{n \geq 0} \subseteq G$ such that $\lim_{n \to \infty} \varphi(g_n).x = \lim_{n \to \infty} \varphi(g_n).y = z$. The action $\varphi$ is said to be \emph{proximal} if every pair of points of $\R$ is proximal for $\varphi$, and is said to be \emph{locally proximal} if every point in $\R$ is contained in an open interval whose endpoints are proximal for $\varphi$.  When $\varphi$ admits a minimal set a further trichotomy is true.
\begin{teo}[\cite{Margulis2000, Antonov1984}] \label{teo:tits}
Let $\varphi \in \mathrm{Rep}_\mathrm{irr}(G)$ and suppose that $\varphi$ admits a minimal set. Then either:
\begin{enumerate}[label=(\Roman*)]
        \item the action is semiconjugate to an action by translations, or
           \item the action is locally proximal and not proximal, or
        \item the action is proximal.
     
\end{enumerate}
If $\varphi$ is proximal, then $G$ contains a non-abelian free semigroup. If $\varphi$ is locally proximal and not proximal, then $G$ contains a non-abelian free group.
\end{teo}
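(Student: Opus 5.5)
Let $\Lambda$ be the minimal set of $\varphi$. The plan is to split according to the shape of $\Lambda$ and, once $\Lambda$ is large, according to the centralizer of the minimal model.

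\textbf{Step 1: reduce to a minimal action.} If $\Lambda$ is a discrete orbit, then $\varphi$ is semiconjugate to an action factoring through $\Z$, so we are in case (I). Otherwise we collapse the gaps of $\Lambda$ and obtain a continuous semiconjugacy to a minimal action $\psi$. Proximality, local proximality, and being semiconjugate to translations all transfer along this semiconjugacy. For proximality, two points of $\R$ can be pushed by a suitable element of $G$ into a neighbourhood of $\Lambda$, and pairs contained in one gap collapse together. So it suffices to treat minimal $\psi$.

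\textbf{Step 2: study the centralizer $Z=\mathrm{Cent}_\psi(G)$ of the minimal action $\psi$.} Minimality forces $Z$ to act freely. Indeed, if $f\in Z$ fixes a point, then $\mathrm{Fix}(f)$ is closed, nonempty and $\psi$-invariant, hence equals $\R$. By Hölder's theorem, $Z$ is therefore abelian and semiconjugate to translations, which gives three cases.
\begin{itemize}
\item If $Z$ is trivial, we will show $\psi$ is proximal.
\item If $Z\cong\Z$ is generated by a fixed-point-free $f$, we pass to the quotient circle $\R/\langle f\rangle$. On this circle $G$ acts minimally, and the goal is to show it acts proximally, giving case (II).
\item If $Z$ is dense, then $Z$ is conjugate to a dense group of translations. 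Since $\psi(G)$ commutes with a dense set of translations, it commutes with all translations, so $\psi(G)$ consists of translations, which is case (I).
\end{itemize}

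\textbf{Step 3: the heart of the argument.} We must show that if $Z$ is trivial then $\psi$ is proximal, and the $\Z$ case reduces to the same dichotomy on the circle. The approach is a Margulis-type argument: define $x\sim y$ if $x,y$ are \emph{not} proximal. By minimality, the set of $y>x$ that are proximal to $x$ is either empty or an interval $[x,\tau(x))$, where $\tau$ is a non-decreasing $G$-equivariant map. We then distinguish two outcomes.
\begin{itemize}
\item If $\tau(x)=+\infty$ everywhere, then $\psi$ is proximal.
\item Otherwise $\tau$ is finite. The aim is to show that $\tau$ is continuous and strictly increasing, hence a nontrivial element of $Z$, which is a contradiction.
\end{itemize}
To prove continuity and monotonicity of $\tau$, we use minimality: the closure of its graph is invariant, and the jumps and flat pieces of $\tau$ are invariant families of intervals, which minimality forbids. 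Showing that any two points are proximal when $Z$ is trivial is, I expect, the main obstacle. The key is the contraction property: any compact interval can be contracted into an arbitrarily small one, which I would derive from the absence of an invariant Radon measure, via Plante's work or the Antonov/Margulis argument.

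\textbf{Step 4: free subgroups and semigroups.} In the proximal case we have contraction of compacts near every point. We pick $g,h\in G$ contracting a suitable interval $I$ into disjoint subintervals of itself. A ping-pong argument on $I$ then shows that $g$ and $h$ generate a free semigroup. In case (II), local proximality means the action on the circle $\R/\langle f\rangle$ is minimal and proximal, hence it has north–south type dynamics. There we find two elements with disjoint attracting and repelling intervals, and circle ping-pong yields a free group. This lift argument is Margulis' theorem, and it also uses the semiconjugacy reduction from Step 1.
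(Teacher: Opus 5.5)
The paper gives no proof of this theorem; it is quoted from Antonov and Margulis, so your outline has to stand on its own. Its architecture is the standard one: collapse the gaps of $\Lambda$, split according to $Z$ via H\"older, use the equivariant map $\tau(x)=\sup\{y\ge x : x,y \text{ proximal}\}$ (in the spirit of the contraction function $c$ in the proof of Lemma~\ref{lema:noIII}), then ping-pong. Your argument that a finite $\tau$ is continuous and strictly increasing is fine.

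The hole is in Step 3. Your alternative ``$\tau\equiv+\infty$, or $\tau$ is a nontrivial element of $Z$'' misses the case $\tau=\mathrm{id}_\R$, where no two distinct points are proximal. This is exactly what happens for a dense group of translations. The identity is continuous and strictly increasing, so no contradiction arises. You need one nondegenerate proximal pair, and the source you propose for it does not work:
\begin{itemize}
\item When $Z$ is trivial, ``every compact interval can be contracted'' just restates the proximality you are trying to prove, so it cannot be an input.
\item It also does not follow from the absence of an invariant Radon measure. A type II action preserves no Radon measure, yet no interval containing $[x,f(x)]$ can be contracted.
\item Plante's theorem runs the other way (no free semigroup $\Rightarrow$ invariant Radon measure) and produces no contraction.
\end{itemize}

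The missing input is elementary and uses only the H\"older theorem you already invoke.
\begin{itemize}
\item If $Z$ is trivial or cyclic, $\psi(G)$ does not act freely. A free minimal action is conjugate to translations, and then $Z$ would contain all translations. So some $\psi(g)\neq\mathrm{id}_\R$ has a fixed point.
\item Take a component of $\R\setminus\mathrm{Fix}(\psi(g))$ with a finite endpoint $x_0$, and a point $y$ in it. For a suitable sign, $\psi(g^{\pm n}).y\to x_0$. So $\{x_0,y\}$ is a nondegenerate proximal pair, and $\tau(w)>w$ for $w=\min(x_0,y)$.
\item Monotonicity gives $\tau>\mathrm{id}$ on $[w,\tau(w))$. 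Equivariance plus minimality then give $\tau(x)>x$ for every $x$.
\end{itemize}
Only now does your argument yield a nontrivial element of $Z$.

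The same reasoning also settles $Z=\langle f\rangle$ directly on the line, with no separate circle dichotomy. Here $\tau$ must be finite and equal to $f^k$ for some $k\ge 1$. In fact $k=1$, because $x$ and $f(x)$ are never proximal, since $f$ commutes with $\psi(G)$. This gives local proximality. It also shows that every proper closed arc of $\R/\langle f\rangle$ can be contracted, which is what your circle ping-pong in Step 4 actually requires.
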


We say that $\varphi$ is of \emph{type I}, \emph{type II} or \emph{type III} according to which of the previous alternatives is verified. When $\varphi$ is minimal, these three cases are distinguished by their centralizer $\mathrm{Cent}_\varphi(G)$: the action $\varphi$ is of type I if $\mathrm{Cent}_\varphi(G)$ is conjugate to the group of translations, of type II if $\mathrm{Cent}_\varphi(G)$ is infinite cyclic, and of type III if $\mathrm{Cent}_\varphi(G)$ is trivial. Type I actions further decompose into those semiconjugate to a group of translations acting minimally on $\R$, and those that are semiconjugate to a \emph{cyclic} action, that is, an action factoring through a morphism $G \to \Z$.

\subsection{Borel equivalence relations} \label{subsection:introBorel}
A \emph{standard Borel space} is a measurable space isomorphic to the Borel $\sigma$-algebra of a \emph{Polish space}, that is, a complete separable metric space. An equivalence relation $E$ on a standard Borel space $Z$ is said to be \emph{Borel} if $E \subseteq Z \times Z$ is Borel, and is said to be \emph{countable} if all equivalence classes of $E$ are countable. We denote the $E$-equivalence class of an element $z \in Z$ by $[z]_E$. Recall that, given equivalence relations $E_1, E_2$ defined on standard Borel spaces $Z_1, Z_2$ respectively we say that \emph{$E_1$ is reducible to $E_2$} if there exists a Borel map $r \colon Z_1 \to Z_2$ such that for all $x,y \in Z_1$, we have $(x,y) \in E_1$ if and only if $(r(x), r(y)) \in E_2$, and that $E_1$ is \emph{bireducible} to $E_2$ if $E_1$ is reducible to $E_2$ and viceversa.

Studying the bireducibility class of the conjugacy relation for different classes of topological or measurable group actions is a question that has already spurred much research, see the monographs \cite{Hjorth2000, KB}. In our context we are only interested in the following two bireducibility types. As was stated in Section \ref{section:introduction}, a Borel equivalence relation $E$ on $Z$ is \emph{smooth} if there exists a Borel map $Z \to \R$ whose fibers are the $E$-classes. This condition is invariant under bireducibility, and it is equivalent to saying that the quotient measurable space $Z/E$ is standard. A larger class of Borel equivalence relations are the \emph{essentially hyperfinite} ones, which can be defined as the ones bireducible to a countable Borel equivalence relation $\widetilde{E}$ that is the orbit equivalence relation of a Borel action of the integers $\Z$ on a standard Borel space $Z$ \cite{SlamanSteel1988}.

A distinguished example of a non-smooth hyperfinite equivalence relation is $\cE_0$, the relation on $\{0,1\}^\N$ where $\left((x_n)_{n \geq 0}, (y_n)_{n \geq 0}\right) \in \cE_0$ if and only if there is an $m \in \N$ with $x_n = y_n$ for all $n \geq m$. A result of Dougherty-Jackson-Kechris \cite{DoughertyJacksonKechris1994} states that $\cE_0$ is the unique non-smooth and essentially hyperfinite Borel equivalence relation up to bireducibility. Moreover, $\cE_0$ is reducible to \emph{any} non-smooth (and not necessarily essentially hyperfinite) Borel equivalence relation \cite{HarringtonKechrisLouveau1990}. We will make use of this theorem, called the \emph{Glimm-Effros dichotomy}, in the special case when the Borel equivalence relation is $F_\sigma$ (that is, a countable union of closed sets) and is generated by the action of a Polish group on a Polish space.

\begin{teo}[{\cite{Effros1981}, see \cite[Theorem 3.4.2]{KB}}] \label{teo:glimmEffrosIntro}
Let $\Gamma$ be a Polish group and $Y$ a Polish space equipped with a continuous action of $\Gamma$. Let $E \subseteq Y \times Y$ be the orbit equivalence relation of the action, and suppose that $E$ is $F_\sigma$. Then either $E$ is smooth or $\cE_0$ is reducible to  $E$.

Moreover, the first alternative holds if and only if for every $y \in Y$, the map
\[
	\gamma \mathrm{Stab}_\Gamma(y) \in \Gamma/\mathrm{Stab}_\Gamma(y) \mapsto \gamma.y \in \mathrm{Orb}_\Gamma(y)
\]
is a homeomorphism.
\end{teo}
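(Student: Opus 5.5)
The plan is to route both implications through the topology of individual orbits, using Effros' lemma: for a continuous action of a Polish group $\Gamma$ on a Polish space $Y$, the following are equivalent for $y\in Y$:
\begin{enumerate}
\item the orbit map $\gamma\,\mathrm{Stab}_\Gamma(y)\mapsto \gamma.y$ is a homeomorphism;
\item $\mathrm{Orb}_\Gamma(y)$ is $G_\delta$ in $Y$;
\item $\mathrm{Orb}_\Gamma(y)$ is non-meager in its closure.
\end{enumerate}
I would prove this first. Implication (i)$\Rightarrow$(ii) holds because $\Gamma/\mathrm{Stab}_\Gamma(y)$ is Polish, and a Polish subspace of $Y$ is $G_\delta$. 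Implication (ii)$\Rightarrow$(iii) is the Baire category theorem in the closure. For (iii)$\Rightarrow$(i), note that $\mathrm{Orb}_\Gamma(y)$ is analytic, hence has the Baire property. Fix a neighbourhood $V$ of the identity and a countable cover $\Gamma=\bigcup_n \gamma_n W$, where $W$ is a symmetric neighbourhood with $W^2\subseteq V$. Non-meagerness forces some $\gamma_n W.y$, and then $W.y$ itself, to be non-meager in an open piece of the closure. A standard Pettis-type argument then shows that $V.y$ is a neighbourhood of $y$ in the orbit, so the orbit map is open.

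Next comes the smooth alternative. Assume every orbit map is a homeomorphism, so every orbit is $G_\delta$. I would show $E$ is smooth by producing a Borel selector, or equivalently by showing that $Y/E$ is countably separated. The tool is Effros' Borel structure: send $y\mapsto \overline{\mathrm{Orb}_\Gamma(y)}\in F(Y)$, which is Borel into the Effros Borel space of closed sets. Since each orbit is $G_\delta$ and dense in its closure, two points with the same orbit closure lie in dense $G_\delta$ subsets of a common closed set. Both orbits are then comeager in that set, so they intersect and coincide. Hence this map reduces $E$ to equality, and $E$ is smooth.

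Conversely, if $E$ is smooth then each class is Borel and in fact closed in a finer Polish topology. Combined with $E$ being $F_\sigma$, each class is $F_\sigma$ and Baire in itself. I would then argue that it is non-meager in its closure: otherwise the second part below yields $\cE_0\le E$, contradicting smoothness, since $\cE_0$ is not smooth. Thus smoothness is equivalent to all orbit maps being homeomorphisms.

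For the other alternative, suppose some orbit $\mathrm{Orb}_\Gamma(y)$ is not $G_\delta$. By the lemma it is meager in $Z:=\overline{\mathrm{Orb}_\Gamma(y)}$, which is a $\Gamma$-invariant Polish space. The points of $Z$ with dense orbit form a dense $G_\delta$ set $Z_0$. By the hypothesis that $E$ is $F_\sigma$, every orbit has the Baire property. An orbit in $Z_0$ that were non-meager would be comeager by a $0$--$1$ topological argument, and hence would meet $\mathrm{Orb}_\Gamma(y)$, a contradiction. So on $Z_0$ the relation is generically ergodic with all classes meager.

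The main obstacle is the final step: constructing a continuous injection $c\colon 2^\N\to Z_0$ reducing $\cE_0$ to $E$. I would follow the Becker--Kechris / Harrington--Kechris--Louveau construction. One builds a Cantor scheme of open sets $U_s$, $s\in 2^{<\N}$, together with group elements $g_s$ close to the identity, such that:
\begin{itemize}
\item $g_{s0}$ and $g_{s1}$ translate $U_{s0}$ onto (a piece of) $U_{s1}$ compatibly along the tree, which gives that $\cE_0$-equivalent sequences have $E$-equivalent images;
\item at stage $n$ one avoids the $n$-th closed set of a decomposition $E=\bigcup_n F_n$ with $F_n$ nowhere dense in $Z_0\times Z_0$ (this uses that the classes are meager and Kuratowski--Ulam), which guarantees that $\cE_0$-inequivalent sequences have $E$-inequivalent images.
\end{itemize}
Keeping the group elements controlled is where the work lies, so that the limit compositions converge in $\Gamma$ by completeness of a left-invariant metric on small products.
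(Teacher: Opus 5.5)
The paper does not prove this statement; it quotes it from Effros. Your architecture is the standard one:
\begin{itemize}
\item Effros' lemma, applied orbit by orbit;
\item the map $y\mapsto\overline{\mathrm{Orb}_\Gamma(y)}$ into the Effros Borel space of closed sets, for the smooth side;
\item a Harrington--Kechris--Louveau type Cantor scheme on a generically ergodic piece, for the other side.
\end{itemize}
The lemma and the smooth implication are fine. In the converse, drop the sentence about a finer Polish topology. Being closed in some finer Polish topology and $F_\sigma$ in the original one gives no category information; for instance $\Q\subseteq\R$ is $F_\sigma$ but not Baire in itself. The converse follows directly from your last part, since $\cE_0$ is not smooth.

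\textbf{The gap.} It sits exactly where the hypothesis on $E$ must be used. You write that a non-meager orbit in $Z_0$ ``would be comeager, and hence would meet $\mathrm{Orb}_\Gamma(y)$''. This is a non sequitur, because a comeager set need not meet a meager one. Without the $F_\sigma$ hypothesis the conclusion is actually false. Take $S_\infty$ acting on graphs on $\N$, and a graph that contains every finite graph but is not the random graph. It has a dense meager orbit, while the random graph's orbit is comeager in the same closure.

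Your stated use of $F_\sigma$ (``every orbit has the Baire property'') does no work, since orbits are analytic anyway. The correct argument runs as follows. Each orbit is a section of $E$, hence $F_\sigma$. A non-meager $F_\sigma$ subset of $Z$ has a closed piece with non-empty interior in $Z$. It therefore meets the dense orbit $\mathrm{Orb}_\Gamma(y)$ and so equals it, contradicting meagerness. Hence every orbit in $Z$ is meager.

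\textbf{The Cantor scheme.} It is underspecified at the critical point. Sibling sets $U_{s0},U_{s1}$ are translates of each other, so $U_{s0}\times U_{s1}$ always contains $E$-pairs. Shrinking alone cannot make it avoid a closed piece $F\subseteq E$; what works is the choice of the translating element.
\begin{itemize}
\item For $x\in Z_0$, the section $F_x$ is a closed subset of the meager orbit of $x$, hence nowhere dense in $Z$.
\item By density of $\mathrm{Orb}_\Gamma(x)$, you can pick $k_n$ with $k_nx$ near $x$ and $(x,k_nx)\notin F$, then shrink around $x$.
\item At stage $n$ you must avoid $\bigcup_{u,u'\in 2^n}(P_u^{-1}\times P_{u'}^{-1})(F_0\cup\dots\cup F_n)$, with the $F_j$ symmetrized. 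This set is still closed and contained in $E$.
\item Avoiding only $F_n$ is not enough. Sequences that differ infinitely often but agree at position $j$ would never be forced out of $F_j$.
\end{itemize}

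\textbf{Convergence.} A Polish group need not admit a complete left-invariant metric ($S_\infty$ does not), so do not make products converge in $\Gamma$. Instead set $U_u=P_uV_{|u|}$ with $P_u=k_0^{u(0)}\cdots k_{n-1}^{u(n-1)}$, and let $c(\alpha)$ be the unique point of $\bigcap_n\overline{U_{\alpha|n}}$. Then $c(u\gamma)=P_uw$ and $c(u'\gamma)=P_{u'}w$ for the same $w$, which gives that $\cE_0$-equivalent sequences have $E$-equivalent images.
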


\begin{ejm} \label{ejm:F2}
        We restate an example from \cite[Section 14.4]{BMRT} showing that $\cE_0$ Borel reduces to conjugacy among minimal (even faithful) actions of the free group $F_2$ on the line. Consider $\widetilde{g},\widetilde{h}$ two homeomorphisms of $\R/\Z$ with $\mathrm{Fix}(\widetilde{g}) = \{0, 1/2\}$, $\mathrm{Fix}(\widetilde{h}) = \{1/4, 3/4\}$ such that $\langle \widetilde{g}, \widetilde{h} \rangle$ acts minimally on $\R/\Z$. Let $g,h$ be lifts of $\widetilde{g}, \widetilde{h}$ to the line with fixed points $\mathrm{Fix}(g) = \frac{1}{2}\Z$ and $\mathrm{Fix}(h) = 1/4 + \frac{1}{2}\Z$. Given a word $\omega = (\omega_n)_{n \in \Z} \in \{\pm 1\}^\Z$ define a homeomorphism $g_\omega$ by $g_\omega(x) = g^{\omega_n}(x)$ for every $x \in [n/2, (n+1)/2], \, n \in \Z$, and let $\varphi_\omega$ be the action of $F_2$ defined by $g_\omega, h$. Then the orbits of $\varphi_\omega$ are the same of those of $\langle g, h \rangle$, so $\varphi_\omega$ is minimal. Two such actions $\varphi_\omega,\, \varphi_{\omega'}$ are conjugate if and only if $\omega, \omega'$ belong to the same orbit of the bilateral shift: indeed, any conjugacy between $\varphi_\omega, \, \varphi_{\omega'}$ preserves $\frac{1}{2}\Z$, and for every $n \in \Z$ the condition $\omega_n = 1$ can be recognized from the sign of $g_\omega - \mathrm{id}_\R$ on $[n/2, (n+1)/2]$.
\end{ejm}

We will also need the following strong uniformization theorem. Recall that a $K_\sigma$ set in a Polish space is a countable union of compact sets.

\begin{teo}[{\cite{Arsenin1940, Kunugui1940}, see \cite[(18.18)]{Kechris1995}}] \label{teo:ksigma}
Let $Z$ be a standard Borel space, $Y$ a Polish space and $B \subseteq Y \times Z$ a Borel subset. Denote by $\pi \colon Y \times Z \to Y$ the projection onto the first coordinate. Suppose that the sections $\pi^{-1}(y) \cap B$ are $K_\sigma$ for every $y \in Y$. Then $\pi(B)$ is a Borel subset, and there exists a Borel function $\zeta \colon \pi(B) \to B$ such that $\pi \circ \zeta = \mathrm{id}_{\pi(B)}$.
\end{teo}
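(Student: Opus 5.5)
This is the classical Arsenin--Kunugui uniformization theorem, so the plan is to follow the standard route of \cite[\S 18.D, \S 35.H]{Kechris1995}. Throughout, fix a Polish topology on $Z$ compatible with its Borel structure. Then regard $B$ as a Borel subset of the Polish space $Y \times Z$, and write $B_y = \{z : (y,z) \in B\}$ for its sections. The strategy has two stages. First treat the case where every section $B_y$ is compact. Then reduce the $K_\sigma$ case to it by decomposing $B$ into countably many Borel pieces, each with compact sections.

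\emph{Compact sections.} Assume each $B_y$ is compact.
\begin{enumerate}
\item Show that for every open $U \subseteq Z$ the set $\{y : B_y \cap U \neq \emptyset\}$ is Borel. The argument would be the usual separation argument of Novikov type. Write $U$ as a countable increasing union of closed sets $F_k$. By compactness, $B_y \cap U = \emptyset$ holds if and only if $B_y \cap F_k = \emptyset$ for all $k$. The sets $\pi(B \cap (Y \times F_k))$ and their complements are then separated by Borel sets, using Lusin separation applied to a Borel set with compact sections.
\item Deduce that $\pi(B) = \{y : B_y \neq \emptyset\}$ is Borel.
\item Deduce that $y \mapsto B_y$ is a Borel map from $\pi(B)$ to the hyperspace $K(Z)$ with the Vietoris topology.
\item Compose with a Borel selector $s\colon K(Z)\setminus\{\emptyset\} \to Z$ satisfying $s(K) \in K$, which exists by Kuratowski--Ryll-Nardzewski. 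This gives a Borel uniformization $y \mapsto (y, s(B_y))$.
\end{enumerate}

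\emph{Reduction from $K_\sigma$ to compact sections.} The key step is Saint Raymond's theorem: a Borel set $B \subseteq Y \times Z$ all of whose sections are $K_\sigma$ can be written as $B = \bigcup_n B_n$, where each $B_n$ is Borel with compact sections. Granting this, the conclusion follows quickly.
\begin{enumerate}
\item By the compact case, each $\pi(B_n)$ is Borel, and there is a Borel uniformization $\zeta_n$ of $B_n$ on $\pi(B_n)$.
\item Hence $\pi(B) = \bigcup_n \pi(B_n)$ is Borel.
\item Define $\zeta(y) = \zeta_{n(y)}(y)$, where $n(y)$ is the least $n$ with $y \in \pi(B_n)$. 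Then $\zeta$ is Borel and satisfies $\pi \circ \zeta = \mathrm{id}_{\pi(B)}$.
\end{enumerate}

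I expect the decomposition into compact-section pieces to be the main obstacle. To prove it I would first compactify $Z$, embedding it into a compact metrizable $\widehat Z$. A section $B_y$ is $K_\sigma$ exactly when $B_y$ is not Hurewicz-hard, meaning no closed copy of the Baire space sits inside it in the appropriate sense. I would then use a Hurewicz-type game, or the Louveau--Saint Raymond separation technique, run uniformly in the parameter $y$. The aim is to produce Borel-in-$y$ strategies that enumerate compact sets $K_{n}(y) \subseteq B_y$ covering $B_y$. The delicate point is measurability of this uniform choice of the covering compacta, and this is where the effective or descriptive-set-theoretic machinery of \cite[(35.46)]{Kechris1995} would be invoked rather than reproved.
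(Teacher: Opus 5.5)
The paper does not prove this statement at all (it is quoted from \cite[(18.18)]{Kechris1995}), and your outline is the standard proof recorded in that reference, so the approach is the same: Saint Raymond's decomposition of $B$ into countably many Borel pieces with compact sections, then the compact-section case via Borel measurability of $y\mapsto B_y\in K(Z)$ and a Borel selector, then gluing by least index. The one step you actually spell out needs repair, for three reasons: $B_y\cap U=\emptyset\iff B_y\cap F_k=\emptyset$ for all $k$ holds without any compactness; Lusin separation cannot separate the analytic set $\pi(B\cap(Y\times F_k))$ from its merely co-analytic complement; and so your sketch never identifies where compactness of the sections enters, which it must, since every analytic set is the projection of a closed subset of $Y\times\N^\N$. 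What is needed instead is the Kunugui--Novikov theorem, proved via Novikov's separation theorem for countably many analytic sets: for a compactification $Z\subseteq\widehat Z$ and a countable basis $(V_n)$ of $\widehat Z$, it writes the Borel set $(Y\times\widehat Z)\setminus B$, whose sections $\widehat Z\setminus B_y$ are open because $B_y$ is compact, as $\bigcup_n C_n\times V_n$ with $C_n$ Borel, so that for closed $F\subseteq Z$ one has $B_y\cap F=\emptyset$ if and only if finitely many $V_n$ with $y\in C_n$ cover the compact set $\overline{F}^{\widehat Z}$, a Borel condition on $y$.
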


\subsection{The space of harmonic actions on the line} \label{sec:harmonic}
Let $G$ be a finitely generated group. Equip $\mathrm{Rep}_\mathrm{irr}(G)$ with the topology coming from the inclusion $\mathrm{Rep}_\mathrm{irr}(G) \subseteq \mathrm{Homeo}_0(\R)^G$ and define the \emph{translation flow} $\R \curvearrowright^\Psi \mathrm{Rep}_\mathrm{irr}(G,\R)$ by
\[
        \Psi^t(\varphi)(g) = T_{t} \circ \varphi(g)\circ T_{-t}
\]
for all $g \in G$ and $\varphi \in \mathrm{Rep}_\mathrm{irr}(G)$, where $T_t\colon s\mapsto s+t$ is the translation. Notice that $\Psi$ defines a continuous action of $\R$.

We say that two actions $\varphi_1, \varphi_2 \in \mathrm{Rep}_\mathrm{irr}(G)$ are \emph{pointed semiconjugate} if there is an action $\eta \in \mathrm{Rep}_\mathrm{irr}(G)$ that is minimal or cyclic, and semiconjugacies $h_i \colon \R \to \R$ between $\varphi_i$ and $\eta$ for $i = 1,2$ such that $h_1(0) = h_2(0)$.

The following result is a consequence of the fundamental work of Deroin, Kleptsyn, Navas and Parwani \cite{DKNP}.
\begin{teo}[{\cite[Theorem 8.5]{DKNP}, \cite[Section 14.2]{BMRT}}] \label{teo:existenciaDeroin}
        Let $G$ be a finitely generated group. Then there exists a compact $\Psi$-invariant subspace $\mathrm{Harm}(G) \subseteq \mathrm{Rep}_\mathrm{irr}(G)$ composed of minimal and cyclic actions, and a continuous retraction $r \colon \mathrm{Rep}_\mathrm{irr}(G) \to \mathrm{Harm}(G)$ such that two actions $\varphi_1, \varphi_2 \in \mathrm{Rep}_\mathrm{irr}(G)$ are pointed semiconjugate if and only if $r(\varphi_1) = r(\varphi_2)$. In particular, $\varphi_1, \varphi_2$ are semiconjugate if and only if $r(\varphi_1), r(\varphi_2)$ are in the same $\Psi$-orbit.
\end{teo}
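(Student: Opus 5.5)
The plan is to follow the random walk construction of \cite{DKNP}. Fix a symmetric probability measure $\mu$ on $G$ whose support is a finite symmetric generating set $S$. The first step is an existence and uniqueness statement for stationary measures. Let $\eta$ be an irreducible action that is minimal or cyclic. I would show that $\eta$ admits a $\mu$-stationary Radon measure $\nu$ on $\R$, i.e. $\sum_g \mu(g)\,\eta(g)_*\nu=\nu$, and that $\nu$ is unique up to a positive scalar.
\begin{itemize}
\item For type I actions this is the invariant (translation) measure.
\item For cyclic actions it is the counting measure on the discrete orbit, suitably spread out.
\item For types II and III, existence comes from a Lyapunov-function / recurrence argument for the random walk on $\R$ (symmetry of $\mu$ prevents escape to infinity). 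Uniqueness comes from minimality together with a Choquet--Deny type argument for the associated harmonic functions.
\end{itemize}
In the minimal case $\nu$ has full support and no atoms. Hence $F(x)=\nu([0,x])$ (signed) is a homeomorphism, and $F\eta F^{-1}$ is an action for which Lebesgue measure is $\mu$-stationary.

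Next I would define $\mathrm{Harm}(G)$ as the set of minimal or cyclic actions $\varphi$ for which Lebesgue measure is $\mu$-stationary, with the normalisation
\[
\sum_{g}\mu(g)\,|\varphi(g)(x)-x|=1 .
\]
By stationarity and symmetry of $\mu$ the left-hand side is independent of $x$, which fixes the free scalar; cyclic actions are normalised analogously by translations. This set is clearly $\Psi$-invariant. The retraction $r$ is defined as follows. Given $\varphi$, take the canonical minimal or cyclic model $\eta$ together with a semiconjugacy $h$, obtained by collapsing the gaps of the minimal set. Then pass to harmonic coordinates with basepoint $h(0)\mapsto 0$ and rescale. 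Uniqueness of the minimal model up to conjugacy, combined with uniqueness of $\nu$ up to scaling, makes $r(\varphi)$ depend only on the pointed semiconjugacy class. Conversely, equal images give pointed semiconjugacies through the common harmonic action. The final claim follows because changing the basepoint of the pointed semiconjugacy amounts to acting by $\Psi$.

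The main obstacle is compactness of $\mathrm{Harm}(G)$ together with continuity of $r$. For compactness, the normalisation bounds the displacement of each generator $s$ by $1/\mu(s)$ uniformly in $x$, and stationarity of Lebesgue gives uniform control on the moduli of continuity. Arzelà--Ascoli then gives relative compactness. The delicate point is closedness: a limit of harmonic actions must still be irreducible and minimal or cyclic. I would try to show that a limit with a fixed point or a proper minimal set would violate the constant-displacement identity or the full support of Lebesgue measure. For continuity of $r$, I would argue that along $\varphi_n\to\varphi$ any limit point of $r(\varphi_n)$ is a harmonic action pointed-semiconjugate to $\varphi$. By uniqueness it must equal $r(\varphi)$, and compactness then upgrades this to convergence. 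I expect this identification of limit points, which amounts to continuity of stationary measures under perturbation, to be the hardest step.
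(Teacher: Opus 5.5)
Your overall strategy (stationary measures as in \cite{DKNP}, harmonic coordinates, $\mathrm{Harm}(G)$ as normalised Lebesgue-stationary minimal or cyclic actions, continuity of $r$ via limit points) follows the sources the paper relies on. The paper gives no argument of its own: everything except continuity of $r$ is quoted from \cite[Theorem 8.5]{DKNP}, and continuity from \cite[Section 14.2]{BMRT}. However, the normalisation step is wrong, and your compactness argument rests on it. Stationarity of Lebesgue measure and symmetry of $\mu$ give, for $a<b$,
\[
b-a=\sum_{g}\mu(g)\bigl(\varphi(g^{-1})(b)-\varphi(g^{-1})(a)\bigr)=\sum_{g}\mu(g)\bigl(\varphi(g)(b)-\varphi(g)(a)\bigr),
\]
so only the \emph{signed} drift $\sum_g\mu(g)(\varphi(g)(x)-x)$ is independent of $x$; this is what underlies the martingale property of harmonic actions. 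Nothing controls the mean \emph{absolute} displacement $D_\varphi(x)=\sum_g\mu(g)|\varphi(g)(x)-x|$, and it is not constant in general. Take the minimal action of $F_2=\langle a,b\rangle$ on $\R$ lifted from the circle action generated by a Diophantine rotation $R_\theta$ and a small perturbation $\beta$ of the identity with fixed points, with $\mu$ uniform on $\{a^{\pm1},b^{\pm1}\}$. A first-order computation shows the stationary measure is Lebesgue up to second order, so in harmonic coordinates $D_\varphi(x)=\tfrac12\theta+\tfrac12|\beta(x)-x|$ up to second order. The harmonic representative of a minimal action is unique up to conjugation by $x\mapsto\lambda x+t$, which only rescales and shifts $D_\varphi$. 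So such a class has no representative in your $\mathrm{Harm}(G)$, and $r$ is undefined on it. The same error removes the uniform bound $|\varphi(s)(x)-x|\le 1/\mu(s)$ that feeds Arzel\`a--Ascoli. A translation-invariant normalisation such as $\sup_x D_\varphi(x)=1$ would fix the scale. But you would then have to show that this supremum is finite for every harmonic action and does not drop under limits. This uniform displacement control is the genuinely non-trivial input; in the paper it appears only \emph{a posteriori}, as a consequence of compactness, in Remark~\ref{rem:bounded-displacement}. Your closedness argument also leaned on the false identity. Moreover, Lebesgue stationarity is a closed condition that the trivial action also satisfies, so full support of Lebesgue measure cannot by itself exclude degenerate limits or limits with an exceptional minimal set.

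There are three further points. First, for cyclic actions stationary Radon measures are not unique up to scaling. By Choquet--Deny on $\Z$ they are exactly the measures invariant under the generating homeomorphism, i.e.\ arbitrary finite measures on a fundamental domain. So cyclic actions must be treated separately, which is harmless since their representatives are $\Psi$-fixed actions by translations. Second, a Choquet--Deny argument handles type~I, but for minimal type~II and~III actions uniqueness of the stationary measure up to scaling is one of the main results of \cite{DKNP}, not a Liouville-type statement. Third, continuity of $r$, which you leave open, is exactly what the paper attributes to \cite[Section 14.2]{BMRT}. In your limit-point argument, the normalised semiconjugacies $h_n$ from $\varphi_n$ to $r(\varphi_n)$ only subconverge pointwise (Helly), and equivariance passes to the limit only at continuity points. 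You must also rule out a constant limit. So identifying every limit point of $r(\varphi_n)$ with $r(\varphi)$ still needs a real argument.
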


The statement follows entirely from \cite[Theorem 8.5]{DKNP}, except for the continuity of the map $r$, which is proven in \cite[Section 14.2]{BMRT}. A subspace $\mathrm{Harm}(G)$ satisfying the conclusion of the theorem is not unique: in fact the construction in \cite{DKNP} depends on the choice of symmetric finitely supported probability measure $\mu$ on $G$ (indeed  $\mathrm{Harm}(G)$ is then realised as the space of normalised $\mu$-harmonic action). However the continuity of the map $r$ implies that any two subspaces satisfying the conclusion of Theorem \ref{teo:existenciaDeroin} are homeomorphic through a homeomorphism sending $\Psi$-orbits to $\Psi$-orbits (i.e. a \emph{flow equivalence}). In other words, the space with flow $(\mathrm{Harm}(G), \Psi)$ is uniquely defined up to flow equivalence \cite[Theorem 3.7]{BMRTrealisation}. With this in mind, we refer to the space $(\mathrm{Harm}(G), \Psi)$ as \emph{the space of harmonic actions}, or the \emph{harmonic space}, of $G$.

We also recall  that the classification of actions into type I, II, III from Theorem \ref{teo:tits} reads in the space $(\mathrm{Harm}(G), \Psi)$ as follows.

\begin{prop}[{see \cite[Proposition 3.10]{BMRTrealisation}}]
Let $G$ be a finitely generated group, and $\varphi\in\mathrm{Rep}_\mathrm{irr}(G)$. Then:
\begin{itemize}
    \item $\varphi$ is of type I if and only if $r(\varphi)
    \in \mathrm{Harm}(G)$ is a fixed point of the flow $\Psi$. The set of fixed points of $\Psi$ consists of actions by translations, and is homeomorphic to the character sphere $\mathbb{S}^{b_1(G)-1}$ , with  $b_1(G)=\dim(\mathrm{Hom}(G, \R))$   (and $\mathbb{S}^{-1} = \varnothing$).
    
    \item $\varphi$ is of type II if and only if $r(\varphi)\in \mathrm{Harm}(G)$ is a periodic non-fixed point of $\Psi$. 
    
    \item $\varphi$ is of type III if and only if $r(\varphi)\in \mathrm{Harm}(G)$ is a non-periodic point of $\Psi$.

\end{itemize}

\end{prop}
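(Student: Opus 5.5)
The plan is to read the type of $\varphi$ off the $\Psi$-stabilizer of $\psi := r(\varphi)$.

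\textbf{Reduction to $\psi$.} By Theorem \ref{teo:existenciaDeroin}, $\psi$ is pointed semiconjugate, hence semiconjugate, to $\varphi$. The type in Theorem \ref{teo:tits} is a semiconjugacy invariant, since semiconjugacy preserves proximality and local proximality on the minimal set. So we may replace $\varphi$ by $\psi$, which is minimal or cyclic.

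\textbf{Stabilizer of $\psi$.} By definition of $\Psi$,
\[
\Psi^t(\psi)=\psi \iff T_t\in \mathrm{Cent}_\psi(G).
\]
Thus the stabilizer $S_\psi=\{t : T_t\in \mathrm{Cent}_\psi(G)\}$ is a closed subgroup of $\R$. It is therefore $\R$, $c\Z$ with $c>0$, or $\{0\}$.

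\textbf{A realisation trick.} The key tool is the following. Suppose $c\in \mathrm{Cent}_\psi(G)$ and $t$ satisfy $c(-t)=0$. Then $h_1=\mathrm{id}$ is a semiconjugacy from $\psi$ to $\psi$, and $h_2=c\circ T_{-t}$ is a semiconjugacy from $\Psi^t(\psi)$ to $\psi$. They satisfy $h_1(0)=h_2(0)=0$, so $\Psi^t(\psi)$ and $\psi$ are pointed semiconjugate. Since both lie in $\mathrm{Harm}(G)$ and $r$ is a retraction, Theorem \ref{teo:existenciaDeroin} gives $\Psi^t(\psi)=r(\Psi^t(\psi))=r(\psi)=\psi$. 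In summary:
\[
t\in S_\psi \quad\text{whenever}\quad -t\in \mathrm{Orb}_{\mathrm{Cent}_\psi(G)}(0).
\]

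\textbf{The three cases.}
\begin{itemize}
\item \emph{Type I.} If $\psi$ is of type I, it is either minimal with centralizer conjugate to the translation group, or cyclic. A cyclic action is generated by a fixed-point-free homeomorphism, which is conjugate to $T_1$, so its centralizer is again conjugate to a group containing all translations. In both cases $\mathrm{Cent}_\psi(G)$ acts transitively on $\R$. The trick then gives $S_\psi=\R$, so $\psi$ is a fixed point. Conversely, if $S_\psi=\R$, every $\psi(g)$ commutes with all translations, hence is a translation, and $\psi$ is of type I. This also shows that the fixed points of $\Psi$ are exactly the actions by translations.
\item \emph{Type II.} Here $\psi$ is minimal with $\mathrm{Cent}_\psi(G)=\langle f\rangle$ for some fixed-point-free $f$. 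Taking $t=-f(0)\neq 0$ in the trick gives $t\in S_\psi$, so $\psi$ is periodic. It is not fixed, because fixed points are of type I.
\item \emph{Type III.} The centralizer is trivial, so $S_\psi=\{0\}$ and $\psi$ is non-periodic.
\end{itemize}
Since the three types are exhaustive and mutually exclusive, each implication becomes an equivalence.

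\textbf{The character sphere.} A fixed point is an action $g\mapsto T_{\chi(g)}$ for some $\chi\in \mathrm{Hom}(G,\R)\setminus\{0\}$. Two such actions are pointed semiconjugate exactly when the characters are positive multiples of each other, since the semiconjugacy must be $x\mapsto \lambda x$ with $\lambda>0$. Hence $\chi\mapsto r(T_\chi)$ induces a continuous bijection from $(\mathrm{Hom}(G,\R)\setminus\{0\})/\R_{>0}\cong \mathbb{S}^{b_1(G)-1}$ onto the fixed-point set. Continuity comes from the continuity of $r$. Since the sphere is compact and $\mathrm{Harm}(G)$ is Hausdorff, this bijection is a homeomorphism.

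\textbf{Main obstacle.} The delicate step is the realisation trick: it is what forces the stabilizer to be as large as the centralizer allows, and it relies on the uniqueness of representatives of pointed semiconjugacy classes in $\mathrm{Harm}(G)$. One must also check carefully that, in the cyclic case, the relevant centralizer still acts transitively.
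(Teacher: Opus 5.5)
Your argument is correct. The paper gives no proof of this proposition; it simply quotes \cite[Proposition 3.10]{BMRTrealisation}. So the comparison is with the route through a concrete model of $\mathrm{Harm}(G)$, such as the normalised $\mu$-harmonic actions of \cite{DKNP}. There the natural approach is to show directly that centralisers of harmonic minimal actions consist of translations.

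Your realisation trick needs no such model. It uses only that $r$ is a retraction whose fibres are the pointed semiconjugacy classes; continuity of $r$ enters only in the character-sphere part. Hence the classification holds for every subspace satisfying the conclusion of Theorem~\ref{teo:existenciaDeroin}, which fits the uniqueness of $(\mathrm{Harm}(G),\Psi)$ up to flow equivalence. It also recovers the translation property for free. Since $t=T_t(0)$, you have $S_\psi\subseteq \mathrm{Orb}_{\mathrm{Cent}_\psi(G)}(0)$, hence equality. So for minimal $\psi$ every $c\in\mathrm{Cent}_\psi(G)$ equals $T_{c(0)}$, because $T_{c(0)}^{-1}c$ is a centralising element fixing $0$.

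Two small points to tidy up.

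First, in the character-sphere step, the claim that the semiconjugacy must be $x\mapsto\lambda x$ is false when $\chi(G)$ is cyclic: any non-decreasing $h$ with $h(x+a)=h(x)+\lambda a$ works. Pointed semiconjugacy also passes through an intermediate $\eta$ anyway. What you actually need is that semiconjugate translation actions have positively proportional characters. This holds: a non-decreasing $h$ with $h(x+\chi(g))=h(x)+\chi'(g)$ forces $\chi(g)>0\Rightarrow\chi'(g)\ge 0$, and for two non-zero characters this sign condition gives $\chi'\in\R_{>0}\chi$.

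Second, the semiconjugacy invariance of the type deserves a line. It is immediate for type I. For (local) proximality the only non-trivial case is collapsing the gaps of an exceptional minimal set. There you use that the two endpoints of a gap are proximal: infinitely many disjoint translates of a gap start in a fixed compact set, so their lengths tend to $0$.
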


\begin{rmk} \label{rem:bounded-displacement}
Notice that for any $g \in G$ and $\varphi \in \mathrm{Harm}(G)$ the constant
\[
    \sup\{\abs{\varphi(g).x - x} : x \in \R\}
\]
is finite. Indeed, for every $g \in G$, by $\Psi$-invariance of $\mathrm{Harm}(G)$ we have
 \begin{align*}
 	\sup\{\abs{\varphi(g).x - x} : \varphi \in \mathrm{Harm}(G), \, x \in \R\} &= \sup\{\abs{\Psi^t(\varphi)(g).0} : \varphi \in \mathrm{Harm}(G),\, t \in \R\} \\
 		& = \sup\{\abs{\varphi(g).0} : \varphi \in \mathrm{Harm}(G)\},
\end{align*}
which is finite by compactness.
\end{rmk}

\subsection{The space of harmonic actions and the class $\cC$} \label{subsection:deroinborel}
Theorem \ref{teo:existenciaDeroin} implies that when the group $G$ is finitely generated, there is a Borel reduction of semiconjugacy on $\mathrm{Rep}_\mathrm{irr}(G)$ (in particular of conjugacy on $\mathrm{Rep}_\mathrm{min}(G)$ to the orbit equivalence relation of flow on a compact space $(\mathrm{Harm}(G), \Psi)$. Such a relation is essentially hyperfinite \cite{Wagh1988}, and as a consequence the previous theorem exhibits a dichotomy for finitely generated groups acting on the line: either their semiconjugacy relation is smooth or it is bireducible to $\cE_0$. In this context, Theorem \ref{teo:glimmEffrosIntro} already gives a criterion for finitely generated groups to belong to $\cC$ since the orbit equivalence relations of an $\R$-flow is always $F_\sigma$. Here, a \emph{recurrent point} of $\Psi$ is an action $\varphi \in \mathrm{Harm}(G)$ such that for every open $U \subseteq \mathrm{Harm}(G)$ the set of return times $\{t \in \R : \Psi^t(\varphi) \in U\}$ is unbounded.

\begin{prop}[{\cite[Corollary 14.4.2]{BMRT}}] \label{prop:recurrenteIntro}
        A finitely generated group $G$ belongs to $\cC$ if and only if every $\Psi$-recurrent element of $\mathrm{Harm}(G)$ is $\Psi$-periodic. If this is not the case, then the semiconjugacy relation on $\mathrm{Rep}_\mathrm{irr}(G)$ (equivalently, the conjugacy relation on the space of minimal actions of $G$) is bi-reducible to $\mathcal{E}_0$.
\end{prop}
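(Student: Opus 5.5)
We combine Theorem \ref{teo:existenciaDeroin} with the Glimm--Effros dichotomy (Theorem \ref{teo:glimmEffrosIntro}) and the essential hyperfiniteness of flow orbit relations. By Theorem \ref{teo:existenciaDeroin}, the continuous retraction $r$ reduces conjugacy on $\mathrm{Rep}_\mathrm{min}(G,\R)$ (and semiconjugacy on $\mathrm{Rep}_\mathrm{irr}(G)$) to the orbit equivalence relation $E_\Psi$ of the flow on the compact space $\mathrm{Harm}(G)$. Conversely, $E_\Psi$ reduces back to conjugacy on minimal actions after removing the cyclic actions, which are the fixed points of $\Psi$ and form a closed, smooth piece. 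Concretely, a Borel selection on the fixed-point set together with the inclusion $\mathrm{Harm}(G)\cap\mathrm{Rep}_\mathrm{min}\hookrightarrow \mathrm{Rep}_\mathrm{min}(G,\R)$ gives the reduction. Hence $G\in\cC$ if and only if $E_\Psi$ is smooth. Since $E_\Psi$ is the orbit relation of a continuous $\R$-action on a compact Polish space, it is $F_\sigma$: it is the union over $n$ of the compact sets $\{(\varphi,\Psi^t\varphi): |t|\le n\}$. So Theorem \ref{teo:glimmEffrosIntro} applies. Either $E_\Psi$ is smooth, or $\cE_0$ reduces to it; in the latter case Wagh's theorem \cite{Wagh1988} makes $E_\Psi$ essentially hyperfinite, hence bireducible to $\cE_0$ by Dougherty--Jackson--Kechris. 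This gives the second sentence, once smoothness is characterised.

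It remains to show that $E_\Psi$ is smooth if and only if every recurrent point is periodic. We use the criterion in Theorem \ref{teo:glimmEffrosIntro}: smoothness holds if and only if, for each $\varphi$, the orbit map $\R/\mathrm{Stab}(\varphi)\to \mathrm{Orb}(\varphi)$ is a homeomorphism.

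First suppose $\varphi$ is recurrent but not periodic. Then $\mathrm{Stab}(\varphi)$ is trivial, and there are $|t_n|\to\infty$ with $\Psi^{t_n}\varphi\to\varphi$. The $t_n$ diverge in $\R$ while the images converge to $\Psi^0\varphi$ inside the orbit, so the orbit map is not a homeomorphism onto its image, and $E_\Psi$ is not smooth.

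Conversely, suppose every recurrent point is periodic, and fix $\varphi$. If $\varphi$ is periodic or fixed, the orbit map is a continuous bijection from the compact circle or point, hence a homeomorphism. Otherwise $\varphi$ is non-recurrent, and the orbit map fails to be a homeomorphism only if there are $s_n\to\infty$ (or $s_n \to -\infty$) and $t\in\R$ with $\Psi^{s_n}\varphi\to\Psi^t\varphi$. Applying $\Psi^{-t}$, this gives $\Psi^{s_n-t}\varphi\to\varphi$ with $|s_n - t|\to\infty$, so $\varphi$ is recurrent, a contradiction. The definition of recurrence only requires unbounded return times, which matches exactly.

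The main obstacle is the bookkeeping in the first paragraph. We must check that passing between $\mathrm{Rep}_\mathrm{min}(G,\R)$ and $\mathrm{Harm}(G)$ preserves smoothness in both directions, in particular that the inclusion of the minimal part of $\mathrm{Harm}(G)$ is Borel and that the cyclic fixed points cause no trouble. We also must verify that pointed semiconjugacy classes correspond exactly to points, so that conjugacy corresponds exactly to $\Psi$-orbits; this is the content of Theorem \ref{teo:existenciaDeroin}.
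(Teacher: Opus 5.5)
Your proof is correct and follows the route the paper itself indicates in Subsection~\ref{subsection:deroinborel}, where the statement is cited rather than reproved: reduce via the retraction $r$ of Theorem~\ref{teo:existenciaDeroin} to the orbit relation $E_\Psi$ on the compact space $\mathrm{Harm}(G)$, note that flow orbit relations are $F_\sigma$ so the Glimm--Effros criterion of Theorem~\ref{teo:glimmEffrosIntro} applies, and invoke Wagh together with Dougherty--Jackson--Kechris for the $\cE_0$ clause. One small correction to your bookkeeping, which does not affect the conclusion: the $\Psi$-fixed points are all the actions by translations, including minimal ones with dense image, and the cyclic ones alone form a Borel but generally non-closed set; moreover $E_\Psi$ need not literally reduce to conjugacy on $\mathrm{Rep}_\mathrm{min}(G)$ (e.g.\ $G=\Z$ has no minimal actions), but what you need still holds, because removing the closed invariant set of fixed points, on which $E_\Psi$ is equality, leaves a subset of $\mathrm{Rep}_\mathrm{min}(G)$ on which the inclusion is a reduction, and this gives $G\in\cC \Leftrightarrow E_\Psi$ smooth.
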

The finite generation assumption on $G$ is crucial in this discussion. We do not know whether the last assertion in the previous proposition holds for general countable groups. In Subsection \ref{subsection:deroinFinf}, we show that Theorem \ref{teo:existenciaDeroin} is simply not true for countable groups.
 In Section \ref{section:complexity} we show that the semiconjugacy relation on $\mathrm{Rep}_\mathrm{irr}(G)$ is always essentially countable.

Proposition \ref{prop:recurrenteIntro} also makes apparent the connection between Question \ref{qn:amenable} and Witte Morris' theorem stating that every finitely generated amenable group $G$ that acts non-trivially on the line has a non-trivial homomorphism to $\R$ \cite{WitteMorris2006}. In fact, elaborating on an idea of B. Deroin \cite{deroinAlmost}, it was shown in \cite[Remark 3.11]{BMRTrealisation} that  this theorem is essentially equivalent to the following statement. Recall a that \emph{uniformly recurrent point} of $\Psi$ is a point $\varphi \in \mathrm{Harm}(G)$ such that for every open $U \subseteq \mathrm{Harm}(G)$, there exists $C>0$ such that the set of return times $\{t \in \R : \Psi^t(\varphi) \in U\}$ intersects every interval $I\subset \R$ of length $|I|\ge C$. This is equivalent to the condition that $\varphi$ belongs to a closed minimal $\Psi$-invariant subset: in particular uniformly recurrent points always exist. 
\begin{teo}[Witte-Morris' theorem revisited]\label{teo:wittemorris}
Let $G$ be a finitely generated amenable group, and $(\mathrm{Harm}(G), \Psi)$ its space of harmonic actions on the line. Then every uniformly recurrent point $\varphi\in\mathrm{Harm}(G)$ of the flow $\Psi$ is a fixed point, i.e. an action by translations.
\end{teo}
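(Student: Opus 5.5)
The plan is to argue by contradiction with an averaging argument, using amenability through an invariant probability measure on a compact space. Let $\varphi\in\mathrm{Harm}(G)$ be uniformly recurrent, and let $M\subseteq \mathrm{Harm}(G)$ be the closure of its $\Psi$-orbit, which is a minimal closed $\Psi$-invariant set. If $M$ contains a fixed point of $\Psi$, minimality forces $M=\{\varphi\}$ and we are done. So assume $\Psi$ has no fixed points on $M$. By the characterisation of types in $\mathrm{Harm}(G)$, no element of $M$ is of type I.

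\textbf{Step 1: a continuous action of $G$ on $M$.} Each $\Psi$-orbit in $M$ is a copy of $\R$: the point $x$ of the line of $\psi$ corresponds to $\Psi^{-x}(\psi)$, the same action re-based at $x$. Define
\[
g\cdot\psi=\Psi^{-\psi(g).0}(\psi).
\]
The cocycle identity for $\psi(g).0$ under the flow gives that this is an action. By Remark \ref{rem:bounded-displacement} it is continuous, and it preserves every $\Psi$-orbit. On the orbit of $\psi$, identified with $\R$, it is exactly the action $\psi$. Since $G$ is amenable and $M$ is compact, there is a $G$-invariant Borel probability measure $\nu$ on $M$.

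\textbf{Step 2: leafwise measures.} Since $\Psi$ has no fixed points on $M$, $M$ is covered by finitely many flow boxes $T_i\times(-\varepsilon,\varepsilon)$. Disintegrate $\nu$ in each box along the plaques. Because $g$ moves points only within their own $\Psi$-orbit, comparing disintegrations in overlapping boxes and under $g$ shows the following. For $\nu$-almost every $\psi$ there is a Radon measure $\mu_\psi$ on $\R$ (the leaf of $\psi$) such that
\[
\psi(g)_*\mu_\psi=c_\psi(g)\,\mu_\psi ,
\]
where $c_\psi\colon G\to\R_{>0}$ is a homomorphism. Since $\psi$ is minimal, $\mu_\psi$ has full support and no atoms.

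The main obstacle is making this gluing rigorous. Leaves may be dense, and the leaf partition is not measurable when $\psi$ is of type III, so the leafwise measure must be assembled from plaque disintegrations. I would first try to do this by working only with the countably many flow-box transition maps and the countable group $G$, discarding a $\nu$-null set.

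\textbf{Step 3: ruling out dilations.} Suppose $c_\psi(g)=\lambda>1$ for some $g$. Then $\psi(g)$ is conformal for $\mu_\psi$ and, being an expanding map for an atomless full-support measure, fixes a point $p$. Consequently
\[
\mu_\psi\bigl(\psi(g)^n[p,p+1]\bigr)=\lambda^n\mu_\psi([p,p+1]).
\]
On the other hand, by Remark \ref{rem:bounded-displacement} the interval $\psi(g)^n[p,p+1]$ has length $O(n)$. The flow-box construction over the compact set $M$ gives a uniform constant $C$ with $\mu_\psi([x,x+1])\le C$ for all $x$, after normalising $\mu_\psi$ on one unit window and using uniform recurrence of $\psi$ to compare all windows. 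Together these give linear growth, contradicting the exponential growth $\lambda^n$. Hence $c_\psi\equiv 1$, so $\mu_\psi$ is a $\psi(G)$-invariant Radon measure. Then $\psi$ is semiconjugate to an action by translations, i.e.\ of type I, which contradicts the assumption. Therefore $M$ is a fixed point, and $\varphi$ is an action by translations.
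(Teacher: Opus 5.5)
Your Step 1 is the same as the paper's argument. There too $G$ acts on $\mathrm{Harm}(G)$ along $\Psi$-orbits, the minimal set is $G$-invariant, and by amenability it carries a $G$-invariant probability measure. The paper then simply invokes \cite[Proposition 3.10]{BMRTrealisation}: every $G$-invariant probability measure on $\mathrm{Harm}(G)$ is supported on $\Psi$-fixed points. Your Steps 2--3 try to reprove that proposition, and Step 3 has a genuine gap: the uniform bound $\mu_\psi([x,x+1])\le C$ is not justified, and it carries the whole content of the statement.

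\textbf{Why the bound is unjustified and circular.} The relative normalisation of $\mu_\psi$ on different plaques of one leaf is a measurable cocycle, given by Radon--Nikodym derivatives of the transverse part of $\nu$ under holonomy. The assignment $\psi\mapsto\mu_\psi$ is only defined $\nu$-almost everywhere and has no continuity. So neither compactness of $M$ nor uniform recurrence, both purely topological, says anything about it. In fact your own computation shows that if $c_\psi\not\equiv1$, then a mass $\lambda^n\mu_\psi([p,p+1])$ sits in an interval of length $O(n)$, so unit windows carry exponentially growing mass. Hence the existence of $C$ is equivalent to $c_\psi\equiv 1$, which is exactly what you want to prove.

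A smaller point: an element scaling an atomless full-support Radon measure by $\lambda\neq1$ need not have a fixed point (take $x\mapsto x+1$ and $e^x\,dx$). In your setting it does, but only because $F(y)=\mu_\psi([0,y])$ must map onto all of $\R$. If $F(\R)\neq\R$, the affine maps preserving $F(\R)$ form an abelian group, which forces type I.

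\textbf{How to close the gap.} You need an argument that genuinely uses the dynamics. For instance, suppose $c_\psi\not\equiv1$. Then $F$ conjugates $\psi$ to a non-abelian affine action, so $\psi$ is of type III. Moreover $N=\ker c_\psi$ is a normal subgroup acting nontrivially by translations in the affine coordinate, hence of type I. Since $\psi$ lies in a minimal set without fixed points and is not periodic, there are $t_n\to\infty$ with $\Psi^{t_n}(\psi)\to\psi$. That is, the translations $T_{t_n}$ almost centralize $\psi$. Lemma~\ref{lema:noIII}, applied with $H=N$, then forces $t_n\to 0$, a contradiction.

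\textbf{Step 2 also still needs to be carried out.} The measure $\nu$ is not $\Psi$-quasi-invariant. The quasi-invariance of the transverse measure under holonomy, which you need to glue plaque conditionals along entire leaves and to make the exceptional null set saturated, must be extracted from the $G$-action.
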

\begin{proof}
The group $G$ has a natural action on $\mathrm{Harm}(G)$ preserving each $\Psi$ orbit, hence minimal closed $\Psi$-invariant subsets are also $G$-invariant. If $G$ is amenable, every closed $G$-invariant subset supports a $G$-invariant probability measure. But it is shown in \cite[Proposition 3.10]{BMRTrealisation} that all $G$-invariant probability measures are supported on fixed points. Hence the only closed minimal $\Psi$-invariant subsets are fixed points.
\end{proof}
 From Proposition \ref{prop:recurrenteIntro}, it is then apparent that Question \ref{qn:amenable}~\eqref{i-amenable} amounts to ask whether Theorem~\ref{teo:wittemorris} remains true when the word ``uniformly'' is dropped.

\section{Criteria for membership in $\cC$} \label{section:spaces}
This section is devoted to the proof of Theorem \ref{cor:criterio}, which is the main criterion used in subsequent sections to decide membership in $\cC$. Subsection \ref{subsection:spaces} introduces the spaces of irreducible actions of interest, Subsection \ref{subsection:approxConj} proves preliminary lemmas enabling the use of the Glimm-Effros dichotomy, and Subsection \ref{subsection:almostCent} proves Theorem \ref{cor:criterio}. In this section, $G$ is always a countable group.

\subsection{Spaces of actions} \label{subsection:spaces} The group $\mathrm{Homeo}_0(\R)$ is naturally identified with $\mathrm{Homeo}_0([0,1])$, and a result of R. Arens \cite[Theorems 1 \& 5]{Arens1946} implies that this identification is a homeomorphism when both groups are given the compact-open topology. Hence $\mathrm{Homeo}_0(\R)$ with the compact-open topology is a Polish group, and the inequality
\[
        \sup_{z \in [x,y]} \abs{f(z) - g(z)} \leq \max(f(y), g(y)) - \min(f(x), g(x)),
\]
valid for $x\leq y$ and $f,g\in \mathrm{Homeo}_0(\R)$, shows that this topology coincides with the topology of pointwise convergence. 

Fix $G$ a countable group and endow $\mathrm{Hom}(G, \mathrm{Homeo}_0(\R))$ with the induced topology from the product topology on $\mathrm{Homeo}_0(\R)^G$. Define:
\begin{itemize}
        \item $\mathrm{Rep}_\mathrm{cc}(G) \subseteq \mathrm{Rep}_\mathrm{irr}(G)$ the space of cocompact actions of $G$,
        \item $\mathrm{Rep}_\mathrm{min}(G), \mathrm{Rep}_\mathrm{cyc}(G) \subseteq \mathrm{Rep}_\mathrm{cc}(G)$ the space of minimal actions of $G$ and the space of cyclic actions of $G$, and
        \item $\mathrm{Rep}_\mathrm{III}(G), \mathrm{Rep}_\mathrm{cent}(G) \subseteq \mathrm{Rep}_\mathrm{min}(G)$ the space of minimal type III actions of $G$ and the space of minimal actions of $G$ with non-trivial centralizer.
\end{itemize}
The inclusions between these spaces are summarized in Figure \ref{fig:inclusions}.

\begin{figure}[h] 
\centering
\begin{tikzpicture}[scale=1]
\draw(0,0) rectangle (9.5,6)node[below left]{$\mathrm{Rep}_\mathrm{irr}(G)$};
\draw (0.5,0.5) rectangle (9,5) node[below left] {$\mathrm{Rep}_\mathrm{cc}(G)$};
\draw (1,1) rectangle (3,3);
\draw (2,2) node {$\mathrm{Rep}_\mathrm{III}(G)$};
\draw (3,1) rectangle (5,3);
\draw (4,2) node {$\mathrm{Rep}_\mathrm{cent}(G)$};
\draw (6,1) rectangle (8,3);
\draw (7,2) node {$\mathrm{Rep}_\mathrm{cyc}(G)$};
\draw (3,3) node[above] {$\mathrm{Rep}_\mathrm{min}(G)$};
\end{tikzpicture}
\caption{}
\label{fig:inclusions}
\end{figure}

By writing
\[
	\mathrm{Rep}_\mathrm{min}(G) = \bigcap_{\substack{q \in \Q, n \in \N_+, \\ m \in \Z}} \bigcup_{\substack{F \subseteq G\\ \text{ finite}} } \{\varphi \colon G \to \mathrm{Homeo}_0(\R) : \varphi(F).(q-1/n, q + 1/n) \supseteq [m,m+1]\}
\]
and
\[
	\mathrm{Rep}_\mathrm{III}(G) = \bigcap_{\substack{p,q,r,s \in \Q \\ p<q,\, r< s}} \,  \bigcup_{g \in G} \{\varphi \colon G \to \mathrm{Homeo}_0(\R) : \varphi(g).[p,q] \subseteq (r,s)\}
\]
we see that $\mathrm{Rep}_\mathrm{min}(G)$ and $\mathrm{Rep}_\mathrm{III}(G)$ are $G_\delta$ sets inside $\mathrm{Hom}(G, \mathrm{Homeo}_0(\R))$, so they are also Polish for the induced topology \cite[(3.11)]{Kechris1995}.

We consider the continuous action of $\mathrm{Homeo}_0(\R)$ on $\mathrm{Rep}_\mathrm{min}(G)$ by conjugation, defined by $(f.\varphi)(g) = f \circ \varphi(g) \circ f^{-1}$ for every $f \in \mathrm{Homeo}_0(\R)$, $\varphi \in \mathrm{Rep}_\mathrm{min}(G)$ and $g \in G$. The stabilizer $\mathrm{Stab}_{\mathrm{Homeo}_0(\R)}(\varphi)$ of an action $\varphi \in \mathrm{Rep}_\mathrm{min}(G)$ coincides with its centralizer $\mathrm{Cent}_\varphi(G)$, which is thus closed in $\mathrm{Homeo}_0(\R)$.

\subsection{Approximate conjugacies} \label{subsection:approxConj}

\begin{lema} \label{lema:preorden}
Consider minimal actions $\varphi,\psi \in \mathrm{Rep}_\mathrm{min}(G)$ such that there are sequences $(f_n)_{n \geq 0} \subseteq \mathrm{Homeo}_0(\R)$ and $(\varphi_n)_{n\geq 0} \subseteq \mathrm{Rep}_\mathrm{min}(G)$ with $\lim_{n \to \infty} \varphi_n = \varphi$ and $\lim_{n \to \infty} f_n. \varphi_n = \psi$. If there exists an $x \in \R$ such that $(f_n^{-1}(x))_{n \geq 0}$ admits a limit point $y \in \R$, then there exists $f \in \mathrm{Homeo}_0(\R)$ with $f.\varphi = \psi$ and $f(y) = x$.
\end{lema}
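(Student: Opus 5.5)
Set $h_n = f_n^{-1}$, so that $h_n.\psi_n = \varphi_n$ where $\psi_n := f_n.\varphi_n \to \psi$. Pass to a subsequence with $h_n(x) \to y$. The plan is to show that $(h_n)$ subconverges, pointwise on a dense set, to a non-decreasing map $h$ with $h(x)=y$ and $h\circ\psi(g) = \varphi(g)\circ h$. Then we show that $h$ is a homeomorphism, and set $f = h^{-1}$. This gives $f.\varphi=\psi$ and $f(y)=x$.

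\textbf{Step 1: finiteness of limits on the $\psi$-orbit of $x$.} For $g\in G$,
\[
h_n(\psi_n(g).x) = \varphi_n(g).h_n(x) \to \varphi(g).y,
\]
because $\varphi_n(g)\to\varphi(g)$ uniformly on compacts and $h_n(x)\to y$. Since $\psi_n(g).x\to\psi(g).x$, monotonicity of the $h_n$ bounds $h_n(z)$ for every $z$ lying between two points of the orbit $\psi(G).x$. Minimality of $\psi$ makes this orbit dense and unbounded in both directions. Hence $(h_n(z))_n$ is bounded for every $z\in\R$.

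\textbf{Step 2: limit map and equivariance.} By a diagonal argument, pass to a further subsequence along which $h_n(q)$ converges for all $q$ in a countable dense set $D$. Define $h$ on $D$ as this limit and extend it to $\R$ by right limits. The result is non-decreasing with $h(x)=y$, using $x \in D$ after adding it. The equivariance $h\circ\psi(g)=\varphi(g)\circ h$ should come from passing to the limit in $h_n\circ\psi_n(g)=\varphi_n(g)\circ h_n$. The delicate point is that $\psi_n(g)\neq\psi(g)$, so evaluating $h_n$ at $\psi_n(g).z$ rather than $\psi(g).z$ requires a monotone sandwich between points of $D$. This identity therefore only holds at continuity points of $h$, and off these I would verify it using one-sided limits. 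Cleaner still, I would avoid this issue by working with $h$ only up to the ambiguity at its countably many jumps. The outcome is that $h$ is a semiconjugacy from $\psi$ to $\varphi$ in the sense of the preliminaries, provided $h$ is non-constant.

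\textbf{Step 3: $h$ is a homeomorphism.} This is the main obstacle. The image of $h$ contains the orbit $\varphi(G).y$, since $h(\psi(g).x)=\varphi(g).y$. This orbit is dense and unbounded by minimality of $\varphi$, so $h$ is non-constant and unbounded in both directions. The set of points where $h$ is locally constant is open and $\psi$-invariant. It is also proper, because $h$ is non-constant. If it were non-empty, the complement of its closure-saturation would be a proper closed $\psi$-invariant set, contradicting minimality of $\psi$. Hence $h$ is strictly increasing. Likewise, the closure of the image of $h$ is a closed $\varphi$-invariant set, so it equals $\R$. The set of jumps of $h$ consists of gaps in the image. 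The union of these gaps is a $\varphi$-invariant open set whose complement contains the dense set $h(\R)$. By minimality of $\varphi$ there are no gaps, so $h$ is continuous. Being a strictly increasing, continuous, surjective map, $h$ is a homeomorphism. This is the remark in the preliminaries that semiconjugacies between minimal actions are homeomorphisms.

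Finally, equivariance now holds everywhere by continuity, and $h(x)=y$, so $f=h^{-1}$ satisfies the conclusion.
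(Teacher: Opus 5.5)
Your proof is correct, but it takes a different route from the paper.

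The paper never extracts a limit of the maps $f_n^{-1}$. It uses the limit point $y$ only to transfer signs element by element: if $\varphi(g).y>y$, then $\varphi_n(g).f_n^{-1}(x)>f_n^{-1}(x)$ for large $n$ along the subsequence, so $(f_n.\varphi_n)(g).x>x$ and hence $\psi(g).x\ge x$, and symmetrically. From this it deduces that $\varphi(g).y\mapsto\psi(g).x$ is a non-decreasing equivariant map between the two orbits. It then appeals to an extension lemma (Lemma 2.3 of Brum--Matte Bon--Rivas--Triestino) to obtain the conjugacy.

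You instead run a Helly-type selection on the monotone maps $f_n^{-1}$ to get a limit semiconjugacy from $\psi$ to $\varphi$. You then use minimality of $\psi$ (no plateaus) and of $\varphi$ (dense image, hence no jumps) to upgrade it to a homeomorphism. In effect you reprove the extension lemma on the spot.

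The paper's route is shorter and avoids any bookkeeping at discontinuities. Yours is self-contained, and it gives for free that $\varphi(g).y=y$ forces $\psi(g).x=x$, i.e. that the orbit map is well defined. In the paper's approach this is not a formal consequence of the two sign implications and needs an extra appeal to minimality.

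To make your Steps 2 and 3 airtight, tighten the following points:
\begin{itemize}
\item The identity $h\circ\psi(g)=\varphi(g)\circ h$ holds at every $z$ such that $h$ is continuous at $z$ and at $\psi(g).z$. Hence it holds on the co-countable $\psi(G)$-invariant set $C$ of points whose entire $\psi$-orbit avoids the jumps of $h$.
\item Run the invariance arguments (for plateaus and for the image) on $C$. In particular use $\overline{h(C)}$, which is genuinely $\varphi$-invariant, rather than $\overline{h(\R)}$, where an isolated value of $h$ at a jump could break invariance.
\item Before continuity is established, you only know that $\varphi(g).y$ lies between the one-sided limits of $h$ at $\psi(g).x$, not that it equals $h(\psi(g).x)$. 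This still suffices for non-constancy and unboundedness.
\end{itemize}
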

\begin{proof}
We claim that for every $g \in G$, the inequality $\varphi(g).y > y$ implies $\psi(g).x \geq x$: indeed, if $\varphi(g).y > y$ then for sufficiently large $n \in \N$ we have $\varphi_n(g).y > y$ and $\varphi_n(g).\left( f_n^{-1}(x) \right) > f_n^{-1}(x)$, so $(f_n. \varphi_n)(g).x > x$ and hence $\psi(g).x \geq x$.  It also follows that $\varphi(g).y < y$ implies that $\psi(g).x \leq x$, and in particular $\varphi(g).y=y$ implies that $\psi(g).x = x$. Hence the correspondence $\varphi(g).y\mapsto \psi(g).x$ is a well-defined, non-decreasing equivariant map from $\varphi(G).x\to  \psi(G).y$. By \cite[Lemma 2.3]{BrumMatteBonRivasTriestino2025}, this map extends to a homeomorphism $f\colon \R\to \R$ conjugating $\varphi$ to $\psi$. \qedhere
\end{proof}

\begin{prop}\label{prop:fsigma}
The orbit equivalence relation 
\[
	E^\mathrm{min}_G = \{ (\varphi, \psi) \in \mathrm{Rep}_\mathrm{min}(G) \times \mathrm{Rep}_\mathrm{min}(G) : \text{there exists }f \in \mathrm{Homeo}_0(\R) \text{ such that }f. \varphi = \psi\}
\]
is $F_\sigma$ in $\mathrm{Rep}_\mathrm{min}(G) \times \mathrm{Rep}_\mathrm{min}(G)$.
\end{prop}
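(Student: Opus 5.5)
We write $E^\mathrm{min}_G$ as a countable union of closed sets indexed by compact windows for where a conjugacy sends a fixed point. For $k \in \N_+$ set
\[
	E_k = \{ (\varphi,\psi) \in \mathrm{Rep}_\mathrm{min}(G)^2 : \text{there exists } f \in \mathrm{Homeo}_0(\R) \text{ with } f.\varphi = \psi \text{ and } f^{-1}(0) \in [-k,k]\}.
\]
Every conjugacy $f$ satisfies $f^{-1}(0) \in [-k,k]$ for some $k$, so $E^\mathrm{min}_G = \bigcup_{k} E_k$. It therefore suffices to show that each $E_k$ is closed in $\mathrm{Rep}_\mathrm{min}(G) \times \mathrm{Rep}_\mathrm{min}(G)$.

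To prove this, take $(\varphi_n,\psi_n) \in E_k$ converging to $(\varphi,\psi)$ in $\mathrm{Rep}_\mathrm{min}(G)^2$. Choose homeomorphisms $f_n$ with $f_n.\varphi_n = \psi_n$ and $f_n^{-1}(0) \in [-k,k]$. Then $\varphi_n \to \varphi$ and $f_n.\varphi_n = \psi_n \to \psi$. Since $f_n^{-1}(0)$ lies in the compact set $[-k,k]$, the sequence has a limit point $y \in [-k,k]$.

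Lemma \ref{lema:preorden}, applied with $x = 0$, now gives $f \in \mathrm{Homeo}_0(\R)$ with $f.\varphi = \psi$ and $f(y) = 0$. Hence $f^{-1}(0) = y \in [-k,k]$, so $(\varphi,\psi) \in E_k$, and $E_k$ is closed.

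The argument has no serious obstacle, because the compactness and limiting work has already been done in Lemma \ref{lema:preorden}. The two points to check are the following.
\begin{itemize}
	\item The limit pair must lie in $\mathrm{Rep}_\mathrm{min}(G)^2$, which holds because closedness is only asserted relative to that space.
	\item The lemma requires $\varphi_n$ and $\varphi$ to be minimal, which is part of the hypotheses here.
\end{itemize}
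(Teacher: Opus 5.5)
Your proof is correct and is essentially the paper's own argument: you decompose $E^\mathrm{min}_G$ into the sets of pairs admitting a conjugacy $f$ with $\abs{f^{-1}(0)} \leq k$, and you show each set is closed by taking a limit point $y \in [-k,k]$ of $f_n^{-1}(0)$ and applying Lemma~\ref{lema:preorden} with $x = 0$. You take the limit pair only in $\mathrm{Rep}_\mathrm{min}(G) \times \mathrm{Rep}_\mathrm{min}(G)$, whereas the paper's write-up assumes it already lies in $E^\mathrm{min}_G$. Your version is the correct formulation, since that membership is exactly what has to be proved.
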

\begin{proof}
	For $k \in \N_+$ define $F_k$ as the set of $(\varphi, \psi) \in E^\mathrm{min}_G$ such that there exists $f \in \mathrm{Homeo}_0(\R)$ with $\abs{f^{-1}(0)} \leq k$ and $f.\varphi = \psi$. We will prove that each $F_k$ is closed in $\mathrm{Rep}_\mathrm{min}(G) \times \mathrm{Rep}_\mathrm{min}(G)$, implying the desired conclusion since $E^\mathrm{min}_G = \bigcup_{k \geq 0} F_k$. 

Let $((\varphi_n, f_n. \varphi_n))_{n \geq 0} \subseteq F_k$ be a sequence such that $\lim_{n \to \infty} \varphi_n  = \varphi$ and $\lim_{n \to \infty} f_n. \varphi_n = \psi$ for some $(\varphi, \psi) \in E^\mathrm{min}_G$. Up to considering a subsequence, we may assume that there exists $y \in [-k, k]$ such that $\lim_{n \to \infty} f_n^{-1}(0) = y$. By Lemma \ref{lema:preorden} we produce $f \in \mathrm{Homeo}_0(\R)$ such that $f(y) = 0$ and $f.\varphi = \psi$, so $(\varphi, \psi) \in F_k$.
\end{proof}

\begin{rmk}
Proposition \ref{prop:fsigma} implies that the conjugacy relation on $\mathrm{Rep}_\mathrm{min}(G)$ and on all its subspaces we have considered are Borel. This conclusion can be obtained directly through an argument close to \cite[Lemma 6.5]{JahelJoseph2025}: by \cite[Theorem 7.1.2]{KB} this is equivalent to seeing that the map
\[
	C \colon \varphi \in \mathrm{Rep}_\mathrm{min}(G) \mapsto \mathrm{Stab}_{\mathrm{Homeo}_0(\R)}(\varphi) = \mathrm{Cent}_\varphi(G) \in \mathrm{Sub}(\mathrm{Homeo}_0(\R))
\]
is Borel, where $\mathrm{Sub}(\mathrm{Homeo}_0(\R))$ is the standard Borel space of closed subgroups of $\mathrm{Homeo}_0(\R)$, endowed with the Effros Borel structure generated by the sets
\[
	B_U = \{H \in \mathrm{Sub}(\mathrm{Homeo}_0(\R)) : U \cap H \neq \emptyset \}
\]
for open $U \subseteq \mathrm{Homeo}_0(\R)$ (see \cite[Proposition 1]{Malicki2011}).

Write
\[	
	C^{-1}(B_U) = \{\varphi \in \mathrm{Rep}_\mathrm{min}(G) : U \cap \mathrm{Cent}_\varphi(G) \neq \emptyset\} = \pi(B)
\]
where $B = \{ (\varphi, g) \in \mathrm{Rep}_\mathrm{min}(G) \times U : g \in \mathrm{Cent}_\varphi(G) \}$ and $\pi \colon \mathrm{Rep}_\mathrm{min}(G) \times U \to \mathrm{Rep}_\mathrm{min}(G)$ is the projection onto the first coordinate. The space $\mathrm{Rep}_\mathrm{min}(G) \times U$ is Polish and $B$ is Borel, and moreover for every $\varphi \in \mathrm{Rep}_\mathrm{min}(G)$ the section $B \cap \pi^{-1}(\varphi)$ is homeomorphic to $\mathrm{Cent}_\varphi(G) \cap U$. Since $\mathrm{Cent}_\varphi(G)$ is homeomorphic either to a point, $\Z$ or $\R$, we have that $B \cap \pi^{-1}(\varphi)$ is always $K_\sigma$, so Theorem \ref{teo:ksigma} implies that $\pi(B)$ is Borel. We conclude that $C$ is a Borel map.
\end{rmk}

\subsection{Almost centralizing sequences and groups in $\cC$} \label{subsection:almostCent}

\begin{defn}
Let $\varphi\in \mathrm{Rep}_\mathrm{irr}(G)$. We say that a sequence  $(f_n)_{n \geq 0} \subseteq \mathrm{Homeo}_0(\R)$ \emph{almost centralizes} $\varphi$ if $\lim_{n \to \infty} f_n.\varphi = \varphi$.
\end{defn}
The following will be our main tool to decide membership to $\mathcal{C}$.
\begin{teo} \label{cor:criterio}

For a countable group $G$, the following are equivalent:
\begin{enumerate}
    \item \label{i-GinC} $G\in \cC$;
    \item \label{i-approxtrivial} for every $\varphi \in \mathrm{Rep}_\mathrm{III}(G)$, every sequence $(f_n)_{n \geq 0}$ almost centralizing $\varphi$ tends to $\mathrm{id}_\R$;
    \item \label{i-approxbounded} for every $\varphi \in \mathrm{Rep}_\mathrm{III}(G)$ and every sequence $(f_n)_{n \geq 0}$ almost centralizing $\varphi$, there exists $z \in \R$ such that $(f_n^{-1}(z))_{n \geq 0}$ is bounded.
\end{enumerate}

If $G$ is finitely generated, then these conditions are also equivalent to:
\begin{enumerate}[resume]
    \item\label{i-approxderoin} for every $\varphi \in \mathrm{Rep}_\mathrm{III}(G) \cap \mathrm{Harm}(G)$ and $(t_n)_{n \geq 0} \subseteq \R$ such that $\lim_{n \to \infty} \Psi^{t_n}(\varphi) = \varphi$ we have $\lim_{n \to \infty} t_n = 0$.
\end{enumerate}
\end{teo}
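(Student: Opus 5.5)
The plan is to reduce everything to the Glimm--Effros dichotomy, Theorem \ref{teo:glimmEffrosIntro}, applied to the continuous conjugation action of $\mathrm{Homeo}_0(\R)$ on the Polish space $\mathrm{Rep}_\mathrm{min}(G)$. Proposition \ref{prop:fsigma} says that the orbit equivalence relation $E^\mathrm{min}_G$ is $F_\sigma$, so the theorem applies. Hence $G\in\cC$ if and only if, for every minimal $\varphi$, the orbit map $\mathrm{Homeo}_0(\R)/\mathrm{Cent}_\varphi(G)\to \mathrm{Homeo}_0(\R).\varphi$ is a homeomorphism. Everything is metrizable, so this can be tested on sequences: $f_n.\varphi\to\varphi$ must imply that there are $c_n\in\mathrm{Cent}_\varphi(G)$ with $f_nc_n\to\mathrm{id}_\R$. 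For $\varphi$ of type III the centralizer is trivial, so this condition is exactly \eqref{i-approxtrivial}. This gives \eqref{i-GinC}$\Rightarrow$\eqref{i-approxtrivial}, and \eqref{i-approxtrivial}$\Rightarrow$\eqref{i-approxbounded} is immediate.

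The key technical step is a \emph{bounded-to-convergent} upgrade. Suppose $f_n.\varphi\to\varphi$ and $w_n:=f_n^{-1}(z)$ is bounded. First I show that $f_n^{-1}(x)$ is bounded for every $x$. Write $\psi_n=f_n.\varphi$, so that $f_n^{-1}\psi_n(g)=\varphi(g)f_n^{-1}$, and hence $f_n^{-1}(\psi_n(g).z)=\varphi(g).w_n$. By minimality choose $g,g'$ with $\varphi(g).z<x-1$ and $\varphi(g').z>x+1$. Since $\psi_n(g).z\to\varphi(g).z$ and $\psi_n(g').z\to\varphi(g').z$, for large $n$ the point $f_n^{-1}(x)$ is trapped between $\varphi(g).w_n$ and $\varphi(g').w_n$. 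These are bounded because $w_n$ is.

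Now Lemma \ref{lema:preorden}, applied with $\varphi_n=\varphi$ and $\psi=\varphi$, shows that any limit point $y$ of $f_n^{-1}(x)$ gives some $c\in\mathrm{Cent}_\varphi(G)$ with $c(y)=x$. For type III, $c=\mathrm{id}$, so $y=x$. Thus $f_n^{-1}(x)\to x$ for all $x$, and since the compact-open topology is that of pointwise convergence, $f_n^{-1}\to\mathrm{id}_\R$ and so $f_n\to\mathrm{id}_\R$. This proves \eqref{i-approxbounded}$\Rightarrow$\eqref{i-approxtrivial}.

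For \eqref{i-approxtrivial}$\Rightarrow$\eqref{i-GinC} it remains to check the Glimm--Effros criterion for minimal actions of type I and II. Cyclic actions are not minimal, so these are the only other cases. In both cases $\mathrm{Cent}_\varphi(G)$ is, up to conjugacy, the translation group or an infinite cyclic group acting freely with a compact fundamental domain $D$. I choose $c_n\in\mathrm{Cent}_\varphi(G)$ with $c_n^{-1}f_n^{-1}(0)\in D$, and replace $f_n$ by $f_nc_n$, which still almost centralizes $\varphi$. The upgrade argument above then shows that every subsequence has a further subsequence along which $f_n^{-1}$ converges pointwise to an element of $\mathrm{Cent}_\varphi(G)$. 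Since the centralizer is closed, this gives the required convergence in the quotient. I expect this bookkeeping with subsequences and the centralizer to be the main, though routine, obstacle.

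For \eqref{i-approxderoin}, with $G$ finitely generated, I use Proposition \ref{prop:recurrenteIntro}. Suppose $G\in\cC$ and $\varphi\in\mathrm{Rep}_\mathrm{III}(G)\cap\mathrm{Harm}(G)$ satisfies $\Psi^{t_n}(\varphi)\to\varphi$. Then \eqref{i-approxtrivial} applied to $f_n=T_{t_n}$ gives $T_{t_n}\to\mathrm{id}$, that is, $t_n\to0$. Conversely, assume \eqref{i-approxderoin} and let $\varphi$ be a recurrent point that is not periodic. It is not a fixed point, so it is of type III and in particular minimal. Recurrence gives returns $\Psi^{t_n}(\varphi)\to\varphi$ with $\abs{t_n}\to\infty$, contradicting \eqref{i-approxderoin}. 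Therefore every recurrent point is periodic, and Proposition \ref{prop:recurrenteIntro} gives $G\in\cC$.
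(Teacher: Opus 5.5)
Your proposal is correct and follows essentially the paper's route: the Glimm--Effros dichotomy via Proposition \ref{prop:fsigma}, then the trapping argument plus Lemma \ref{lema:preorden} for type III actions, then renormalizing by centralizer elements for types I and II, and finally Proposition \ref{prop:recurrenteIntro} for \eqref{i-approxderoin}. The one step you compress is the type I/II case. There Lemma \ref{lema:preorden} alone only pins down each limit point of $f_n^{-1}(x)$ up to a centralizer element that may depend on $x$. To get convergence to a single element $c_0^{-1}$, you need to reuse your identity $f_n^{-1}(\psi_n(g).0)=\varphi(g).w_n$ with $w_n\to y_0$, together with density of the orbit of $0$. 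The paper does this by normalizing so that $c_n\circ f_n^{-1}(0)\to 0$ and then arguing by contradiction.
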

\begin{proof}
The equivalence between \eqref{i-GinC} and \eqref{i-approxderoin} follows from Proposition \ref{prop:recurrenteIntro}. We now concentrate on the other equivalences.

Notice that the conjugacy relation on $\mathrm{Rep}_\mathrm{min}(G)$ is precisely the orbit equivalence relation $E^\mathrm{min}_G$ of $\mathrm{Homeo}_0(\R)$ acting on $\mathrm{Rep}_\mathrm{min}(G)$ by conjugation. Since $E^\mathrm{min}_G$ is $F_\sigma$ by Proposition \ref{prop:fsigma}, the Glimm-Effros dichotomy shows that $E^\mathrm{min}_G$ is smooth if and only if for every $\varphi \in \mathrm{Rep}_\mathrm{min}(G)$, the orbital map
\[
	\Psi_\varphi \colon f \in \mathrm{Homeo}_0(\R)/\mathrm{Cent}_\varphi(G) \mapsto f.\varphi \in \mathrm{Orb}_{\mathrm{Homeo_0(\R)}} (\varphi)
\]
is a homeomorphism (it is always continuous). Thus if $G$ is in $\cC$, then sequence $(f_n)_{n \geq 0}$  almost centralizing some $\varphi \in \mathrm{Rep}_\mathrm{III}(G)$ must verify $\lim_{n \to \infty} f_n = \mathrm{id}_\R$, and in particular all the sequences $(f_n^{-1}(z))_{n \geq 0},\, z \in \R$ are bounded. This proves that \eqref{i-GinC}$\Rightarrow$\eqref{i-approxtrivial}$\Rightarrow$\eqref{i-approxbounded}.

To prove the remaining implication \eqref{i-approxbounded}$\Rightarrow$\eqref{i-GinC}, suppose that every action in $\mathrm{Rep}_\mathrm{III}(G)$ satisfies the condition in the statement of the proposition, and let $\varphi \in \mathrm{Rep}_\mathrm{min}(G)$. We will show that $\Psi_\varphi^{-1}$ is continuous, and this suffices to prove that $G \in \cC$ by the previous paragraph.

Suppose first that $\varphi \in \mathrm{Rep}_\mathrm{III}(G)$, and let $(f_n)_{n \geq 0} \subseteq \mathrm{Homeo}_0(\R)$ with $\lim_{n \to \infty} f_n.\varphi = \varphi$. Then there is a $z \in \R$ such that $(f_n^{-1}(z))_{n \geq 0}$ is bounded by the hypothesis.  Fix $x \in \R$ and let $g \in G$ such that $\varphi(g).x < z$. Then for large enough $n \in \N$ we have $f_n \circ \varphi(g) \circ f_n^{-1}(x) < z$, or $f_n^{-1}(x) < \varphi(g^{-1}) \circ f_n^{-1}(z)$. Hence $(f_n^{-1}(x))_{n \geq 0}$ is bounded above, and a symmetrical argument shows that it is also bounded below. 

Let $y \in \R$ be a limit point of the sequence $(f_n^{-1}(x))_{n \geq 0}$. Lemma \ref{lema:preorden} shows that there is a $f \in \mathrm{Homeo}_0(\R)$ such that $f(x) = y$ and $f.\varphi = \varphi$. Since $\varphi$ is of type III, $f$ must be $\mathrm{id}_\R$ and $y = x$. Thus $\lim_{n \to \infty} f_n^{-1}(x) = x$, and as this is true for every $x \in \R$, we have $\lim_{n \to \infty} f_n = \mathrm{id}_\R$. Thus $\Psi_\varphi^{-1}$ is continuous in this case.

Suppose that $\varphi \in \mathrm{Rep}_\mathrm{cent}(G)$ instead, and let $(f_n)_{n \geq 0} \subseteq \mathrm{Homeo}_0(\R)$ with $\lim_{n \to \infty} f_n.\varphi = \varphi$. If $\varphi$ is of type I, then we can find $(c_n)_{n \geq 0} \subseteq \mathrm{Cent}_\varphi(G)$ such that $\lim_{n \to \infty} c_n \circ f_n^{-1}(0) = 0$. Suppose instead that $\varphi$ is of type II. Let $c$ be the generator of $\mathrm{Cent}_\varphi(G)$ with $c(0) > 0$, and take $(c_n)_{n \geq 0} \subseteq \mathrm{Cent}_\varphi(G)$ such that $c_n \circ f_n^{-1}(0) \in [c^{-1}(c(0)/2), c(0)/2)$ for every $n \in \N$. In any case, we have
\[
	\lim_{n \to \infty}(f_n \circ c_n^{-1}).\varphi = \lim_{n \to \infty} f_n.\varphi = \varphi,
\]
and Lemma \ref{lema:preorden} shows that any limit point $y$ of $c_n \circ f_n^{-1}(0)$ is of the form $f(0)$ for some function $f \in \mathrm{Cent}_\varphi(G)$. Hence $y = 0$, and since this is true for any limit point we deduce that
\[
	\lim_{n \to \infty} c_n \circ f_n^{-1}(0) = 0
\]
also in this case.

Now assume towards a contradiction that for some $x > 0$ we have $\limsup_{n \to \infty} c_n \circ f_n^{-1}(x) > x$, and upon passing to a subsequence we may assume that $\liminf_{n \to \infty} c_n \circ f_n^{-1}(x) > z$ for some $z > x$. Take $0 < \epsilon < x$ small enough so that there is $g \in G$ with $\varphi(g).[-\epsilon, \epsilon] \subseteq (x,z)$. But for $n$ large enough we have that $c_n \circ f_n^{-1}(0) \in [-\epsilon, \epsilon]$, so $\varphi(g) \circ c_n \circ f_n^{-1}(0) \in (x,z)$. For $n$ large enough we conclude that
\[
	(f_n.\varphi)(g).0 = f_n \circ c_n^{-1} \circ \varphi(g) \circ c_n \circ f_n^{-1}(0) < x,
\]
contradicting the assumption $\lim_{n \to \infty}f_n.\varphi(g) = \varphi(g)$. Hence $\limsup_{n \to \infty} c_n \circ f_n^{-1}(x) \leq x$ for every $x > 0$. 

Similarly, assume that for some $x > 0$ we have $\liminf_{n \to \infty} c_n \circ f_n^{-1}(x) < x$, so upon passing to a subsequence we have $\limsup_{n \to \infty} c_n \circ f_n^{-1}(x) < z$ for some $z \in (0, x)$. Take $0 < \epsilon < z$ small enough so that there is $g \in G$ with $\varphi(g).[-\epsilon, \epsilon] \subseteq (z,x)$. Again, for $n$ large enough we have $c_n \circ f_n^{-1}(0) \in [-\epsilon, \epsilon]$ and $\varphi(g) \circ c_n \circ f_n^{-1}(0) \subseteq (x,z)$. We conclude that for $n$ large the inequality 
\[
	(f_n.\varphi)(g).0 = f_n \circ c_n^{-1} \circ \varphi(g) \circ c_n \circ f_n^{-1}(0) > x
\]
holds, contradicting $\lim_{n \to \infty}f_n.\varphi(g) = \varphi(g)$. Hence $\lim_{n \to \infty}c_n \circ f_n^{-1}(x) = x$.

A symmetrical argument gives $\lim_{n \to \infty}c_n \circ f_n^{-1}(x) = x$ for all $x < 0$ also. We deduce that $\lim_{n \to \infty}c_n \circ f_n^{-1} = \mathrm{id}_\R$ in $\mathrm{Homeo}_0(\R)$ and that $\lim_{n \to \infty} f_n\mathrm{Cent}_\varphi(G) = \mathrm{Cent}_\varphi(G)$. We conclude that $\Psi_\varphi^{-1}$ is continuous.
\end{proof}

The previous theorem and Theorem \ref{teo:tits} imply the following, which was known for finitely generated groups as a consequence of Proposition \ref{prop:recurrenteIntro}.
\begin{cor} \label{cor:abeliano}
If $G$ does not admit any minimal proximal action on the line (in particular, if $G$ has no non-abelian free semigroups), then $G \in \cC$.
\end{cor}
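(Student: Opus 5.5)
The plan is to use the criterion of Theorem \ref{cor:criterio}: it suffices to check condition \eqref{i-approxtrivial} (or \eqref{i-approxbounded}), namely that for every $\varphi \in \mathrm{Rep}_\mathrm{III}(G)$ every almost centralizing sequence tends to $\mathrm{id}_\R$. If this set is empty, the condition holds vacuously, so $G \in \cC$.

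First I reduce the hypothesis to the emptiness of $\mathrm{Rep}_\mathrm{III}(G)$. A minimal action is in particular cocompact, since its minimal set is $\R$ itself. Hence Theorem \ref{teo:tits} applies to it. For a minimal action, being of type III means having trivial centralizer, which is equivalent to being proximal by the discussion after Theorem \ref{teo:tits}. So $\mathrm{Rep}_\mathrm{III}(G)$ is exactly the set of minimal proximal actions of $G$. If $G$ admits none, then $\mathrm{Rep}_\mathrm{III}(G)=\varnothing$, and Theorem \ref{cor:criterio} \eqref{i-approxtrivial}$\Rightarrow$\eqref{i-GinC} gives $G\in\cC$.

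For the parenthetical case, suppose $G$ contains no non-abelian free semigroup. By the last assertion of Theorem \ref{teo:tits}, a proximal action admitting a minimal set forces $G$ to contain a non-abelian free semigroup. Thus $G$ has no minimal proximal action, and the previous paragraph applies.

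The only delicate point is identifying type III for minimal actions with trivial centralizer, i.e.\ with proximality. I take this from the paper's stated characterisation of types via $\mathrm{Cent}_\varphi(G)$. Alternatively, one can observe directly that a minimal proximal action has no nontrivial centralizing homeomorphism: a nontrivial centralizer element $c$ satisfies $c(x)\neq x$ for all $x$, and then the points $x$ and $c(x)$ are never proximal. Either way the argument is immediate.
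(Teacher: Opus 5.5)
Your proof is correct and is exactly the paper's argument: $\mathrm{Rep}_\mathrm{III}(G)$ is by definition the set of minimal proximal actions, so under the hypothesis it is empty and condition \eqref{i-approxtrivial} of Theorem \ref{cor:criterio} holds vacuously, while the parenthetical case follows from the last assertion of Theorem \ref{teo:tits}. One small remark: your ``alternative'' direct observation shows that proximal implies trivial centralizer, which is the converse of what you would need if you took trivial centralizer as the definition of type III; this is moot, since type III is defined as proximal.
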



\section{Actions on the line and commensurated subgroups} \label{section:actionsCommensurated}
In this section we collect several structural statements for minimal actions on the line of a countable group admitting a commensurated subgroup, to be used in the next section. These are natural generalisations of the corresponding statements for normal subgroups. Subsection \ref{subsection:conrad} describes the case when the action of the commensurated subgroup is of type I, and Subsection \ref{subsection:laminar} describes the case when it admits no minimal set. In this section $G$ is always a countable group.

\subsection{Commensurated subgroups and isolator} \label{subsection:isolator}

Recall that two subgroups $H, K$ of a group $G$ are \emph{commensurate} if $H\cap K$ has finite index in $G$. The subgroup $H$ is \emph{isolated} if whenever $g^n\in H$ for some $g\in G,\  n\in \Z$, then $g\in H$. The smallest isolated subgroup containing $H$ is called the \emph{isolator} of $H$, and is denoted by $\mathrm{I}(H)$. 
The following straightforward lemma explains why this notion arises here.

\begin{lema} \label{lema:isolator}
    Let $H$ be a commensurated subgroup of $G$. Then its isolator $\mathrm{I}(H)$ does not depend on the choice of $H$ in its commensurability class, and it is equal to the smallest normal subgroup of $G$ containing $H$ and such that $G/\mathrm{I}(H)$ is torsion free.
    
    In particular, every $\varphi\colon G
    \to \mathrm{Homeo_0(\R)}$ such that $\restr{\varphi}{H}$ is trivial factors through $G/\mathrm{I}(H)$.

\end{lema}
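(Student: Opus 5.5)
The plan is to prove the characterisation of $\mathrm{I}(H)$ through a short chain of elementary group-theoretic observations, and then read off the statement about actions from the fact that $\mathrm{Homeo}_0(\R)$ is torsion free.

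First, $\mathrm{I}(H)$ is well defined, because an intersection of isolated subgroups is isolated. Next I show that commensurate subgroups $H, H'$ have the same isolator. The key step is the following pigeonhole observation. Since $H\cap H'$ has finite index in $H$, for every $h \in H$ two of the cosets $h^i(H\cap H')$, $i \ge 0$, must coincide, so $h^n \in H\cap H'$ for some $n\ge 1$. Hence $H \subseteq \mathrm{I}(H\cap H')$, which gives $\mathrm{I}(H) \subseteq \mathrm{I}(H\cap H')$. The reverse inclusion is clear, so $\mathrm{I}(H)=\mathrm{I}(H\cap H')$. By symmetry this also equals $\mathrm{I}(H')$.

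Now let $H$ be commensurated and $g\in G$. Conjugation preserves isolation, so $g\,\mathrm{I}(H)g^{-1}=\mathrm{I}(gHg^{-1})$. Since $gHg^{-1}$ is commensurate with $H$, the previous step gives $\mathrm{I}(gHg^{-1})=\mathrm{I}(H)$, so $\mathrm{I}(H)$ is normal. For a normal subgroup $N$, being isolated is exactly the statement that $G/N$ is torsion free. Thus $\mathrm{I}(H)$ is a normal subgroup containing $H$ with torsion-free quotient.

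Conversely, let $N$ be any normal subgroup containing $H$ with $G/N$ torsion free. Then $N$ is isolated, so $\mathrm{I}(H)\subseteq N$. Therefore $\mathrm{I}(H)$ is the smallest such subgroup. This also shows the smallest one exists, without needing to intersect all such $N$ and check the quotient separately.

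For the last assertion, suppose $\restr{\varphi}{H}$ is trivial. Then $N=\ker\varphi$ is normal and contains $H$. Moreover $G/N$ embeds in $\mathrm{Homeo}_0(\R)$, which is torsion free: if $f^n=\mathrm{id}$ and $f(x)>x$ for some $x$, then $f^n(x)>x$ by monotonicity, a contradiction, and similarly if $f(x)<x$. Hence $\mathrm{I}(H)\subseteq \ker\varphi$, and $\varphi$ factors through $G/\mathrm{I}(H)$.

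None of these steps is a real obstacle. The only point needing care is the pigeonhole step giving $h^n\in H\cap H'$, which is what makes the isolator insensitive to passing to commensurate subgroups.
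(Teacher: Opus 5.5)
Your proof is correct and complete. The paper gives no argument for this lemma (it calls it straightforward), and your steps are exactly the routine verification intended: the pigeonhole step giving $h^n\in H\cap H'$, so that commensurate subgroups have the same isolator; normality from $g\,\mathrm{I}(H)g^{-1}=\mathrm{I}(gHg^{-1})$; the equivalence between isolated normal subgroups and torsion-free quotients; and torsion-freeness of $\mathrm{Homeo}_0(\R)$.

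Two harmless remarks. The paper's definition of ``isolated'' must be read with $n\neq 0$, as you implicitly do. The final assertion about $\varphi$ does not actually use that $H$ is commensurated.
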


\subsection{Conrad homomorphisms and affine actions} \label{subsection:conrad}
We first describe actions semiconjugate to actions by translations, following \cite[Section 2.2]{Navas2011}. An action $\varphi \in \mathrm{Rep}_\mathrm{irr}(G)$ is of type I if and only if it preserves a non-trivial Radon measure $\nu$ on $\R$. In this, case the map $\tau_\nu \colon G \to \R$ given by
            \begin{equation} \label{eq:defTau}
                g \in G \mapsto \tau_\nu(g) = 
                \begin{cases}
                        \nu[x,\varphi(g).x) & \text{ if } x \leq \varphi(g).x\\
                        -\nu[\varphi(g).x, x) & \text{ if } x > \varphi(g).x
                \end{cases}
        \end{equation}
is well defined and does not depend on $x \in \R$. It is actually a group morphism called the \emph{Conrad homomorphism associated to $\varphi$}, which is unique up to positive rescaling.

\begin{prop}[see {\cite[Section 2.2]{Navas2011}}]\label{prop:conrad} 
        Let $\varphi \in \mathrm{Rep}_\mathrm{irr}(G)$ be an action of type I with a minimal set $\Lambda$, and let $\nu$ be a non-trivial Radon measure on $\R$ invariant under $\varphi(G)$.
\begin{enumerate}
        \item \label{itC:3} The action $\varphi$ is semiconjugate to the action by translations defined by $\tau_\nu$, and $\mathrm{ker}(\tau_\nu)$ coincides with the elements $h \in H$ such that $\varphi(h)$ is trivial on $\Lambda$. In particular, $\tau$ is trivial if and only if $\varphi$ has a global fixed point.
        \item \label{itC:4} If $\nu'$ is another non-zero Radon measure preserved by $\varphi(G)$ there exists a $\kappa > 0$ such that $\tau_{\nu'} = \kappa \tau_\nu$. If moreover $\tau_\nu(G)$ is dense in $\R$, then actually $\nu' = \kappa \nu$.
\end{enumerate}
\end{prop}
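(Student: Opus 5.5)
The plan is to build the semiconjugacy directly from the distribution function of $\nu$, then read off the kernel and the uniqueness statements from where $\nu$ is supported. I take for granted from \cite{Navas2011} that $\tau_\nu$ in \eqref{eq:defTau} is a well-defined homomorphism.

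\emph{Step 1: the semiconjugacy.} Define $F(x)=\nu[0,x)$ for $x\ge 0$ and $F(x)=-\nu[x,0)$ for $x<0$. This $F$ is non-decreasing. Invariance of $\nu$ together with additivity of $\nu$ on adjacent half-open intervals gives $F(\varphi(g).x)=F(x)+\tau_\nu(g)$. Hence $F$ is a semiconjugacy from $\varphi$ to the action by translations $g\mapsto T_{\tau_\nu(g)}$. This is a routine case check on the relative positions of $0$, $x$ and $\varphi(g).x$.

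\emph{Step 2: $\mathrm{supp}(\nu)\subseteq\Lambda$.} This is the step I expect to be the main obstacle. If $\Lambda$ is a discrete orbit, the action is semiconjugate to a cyclic one and I would treat that case by hand. Otherwise $\Lambda$ is perfect and is the unique minimal set. If $\Lambda=\R$ there is nothing to prove. If not, let $I$ be a connected component of $\R\setminus\Lambda$. Its $\varphi$-translates are components of the complement, so any two of them are equal or disjoint. Because $\Lambda$ is perfect and the orbit of an endpoint of $I$ accumulates in $\Lambda$, infinitely many pairwise distinct translates of $I$ lie in some compact interval. They all have the same $\nu$-mass and local finiteness of $\nu$ bounds their total, so $\nu(I)=0$.

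\emph{Step 3: the kernel.} If $\varphi(g)$ is trivial on $\Lambda$, then Step 2 gives $\tau_\nu(g)=\nu[x,\varphi(g).x)=0$ by taking $x\in\Lambda$. Conversely, suppose $\tau_\nu(g)=0$ but $\varphi(g).x\neq x$ for some $x\in\Lambda$.
\begin{itemize}
\item Since $\varphi(g)$ moves an open set of points of $\Lambda$, I may choose such an $x$ that is not an endpoint of a gap. Then the open interval $J$ between $x$ and $\varphi(g).x$ meets $\Lambda$ and satisfies $\nu(J)=0$.
\item By minimality, the translates $\varphi(G).J$ cover $\Lambda$. Compactness then lets finitely many of them cover $\Lambda\cap[a,b]$ for any $a<b$.
\item Hence $\nu(\Lambda)=0$, and with Step 2 this forces $\nu=0$, a contradiction.
\end{itemize}
This proves the description of $\ker(\tau_\nu)$. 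In particular, $\tau_\nu$ trivial would make $G$ fix $\Lambda$ pointwise, giving a global fixed point. The reverse implication follows from Step 1: a translation action with trivial $\tau_\nu$ fixes everything.

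\emph{Step 4: uniqueness.} From Step 3 (applied to both measures) together with the formula for $\tau$ at a point $x\in\Lambda$, the sign of $\tau_\nu(g)$ and of $\tau_{\nu'}(g)$ are both equal to the sign of $\varphi(g).x-x$. So $\tau_\nu$ and $\tau_{\nu'}$ are homomorphisms to $\R$ with the same positive cone. A Hölder-type comparison of rational ratios $\tau(g)/\tau(h)$ then shows $\tau_{\nu'}=\kappa\tau_\nu$ for some $\kappa>0$.

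If moreover $\tau_\nu(G)$ is dense, Step 2 reduces the question to intervals with endpoints in $\Lambda$. Fix $x\in\Lambda$. Then $\nu[x,\varphi(g).x)=\tau_\nu(g)$, and density of $\tau_\nu(G)$ lets the orbit points $\varphi(g).x$ approximate every point of $\Lambda$ from both sides. So the distribution functions of $\nu$ and $\kappa^{-1}\nu'$ agree on a dense subset of $\Lambda$, hence everywhere, giving $\nu'=\kappa\nu$.
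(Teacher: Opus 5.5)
The paper does not prove this statement; it quotes it from \cite[Section 2.2]{Navas2011}, so your argument has to stand on its own. For non-discrete $\Lambda$, Steps 1--3 are correct, and so is the proportionality $\tau_{\nu'}=\kappa\tau_\nu$ in Step 4: equal sign functions together with $\tau(g^qh^{-p})=q\tau(g)-p\tau(h)$ do the job.

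\textbf{The gap is in the last step.} The inference ``agree on a dense subset of $\Lambda$, hence everywhere'' needs the distribution functions to be continuous, i.e.\ $\nu$ and $\nu'$ must have no atoms, and you never show this.

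Your substitute claim, that orbit points approximate every point of $\Lambda$ from both sides, is false when $\Lambda\neq\R$. For a gap $(a,b)$, the point $a$ is approached only from the left and $b$ only from the right. Since $y\mapsto\nu[x,y)$ is left-continuous, agreement along the orbit of $x$ only controls $\nu[x,a)$ and $\nu[x,b]$. So it pins down the sum $\nu\{a\}+\nu\{b\}$, not the individual atoms. That atoms are the real issue is visible already for $\Z$ acting by $x\mapsto x+1$: Lebesgue measure and counting measure on $\Z$ have proportional $\tau$ without being proportional.

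The density of the orbit of $x$ in $\Lambda$ comes from minimality, not from density of $\tau_\nu(G)$. What density of $\tau_\nu(G)$ actually buys is the absence of atoms:
\begin{enumerate}
\item Suppose $\nu\{y\}=c>0$. Then $y\in\Lambda$ by Step 2.
\item Pick $g$ with $0<\tau_\nu(g)<c$. Your sign argument gives $\varphi(g).y>y$.
\item Hence $\tau_\nu(g)=\nu[y,\varphi(g).y)\ge c$, a contradiction.
\end{enumerate}
The same argument applies to $\nu'$, since $\tau_{\nu'}(G)=\kappa\tau_\nu(G)$ is also dense. With both measures atomless, $y\mapsto\nu'[x,y)$ and $y\mapsto\kappa\nu[x,y)$ are continuous and agree on the orbit of $x$. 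Hence they agree on $\Lambda$, and they are constant on gaps, which gives $\nu'=\kappa\nu$.

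\textbf{The discrete-orbit case you postpone cannot go through Step 2, which is false there.} In the example above, Lebesgue measure is invariant while $\Lambda=\Z$ is a minimal set. Step 3 also breaks in this case, since every point of $\Lambda$ is a gap endpoint. Bypass both as follows. Write $\Lambda=\{x_n\}_{n\in\Z}$ in increasing order. Invariance gives $\nu[x_n,x_{n+1})=c$ for all $n$, with $c>0$ because $\nu\neq 0$. Therefore $\tau_\nu=c\,\pi$, where $\pi\colon G\to\Z$ is defined by $\varphi(g).x_n=x_{n+\pi(g)}$. This yields the kernel description and the proportionality at once, and shows that the density hypothesis never holds in this case.

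\textbf{A minor slip:} ``global fixed point implies $\tau_\nu=0$'' follows by evaluating the defining formula of $\tau_\nu$ at the fixed point, not from Step 1. It is in any case vacuous here, since $\varphi$ is irreducible.
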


A similar statement is true for actions semiconjugate to affine actions: let $\varphi \in \mathrm{Rep}_\mathrm{irr}(G)$ and let $\nu$ be a Radon measure on $\R$. We say that $\varphi$ \emph{preserves the projective class of} $\nu$ if for every $g \in G$ there exists $\kappa(g) > 0$ such that the measures $ \nu(\varphi(g).\cdot)$ and $\kappa(g)\nu(\cdot)$ are equal. In this case, the map
\begin{equation} \label{eq:tau}
        g \in G \mapsto \tau_\nu(g) =
        \begin{cases}
                \nu[0,\varphi(g).0) & \text{ if } 0 \leq \varphi(g).0\\
                -\nu[\varphi(g).0, 0) & \text{ if } \varphi(g).0 < 0
        \end{cases}
\end{equation}
satisfies $\tau_\nu(g_1g_2) = \tau_\nu(g_1) + \kappa(g_1)\tau_\nu(g_2)$ for all $g_1,g_2 \in G$. Any action $\varphi \in \mathrm{Rep}_\mathrm{irr}(G)$ semiconjugate to an action factoring through the affine group
\[
        \mathrm{Aff}(\R) = \{x \mapsto ax + b : a \in \R_+,\, b \in \R\}
\] preserves the projective class of the measure given by the pullback through the semiconjugacy of the Lebesgue measure on $\R$. Conversely, if $\varphi$ preserves the projective class of a Radon measure $\nu$ on $\R$, then $\varphi$ is semiconjugate to the affine action given by
\begin{equation} \label{eq:affine}
        g \in G \mapsto \left( x \in \R \mapsto \kappa(g)(x - \tau_\nu(g^{-1}))\right).
\end{equation}
where $\kappa$ is such that $\varphi(g)^\ast \nu = \kappa(g)\nu$ for all $g \in G$ and $\tau_\nu$ is defined as in \eqref{eq:tau}, see \cite[Proposition 1.2.2]{Navas2011}.

Hence any minimal action $\varphi \in \mathrm{Rep}_\mathrm{irr}(G)$ such that $\restr{\varphi}{N}$ is minimal and of type I for some normal subgroup $N \subseteq G$ is actually affine, since the normality of $N$ implies that $\varphi(G)$ preserves the projective class of any $\varphi(N)$-invariant measure on $\R$. We now verify a version of this statement in the case when $N$ is only commensurated in $G$. The starting point is the following observation.

\begin{prop} \label{prop:fixComm}
Let $\varphi \in \mathrm{Rep}_\mathrm{irr}(G)$ be a minimal action and let $H \subseteq G$ be a commensurated subgroup. If $\restr{\varphi}{H}$ has a global fixed point, then $\restr{\varphi}{H}$ is trivial.
\end{prop}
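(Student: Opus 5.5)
The plan is to show that the closed set $\mathrm{Fix}_\varphi(H)$ is $\varphi(G)$-invariant, possibly after replacing $H$ by a commensurate subgroup. Minimality then forces it to be empty or all of $\R$. Since it is non-empty by hypothesis, it equals $\R$, so $\restr{\varphi}{H}$ is trivial.

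\textbf{Reduction to finite index subgroups.} First I reduce the statement to one about finite index subgroups. The key claim is that if $K \le H$ has finite index, then $\mathrm{Fix}_\varphi(K)$ and $\mathrm{Fix}_\varphi(H)$ have the same boundary behaviour. More precisely, I will show that every component $I$ of $\mathrm{supp}_\varphi(H)$ with an endpoint in $\mathrm{Fix}_\varphi(H)$ is also a component of $\mathrm{supp}_\varphi(K)$.

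To see this, note that $\varphi(H)$ preserves $I$, and $\restr{\varphi}{H}$ on $I$ has no global fixed point in $I$. Take a point $x \in I$. If $\varphi(K)$ fixed $x$, then the orbit $\varphi(H).x$ would be finite, because $K$ has finite index in $H$. An orientation-preserving action with a finite orbit fixes each point of that orbit. So $\varphi(H)$ would fix $x$, a contradiction. Hence $\varphi(K)$ has no fixed point in $I$, and the two fixed sets coincide:
\[
\mathrm{Fix}_\varphi(K) = \mathrm{Fix}_\varphi(H).
\]
This is really the whole statement, and it holds for any finite index subgroup $K \le H$ by the same finite orbit argument applied pointwise: a point fixed by $K$ has a finite $H$-orbit, hence is fixed by $H$.

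\textbf{Invariance under $G$.} Next, fix $g \in G$ and set $K = H \cap g^{-1}Hg$. Since $H$ is commensurated, $K$ has finite index in $H$. Moreover, $gKg^{-1} = gHg^{-1} \cap H$ has finite index in $gHg^{-1}$. Applying the previous step twice gives
\[
\varphi(g).\mathrm{Fix}_\varphi(H) = \mathrm{Fix}_\varphi(gHg^{-1}) = \mathrm{Fix}_\varphi(gKg^{-1}),
\]
and also $\mathrm{Fix}_\varphi(gKg^{-1}) = \mathrm{Fix}_\varphi(H \cap gHg^{-1}) = \mathrm{Fix}_\varphi(H)$. The last equality uses that $H \cap gHg^{-1}$ has finite index in $H$. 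Therefore $\mathrm{Fix}_\varphi(H)$ is a closed, non-empty, $\varphi(G)$-invariant set.

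\textbf{Conclusion.} Minimality now gives $\mathrm{Fix}_\varphi(H) = \R$, that is, $\restr{\varphi}{H}$ is trivial.

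I do not expect a real obstacle. The only point that needs care is the finite-orbit fact: the stabilizer of $x$ in $H$ has finite index, so for every $h \in H$ some power $h^m$ with $m \ge 1$ fixes $x$. An orientation-preserving homeomorphism of $\R$ with $h^m(x) = x$ must satisfy $h(x) = x$, since otherwise the sequence $h^k(x)$ is strictly monotone. This is what yields $\mathrm{Fix}_\varphi(K) = \mathrm{Fix}_\varphi(H)$.
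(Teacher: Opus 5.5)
Your proof is correct and is essentially the paper's argument: both show that $\mathrm{Fix}_\varphi(H)$ is $\varphi(G)$-invariant because an orientation-preserving homeomorphism that fixes $x$ after some positive power already fixes $x$. The paper applies this directly to each element of $gHg^{-1}$, which has a positive power in $H$; you instead package it as $\mathrm{Fix}_\varphi(K)=\mathrm{Fix}_\varphi(H)$ for finite-index $K\le H$ and pass through $H\cap gHg^{-1}$. Your opening discussion of components of $\mathrm{supp}_\varphi(H)$ is an unnecessary detour, since the pointwise finite-orbit argument already suffices.
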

\begin{proof}
Let $g \in G$ and $h \in gHg^{-1}$, so there exists an $n \in \N_+$ such that $h^n \in H$. If $x \in \mathrm{Fix}_\varphi(H)$ then $\varphi(h^n)(x) = x$, and thus $\varphi(h)(x) = x$. This shows that $\mathrm{Fix}_\varphi(H) \subseteq \mathrm{Fix}_\varphi(gHg^{-1})$ and a symmetric argument gives $\mathrm{Fix}_\varphi(H) = \mathrm{Fix}_\varphi(gHg^{-1})$, so $\varphi(g).\mathrm{Fix}_\varphi(H) = \mathrm{Fix}_\varphi(H)$. Thus $\mathrm{Fix}_\varphi(H)$ is closed, $\varphi(G)$-invariant and nonempty, and by minimality it must be all $\R$.
\end{proof}

Given a commensurated subgroup $H \subseteq G$ and $g \in G$, we write $\triangle(g) \colon H \cap g^{-1}H g \to H$ for conjugation by $g$.

\begin{lema} \label{lema:struct1}
Let $\varphi \in \mathrm{Rep}_\mathrm{irr}(G)$ be a minimal action such that $\restr{\varphi}{H}$ is irreducible and of type I for some commensurated subgroup $H \subseteq G$. Let $\tau \colon H \to \R$ be the Conrad morphism for $H$ associated to $\restr{\varphi}{H}$.
\begin{enumerate}
        \item \label{it:s1} There exists a group morphism $\kappa \colon G \to \R^\ast_+$ such that $\tau \circ \triangle(g)(h) = \kappa(g) \tau(h)$ for all $g \in G$ and $h \in H \cap g^{-1}H g$.
        \item \label{it:s2} $\mathrm{ker}(\tau)$ is commensurated in $G$ and acts trivially through $\varphi$.
        \item \label{it:s3} If $\varphi(H)$ is non-cyclic, then $\varphi$ is conjugate to an affine action $\varphi_\mathrm{aff}$ as in \eqref{eq:affine}. Moreover, if $\varphi$ is semiconjugate to an affine action $\varphi_\mathrm{aff}$, the element $\varphi_\mathrm{aff}(h)$ is a translation whenever $h \in G$ belongs to a subgroup $L \subseteq G$ commensurate with $H$.
\end{enumerate}
\end{lema}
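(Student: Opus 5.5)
The plan is to transport invariant measures along $\varphi(g)$. Fix a non-trivial Radon measure $\nu$ invariant under $\varphi(H)$, so that $\tau=\tau_\nu$. For $g\in G$ consider $\nu_g:=\varphi(g)_\ast\nu$, which is invariant under $\varphi(gHg^{-1})$. Set $L=H\cap gHg^{-1}$; it has finite index in both $H$ and $gHg^{-1}$. I would first check that $\restr{\varphi}{L}$ is irreducible. By Proposition \ref{prop:fixComm} applied to the commensurated subgroup $L$, a global fixed point of $L$ would force $\restr{\varphi}{L}$ to be trivial. Then every element of $H$ would have a power acting trivially, hence would act trivially, contradicting irreducibility of $\restr{\varphi}{H}$.

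Both $\nu$ and $\nu_g$ are $\varphi(L)$-invariant. By Proposition \ref{prop:conrad}\eqref{itC:4}, $\tau_{\nu_g}|_L=\kappa(g)\,\tau_\nu|_L$ for some $\kappa(g)>0$. Unwinding the definitions, $\tau_{\nu_g}(ghg^{-1})=\tau_\nu(h)$. Since $\restr{\varphi}{L}$ is of type I, its Conrad morphism is the restriction of $\tau$ up to scaling. Because $L$ has finite index in $H$, $\tau$ is determined by its restriction to $L$: indeed $\tau(h^n)=n\tau(h)$ with $h^n\in L$. This yields $\tau\circ\triangle(g)=\kappa(g)\tau$ on $H\cap g^{-1}Hg$.

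Next I would show that $\kappa(g)$ is well defined and multiplicative. It is well defined because $\tau\neq0$: $\tau$ vanishing would mean $\restr{\varphi}{H}$ has a fixed point, by Proposition \ref{prop:conrad}\eqref{itC:3}. Multiplicativity, $\kappa(g_1g_2)=\kappa(g_1)\kappa(g_2)$, follows by composing the conjugations on a common finite-index subgroup of $H$ (again commensurated, so $\tau$ is nonzero there). This proves \eqref{it:s1}.

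For \eqref{it:s2}, the relation in \eqref{it:s1} shows that $\triangle(g)$ maps $\ker\tau\cap g^{-1}Hg$ into $\ker\tau$. Since $\ker\tau\cap g^{-1}Hg$ has finite index in $g^{-1}(\ker\tau) g$, the subgroup $\ker\tau$ is commensurated. By Proposition \ref{prop:conrad}\eqref{itC:3}, $\ker\tau$ acts trivially on the minimal set $\Lambda_H$ of $\restr{\varphi}{H}$. If $\ker\tau$ had a global fixed point, Proposition \ref{prop:fixComm} would give triviality; otherwise I would argue directly that $\mathrm{Fix}(\ker\tau)\supseteq\Lambda_H\neq\emptyset$, which again forces triviality by Proposition \ref{prop:fixComm}.

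For \eqref{it:s3}, assume $\varphi(H)$ is non-cyclic. Then $\tau(H)$ is dense, since $\varphi(H)\cong\tau(H)$ up to the kernel, which acts trivially. By the second half of Proposition \ref{prop:conrad}\eqref{itC:4}, the $\varphi(L)$-invariant measure is unique up to scaling, provided $\tau(L)$ is also dense, which holds because it has finite index in $\tau(H)$. Hence $\nu_g=c\,\nu$, so $\varphi(G)$ preserves the projective class of $\nu$, and $\varphi$ is semiconjugate to the affine action \eqref{eq:affine}. The support of $\nu$ is closed and $\varphi(G)$-invariant, so minimality forces full support. The measure $\nu$ also has no atoms: an atom would give, by density of $\tau(H)$ and $\varphi(G)$-invariance of the projective class, infinitely many atoms in a compact set with summable masses, and I expect this to be ruled out by minimality. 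So the semiconjugacy is a homeomorphism. For the last claim, the $\varphi(L)$-invariant measure coincides with $\nu$ up to scale, so $\kappa(l)=1$ for $l\in L$, and $\varphi_{\mathrm{aff}}(l)$ is a translation. The main obstacle is this no-atoms/full-support step, together with tracking the finite-index subgroups carefully.
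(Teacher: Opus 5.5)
\textbf{Verdict.} Parts (i), (ii) and the conjugacy statement in (iii) are correct and follow the paper's route. The last claim of (iii), however, has a genuine gap.

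\textbf{Parts (i), (ii) and the first half of (iii).} Your argument is the paper's. You transport $\nu$ by $\varphi(g)$ and compare the two Conrad morphisms on a finite-index subgroup of $H$ via Proposition~\ref{prop:conrad}. You then read off commensuration of $\ker\tau$ from the twisting identity, and kill $\ker\tau$ with Propositions~\ref{prop:conrad} and~\ref{prop:fixComm}. Your check that $\varphi|_{H\cap gHg^{-1}}$ is still irreducible is a point the paper leaves implicit.

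Two small repairs are needed.
\begin{enumerate}
\item With the push-forward $\nu_g=\varphi(g)_\ast\nu$, your identity $\tau_{\nu_g}(ghg^{-1})=\tau(h)$ gives $\tau\circ\triangle(g)=\kappa(g)^{-1}\tau$, so you must invert $\kappa$. The paper avoids this by using the pull-back $\nu(\varphi(g)\,\cdot)$.
\item Atoms are excluded not by minimality but directly by density of $\tau(H)$. An atom of mass $m>0$ at $x$ gives $\tau(h)=\nu[x,\varphi(h).x)\ge m$ whenever $\tau(h)>0$, so $\tau(H)\cap(0,m)=\emptyset$, a contradiction. Together with full support, the semiconjugacy given by the distribution function of $\nu$ is then a homeomorphism. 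Equivalently: the affine model is minimal because $H$ acts on it by a dense group of translations, and a semiconjugacy between minimal actions is a homeomorphism.
\end{enumerate}

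\textbf{The gap: last claim of (iii).} Your sentence ``the $\varphi(L)$-invariant measure coincides with $\nu$ up to scale, so $\kappa(l)=1$'' presupposes that $\varphi(L)$ preserves some measure. For $l\in L\setminus H$ this is exactly what must be shown. Uniqueness of the $\varphi(L\cap H)$-invariant measure only gives $\varphi(l)_\ast\nu=c\,\nu$ for some unknown $c>0$.

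The missing idea, which is all the paper uses, is a power argument. Since $L\cap H$ has finite index in $L$, some power $l^n$ with $n\ge 1$ lies in $H$. The multiplier is a homomorphism to the torsion-free group $\R^\ast_+$ and is trivial on $H$, so $\kappa(l)^n=\kappa(l^n)=1$ forces $\kappa(l)=1$.

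Moreover, the claim concerns \emph{any} affine action $\varphi_{\mathrm{aff}}$ semiconjugate to $\varphi$, with no non-cyclicity assumption. It is meaningful, for example, for the standard affine action of $\mathrm{BS}(1,n)$, with $H$ the cyclic subgroup acting by integer translations. So the argument should be run, as in the paper, with the Lebesgue multiplier $\kappa_{\mathrm{Leb}}$ of $\varphi_{\mathrm{aff}}$ itself: $\kappa_{\mathrm{Leb}}(l)^n=\kappa_{\mathrm{Leb}}(l^n)=1$ because $\varphi_{\mathrm{aff}}(H)$ consists of translations. The paper uses this last fact without comment; it holds because $\varphi_{\mathrm{aff}}|_H$ is semiconjugate to the type I action $\varphi|_H$, whereas an irreducible group of affine maps containing a non-translation is proximal.
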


\begin{proof}
We first aim to find, for a fixed $g \in G$, a $\kappa(g)>0$ verifying $\tau \circ \triangle(g)(h) = \kappa(g)\tau(h)$ for all $h \in H \cap g^{-1}Hg$.

Since $\restr{\varphi}{H}$ is of type I, there exists an $H$-invariant Radon measure $\nu$ on $\R$ as in \eqref{eq:tau} such that $\tau = \tau_\nu$. Define $\nu_g$ as the Radon measure on $\R$ given by $\nu_g(\cdot) = \nu(\varphi(g)\,  \cdot)$. It is straightforward to see that the measure $\nu_g$ is $H \cap g^{-1}H g$-invariant, so there exists a $\kappa(g) > 0$ such that $\tau_{\nu_g} = \kappa(g)\tau_\nu$ on $H \cap g^{-1} H g$ by Proposition \ref{prop:conrad}. 
We conclude that when $h \in H \cap g^{-1}Hg$ is such that $\tau(h) \geq 0$, then
\[
        \tau_{\nu_g}(h) = \nu_g[\varphi(g^{-1}).x,\varphi(h)\circ \varphi(g^{-1}).x) = \nu[x, \varphi(ghg^{-1}).x) = \tau \circ \triangle(g)(h),
\]
and the same equality holds by linearity whenever $h \in H \cap g^{-1}Hg$ is such that $\tau(h) < 0$.

We now show that $\mathrm{ker}(\tau)$ is commensurated in $G$. Take $g \in G$ and set $K = \mathrm{ker}(\tau)$. Consider the map
\[
        \iota \colon h(K \cap g^{-1}Kg) \in K/(K \cap g^{-1}Kg) \mapsto h(H \cap g^{-1}Hg) \in H/(H \cap g^{-1}Hg)
\]
which is well defined since $K \subseteq H$. Notice that $H \cap g^{-1}H g \cap K \subseteq K\cap g^{-1}K g$: indeed, if $h \in H \cap g^{-1}Hg \cap K$, then
\[
        \tau(ghg^{-1}) = \tau \circ \triangle(g)(h) = \kappa(g) \tau(h) = 0,
\]
hence $h \in g^{-1}Kg$ and $h \in K \cap g^{-1}K g$. Since $H \cap g^{-1}H g \cap K \subseteq K \cap g^{-1}K g$,  the map $\iota$ is injective. Thus $K \cap g^{-1}Kg$ has finite index in $K$ and $K$ is commensurated in $G$.

By Proposition \ref{prop:conrad} the set $\mathrm{Fix}_\varphi(K)$ is non-empty. But the subgroup $K$ is commensurated in $G$, hence $\mathrm{Fix}_\varphi(K) = \R$, showing \eqref{it:s2}. By hypothesis $\restr{\varphi}{H}$ is non-trivial, so $K = \mathrm{ker}(\tau)$ does not coincide with $H$ and thus $\tau$ is non-trivial. We conclude that $\kappa \colon G \to \R^\ast_+$ is a morphism, showing \eqref{it:s1}.

We now prove \eqref{it:s3}. Again Proposition \ref{prop:conrad} shows that $\nu_g = \kappa(g)\nu$ (that is, $\varphi$ preserves the projective class of $\nu$) when $\tau_\nu(H)$ is dense. Since $\varphi(H)$ is isomorphic to $\tau(H)$ by \eqref{it:s2}, this happens if and only if $\varphi(H)$ is non-cyclic. If this is so, then $\varphi$ must be semiconjugate (actually conjugate by minimality of $\varphi$) to an affine action.

For the second statement of \eqref{it:s3}, notice that if $h \in L$ where $L \subseteq G$ is a subgroup commensurate with $H$, then there is an $n \in \N_+$ with $h^n \in H$. The action $\varphi_\mathrm{aff}$ preserves the projective class of the Lebesgue measure on $\R$, so we have a morphism $\kappa_\mathrm{Leb} \colon G \to \R^\ast_+$ such that $\varphi_\mathrm{aff}(g)$ rescales Lebesgue measure by $\kappa_\mathrm{Leb}(g)$. Thus
\[
        \kappa_\mathrm{Leb}(h)^n \mathrm{Leb} = \varphi_\mathrm{aff}(h^n)^\ast \mathrm{Leb} = \mathrm{Leb}
\]
and $\kappa_\mathrm{Leb}(h) = 1$. We conclude that $\varphi_\mathrm{aff}(h)$ preserves $\mathrm{Leb}$, that is, $\varphi_\mathrm{aff}(h)$ is a translation.
\end{proof}

\subsection{Laminar actions}\label{subsection:laminar}
Two open and bounded intervals $I,J \subseteq \R$ are said to be \emph{crossed} if $I \cap J \neq \emptyset$ and neither $I \subseteq J$ or $J \subseteq I$. A \emph{prelamination} is a collection $\cL$ of open, bounded and non-empty intervals of $\R$ that pairwise do not cross. A \emph{lamination} is a prelamination $\cL$ that is closed when seen as a subset of $\{(x,y) \in \R^2 : x < y\}$ with its natural topology, and its elements are called \emph{leaves}. An action $\varphi \colon G \to \mathrm{Homeo}_0(\R)$ is said to be \emph{laminar} if it preserves a lamination $\cL$ that is \emph{covering}, that is, that contains an increasing exhaustion of $\R$.

Laminar actions on the line are studied in \cite{BMRT}. Here we will only recall that covering laminations immediately arise in group actions on the line that admit no minimal sets. A \emph{wandering interval} for an action $\varphi \colon G \to \mathrm{Homeo}_0(\R)$ is an open bounded and non-empty interval $I \subseteq \R$ such that for all $g \in G$, either $\varphi(g).I = I$ or $\varphi(g).I \cap I = \emptyset$. An \emph{irreducible wandering interval} for $\varphi(G)$ is a wandering interval $I$ for $\varphi(H)$ such that $\mathrm{Stab}_{\varphi(H)}(I)$ acts on $I$ without global fixed points. We denote by $\cW_\varphi(G)$ the set of irreducible wandering intervals of $\varphi(G)$.

\begin{lema}[{\cite[Lemma 8.3.2]{BMRT}}] \label{lema:wandering}
Let $\varphi \in \mathrm{Rep}_\mathrm{irr}(G)$ be an action that does not admit a minimal set. Then $\cW_\varphi(G)$ is a $\varphi(G)$-invariant covering prelamination.
\end{lema}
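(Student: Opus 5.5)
The plan is to verify the three properties separately: $\varphi(G)$-invariance, the non-crossing property, and the covering property. The last one is where the absence of a minimal set is used.

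\emph{Invariance.} If $I\in\cW_\varphi(G)$ and $g\in G$, then $\varphi(g).I$ is again a wandering interval. Indeed, $\varphi(k).\varphi(g).I=\varphi(g).\varphi(g^{-1}kg).I$, and $\varphi(g^{-1}kg).I$ is either equal to $I$ or disjoint from it. Moreover $\mathrm{Stab}_{\varphi(G)}(\varphi(g).I)=\varphi(g)\,\mathrm{Stab}_{\varphi(G)}(I)\,\varphi(g)^{-1}$, so this stabilizer acts on $\varphi(g).I$ without fixed points. Hence $\cW_\varphi(G)$ is $\varphi(G)$-invariant.

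\emph{Non-crossing.} Suppose $I=(a,b)$ and $J=(c,d)$ are irreducible wandering intervals that cross, say $a<c<b<d$. Since $\mathrm{Stab}_{\varphi(G)}(I)$ has no fixed point in $I$, there is $h$ in this stabilizer with $\varphi(h).c\neq c$, and $\varphi(h).c\in(a,b)$.
\begin{itemize}
\item Then $\varphi(h).J=(\varphi(h).c,\varphi(h).d)\neq J$.
\item Both intervals contain the points slightly below $b$ and above $\max(c,\varphi(h).c)$: for $J$ because $b<d$, and for $\varphi(h).J$ because $\varphi(h).d>\varphi(h).b=b$.
\end{itemize}
So $\varphi(h).J\cap J\neq\emptyset$ while $\varphi(h).J\neq J$, contradicting that $J$ is wandering. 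Hence $\cW_\varphi(G)$ is a prelamination.

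\emph{Covering.} Fix $n$ and $K=[-n,n]$; the aim is an irreducible wandering interval containing $K$. Since $\varphi$ is not cocompact, some orbit avoids $K$. Therefore the open invariant set $U=\bigcup_g\varphi(g).\mathrm{int}(K')$, for a slightly larger interval $K'\supset K$, has nonempty closed invariant complement $C$. Irreducibility forces $C$ to be unbounded in both directions, since otherwise $\sup C$ or $\inf C$ would be a global fixed point. Let $I$ be the component of $U$ containing $K$. It is bounded, and it is wandering because it is a component of the complement of an invariant set.

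It remains to make $I$ irreducible, and this is the step I expect to be the main obstacle. Let $F=\mathrm{Fix}(\mathrm{Stab}_{\varphi(G)}(I))\cap I$. If $p\in F$, then $p\in\varphi(g).\mathrm{int}(K')$ for some $g$. Since $\varphi(g).K'$ meets $I$ and lies in $U$, we get $g\in\mathrm{Stab}(I)$, hence $\varphi(g^{-1}).p=p$ and $p\in\mathrm{int}(K')$. So $F$ is a compact subset of $\mathrm{int}(K')$. To handle a nonempty $F$, I would iterate the construction. I would try replacing $K'$ by $\overline I$ (or by a compact interval strictly containing it) and repeating. This gives a nested sequence of wandering intervals $I_1\subseteq I_2\subseteq\cdots$ whose stabilizers increase. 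I would then argue as follows.
\begin{itemize}
\item If the sequence stabilizes, the resulting interval contains its own translates' chains, which forces the fixed set $F$ to be empty.
\item If it never stabilizes, the union is either all of $\R$, contradicting that orbits avoid large compacts (non-cocompactness), or a bounded interval that can again be enlarged.
\end{itemize}
Making this transfinite or limiting argument rigorous, presumably via a Zorn-type maximality among wandering intervals containing $K$, is the delicate point. Once every $[-n,n]$ lies in some leaf of $\cW_\varphi(G)$, non-crossing makes these leaves nested, so they form an increasing exhaustion of $\R$ and the prelamination is covering.
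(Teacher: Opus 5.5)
The $\varphi(G)$-invariance and non-crossing arguments are correct. So is your construction of a bounded wandering interval $I\supseteq K$ as a component of $U$. The gap is the one you flag: nothing in your argument makes $I$ irreducible, and the proposed iteration cannot do it.

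Components of $U$ genuinely can be reducible. Suppose $p$ is the common endpoint of two irreducible wandering intervals $(p-1,p)$ and $(p,p+1)$, their union with $\{p\}$ is wandering, and its stabilizer preserves both halves; this is easy to arrange in a horocyclic action. If $K'$ is a small neighbourhood of $p$, then $I=(p-1,p+1)$ and its stabilizer fixes $p$. Taking $K'$ slightly larger than $\overline I$ can again give a component whose stabilizer fixes $p$ and the added margins.

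Your dichotomy does not address this. With $K'=\overline I$ the construction simply returns $I$, since its translates are equal to it or disjoint, so ``stabilization'' tells you nothing about $F$. With $K'$ strictly larger the intervals strictly grow. An increasing sequence of bounded wandering intervals with union $\R$ does not contradict non-cocompactness; it is exactly the picture the lemma describes.

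A Zorn-type argument does not rescue this either. Increasing chains may have unbounded union, and a minimal wandering interval containing $K$ can be reducible, like $(p-1,p+1)$ above.

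The missing idea is to shrink rather than enlarge, replacing $I$ by the convex hull of an orbit of its stabilizer.
\begin{enumerate}
\item Fix $x\in\R$. By irreducibility choose $g_1,g_2\in G$ with $\varphi(g_1).x<\min K$ and $\varphi(g_2).x>\max K$.
\item Run your construction with a compact interval $K'$ whose interior contains $K$, $x$, $\varphi(g_1).x$ and $\varphi(g_2).x$. This gives a bounded wandering interval $I\supseteq\mathrm{int}(K')$. Let $H=\{g\in G:\varphi(g).I=I\}$.
\item Since $\varphi(g_i).I$ meets $I$ at $\varphi(g_i).x$, both $g_1$ and $g_2$ lie in $H$.
\item Put $J=(\inf\varphi(H).x,\sup\varphi(H).x)$. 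Then $K\subseteq J\subseteq I$, and $J$ is $\varphi(H)$-invariant.
\item For $g\notin H$, $\varphi(g).J\subseteq\varphi(g).I$, which is disjoint from $I\supseteq J$. So $J$ is wandering with stabilizer exactly $\varphi(H)$.
\item Suppose $y\in J$ were fixed by $\varphi(H)$. Then $y\neq x$, since $\varphi(g_1).x\neq x$. If $x<y$, then $\varphi(h).x<\varphi(h).y=y$ for all $h\in H$, so $\sup\varphi(H).x\le y$, contradicting $y\in J$. The case $y<x$ is symmetric.
\end{enumerate}
Hence $J\in\cW_\varphi(G)$ contains $K$. Your last paragraph then gives the increasing exhaustion, after passing to a nested subsequence.
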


A laminar action $\varphi \colon G \to \mathrm{Homeo}_0(\R)$ can be thought as coming from the action of $G$ on a (not necessarily simplicial) tree $\cT$ that fixes a point in the boundary of $\cT$. An analogue of the general classification of group actions on trees by J. Tits \cite{Tits1970} is still true for laminar actions, but only two cases arise when the action is assumed irreducible. Call a subset $\cX$ of a lamination $\cL$ \emph{cofinal} if for every $l \in \cL$ there exists $I \in \cX$ with $l \subseteq I$.

\begin{prop}[{\cite[Proposition 8.1.10]{BMRT}}]
Let $\varphi \in \mathrm{Rep}_\mathrm{irr}(G)$ be a laminar and irreducible action. Let $\cL$ be a covering lamination preserved by $\varphi$. Then exactly one of the following hold.
\begin{itemize}
        \item There is a cofinal set of wandering intervals for $\varphi(G)$ in $\cL$. In this case there is no minimal set for $\varphi(G)$, and for every finitely generated $H \subseteq G$ the set $\{l \in \cL : \varphi(H).l = l\}$ is cofinal in $\cL$ and $\mathrm{Fix}_\varphi(H) \subseteq \R$ is unbounded in both directions. The action $\varphi$ is said to be \emph{horocyclic}.
        \item There exists an $l \in \cL$ with cofinal $\varphi(G)$-orbit. In this case there exists a unique (non-discrete) minimal set for $\varphi(G)$ and $\varphi$ is proximal. The action $\varphi$ is said to be \emph{focal}.
\end{itemize}
\end{prop}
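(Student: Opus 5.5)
The plan is to split according to whether some leaf has a cofinal $\varphi(G)$-orbit. The basic tool is that, since $\cL$ is $\varphi(G)$-invariant and its leaves pairwise do not cross, any two leaves in a single orbit $\varphi(G).l$ are either nested or disjoint. Throughout, I will use that $\cL$ is covering: every bounded set is contained in some leaf.

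\emph{First case: no leaf has a cofinal orbit.} Fix $l\in\cL$. By assumption there is a leaf $m\in\cL$ not contained in any $\varphi(g).l$; enlarging $m$ along the exhaustion, we may assume $l\subseteq m$.
\begin{itemize}
\item Every $\varphi(g).l$ containing $l$ meets $m$, so it is nested with $m$. It cannot contain $m$, hence it is contained in $m$.
\item The leaves $\varphi(g).l\supseteq l$ form a chain. Let $I_l$ be their union; it is a bounded open interval inside $m$.
\item $I_l$ is wandering. Indeed $\varphi(g).I_l$ is the union of the analogous chain through $\varphi(g).l$. If $\varphi(g).I_l$ meets $I_l$, then some leaf of one chain meets some leaf of the other. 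By nesting, the two chains are mutually cofinal, so $\varphi(g).I_l=I_l$.
\end{itemize}
Since the $I_l$ contain arbitrarily large leaves, we get a cofinal family of wandering intervals.

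For a finitely generated $H=\langle S\rangle$ and a point $x$, choose such a wandering $I$ containing $x$ and $\varphi(s).x$ for all $s\in S$. Then $\varphi(s).I$ meets $I$, so $\varphi(s).I=I$. Thus $\varphi(H)$ fixes a cofinal set of these intervals, hence also of leaves inside the stabilized chains, and their endpoints give $\mathrm{Fix}_\varphi(H)$ unbounded in both directions.

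Now suppose a minimal set $\Lambda$ existed. Cocompactness gives a compact $K$ meeting every orbit. Take a wandering $I\supseteq K$ whose stabilizer contains a finitely generated $H$ large enough that $\varphi(H).K$ is not contained in $I$; such an $H$ exists because $\varphi$ is irreducible and orbits of points of $K$ leave every bounded set. This contradicts $\varphi(H).I=I$. A cleaner route is to note that the endpoints of the $I_l$ form a closed invariant set with no dense orbit.

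\emph{Second case: some $l$ has a cofinal orbit.} For every bounded $J$, pick a leaf containing $J$ and then $\varphi(g).l$ containing that leaf, so that $\varphi(g^{-1}).J\subseteq l$. Thus every compact set can be pushed into $l$.

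For proximality, I want to push $l$ into arbitrarily small intervals. By irreducibility and minimality considerations, some orbit leaf $\varphi(g).l$ is strictly contained in $l$. Applying the compact-pushing property to small leaves, which I get by intersecting the orbit chain, should yield contraction of $l$ toward a point of $\Lambda$.

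The minimal set should be the set of accumulation points of the endpoints of the leaves $\varphi(G).l$. Uniqueness and non-discreteness follow from proximality together with Theorem~\ref{teo:tits}.

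Finally, the two cases are mutually exclusive: in the first case there is no minimal set, and in the second case there is one.

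The main obstacle is the second case. The difficulty is producing leaves of the orbit of arbitrarily small size, i.e.\ genuine contraction rather than mere cofinality, which is what is needed for proximality. I would handle this by analysing the chain $\varphi(g^{-n}).l$ for some $g$ with $\varphi(g).l\supsetneq l$. I would then show that its intersection is a point, using irreducibility to rule out a nondegenerate invariant interval.
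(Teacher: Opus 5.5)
The paper gives no proof of this statement; it quotes it from BMRT. So your argument has to be judged on its own. Your first case is essentially sound: the $I_l$ are wandering and cofinal, and the argument for finitely generated $H$ is fine. You should add that $I_l\in\cL$ because $\cL$ is closed in $\{(x,y):x<y\}$, since the statement asks for wandering intervals \emph{in} $\cL$.

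Your proof that there is no minimal set, however, does not work. If $H$ lies in the stabilizer of a wandering interval $I\supseteq K$, then $\varphi(H).K\subseteq I$ automatically, so the $H$ you ask for never exists. Your ``cleaner route'' fails too, because a closed invariant set with no dense orbit does not contradict the existence of a minimal set. Here is a correct short argument. Let $I\supseteq K$ be wandering and let $p$ be an endpoint of $I$. If $\varphi(g).p\in I$, then $\varphi(g).I$ meets $I$, so $\varphi(g).I=I$ and $\varphi(g).p=p\notin I$, a contradiction. Hence the orbit of $p$ misses $K$, contradicting cocompactness.

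The genuine gap is proximality in the focal case. Your plan is to show that $\bigcap_n\varphi(g^{-n}).l$ is a point ``using irreducibility to rule out a nondegenerate invariant interval''. That intersection is only invariant under $\varphi(g)$ and is bounded by fixed points of $\varphi(g)$. Irreducibility of $\varphi(G)$ gives no control on the fixed set of the single homeomorphism $\varphi(g)$, which may well fix a nondegenerate subinterval of $l$. You also never actually establish that a minimal set exists; the description via accumulation points of endpoints is unjustified.

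Two ideas are missing.
\begin{enumerate}
\item Every point can be pushed into $l$, as you showed, so $\overline{l}$ meets every orbit. Hence $\varphi$ is cocompact and has a minimal set $\Lambda$. $\Lambda$ is not a discrete orbit: otherwise $|\varphi(g).l\cap\Lambda|=|l\cap\Lambda|<\infty$ for all $g$, while some orbit leaves contain $[-n,n]$ for every $n$ and $\Lambda$ is unbounded. So $\Lambda$ is perfect, unique, and contained in every orbit closure.
\item Get small orbit leaves from density of endpoints together with non-crossing, not from a single element. Take $z\in\Lambda$ accumulated from the right by $\Lambda$ (all but countably many points are), and fix $\epsilon>0$.
\begin{enumerate}
\item Since the orbit of $\sup l$ accumulates on $\Lambda$, there is an orbit leaf $(a_1,b_1)$ with $b_1\in(z,z+\epsilon)$. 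If $a_1\ge z$, this leaf already lies in $(z-\epsilon,z+\epsilon)$.
\item If $a_1<z$, the open set $(z,b_1)$ meets $\Lambda$, so there is an orbit leaf $(a_2,b_2)$ with $a_2\in(z,b_1)\subseteq(a_1,b_1)$. Since $a_2>a_1$, non-crossing forces $(a_2,b_2)\subseteq(a_1,b_1)$, so this leaf lies in $(z-\epsilon,z+\epsilon)$.
\end{enumerate}
Now take any $x<y$, an element $h$ with $[x,y]\subseteq\varphi(h).l$, and an element $k$ with $\varphi(k).l\subseteq(z-\epsilon,z+\epsilon)$. Then $\varphi(kh^{-1})$ maps $[x,y]$ into $(z-\epsilon,z+\epsilon)$, which gives proximality directly, without Theorem~\ref{teo:tits}.
\end{enumerate}
With these repairs, mutual exclusivity follows as you state.
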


If $\varphi \in \mathrm{Rep}_\mathrm{irr}(G)$ is an action such that $\restr{\varphi}{N}$ admits no minimal set for some normal subgroup $N \subseteq G$ then $\cW_\varphi(N)$ is also $\varphi(G)$-invariant, and hence $\varphi$ is laminar. The same statement holds when $N$ is only commensurated.

\begin{lema} \label{lema:noMinimal}
Let $\varphi \in \mathrm{Rep}_{\mathrm{irr}}(G)$ such that $\restr{\varphi}{H}$ does not admit a minimal set for some commensurated subgroup $H \subseteq G$. Then $\cW_\varphi(H)$ is $\varphi(G)$-invariant, and hence $\varphi$ is laminar. 
\end{lema}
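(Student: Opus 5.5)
The plan mirrors the normal-subgroup case. Fix $g \in G$ and an irreducible wandering interval $I \in \cW_\varphi(H)$; the goal is to show that $\varphi(g).I$ also lies in $\cW_\varphi(H)$. Conjugation by $\varphi(g)$ sends $\varphi(H)$ to $\varphi(gHg^{-1})$, so $\varphi(g).I$ is an irreducible wandering interval for $\varphi(gHg^{-1})$. The proof therefore reduces to the claim that $\cW_\varphi(H) = \cW_\varphi(L)$ whenever $L$ is commensurate with $H$. Since $H \cap gHg^{-1}$ has finite index in both $H$ and $gHg^{-1}$, it suffices to treat the case $L \le H$ of finite index, comparing each of $H$ and $gHg^{-1}$ with their intersection.

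So let $L \le H$ have finite index, and argue in two directions.

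\emph{$\cW_\varphi(H) \subseteq \cW_\varphi(L)$.} Take $I \in \cW_\varphi(H)$. Clearly $I$ is wandering for $L$. Its stabilizer $\mathrm{Stab}_{\varphi(L)}(I)$ has finite index in $\mathrm{Stab}_{\varphi(H)}(I)$. The latter acts on $I$ without global fixed points, so the former does too. Indeed, for a finite-index subgroup $S' \le S$ acting on an interval, $\mathrm{Fix}(S')$ is $S$-invariant up to the finitely many cosets. The key fact is that $h^n \in S'$ and $h^n(x)=x$ force $h(x)=x$, as in the proof of Proposition \ref{prop:fixComm}; taking a normal core gives $\mathrm{Fix}(S') = \mathrm{Fix}(S)$.

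\emph{$\cW_\varphi(L) \subseteq \cW_\varphi(H)$.} Take $J \in \cW_\varphi(L)$. First show that $J$ is wandering for $H$. Suppose $h \in H$ with $\varphi(h).J \neq J$ but $\varphi(h).J \cap J \neq \emptyset$. Since some power $h^n$ lies in the normal core of $L$ in $H$, which has finite index, one has $\varphi(h^n).J = J$ or $\varphi(h^n).J \cap J = \emptyset$. On the other hand, monotonicity of $\varphi(h)$ together with the overlap gives the following. If $\varphi(h).J$ is, say, shifted to the right relative to $J$ (the endpoints move consistently), then the iterates $\varphi(h^k).J$ overlap consecutively and march monotonically, so $\varphi(h^n).J \neq J$ while still intersecting $J$ for $n=1$ only. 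This needs care.

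A cleaner route for this step is to use that $J$ is irreducible for $\mathrm{Stab}_{\varphi(L)}(J)$. Endpoints of irreducible wandering intervals are accumulated by $L$-orbit points inside $J$, and the intervals in $\cW_\varphi(L)$ pairwise do not cross (Lemma \ref{lema:wandering} applied to $L$ acting on the $\varphi(L)$-invariant set; alternatively, directly from the wandering property). The set $\cW_\varphi(L)$ is $\varphi(H)$-invariant because it is invariant under the finite-index normal core $N$, and conjugation by $H$ permutes the $N$-structure: $\cW_\varphi(N) = \cW_\varphi(L)$ by the first direction applied twice. Hence $\varphi(h).J \in \cW_\varphi(L)$, and two elements of a non-crossing family are nested or disjoint. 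If they are strictly nested, say $\varphi(h).J \subsetneq J$, iterating gives $\varphi(h^n).J \subsetneq J$, contradicting the wandering property for $h^n \in N$. Thus $J$ is wandering for $H$. Irreducibility of $J$ for $H$ then follows because $\mathrm{Stab}_{\varphi(H)}(J)$ contains $\mathrm{Stab}_{\varphi(L)}(J)$.

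Combining the two directions, $\cW_\varphi(H) = \cW_\varphi(H\cap gHg^{-1}) = \cW_\varphi(gHg^{-1}) = \varphi(g).\cW_\varphi(H)$, which proves $\varphi(G)$-invariance.

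Laminarity then follows. Since $\varphi|_H$ has no minimal set, Lemma \ref{lema:wandering} shows that $\cW_\varphi(H)$ is a covering prelamination. Its closure in $\{(x,y): x<y\}$ is still non-crossing, because crossing is an open condition. The closure is also $\varphi(G)$-invariant and covering, so it is a covering lamination preserved by $\varphi$.

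The main obstacle is the non-crossing and invariance bookkeeping in the second direction, namely showing that $\cW_\varphi(L)$ is invariant under the overgroup $H$. I would handle it by passing to the normal core $N$ of $L$ in $H$, so that invariance under $H$ becomes automatic by normality, and then using only the finite-index comparison in the first direction.
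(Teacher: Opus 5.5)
Your strategy works, but one step in the second direction is circular as written. You justify the $\varphi(H)$-invariance of $\cW_\varphi(L)$ through the equality $\cW_\varphi(N)=\cW_\varphi(L)$, claimed ``by the first direction applied twice''.

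The first direction only gives inclusions from a group to its finite-index subgroups, here $\cW_\varphi(L)\subseteq\cW_\varphi(N)$ and $\cW_\varphi(H)\subseteq\cW_\varphi(N)$. The reverse inclusion $\cW_\varphi(N)\subseteq\cW_\varphi(L)$ is an instance, for the pair $N\le L$, of exactly what the second direction is supposed to prove. So $\varphi(h).J\in\cW_\varphi(L)$ is not yet available at that point.

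You do not actually need it:
\begin{itemize}
\item $J\in\cW_\varphi(L)\subseteq\cW_\varphi(N)$ by the first direction.
\item $\varphi(h).J\in\varphi(h).\cW_\varphi(N)=\cW_\varphi(hNh^{-1})=\cW_\varphi(N)$, by normality of $N$ in $H$.
\item $\cW_\varphi(N)$ is non-crossing. The standard argument works for any subgroup, with no minimal-set hypothesis: if $J_1=(a,b)$ and $J_2=(c,d)$ cross with $a<c<b<d$, an element of $\mathrm{Stab}_{\varphi(N)}(J_1)$ moving $c$ fixes $b\in J_2$, so it must stabilize $J_2$ and hence fix $c$.
\end{itemize}
Hence $J$ and $\varphi(h).J$ are equal, disjoint or strictly nested, and your iteration with $h^n\in N$ rules out strict nesting. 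Normality is not even needed: $\cW_\varphi(L\cap hLh^{-1})$ already contains both $\cW_\varphi(L)\ni J$ and $\cW_\varphi(hLh^{-1})\ni\varphi(h).J$ by the first direction.

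You should also delete the abandoned ``monotone march'' attempt. Monotonicity alone cannot rule out $\varphi(h).J$ crossing $J$, which is exactly the case where irreducibility of the stabilizer must be used.

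With this repair your proof is complete, and it is organised differently from the paper's. The paper works directly with the pair $H$, $gHg^{-1}$ in three steps:
\begin{itemize}
\item It shows that no interval of $\cW_\varphi(H)$ crosses an interval of $\cW_\varphi(gHg^{-1})=\varphi(g).\cW_\varphi(H)$, using an element of $\mathrm{Stab}_{\varphi(H)}(I)$ moving an endpoint and a power of it lying in $gHg^{-1}$.
\item It checks by hand that $\varphi(g).I$ is wandering for $H$ via powers $g^{-1}h^ng\in H$, excluding the crossing case by the previous claim.
\item It obtains irreducibility by a finite-index argument.
\end{itemize}
You instead isolate the more conceptual statement that $\cW_\varphi(\cdot)$ depends only on the commensurability class of the subgroup, in the spirit of Lemma~\ref{lema:isolator} for isolators. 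You prove it by two finite-index inclusions, so non-crossing is only ever needed inside a single family, which is the usual one-group fact.

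The paper's cross-family claim becomes a corollary in your framework, since both families lie in the non-crossing family $\cW_\varphi(H\cap gHg^{-1})$. The paper's route is shorter and avoids cores; yours gives a reusable invariance statement. The final laminarity step, taking the closure of an invariant covering prelamination and using that crossing is an open condition, is the same in both.
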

\begin{proof}
Since $\restr{\varphi}{H}$ does not admit a minimal set, $\restr{\varphi}{H}$ must be irreducible and Lemma \ref{lema:wandering} shows that $\cW_\varphi(H)$ is a covering prelamination. 


We claim that if $I \in \cW_\varphi(H)$ and $J \in \varphi(g).\cW_\varphi(H) = \cW_\varphi(gHg^{-1})$ for some $g \in G$, then $I$ and $J$ do not cross. Indeed, suppose towards a contradiction that $I = (a,b)$ and $J = (c,d)$ cross, so we may assume that $a < c < b < d$. The action of $\mathrm{Stab}_{\varphi(H)}(I)$ is fixed-point-free, so there exists $h \in H$ such that $\varphi(h).I = I$ and $\varphi(h).c \neq c$. If $n \in \N_+$ is such that $h^n \in g H g$, then $\varphi(h^n).J$ cannot be disjoint from $J$ because $\varphi(h^n).b = b$. Thus $\varphi(h^n).J = J$, which contradicts $\varphi(h^n).c \neq c$.  We conclude that $I$ and $J$ do not cross.

Now let $I \in \cW_\varphi(H)$, $g \in G$. We claim that $\varphi(g).I \in \cW_\varphi(H)$: take $h \in H$ and $n \in \N_+$ such that $g^{-1}h^ng \in H$. Then either $\varphi(g^{-1}h^ng).I = I$ or $\varphi(g^{-1}h^ng).I \cap I = \emptyset$. If the first option is satisfied we also have $\varphi(g^{-1}hg).I = I$. If the second option is satisfied, suppose that $\varphi(g^{-1}hg).I\cap I \neq \emptyset$. Then $\varphi(g^{-1}hg).I$ and $I$ must be crossed, contradicting the previous claim, so $\varphi(g^{-1}hg).I \cap I = \emptyset$. In any case, we conclude that $\varphi(h)\left(\varphi(g).I\right) = \varphi(g).I$ or $\varphi(h)\left(\varphi(g).I\right) \cap \varphi(g).I = \emptyset$ and that $\varphi(g).I$ is also wandering. Since $\mathrm{Stab}_{\varphi(gHg^{-1})}(\varphi(g).I)$ has no fixed points in $\varphi(g).I$ and $\varphi(gHg^{-1}) \cap \varphi(H)$ has finite index in $\varphi(H)$, it follows that $\mathrm{Stab}_{\varphi(H)}(\varphi(g).I)$ has no fixed points in $\varphi(g).I$ either. Thus $\varphi(g).I \in \cW_\varphi(H)$.


        The closure of $\cW_\varphi(H)$ in $\{(x,y) \in \R^2 : x < y\}$ gives a $\varphi(G)$-invariant covering lamination, so $\varphi$ is laminar.
\end{proof}

\section{Stability properties of $\cC$} \label{section:stability}
In this section we prove all the claimed stability properties of $\cC$ and show some examples. Subsection \ref{subsection:stability} proves Theorem \ref{teo:stability}, and Subsection \ref{subsection:BassSerre} proves Corollaries \ref{cor:stability}, along with a more general theorem for fundamental groups of graphs of groups. We also introduce the necessary definitions from Bass-Serre theory. Subsection \ref{subsection:BaumslagSolitar} considers the example of non-amenable Baumslag-Solitar groups, and shows that most of them admit uncountably many conjugacy classes of minimal type III actions on the line. 

\subsection{Proof of Theorem \ref{teo:stability}} \label{subsection:stability}
In this subsection $G$ will always denote a finitely generated group.
\begin{lema} \label{lema:noIII} Let $G$ be a finitely generated group.
Let $\varphi \in \mathrm{Rep}_\mathrm{III}(G)$ and $(f_n)_{n \geq 0}$ almost centralizing $\varphi$.  Suppose that $H \subseteq G$ is a commensurated subgroup such that $\restr{\varphi}{H}$ is non-trivial and is not of type III. Then $\lim_{n \to \infty}f_n = \mathrm{id}_\R$.
\end{lema}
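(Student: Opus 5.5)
The plan is to reduce the conclusion to condition \eqref{i-approxbounded} of Theorem \ref{cor:criterio}, applied to the single action $\varphi$. Its proof shows that for a type III action $\varphi$, once there is a $z\in\R$ with $(f_n^{-1}(z))_{n\geq 0}$ bounded, every $(f_n^{-1}(x))$ is bounded. Lemma \ref{lema:preorden} then gives $f\in\mathrm{Cent}_\varphi(G)=\{\mathrm{id}_\R\}$ with $f(x)=y$ for every limit point $y$, hence $f_n\to\mathrm{id}_\R$. So it suffices to exhibit one point $z$ whose preimages stay bounded, or equivalently a point $p$ with $f_n(p)\to p$.

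By Proposition \ref{prop:fixComm}, $\restr{\varphi}{H}$ has no global fixed point, since it is non-trivial. I then split according to Theorem \ref{teo:tits}.

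\emph{First case: $\restr{\varphi}{H}$ has a minimal set.} Commensuration should force this minimal set $\Lambda_H$ to be $\varphi(G)$-invariant, because a finite-index subgroup of $H$ and of $gHg^{-1}$ has the same minimal set. Minimality of $\varphi$ then gives $\Lambda_H=\R$.
\begin{itemize}
\item If $\restr{\varphi}{H}$ is of type I with $\varphi(H)$ non-cyclic, Lemma \ref{lema:struct1}\eqref{it:s3} conjugates $\varphi$ to an affine action. Since $\varphi$ is of type III, the modular morphism $\kappa$ is non-trivial, so some $\varphi(g)$ is a non-translation affine map with a unique fixed point $p$, where $\varphi(g)-\mathrm{id}$ changes sign. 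Because $f_n\varphi(g)f_n^{-1}\to\varphi(g)$ uniformly on compacts, the fixed point $f_n(p)$ of $f_n\varphi(g)f_n^{-1}$ must converge to $p$, since a sign change near $p$ forces nearby fixed points. This gives the required boundedness.
\item If $\varphi(H)$ is cyclic, or if $\restr{\varphi}{H}$ is of type II, let $\mathrm{Cent}_\varphi(H)$ be generated by $c$ (in the cyclic case, use the generator of $\varphi(H)$ together with its centralizer). Finite-index subgroups of $H$ have centralizers commensurable with $\langle c\rangle$. Hence each $\varphi(g)$ conjugates $c$ to an element commensurable with it, and $\varphi(G)$ preserves the discrete closed set $D=\bigcup_{k}\mathrm{Fix}(c^k\varphi(h))$ built from the periodic structure. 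This should contradict minimality of $\varphi$ unless the structure is trivial. I would instead run the argument in the type II part of the proof of Theorem \ref{cor:criterio}. Since $f_n$ almost centralizes $\restr{\varphi}{H}$, that argument gives $c^{k_n}f_n^{-1}\to\mathrm{id}_\R$ for some integers $k_n$. One then has to show that $(k_n)$ is bounded. For this I would use an element $g$ with $\varphi(g)c\varphi(g)^{-1}$ not in $\langle c\rangle$; such $g$ exists, otherwise $c$ would centralize a finite-index subgroup of $\varphi(G)$, which is impossible for type III. Approximate commutation of $f_n$ with $\varphi(g)$ should then bound $k_n$.
\end{itemize}

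\emph{Second case: $\restr{\varphi}{H}$ has no minimal set.} By Lemma \ref{lema:noMinimal}, $\varphi$ preserves the covering prelamination $\cW_\varphi(H)$. Since $\varphi$ is minimal and of type III, it is focal. Here I would use finite generation of $G$: fix a leaf $I\in\cW_\varphi(H)$ and finitely many $g_i$ with $\varphi(g_i)$ mapping a small interval deep inside leaves. Since $f_n.\varphi\to\varphi$, the leaves of $f_n.\cW_\varphi(H)=\cW_{f_n.\varphi}(H)$ near $I$ should be close to leaves of the original lamination, which pins down $f_n$ on an endpoint of $I$ up to boundedness.

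I expect this laminar case, together with the step bounding $k_n$ in the cyclic/type II case, to be the main obstacle. For the laminar case I would first try to show that $f_n(\partial I)$ stays bounded by comparing the irreducible wandering intervals of $f_n.\varphi$ and of $\varphi$ on $H\cap g^{-1}Hg$, using the exhaustion by leaves.
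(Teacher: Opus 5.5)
\paragraph{Overall assessment.} Your reduction is sound: once one point satisfies $f_n(p)\to p$, density of $\varphi(G).p$ and monotonicity give $f_n\to\mathrm{id}_\R$. Your treatment of the non-cyclic type~I subcase is also correct, and it differs from the paper's. After conjugating $\varphi$ to an affine action, some $\varphi(g)$ has a unique fixed point $p$ with a sign change, and the unique fixed point $f_n(p)$ of $f_n\varphi(g)f_n^{-1}$ is then forced to converge to $p$. The paper instead treats all of type~I at once through $\kappa$: it finds $g$ with $\kappa(g)>1$ and $h\in H\cap gHg^{-1}$ with $\varphi(h).x>\varphi(g).x$, and shows that the reverse inequality holds at every large $y$. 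However, the remaining cases are not proved, so there is a genuine gap.

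\paragraph{Cyclic and type~II subcases.} When $\varphi(H)$ is cyclic, all of its orbits are discrete. So your claim that the minimal set of $\varphi|_H$ is $\varphi(G)$-invariant and equal to $\R$ fails here. Moreover, the centralizer of a single fixed-point-free homeomorphism is far from cyclic, so your second bullet gives no argument in this subcase.

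Your own method does adapt. Write $\varphi(H)=\langle a\rangle$; commensuration gives $\varphi(g)a^m\varphi(g)^{-1}=a^k$ with $k,m\geq 1$. Finite generation and type~III force $k\neq m$ for some $g$, since otherwise a power of $a$ would be central. Inverting $g$ if needed, $\varphi(g)(x)-\frac{k}{m}x$ is periodic, so $\mathrm{Fix}(\varphi(g))$ is compact, non-empty and has a sign change across it. That suffices to localize fixed points of $f_n\varphi(g)f_n^{-1}$ and bound $f_n^{-1}$ at one point, but you have to carry this out.

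In the type~II subcase, the element $g$ with $\varphi(g)c\varphi(g)^{-1}\notin\langle c\rangle$ that you want to use to bound $k_n$ never exists. The missing idea is that every finite-index subgroup of $\varphi(H)$ has exactly the same centralizer generator $c$, not merely a commensurable one. The paper gets this from Ghys' description $c(x)=\sup\{y\geq x : [x,y] \text{ is contracted by a sequence in } \varphi(H)\}$. This is unchanged when each $\varphi(h_n)$ is replaced by a power lying in $\varphi(H\cap gHg^{-1})$. Hence $\varphi(g)c\varphi(g)^{-1}=c$ for all $g$, which contradicts type~III, so this case is vacuous.

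\paragraph{No minimal set.} This is the heart of the lemma, and your sketch contains no argument. The convergence $f_n.\varphi\to\varphi$ only controls finitely many elements on compact sets, whereas $\mathcal{W}_{f_n.\varphi}(H)=f_n(\mathcal{W}_\varphi(H))$ is a global object. Claiming that its leaves near $I$ stay close to those of $\mathcal{W}_\varphi(H)$ is essentially the statement to be proved.

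The paper needs three ingredients:
\begin{enumerate}
\item finite generation, to replace $\varphi$ by a (harmonic) conjugate with uniformly bounded displacement $\delta_S$ on a finite generating set $S$;
\item a maximal chain $\mathcal{S}$ of leaves whose large members are sent into $\mathcal{S}$ by $\varphi(S\cup\{h\})$, and whose right endpoints are $2\delta_S$-dense near $+\infty$;
\item an element $h\in H$ with $\varphi(h).x>\varphi(s).x$ for all $s\in S$.
\end{enumerate}
This open condition holds for $\varphi$ at $x$ but fails at every large $y$. Indeed, $\varphi(h)$ fixes the endpoints of large leaves of $\mathcal{S}$, while focality provides $s\in S$ with $\varphi(s).l_-\supseteq l_+$. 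Hence $f_n^{-1}(x)\to+\infty$ would eventually push $f_n.\varphi$ out of this neighbourhood of $\varphi$.
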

\begin{proof}
First of all, $\varphi(H)$ cannot have an exceptional minimal set: suppose on the contrary that $\Lambda \subsetneq \R$ is such a minimal set and let $K \subseteq H$ be any normal finite-index subgroup. We claim that $\varphi(K)$ has a minimal set, which is $\Lambda$. Indeed, by reducing $K$ we may assume that $K$ is normal in $H$. The action of $\varphi(K)$ is cocompact since $\varphi(H)$ is cocompact and $K$ has finite index in $H$, so $\varphi(K)$ has a minimal set $\Lambda_K$. It cannot be a discrete orbit, since in that case $\varphi(H)$ would preserve a union of discrete orbits. Thus $\Lambda_K$ is unique, and by normality of $K$ in $H$ it is preserved by $\varphi(H)$. We conclude that $\Lambda_K = \Lambda$.

Now let $g \in G$, so by the previous paragraph the groups $\varphi(H), \varphi(H \cap gHg^{-1})$ and $\varphi(gHg^{-1})$ all contain a common normal finite index subgroup, hence share the same minimal set $\Lambda$. But $\varphi(g).\Lambda$ is also $\varphi(gHg^{-1})$-invariant, and hence contains a $\varphi(gHg^{-1})$-minimal set, which must be $\Lambda$ by uniqueness. We conclude that $\varphi(g).\Lambda \supseteq \Lambda$, and since $g \in G$ was arbitrary we deduce that $\Lambda$ is $\varphi(G)$-invariant. This contradicts the minimality of $\varphi$.

By Proposition \ref{prop:fixComm} we have $\restr{\varphi}{H} \in \mathrm{Rep}_\mathrm{irr}(H)$, and we will proceed to verify the conclusion according to whether $\restr{\varphi}{H}$ is of type I, II or has no minimal set.

\begin{itemize}
	\item \underline{Assume that $\restr{\varphi}{H}$ is of type II.} In this case $\restr{\varphi}{H}$  cannot have a discrete orbit, and by the previous paragraph it cannot have an exceptional minimal set either, hence it must be minimal. We say that a sequence $(h_n)_{n \geq 0} \subseteq \mathrm{Homeo}_0(\R)$ \emph{contracts} a compact interval $I \subseteq \R$ if $h_n(I) \subseteq I$ for every $n \in \N$ and $\lim_{n \to \infty} \mathrm{diam}(h_n(I)) = 0$. Thus the generator $c$ of the centralizer $\mathrm{Cent}_\varphi(H)$ such that $c(0) > 0$ can be written as
\[
	c(x) = \sup\{y \geq x : [x,y] \text{ is contracted by some sequence in }\varphi(H)\},
\]
see \cite[Section 5.2]{Ghys2001}.

For any $g \in G$ we have, by definition, that
\[
	\varphi(g) \circ c \circ \varphi(g^{-1})(x) = \sup\{y \geq x : \varphi(g^{-1}).[x,y] \text{ is contracted by some sequence in }\varphi(H)\}.
\]
Consider a sequence $(\varphi(h_n))_{n \geq 0} \subseteq \varphi(H)$ contracting an interval $[x,y]$. If $k \in \N_+$ is the index of $H \cap gHg^{-1}$ in $H$, then $(\varphi(h_n^k))_{n \geq 0}$ still contracts $[x,y]$, and every $\varphi(h_n^k)$ can be written as $\varphi(g\widetilde{h_n}g^{-1})$ for some $\widetilde{h_n} \in H$. Hence $(\widetilde{h_n})_{n \geq 0}$ contracts the interval $\varphi(g^{-1}).[x,y]$, so $ c(x) \leq \varphi(g) \circ c \circ \varphi(g^{-1})(x)$ for any $x \in \R$. A symmetric argument replacing $g$ by $g^{-1}$ shows that $c = \varphi(g) \circ c \circ \varphi(g^{-1})$ and, since $g \in G$ was arbitrary, that $\varphi$ has a non-trivial centralizer. This contradicts the fact that $\varphi$ is of type III, so $\restr{\varphi}{H}$ cannot be of type II.
\end{itemize}
We will assume that there exists an $x \in \R$ such that $(f_n^{-1}(x))_{n \geq 0}$ is unbounded and deduce a contradiction with this hypothesis in the remaining cases.  This suffices: indeed since $\varphi$ has trivial centralizer, Lemma \ref{lema:preorden} implies that the only finite accumulation point of $(f_n^{-1}(x))_{n \geq 0}$ can be $x$, hence if $(f_n^{-1}(x))_{n \geq 0}$ is bounded for every $x$ then necessarily $f_n\to \mathrm{id}_\R$. We may assume that  $\lim_{n \to \infty} f_n^{-1}(x) = \infty$. Fix a finite symmetric generating set $S \subseteq G$.

\begin{itemize}
	\item \underline{Assume that $\restr{\varphi}{H}$ is of type I.}  Let $\tau \colon H \to \R$ be a Conrad homomorphism associated to $\restr{\varphi}{H}$ and let $\kappa \colon G \to \R_+^\ast$ as in Lemma \ref{lema:struct1}. We claim that $\kappa(G) \neq \{1\}$. Suppose not, and notice that since $H \cap \bigcap_{s \in S}s^{-1}Hs$ has finite index in $H$, it contains an element $h$ such that $\tau(h) > 0$ because $\restr{\varphi}{H}$ is non-trivial. For every $s \in S$ we have
\[
        \tau(hsh^{-1}s^{-1}) = \tau(h) + \tau \circ \triangle(s)(h^{-1}) = \tau(h) - \kappa(s) \tau(h) = 0,
\]
so by Lemma \ref{lema:struct1}, \eqref{it:s2} the element $\varphi(hsh^{-1}s^{-1})$ is trivial. Hence $\varphi(h)$ is non-trivial and commutes with $\varphi(G)$, a contradiction.

Thus we can find $g \in G$ such that $\kappa(g) > 1$. Let $h \in H \cap g H g^{-1}$ such that $\varphi(h).x > \varphi(g).x$, so
\[
	U = \{\psi \in \mathrm{Rep}_\mathrm{III}(G) : \psi(h).x > \psi(g).x\}
\]
is a neighborhood of $\varphi$ in $\mathrm{Rep}_\mathrm{III}(G)$. We will show that $\varphi(h).y \leq \varphi(g).y$ for all sufficiently large $y \in \R$. Choose $r \in \N$ such that $\varphi(h^r g).s \geq \varphi(h).s$ for all $s \in [0, \varphi(h).0]$. Since $\kappa(g) > 1$ we can choose $N \in \N$ such that $\kappa(g)n - 1 \geq n + r$ for all $n \geq N$.

We claim that $\varphi(h).y < \varphi(g).y$ for all $y \geq \varphi(h^N).0$. Indeed, write $y = \varphi(h^n).t$ where $n \geq N$ and $t \in [0, \varphi(h).0]$. Denote by $\nu$ a Radon measure on $\R$ such that $\tau = \tau_\nu$ as in \eqref{eq:defTau}. Take $k=k(n,t) \in \Z$ such that
\begin{equation} \label{eq:k}
        \varphi(g^{-1}h^{k-1} g).t \leq \varphi(h^n).t < \varphi(g^{-1}h^{k}g).t. 
\end{equation}
Then
\[
	n\tau(h) = \nu[t, \varphi(h^n).t) \leq \nu[t, \varphi(g^{-1}h^{k} g).t) = \tau \circ \triangle(g^{-1})(h^{k}) = k\kappa(g^{-1})\tau(h).
\]
We conclude that $k \geq \kappa(g)n - 1 \geq n + r$ because $n \geq N$, and \eqref{eq:k} implies
\[
        \varphi(g).y = \varphi(gh^n).t \geq \varphi(h^{k-1}g).t \geq \varphi(h^{n+r}g).t > \varphi(h^{n+1}).t = \varphi(h).y
\] as desired.

Since $\lim_{n \to \infty} f_n^{-1}(x) = \infty$ we deduce that $f_n.\varphi \not \in U$ for sufficiently large $n \in \N$, which contradicts the assumption that $\lim_{n \to \infty} f_n.\varphi = \varphi$.

	\item \underline{Assume that $\restr{\varphi}{H}$ does not admit a minimal set.} By Lemma \ref{lema:noMinimal} the action $\varphi$ preserves a covering lamination $\cL$ composed of wandering intervals for $\restr{\varphi}{H}$. Since $G$ is finitely generated, the action $\varphi$ is focal. Up to conjugating $\varphi$ and the $f_n$ we may assume that for every $g \in G$ the quantity $\sup\{ \abs{\varphi(g).y - y} : y \in \R\}$ is finite by Remark \ref{rem:bounded-displacement}. 
	Fix a maximal totally ordered (for inclusion) set $\cS \subseteq \cL$, which must be closed in $\cL$ by maximality. The proofs of the following two claims follow some ideas in \cite[Section 15.1]{BMRT}.

\setcounter{claimnum}{0}
\begin{claimnum} \label{claim:stable}
        For every finite subset $F$ of $G$, there exists a leaf $l_F \in \cS$ such that for all $l \in \cS$ with $l \supseteq l_F$, we have $\varphi(F).l \subseteq \cS$.
\end{claimnum}

\begin{proof}[Proof of the claim]
Notice that since $\cL$ is a lamination and $\cS$ is maximal, if $l_1 \in \cL$ and $l_2 \in \cS$ are such that $l_1 \supseteq l_2$ then $l_1 \in \cS$ too. Thus it suffices to find a leaf $l \in \cS$ such that all its images under $\varphi(F)$ are over some element of $\cS$.

Define the finite constant
\[
        \delta_F = \sup\{\abs{\varphi(g).y- y} : g \in F, \, y \in \R\}.
\]
Since $\cS$ is maximal and $\cL$ is covering, we can find leaves $l_1, l_2 \in \cS$ such that $l_1 \supseteq l_2$ and the endpoints of $l_1$ are at distance at least $2 \delta_F$ from the endpoints of $l_2$. Thus $\varphi(g).l_1$ still contains $l_2$ for any $g \in F$, so $\varphi(F).l_1 \subseteq \cS$ and we are done.
\end{proof}

\begin{claimnum} \label{claim:endpoints}
        There exist positive constants $C,D > 0$ such that for all $y > D$, the intervals $[y - C,y]$ and $[y, y + C]$ contain the right endpoint of a leaf of $\cS$.
\end{claimnum}

\begin{proof}[Proof of the claim]
Recall that $S \subseteq G$ is a fixed finite symmetric generating subset, and let $D > 0$ be greater than the right endpoint $y_S$ of $l_S \in \cS$. Define, as in the previous claim, the finite constant
\[
        \delta_S = \sup\{\abs{\varphi(s).y - y} : s \in S, \,  y \in \R\},
\]
which must be positive since $\varphi$ is irreducible. 

For every $n \in \N$, pick $s_n \in S$ such that $\varphi(s_n s_{n-1}\cdots s_1).y_S - \varphi(s_{n-1}\cdots s_1).y_S$ is maximal. We have
\[
        \varphi(s_n s_{n-1}\cdots s_1).y_S - \varphi(s_{n-1}\cdots s_1).y_S \leq \delta_S
\]
and $\lim_{n \to \infty} \varphi(s_n \cdots s_1).y_S = \infty$ by irreducibility of $\varphi$.

Now $\varphi(s_1).l_S \in \cS$ by the definition of $l_S$, and since $\varphi(s_1).y_S > y_S$ and $\cS$ is totally ordered we see that $\varphi(s_1).l_S \supsetneq l_S$. Similarly, for every $n \in \N$ we have that $\varphi(s_n\cdots s_1).l_S$ is a leaf of $\cS$ that contains $l_S$. Thus any $y > D$ is such that $[y - \delta_S, y + \delta_S]$ contains a point $\varphi(s_m \cdots s_1).y_S$ for some $m \in \N$, which is the right endpoint of the leaf $\varphi(s_m\cdots s_1).l_S \in \cS$. Setting $C = 2 \delta_S$ and enlarging $D$ if necessary we obtain the desired conclusion.
\end{proof}

The irreducibility of $\restr{\varphi}{H}$ shows that there is an $h \in H$ with $\varphi(h).x > \varphi(s).x$ for every $s \in S$, and hence $\varphi$ belongs to the open set
\[
	U = \{\psi \in \mathrm{Rep}_\mathrm{III}(G) : \psi(h).x > \psi(s).x \text{ for every }s \in S\}.
\]
We will show that for all sufficiently large $y \in \R$ the inequality $\varphi(h).y \leq \varphi(s).y$ holds for at least one $s \in S$. Indeed, let $y_h$ be the right endpoint of the leaf $l_{\{h\} \cup S}$ from Claim \ref{claim:stable} applied to the finite set $\{h\} \cup S$. Notice that if $l \supseteq l_{\{h\} \cup S}$ then $\varphi(h).l \in \cS$, but since $\cS$ is totally ordered and $l$ is $\varphi(H)$-wandering we conclude that $\varphi(h).l = l$ and $\varphi(h)$ fixes the extremities of $l$.

 Let $C,D > 0$ be the constants from Claim \ref{claim:endpoints}, and take $y > \max(D, y_h)$. Thus the intervals $[y-C, y]$ and $[y, y+C]$ contain the right endpoints of leafs $l \supseteq l_{\{h\} \cup S}$ which must be fixed by $h$. 

Using that $\cS$ is closed we see that there are leaves $l_+ \supsetneq l_- \supseteq l_{\{h\} \cup S}$ such that $l_-$ is maximal with the property that its right endpoint $y_-$ is in $[y-C, y]$ and $l_+$ is minimal with the property that its right endpoint $y_+$ is in $[y, y+C]$. Since $\varphi$ is focal and $\varphi(S).l_- \subseteq \cS$ (because $l_- \supseteq l_{\{h\} \cup S}$) there is an $s \in S$ such that $\varphi(s).l_- \supseteq l_+$, so
\[
	\varphi(s).y \geq \varphi(s).y_- \geq y_+ = \varphi(h).y_+ \geq \varphi(h).y.
\]
We conclude that if $y$ is large enough we have $\varphi(h).y \leq \varphi(s).y$ for at least one $s \in S$.

We deduce that $f_n.\varphi \not \in U$ for sufficiently large $n \in \N$, which contradicts the assumption that $\lim_{n \to \infty} f_n.\varphi = \varphi$. \qedhere
\end{itemize}
\end{proof}
Recall that we denote by $\mathrm{I}(H)$ the isolator of a subgroup, see Subsection \ref{subsection:isolator}. The following proves Theorem  \ref{teo:stability}.
\begin{teo} \label{teo:extension}
Let $G$ be a finitely generated group. If $G$ contains a commensurated subgroup $H$ in $\cC$ such that $G/\mathrm{I}(H)$ is in $\cC$, then $G$ is in $\cC$.
\end{teo}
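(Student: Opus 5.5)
We verify criterion \eqref{i-approxtrivial} of Theorem \ref{cor:criterio}. Fix $\varphi \in \mathrm{Rep}_\mathrm{III}(G)$ and a sequence $(f_n)_{n\ge 0}$ almost centralizing $\varphi$; the goal is $f_n \to \mathrm{id}_\R$. We split according to the restriction $\restr{\varphi}{H}$. If $\restr{\varphi}{H}$ is non-trivial and not of type III, Lemma \ref{lema:noIII} already gives the conclusion. Two cases remain: $\restr{\varphi}{H}$ trivial, or $\restr{\varphi}{H}$ of type III.

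\emph{Case $\restr{\varphi}{H}$ trivial.} By Lemma \ref{lema:isolator}, $\varphi$ factors through the quotient $Q = G/\mathrm{I}(H)$, say $\varphi = \bar\varphi \circ \pi$. Since $\varphi$ and $\bar\varphi$ have the same image, $\bar\varphi \in \mathrm{Rep}_\mathrm{III}(Q)$. For $g \in G$ we have $(f_n.\varphi)(g) = (f_n.\bar\varphi)(\pi(g))$, so $(f_n)$ also almost centralizes $\bar\varphi$. Since $Q \in \cC$, Theorem \ref{cor:criterio} applied to $Q$ gives $f_n \to \mathrm{id}_\R$.

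\emph{Case $\restr{\varphi}{H}$ of type III.} First, $\restr{\varphi}{H}$ has no exceptional minimal set, by the first paragraph of the proof of Lemma \ref{lema:noIII}, which only uses that $H$ is commensurated. Being of type III, it is not semiconjugate to a cyclic action, so it has no discrete orbit. Hence $\restr{\varphi}{H}$ is minimal, lies in $\mathrm{Rep}_\mathrm{III}(H)$, and has trivial centralizer. Convergence in $\mathrm{Rep}(G)$ is pointwise on $G$, so $f_n.\restr{\varphi}{H} = \restr{(f_n.\varphi)}{H} \to \restr{\varphi}{H}$. Thus $(f_n)$ almost centralizes $\restr{\varphi}{H}$. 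Since $H \in \cC$, criterion \eqref{i-approxtrivial} of Theorem \ref{cor:criterio} for the countable group $H$ gives $f_n \to \mathrm{id}_\R$. This is why $\cC$ was defined for all countable groups: $H$ need not be finitely generated.

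The step I expect to need the most care is the reduction in the last case, namely that a type III restriction to a commensurated subgroup is genuinely minimal. Theorem \ref{cor:criterio} is stated for $\mathrm{Rep}_\mathrm{III}$, which consists of minimal actions, so this minimality is required. I would rely on the exceptional-minimal-set argument of Lemma \ref{lema:noIII}, together with the exclusion of discrete orbits for type III actions. One further remark: the trichotomy of Theorem \ref{teo:tits} presupposes a minimal set. The case where $\restr{\varphi}{H}$ has no minimal set is covered by the hypothesis "not of type III" in Lemma \ref{lema:noIII}, whose proof treats it explicitly. So the case analysis is exhaustive.
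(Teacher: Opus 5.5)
Your proposal is correct and follows essentially the same route as the paper. Like the paper, you verify criterion \eqref{i-approxtrivial} of Theorem \ref{cor:criterio} by splitting into three cases: $\restr{\varphi}{H}$ trivial (pass to $G/\mathrm{I}(H)$ via Lemma \ref{lema:isolator}), $\restr{\varphi}{H}\in\mathrm{Rep}_\mathrm{III}(H)$ (use $H\in\cC$), and all remaining cases (Lemma \ref{lema:noIII}). Your extra check that a type III restriction is genuinely minimal, by excluding exceptional minimal sets and discrete orbits, makes explicit a step the paper leaves implicit in its case split.
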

\begin{proof}
Let $\varphi \in \mathrm{Rep}_\mathrm{III}(G)$ and $(f_n)_{n \geq 0}$ be an almost centralizing sequence.

If $\restr{\varphi}{H}$ is trivial, then it is trivial on $\mathrm{I}(H)$ (see Lemma \ref{lema:isolator}) and $\lim_{n \to \infty} f_n.\varphi = \varphi$ inside $\mathrm{Rep}_\mathrm{III}(G/\mathrm{I}(H))$. Thus $\lim_{n \to \infty} f_n = \mathrm{id}_\R$ because $G/\mathrm{I}(H)\in \cC$. If $\restr{\varphi}{H} \in \mathrm{Rep}_\mathrm{III}(H)$, then  $(f_n)_{n \geq 0}$ almost centralizes $\restr{\varphi}{H}$ and we conclude that $\lim_{n \to \infty} f_n = \mathrm{id}_\R$ because $H \in \cC$. We conclude that $\lim_{n \to \infty} f_n = \mathrm{id}_\R$ in the remaining cases by Lemma \ref{lema:noIII}. 

We deduce that $G \in \cC$ by Theorem \ref{cor:criterio}.
\end{proof}
\begin{cor}\label{cor:extension}
    Let $G$ be a finitely generated group. If $G$ has a normal subgroup $N\in G$ such that $G/N\in \mathcal{C}$, then $G\in \cC$.
\end{cor}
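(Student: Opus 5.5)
The plan is to rerun the proof of Theorem \ref{teo:extension} with $H = N$. I read the statement with the hypothesis $N \in \cC$, as in Corollary \ref{cor:stability}~\eqref{i-stability-extension}. Without it the statement fails: taking $N = G$ makes $G/N$ trivial, hence in $\cC$, for every $G$, including those outside $\cC$ such as $F_2$ (Example \ref{ejm:F2}).

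A normal subgroup is commensurated, so Theorem \ref{teo:extension} applies to $N$, except for one point. It asks that $G/\mathrm{I}(N)$ lie in $\cC$, while we only know $G/N \in \cC$. The group $G/\mathrm{I}(N)$ is a quotient of $G/N$, and the paper has not shown that $\cC$ is closed under quotients. So instead of invoking the theorem as stated, I will redo its proof, replacing $\mathrm{I}(N)$ by $N$ in the one step where it appears.

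By Theorem \ref{cor:criterio}, it suffices to fix $\varphi \in \mathrm{Rep}_\mathrm{III}(G)$ and a sequence $(f_n)_{n\ge 0}$ almost centralizing $\varphi$, and to show that $f_n \to \mathrm{id}_\R$. There are three cases according to $\restr{\varphi}{N}$.

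\begin{enumerate}
\item \emph{$\restr{\varphi}{N}$ is trivial.} Then $\varphi$ factors through an action $\bar\varphi$ of $G/N$. Conjugation commutes with this factorization, so $f_n.\bar\varphi = \overline{f_n.\varphi}$. Pointwise convergence on $G$ is the same as pointwise convergence on $G/N$ for actions trivial on $N$, so $f_n.\bar\varphi \to \bar\varphi$. Moreover $\bar\varphi$ has the same image as $\varphi$, so it is minimal with trivial centralizer, i.e. $\bar\varphi \in \mathrm{Rep}_\mathrm{III}(G/N)$. Since $G/N \in \cC$, implication \eqref{i-GinC}$\Rightarrow$\eqref{i-approxtrivial} of Theorem \ref{cor:criterio} gives $f_n \to \mathrm{id}_\R$. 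This is the only place where $N$ replaces $\mathrm{I}(N)$, and no appeal to Lemma \ref{lema:isolator} is needed.
\item \emph{$\restr{\varphi}{N}$ is of type III.} Then $\restr{\varphi}{N} \in \mathrm{Rep}_\mathrm{III}(N)$, and $(f_n)$ also almost centralizes $\restr{\varphi}{N}$. Since $N \in \cC$, Theorem \ref{cor:criterio} again gives $f_n \to \mathrm{id}_\R$.
\item \emph{$\restr{\varphi}{N}$ is non-trivial and not of type III.} Lemma \ref{lema:noIII} applies directly, since $N$ is commensurated and $G$ is finitely generated. It gives $f_n \to \mathrm{id}_\R$.
\end{enumerate}

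The only point that needs care is the first case: checking that the induced action of $G/N$ is still of type III and that the almost-centralizing property passes to the quotient. Both are immediate, since $\varphi(G) = \bar\varphi(G/N)$ as subgroups of $\mathrm{Homeo}_0(\R)$. All the real work is already contained in Lemma \ref{lema:noIII}.
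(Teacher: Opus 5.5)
Your proof is correct, and it is essentially the paper's argument. You are also right that the hypothesis $N\in\cC$ is intended. The paper gets the corollary directly from Theorem \ref{teo:extension} with $H=N$, and covers the gap between $G/N$ and $G/\mathrm{I}(N)$ by noting that $\cC$ is ``obviously closed under quotient'' (stated in the proof of Corollary \ref{cor:BassSerre}); your case (i) simply proves that fact inline. Your case (ii) assumes without saying so that a type III restriction $\restr{\varphi}{N}$ is minimal: its unique non-discrete minimal set is $\varphi(G)$-invariant by normality, hence equals $\R$. The paper leaves this step at the same level of detail.
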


Given a family of groups $(H_i)_{i\in I}$, the direct sum $\bigoplus_I H_i$ is the subgroup of $\prod_I H_i$ of all $(h_i)_{i\in I}$ such that $h_i=e_{H_i}$ for all but finitely many $i \in I$. We record that $\cC$ is stable under direct sums of finitely generated groups.

\begin{prop} \label{prop:producto}
If $(H_n)_{n \geq 0}$ are finitely generated groups, then every minimal type III action of $\bigoplus_{n \geq 0} H_n $ factors through the projection to one $H_m$. In particular if all  $H_n\in \cC$, then $\bigoplus_{n \geq 0} H_n \in \cC$.
\end{prop}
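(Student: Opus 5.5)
Write $G=\bigoplus_{n\ge 0}H_n$ and let $\varphi\in\mathrm{Rep}_\mathrm{III}(G)$. Each factor $H_m$, and each finite partial sum $G_{\le m}=H_0\oplus\cdots\oplus H_m$, is a normal subgroup of $G$, hence commensurated. The plan is to treat these normal subgroups with the same structural arguments as in the proof of Lemma \ref{lema:noIII}, using that distinct factors commute.

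First I will show that if $\varphi|_{H_m}$ is non-trivial, then it is minimal and of type III. By Proposition \ref{prop:fixComm}, $\varphi|_{H_m}$ is irreducible. Since $H_m$ is finitely generated, it has a minimal set. This minimal set cannot be exceptional: by normality it is unique and $\varphi(G)$-invariant, as in the first paragraph of Lemma \ref{lema:noIII}. So $\varphi|_{H_m}$ is minimal. If it were of type I or II, its centralizer $\mathrm{Cent}_\varphi(H_m)$ would be non-trivial and normalized by $\varphi(G)$. In type II, $\varphi(G)$ would normalize the infinite cyclic group $\langle c\rangle$ and preserve its orientation, so it would commute with $c$, contradicting type III. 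In type I, $\varphi$ would be conjugate to an affine action by the normal-subgroup remark before Proposition \ref{prop:fixComm}. The affine parts on the other factors $H_k$, $k\neq m$, would commute with a dense group of translations, hence be translations. Then $\varphi(G)$ would be metabelian-affine and preserve the projective class of Lebesgue measure, so $\varphi$ would not be proximal. This is the same contradiction used in Lemma \ref{lema:noIII}.

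Next, suppose two distinct factors $H_m,H_k$ both act non-trivially. Then $\varphi(H_k)$ commutes with $\varphi(H_m)$. But $\varphi|_{H_m}$ is minimal of type III, so its centralizer is trivial, giving $\varphi(H_k)=\{\mathrm{id}\}$, a contradiction. Hence at most one factor acts non-trivially. Since $\varphi$ is irreducible, not all factors act trivially, so exactly one factor $H_m$ acts non-trivially and $\varphi$ factors through the projection $G\to H_m$.

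For the consequence, let $(f_n)$ almost centralize $\varphi$. Then $(f_n)$ almost centralizes the type III minimal action of $H_m$ induced by $\varphi$. Since $H_m\in\cC$, Theorem \ref{cor:criterio} gives $f_n\to\mathrm{id}_\R$, and applying Theorem \ref{cor:criterio} again to $G$ shows $G\in\cC$.

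The step I expect to be the main obstacle is excluding type I for $\varphi|_{H_m}$. Here one should argue carefully that $\varphi|_{H_m}$ has dense translation image and not cyclic image. This follows from minimality: a cyclic, type I minimal action of the restriction is impossible because its orbits would be discrete.
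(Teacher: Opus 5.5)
Your overall strategy is sound: a factor acting non-trivially must act minimally with trivial centralizer, so the other factors, which commute with it, act trivially. Two steps need repair.

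\textbf{The discrete-orbit case is missing.} After excluding an exceptional minimal set for $\varphi(H_m)$, you conclude that $\varphi|_{H_m}$ is minimal. But the third possibility, that the minimal set of $\varphi(H_m)$ is a discrete orbit, is never ruled out. Your closing remark about cyclic type I restrictions presupposes minimality, so it does not cover this case. It can be handled as follows:
\begin{itemize}
\item $\varphi|_{H_m}$ is irreducible of type I, and its Conrad homomorphism $\tau$ (from counting measure on the orbit) has cyclic image.
\item Lemma \ref{lema:struct1}, \eqref{it:s2} shows $\ker\tau$ acts trivially. Hence $\varphi(H_m)=\langle c\rangle$ is infinite cyclic with $c$ fixed-point-free.
\item Your own type II argument (normalized by $\varphi(G)$, orientation preserved, hence central) then contradicts type III.
\end{itemize}
This is the ``cyclic subgroup contained in the center'' alternative of \cite[Lemma 8.3.6]{BMRT} that the paper invokes.

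\textbf{The type I contradiction is mis-argued.} The inference ``$\varphi(G)$ preserves the projective class of Lebesgue measure, so $\varphi$ is not proximal'' is false. For example, the action of $\mathrm{BS}(1,2)$ generated by $x\mapsto 2x$ and $x\mapsto x+1$ is minimal and proximal. Your argument actually yields something stronger: $\varphi_{\mathrm{aff}}(H_m)$ and every $\varphi_{\mathrm{aff}}(H_k)$, $k\neq m$, consist of translations. These subgroups generate $\varphi_{\mathrm{aff}}(G)$, which is therefore abelian, contradicting type III. This is also not the contradiction used in Lemma \ref{lema:noIII}, whose type I case is quite different.

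With these fixes your proof is correct, but the type-by-type analysis is unnecessary. The paper assumes $\varphi(H_m)$ and $\varphi(\bigoplus_{n\neq m}H_n)$ are both non-trivial. The normal subgroup $\varphi(H_m)$ is then either cyclic and central (excluded) or minimal. In the minimal case, $\varphi(\bigoplus_{n\neq m}H_n)\subseteq \mathrm{Cent}_\varphi(H_m)$, which is abelian, being the centralizer of a minimal action (trivial, infinite cyclic, or conjugate to the translations). So this complement is abelian and commutes with $\varphi(H_m)$, hence is a non-trivial central subgroup of $\varphi(G)$, contradicting type III. This disposes of your types I and II at once.
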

\begin{proof}

 Denote $H = \bigoplus_{n \geq 0} H_n$, and consider $\varphi \in \mathrm{Rep}_\mathrm{III}(H)$. Suppose by contradiction that $\varphi(H_m)$ and $\varphi(\bigoplus_{n\neq m} H_n)$ are both non-trivial for some $m$. Then $\varphi(H_m)$  is a non-trivial normal subgroup of $\varphi(H)$, and since it is finitely generated, it has a minimal set. Then it is  it either acts minimally, or it is a cyclic subgroup contained in the center of $\varphi(H)$, see \cite[Lemma 8.3.6]{BMRT}. The second possibility is excluded since $\varphi$ is type III and has trivial centralizer. If $\varphi(H_m)$  is minimal, then $\varphi(\bigoplus_{n\neq m} H_n)$ is contained in the centralizer of a minimal action, hence it is abelian, hence contained in the center of $\varphi(H)$, also contradicting that $\varphi$ is type III. The last assertion follows from Theorem \ref{cor:criterio}.\qedhere

\end{proof}

Recall that given groups $H, K$ and an action $K\curvearrowright X$ on a set, the permutational wreath product $H\wr_X K$ is defined as the semi-direct product 
\[\bigoplus_X H\rtimes K,\]
where $K$ acts on $\bigoplus_X H$ by shifting coordinates.  The wreath product $H\wr K$ is the permutational wreath product associated to the left-regular action of $K$ on itself.  The group $H\wr_X K$ is finitely generated provided $H, K$ are finitely generated and the action $K\curvearrowright X$ has finitely many orbits. Hence Corollary \ref{cor:extension} and Proposition \ref{prop:producto} imply the following.
\begin{cor} \label{cor:wreath}
    If $H, K\in \cC$ are finitely generated, then $H\wr_X K\in \cC$ for every action $K\curvearrowright X$ with finitely many orbits.
\end{cor}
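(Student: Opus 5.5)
The plan is to realise $H\wr_X K$ as the extension
\[
	1 \to N \to H\wr_X K \to K \to 1, \qquad N=\bigoplus_X H,
\]
and apply Corollary \ref{cor:stability}~\eqref{i-stability-extension}, i.e. Theorem \ref{teo:extension} with the commensurated subgroup taken to be the normal subgroup $N$. In this case $\mathrm{I}(N)=N$, since the quotient $K$ need not be torsion-free. To avoid this subtlety I will instead use Theorem \ref{teo:extension} directly through the criterion of Theorem \ref{cor:criterio}, or simply note that the normal version is proved in the same way. The argument requires three inputs: that $G=H\wr_X K$ is finitely generated, that $N\in\cC$, and that $G/N\cong K\in\cC$.

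Finite generation is standard. Choose representatives $x_1,\dots,x_r$ of the finitely many $K$-orbits on $X$. Then $G$ is generated by a finite generating set of $K$ together with copies of a finite generating set of $H$ placed in the coordinates $x_1,\dots,x_r$. Every other coordinate copy of $H$ is obtained from these by conjugating with an element of $K$.

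For $N\in\cC$, note that $X$ is countable, because $K$ is countable and there are finitely many orbits. So $N$ is a countable direct sum of copies of the finitely generated group $H\in\cC$, and Proposition \ref{prop:producto} gives $N\in\cC$. Finally $G/N\cong K\in\cC$ by hypothesis.

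The main obstacle is the isolator issue in Theorem \ref{teo:extension}. That statement requires $G/\mathrm{I}(N)\in\cC$. When $N$ is normal, the quotient $G/\mathrm{I}(N)$ is the largest torsion-free quotient of $K$ compatible with the isolator, and it is a quotient of $K$ but not obviously in $\cC$. I would circumvent this by rerunning the proof of Theorem \ref{teo:extension} with $N$ in place of $\mathrm{I}(N)$. Take $\varphi\in\mathrm{Rep}_\mathrm{III}(G)$ and an almost centralizing sequence $(f_n)$, and split into three cases.

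\begin{enumerate}
\item If $\varphi|_N$ is trivial, then $\varphi$ factors through $K$. The induced action lies in $\mathrm{Rep}_\mathrm{III}(K)$ and is almost centralized by $(f_n)$, so $f_n\to\mathrm{id}_\R$ by Theorem \ref{cor:criterio} applied to $K$.
\item If $\varphi|_N$ is of type III, then $f_n\to\mathrm{id}_\R$ by Theorem \ref{cor:criterio} applied to $N$.
\item Otherwise $\varphi|_N$ is non-trivial and not of type III, and Lemma \ref{lema:noIII} applies because $N$ is normal, hence commensurated, and $G$ is finitely generated.
\end{enumerate}

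Theorem \ref{cor:criterio} then yields $G\in\cC$.
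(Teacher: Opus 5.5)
Your proposal is correct and follows the paper's route: you realise $H\wr_X K$ as an extension of $K$ by $N=\bigoplus_X H$, get $N\in\cC$ from Proposition~\ref{prop:producto}, and conclude by the extension result, since your three-case argument is just the proof of Theorem~\ref{teo:extension} rerun for $N$. The isolator detour is unnecessary: $G/\mathrm{I}(N)$ is a quotient of $G/N\cong K$, and $\cC$ is closed under quotients (a minimal type III action of a quotient is also one of $G$, with the same almost centralizing sequences), so Corollary~\ref{cor:extension} applies verbatim. Also, your remark that $\mathrm{I}(N)=N$ ``since $K$ need not be torsion-free'' is backwards, because $\mathrm{I}(N)=N$ holds exactly when $K$ is torsion-free.
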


\subsection{Fundamental groups of graphs of groups} \label{subsection:BassSerre}
For more details on the material in this subsection see \cite{Serre1977}.

For us, a \emph{graph} $\Pi$ consists in a set $V(\Pi)$ of vertices, a set $E(\Pi)$ of edges and two maps 
\[
	e \in E(\Pi) \mapsto (o(e), t(e)) \in V(\Pi) \times V(\Pi) \quad \text{ and } \quad e \in E(\Pi) \mapsto \overline{e} \in E(\Pi)
\]
where $e \mapsto \overline{e}$ is a fixed-point-free involution and $o(\overline{e}) = t(e)$ for all $e \in E(\Pi)$. The vertex $o(e)$ (resp. $t(e)$) is the \emph{origin} (resp. \emph{terminus}) of $e$, and $\overline{e}$ is the \emph{inverse edge} of $e$. A \emph{graph of groups} $(\Pi, \cG)$ is the data of a connected graph $\Pi$ and a collection $\cG$ of groups $\{G_v\}_{v \in V(\Pi)}$ (the \emph{vertex groups}) and $\{G_e\}_{e \in E(\Pi)}$ (the \emph{edge groups}) with $G_e = G_{\overline{e}}$ for all $e \in E(\Pi)$, equipped with injective morphisms $\iota_{e,o(e)}\colon G_e \to G_{o(e)}$. The \emph{fundamental group} $\pi_1(\Pi, \cG)$ of the graph of groups $(\Pi, \cG)$ is defined as follows: choose a connected subgraph $T \subseteq \Pi$ with no cycles and that contains all vertices of $\Pi$. 
Denote by $t_e$ a symbol indexed by $e \in E(\Pi)$. Then $\pi_1(\Pi, \cG)$ is the quotient of the free product $\bigast_{v \in V(\Pi)} G_v \ast \bigast_{e \in E(\Pi \setminus T)} t_e$ by the relations 
\[
	t_e \iota_{e, o(e)}(g) t_e^{-1} = \iota_{\overline{e}, o(\overline{e})}(g) \quad \text{and} \quad t_e = t_{\overline{e}}^{-1}
\]
for all $e \in E(\Pi \setminus T)$ and $g \in G_{t(e)}$, and $\iota_{e, o(e)}(g) = \iota_{e,t(e)}(g)$ for all $e \in E(T)$ and $g \in G_{e}$. HNN-extensions and amalgamated products correspond to the case when $\Pi$ is a loop or a single edge, respectively. As in these special cases, the natural maps $G_v \to \pi_1(\Pi, \cG)$ and $G_e \to \pi_1(\Pi, \cG)$ are always injective \cite[Théorème 13]{Serre1977}, and 
one sees that $\pi_1(\Pi, \cG)$ is isomorphic to 
\[
	\langle \langle G_v,\, v \in V(\Pi)\rangle \rangle \rtimes \langle t_e,\,  e \in E(\Pi \setminus T)\rangle
\]
where $\langle t_e,\,  e \in E(\Pi \setminus T)\rangle \cong F_{b(\Pi)}$ is free of rank $b(\Pi)$, the first Betti number of the graph $\Pi$. The group $\pi_1(\Pi, \cG)$ does not depend, up to isomorphism, on the choice of $T$ \cite[Proposition 20]{Serre1977}.

Bass-Serre theory \cite[\textsection 5]{Serre1977} shows that presentations of a group $G$ as a (finite) fundamental group of a graph of groups are in correspondence with (cocompact) actions of $G$ on trees $\cT$ by tree automorphisms with no inversions, that is, such that there is no $g \in G$ and $e \in E(\cT)$ with $g.e = \overline{e}$. One direction of this correspondence takes a graph of groups $(\Pi, \cG)$ and produces a canonically defined action of $\pi_1(\Pi, \cG)$ on a tree $\cT_{\Pi, \cG}$, called the \emph{Bass-Serre tree} of $(\Pi, \cG)$, such that $\Pi = G\backslash \cT_{\Pi, \cG}$. The conjugates of the vertex groups $G_v$ (resp. of the edge groups $G_e$) in $\pi_1(\Pi, \cG)$ are exactly the stabilizers of vertices (resp. edges) that project to $v$ (resp. $e$) in $\Pi$, and the maps $\iota_{e, o(e)}$ are induced by the inclusion of edge stabilizers into vertex stabilizers. Moreover, the tree $\cT_{\Pi, \cG}$ is locally finite if and only if $\iota_{e, o(e)}(G_e)$ has finite index in $G_{o(e)}$ for all $e \in E(\Pi)$.

The following lemma is well known, and we include it for completeness.

\begin{lema}\label{lema:commensurated-graphs}
Let $(\Pi, \cG)$ be a graph of groups where all inclusions $\iota_{e, o(e)},\, e \in E(\Pi)$ have finite-index image in $G_{o(e)}$. Then any two stabilizers of vertices or edges in the Bass-Serre tree $\cT_{\Pi, \cG}$ are commensurate. In particular, any vertex group or edge group in $\pi_1(\Pi, \cG)$ is commensurated in $\pi_1(\Pi, \cG)$.

If $H$ is any vertex group or edge group, then  $\mathrm{I}(H)=\langle \langle G_v,\, v \in V(\Pi)\rangle \rangle$, and thus $G/\mathrm{I}(H)$ is free of rank $b(\Pi)$.
\end{lema}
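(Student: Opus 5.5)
The plan is to derive commensurability from local finiteness of the tree, and to compute the isolator through the semidirect decomposition of $\pi_1(\Pi, \cG)$ recalled above.

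\emph{Commensurability.} Let $\cT = \cT_{\Pi,\cG}$ and $G = \pi_1(\Pi,\cG)$. By hypothesis, each edge stabilizer $G_e$ has finite index in the stabilizers of its two endpoints, since these inclusions are conjugates of the maps $\iota_{e,o(e)}$ and $\iota_{\overline{e},o(\overline{e})}$. Hence for adjacent vertices $v,w$ joined by $e$, the groups $G_v \supseteq G_e \subseteq G_w$ are pairwise commensurate. Commensurability is an equivalence relation on subgroups, because intersecting finite-index subgroups yields finite-index subgroups. Since $\cT$ is connected, induction along the geodesic between any two vertices shows that all vertex stabilizers are commensurate. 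Each edge stabilizer is commensurate with the stabilizer of an endpoint, so the same holds for edge stabilizers. For $g\in G$ and $H = G_v$, the group $gHg^{-1}$ is the stabilizer of the vertex $g.v$, so it is commensurate with $H$; thus $H$ is commensurated. The same argument applies to edge groups.

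\emph{Isolator.} Let $N=\langle\langle G_v,\, v\in V(\Pi)\rangle\rangle$. By Lemma \ref{lema:isolator}, $\mathrm{I}(H)$ depends only on the commensurability class of $H$, which by the first part is the same for every vertex and edge group. Moreover, $\mathrm{I}(H)$ is the smallest normal subgroup containing $H$ with torsion-free quotient.

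First I would show $\mathrm{I}(H)\subseteq N$. Indeed, $N$ is normal and contains $H$, and $G/N \cong F_{b(\Pi)}$ by the semidirect decomposition, which is torsion free. Hence $N$ belongs to the family over which $\mathrm{I}(H)$ is the minimum.

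Next I would show $N\subseteq \mathrm{I}(H)$. Every vertex group $G_v$ and every conjugate of it is commensurate with $H$. So for each $x\in gG_vg^{-1}$ some power $x^k$ lies in $H\subseteq \mathrm{I}(H)$. Since $G/\mathrm{I}(H)$ is torsion free, this forces $x\in \mathrm{I}(H)$. Thus all conjugates of all vertex groups lie in $\mathrm{I}(H)$, and therefore $N\subseteq \mathrm{I}(H)$.

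Combining the two inclusions gives $\mathrm{I}(H)=N$ and $G/\mathrm{I}(H)\cong F_{b(\Pi)}$. The only step needing care is that $G/N$ is really free of rank $b(\Pi)$. This follows from the presentation: killing all $G_v$ leaves only the generators $t_e$ for $e \in E(\Pi\setminus T)$, subject to $t_e=t_{\overline e}^{-1}$. The conjugation relations become trivial because they identify elements of vertex groups, which have been killed.
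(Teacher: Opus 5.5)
Your proof is correct and essentially the paper's. The isolator computation is the same: $\mathrm{I}(H)$ contains every conjugate of every vertex group, hence contains $N$, and $G/N \cong F_{b(\Pi)}$ is torsion-free, which gives the reverse inclusion. The only difference is in the commensurability step: you chain finite-index inclusions along a geodesic using transitivity of commensurability, while the paper bounds $[\mathrm{Stab}_G(u):\mathrm{Stab}_G(u)\cap\mathrm{Stab}_G(u')]$ directly by the size of the sphere of radius $d(u,u')$ around $u$ in the locally finite tree.
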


\begin{proof}
Write $G = \pi_1(\Pi, \cG)$. The tree $\cT_{\Pi, \cG}$ is locally finite, so its balls for the natural path metric $d$ are finite. If $u,u'$ are vertices in $\cT_{\Pi, \cG}$, then $\mathrm{Stab}_G(u) \cap \mathrm{Stab}_G(u')$ has index at most $\abs{\{w \in \cT_{\Pi, \cG} : d(u,w)= d(u,u')\}}$ inside $\mathrm{Stab}_G(u)$. The same is true if $u$ or $u'$ are edges in $\cT_{\Pi, \cG}$ since edge groups have finite index in vertex groups.
If $H$ is any vertex group or edge group, then  $\mathrm{I}(H)$ is normal and contains all vertex groups (see Lemma \ref{lema:isolator}), so contains $\langle \langle G_v,\, v \in V(\Pi)\rangle \rangle$, and we actually actually have equality since  $G/\langle \langle G_v,\, v \in V(\Pi)\rangle \rangle=F_{b(\Pi)}$ is torsion-free. \qedhere

\end{proof}


\begin{cor}\label{cor:BassSerre}
Suppose $G$ is a finitely generated group that can be presented as the fundamental group $\pi_1(\Pi, \cG)$ of a finite graph of groups $(\Pi, \cG)$ where all the inclusions $\iota_{e, o(e)},\, e \in E(\Pi)$ have finite-index image in $G_{o(e)}$ and every $G_v, \, v \in V(\Pi)$ is in $\cC$. Then $G \in \cC$ if and only if $b(\Pi)$ is at most 1.
\end{cor}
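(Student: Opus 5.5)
For the ``if'' direction I will apply Theorem \ref{teo:extension} with $H = G_v$ for some vertex $v \in V(\Pi)$. By Lemma \ref{lema:commensurated-graphs}, $H$ is commensurated in $G$ and $G/\mathrm{I}(H)$ is free of rank $b(\Pi)$. When $b(\Pi) \le 1$ this quotient is trivial or $\Z$. Both are abelian, so they contain no free semigroup and lie in $\cC$ by Corollary \ref{cor:abeliano}. Since $H = G_v \in \cC$ by hypothesis and $G$ is finitely generated, Theorem \ref{teo:extension} gives $G \in \cC$.

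For the ``only if'' direction, assume $b(\Pi) \ge 2$. We have a surjection $G \to G/\langle\langle G_v,\, v \in V(\Pi)\rangle\rangle \cong F_{b(\Pi)}$, and this further surjects onto $F_2$ by killing all but two free generators $t_e$. I will pull back minimal actions of $F_2$ along this composite map $\pi \colon G \to F_2$.

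The key observation is that precomposition $\varphi \mapsto \varphi \circ \pi$ is a continuous map $\mathrm{Rep}_\mathrm{min}(F_2) \to \mathrm{Rep}_\mathrm{min}(G)$: minimality is preserved because the images coincide. This map is also a reduction of conjugacy, since a homeomorphism $f$ conjugates $\varphi\circ\pi$ to $\psi\circ\pi$ exactly when it conjugates $\varphi$ to $\psi$ ($\pi$ being surjective). Example \ref{ejm:F2} shows that $\cE_0$ reduces to conjugacy on $\mathrm{Rep}_\mathrm{min}(F_2)$, so $\cE_0$ reduces to conjugacy on $\mathrm{Rep}_\mathrm{min}(G)$. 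Hence the latter relation is not smooth, because $\cE_0$ is not smooth and smoothness passes to relations reducible to a smooth one. Therefore $G \notin \cC$.

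Alternatively, one can argue through Theorem \ref{cor:criterio}. Take a type III action of $F_2$ from Example \ref{ejm:F2} with $\omega$ recurrent but not periodic under the shift. Shifting by $n_k$ with $\sigma^{n_k}\omega \to \omega$ yields conjugates by translations $T_{n_k/2}$ that almost centralize the action without tending to $\mathrm{id}_\R$. Pulling this back to $G$ violates condition \eqref{i-approxtrivial}.

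The only point needing care is the ``if'' direction's input from Lemma \ref{lema:commensurated-graphs}, namely that $G/\mathrm{I}(H)$ is exactly free of rank $b(\Pi)$; this is already established there. The remaining step is to check that the pulled-back actions in the ``only if'' direction are minimal, which is immediate. I do not expect a real obstacle.
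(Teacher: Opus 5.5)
Your proof is correct and follows the paper's. The ``if'' direction is exactly Theorem \ref{teo:extension} applied to a vertex group via Lemma \ref{lema:commensurated-graphs}. Your ``only if'' direction is the paper's one-line argument ($F_m\notin\cC$ for $m\ge 2$, and $\cC$ is closed under quotients), which you unpack through the precomposition reduction.

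One small caution on your optional second argument: conjugating by $T_{n_k/2}$ turns $\varphi_\omega$ into $\varphi_{\sigma^{n_k}\omega}$ only if $g$ and $h$ commute with $T_{1/2}$, and Example \ref{ejm:F2} does not impose this. Take $n_k$ even, so that you conjugate by integer translations, which do commute with these lifts of circle maps.
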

\begin{proof}
Since the non-abelian free group $F_m$ is not in $\cC$ for $m\ge 2$ and the class $\cC$ is obviously closed under quotient, we have $b(\Pi) \leq 1$ if $G \in \cC$. The converse holds by Theorem \ref{teo:extension} and  Lemma \ref{lema:commensurated-graphs}.
\end{proof}

\begin{ejm}
Recall that a group $G$ is a \emph{generalized Baumslag-Solitar group of rank $n \in \N_+$} if it is the fundamental group of a finite graph of groups where all vertex and edge groups are isomorphic to $\Z^n$.  Corollary \ref{cor:BassSerre} characterizes exactly which generalized Baumslag-Solitar groups belong to $\cC$.

The class of rank 2 generalized Baumslag-Solitar groups contains the examples of Leary-Minasyan of the first CAT(0) and non-virtually biautomatic groups \cite{LearyMinasyan2021}, and the rank 1 generalized Baumslag-Solitar groups has been previously examined from geometric and algebraic perspectives as a natural generalization of the Baumslag-Solitar groups, see for instance \cite{Whyte2001, levittAutomorphisms}.
\end{ejm}

\begin{proof}[Proof of Corollary \ref{cor:stability}]
Corollary \ref{cor:extension} implies \eqref{i-stability-extension},  Corollary \ref{cor:wreath} implies  \eqref{i-stability-wreath}, and Corollary \ref{cor:BassSerre} implies \eqref{i-stability-HNN} and \eqref{i-stability-amalgamated}. \qedhere
\end{proof}
%

\subsection{Producing many non-conjugate actions} \label{subsection:BaumslagSolitar}
Even though Theorem \ref{teo:stability} allows to prove that many groups $G$ are in $\cC$,  it does not produce an explicit parametrization of  minimal  actions of $G$ on the line up to conjugacy, even when one understands very well such actions for the groups $H$ and $G/\mathrm{I}(H)$ in its statement. For instance, consider a non-amenable Baumslag-Solitar group
\[G = \mathrm{BS}(m,n) = \langle g,t \mid gt^ng^{-1} = t^m\rangle, \, 2 \leq m < n.\]
Then $G$  fits in the setting of Theorem \ref{teo:stability} in a very simple way, since $H=\langle t \rangle\simeq \Z$ is commensurated and $G/\mathrm{I}(H)\simeq \Z$. We will show that (in contrast with the solvable case, see \cite{Rivas2010}) there are many non-conjugate minimal actions of $G$ on the line. We do not know an explicit classification of all its  minimal actions up to conjugacy.

\begin{lema} \label{lema:BS}
    Let $G$ be a finitely generated group and let $(\varphi_s)_{s \in [0,1]} \subseteq \mathrm{Rep}_\mathrm{irr}(G)$ be a continuous path of actions that are not of type I such that $\varphi_0$ is not semiconjugate to $\varphi_1$. Then there exist uncountably many actions in $\mathrm{Rep}_\mathrm{irr}(G)$ that are pairwise non-semiconjugate and that are not of type I. 
\end{lema}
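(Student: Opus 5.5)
Since $G$ is finitely generated, I will work in the harmonic space and use the continuous retraction $r\colon \mathrm{Rep}_\mathrm{irr}(G)\to \mathrm{Harm}(G)$ of Theorem \ref{teo:existenciaDeroin}. Two irreducible actions are semiconjugate if and only if their images under $r$ lie in the same $\Psi$-orbit. Moreover, by the description of types in $\mathrm{Harm}(G)$, an action is of type I exactly when its image is a fixed point of $\Psi$. So the path $s\mapsto r(\varphi_s)$ is a continuous path $\gamma\colon[0,1]\to\mathrm{Harm}(G)$. It avoids the fixed-point set of $\Psi$, and its endpoints lie in distinct $\Psi$-orbits. The plan is to show that such a path meets uncountably many $\Psi$-orbits. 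The corresponding actions $\varphi_s$ are then pairwise non-semiconjugate and not of type I.

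The key step is to argue by contradiction and assume $\gamma([0,1])$ meets only countably many $\Psi$-orbits $\mathcal O_1,\mathcal O_2,\dots$. Then $[0,1]=\bigcup_i \gamma^{-1}(\mathcal O_i)$. I would show that each set $\gamma^{-1}(\mathcal O_i)$ is a countable union of closed sets. Every non-fixed orbit of a continuous flow on a compact metric space is the image of $\R$ or of a circle. Hence $\mathcal O_i=\bigcup_{k}\Psi^{[-k,k]}(\psi_i)$ is $K_\sigma$, and its preimage under the continuous map $\gamma$ is $F_\sigma$. So $[0,1]$ is a countable union of closed sets $C_{i,k}$, each contained in a single orbit preimage. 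Since not all $C_{i,k}$ lie in one orbit (the endpoints are in distinct orbits), the sets $\gamma^{-1}(\mathcal O_i)$ form a partition of $[0,1]$ into at least two and at most countably many nonempty $F_\sigma$ pieces. By Sierpiński's theorem, a continuum cannot be written as a countable union of at least two pairwise disjoint nonempty closed sets. I would apply this to the closed pieces $C_{i,k}$, which however are not disjoint. The cleaner route is to pass to the quotient.

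Here is the more robust way I would carry this out. Near each non-fixed point, the flow admits local cross-sections: a closed transversal $\Sigma$ together with $\epsilon>0$ such that $(t,\sigma)\mapsto\Psi^t(\sigma)$ is a homeomorphism from $(-\epsilon,\epsilon)\times\Sigma$ onto a neighbourhood. This exists for continuous flows at non-fixed points (Whitney–Bebutov). Choose $s_0$ at which the orbit changes, meaning a point with points of other orbits arbitrarily close; such a point exists because $[0,1]$ is connected and is not contained in one orbit. Locally, the composition of $\gamma$ with the projection to $\Sigma$ is a continuous map from an interval to $\Sigma$. Points of $\Sigma$ in the same local plaque are distinct in $\Sigma$, but a single orbit meets $\Sigma$ in only countably many points, since the return times are discrete. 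Hence countably many orbits meet $\Sigma$ in a countable set. A continuous image of an interval that is countable is a single point, by connectedness and because a countable connected metric space is trivial. Therefore $\gamma$ stays locally in one orbit. By connectedness, $\gamma([0,1])$ then lies in a single orbit, which contradicts the assumption on the endpoints.

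The main obstacle is making this local-to-global step rigorous. Concretely, one must check that a single $\Psi$-orbit meets a small local cross-section in at most countably many points, and that existence of local sections holds in this non-smooth compact setting. I would cite Bebutov's theorem and verify countability using the compactness of $[-\epsilon,\epsilon]$-orbit segments. Once this is established, the rest is routine.
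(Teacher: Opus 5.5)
Your final argument is correct and is essentially the paper's proof. Both push the path into $\mathrm{Harm}(G)$ via $r$, take a flow box (local cross-section) at a non-fixed point of $\Psi$, and combine two facts: each orbit meets a flow box in only countably many plaques, and a non-constant continuous image of an interval in the transversal is uncountable. The differences are minor. The paper locates the change of orbit at $\delta=\sup\{s : \varphi_s\in \mathrm{Orb}_\Psi(\varphi_0)\}$ and cites a flow-box lemma of Deroin--Hurtado, whereas you use local constancy of the orbit together with connectedness of $[0,1]$, which is equally valid. You were also right to abandon the Sierpi\'nski detour, since the orbit preimages are only $F_\sigma$ and $[0,1]$ can be partitioned into disjoint nonempty $F_\sigma$ sets.
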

\begin{proof}
Denote by $r \colon \mathrm{Rep}_\mathrm{irr}(G) \to \mathrm{Harm}(G)$  the continuous retraction from Theorem \ref{teo:existenciaDeroin}. Up to post-composing by $r$ we may assume that $(\varphi_s)_{s \in [0,1]}$ is contained in $\mathrm{Harm}(G)$. Since $\Psi$ restricted to $X = \mathrm{Harm}(G) \setminus \mathrm{Fix}_\Psi(\mathrm{Harm}(G))$ is locally free, by \cite[Lemma 5.1]{deroinHurtado} every $\varphi \in X$ admits a \emph{flow box}: that is, there is a Polish space $T$, a neighborhood $U \subseteq X$ of $\varphi$ and a homeomorphism $\alpha \colon U \to T \times (-1, 1)$ conjugating $\Psi$ with the vertical flow
\[
    (\tau, x) \in T \times (-1, 1) \mapsto s.(\tau, x) = (\tau, x+s) \in T \times (-1,1)
\]
for small $s \in \R$.

Define
\[
    \delta = \sup\{s \in [0,1] : \varphi_s \in \mathrm{Orb}_\Psi(\varphi_0)\},
\]
which belongs to $[0,1)$ since $\varphi_1$ is not in the $\Psi$-orbit of $\varphi_0$. 
Let $U \subseteq X$ be a neighborhood of $\varphi_\delta$ so that there exists a flow box $\alpha \colon U \to T \times (-1,1)$ for some Polish space $T$, and call $\alpha_T$ the composition of $\alpha$ with the projection to the $T$-factor. Find $\delta' > \delta$ so that $\varphi_s$ remains in $U$ for $s \in [\delta, \delta']$. The map $\alpha_T \circ \varphi \colon [\delta, \delta'] \to T$ is continuous and not constant by the choice of $\delta$, its image in the transversal $T$ is uncountable. Since every $\Psi$-orbit in $X$ intersects $U$ in an at most countable number of connected components, which are fibers of $\alpha_T$ themselves, the actions $(\varphi_s)_{s \in [\delta, \delta']}$ must intersect uncountably many $\Psi$-orbits.
\end{proof}

\begin{prop}
There exist uncountably many actions of $G = \mathrm{BS}(m,n),\, 2 \leq m < n$ on the line that are pairwise non-semiconjugate, minimal and proximal.
\end{prop}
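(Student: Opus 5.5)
Our plan is to apply Lemma \ref{lema:BS} to a continuous path of faithful-ish actions of $G=\mathrm{BS}(m,n)$ built by a Farb--Franks type construction, and then pass from ``not of type I and pairwise non-semiconjugate'' to ``minimal and proximal''. Fix a homeomorphism $T$ of $\R$ commuting with $x\mapsto x+1$ ... more concretely, we will let $t$ act by the translation $x\mapsto x+1$ (so $\varphi(t)$ has centralizer containing all $1$-periodic lifts) and look for $\varphi(g)$ satisfying $\varphi(g)\circ T_n = T_m\circ\varphi(g)$. Any homeomorphism of the form $\varphi(g)(x)=\tfrac{m}{n}x + u(x)$ with $u$ periodic of period $n$ and $\varphi(g)$ increasing satisfies this relation. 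Thus choosing a one-parameter family $u_s$, $s\in[0,1]$, of $n$-periodic functions depending continuously on $s$ (with $\tfrac mn x+u_s(x)$ increasing) yields a continuous path $s\mapsto\varphi_s\in\mathrm{Hom}(G,\mathrm{Homeo}_0(\R))$, irreducible since $t$ acts by a translation.

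The key steps, in order: first, check that each $\varphi_s$ is not of type I. Since $m\neq n$, $t$ is conjugate to $t^{m/n}$-type behaviour: by Lemma \ref{lema:struct1} (with $H=\langle t\rangle$ commensurated), if $\varphi_s$ were semiconjugate to translations then $\kappa\equiv1$ on the translation part, forcing $n\tau(t)=m\tau(t)$ hence $\tau(t)=0$, so $t$ would act with a fixed point on the minimal set, contradicting that $\varphi_s(t)$ is a nontrivial translation (or directly: any invariant Radon measure gives $n\,\tau(t)=m\,\tau(t)$). Second, choose the endpoints $\varphi_0,\varphi_1$ non-semiconjugate: e.g. take $u_0=0$, so $\varphi_0$ is affine ($\varphi_0(G)\subseteq\mathrm{Aff}(\R)$, a minimal non-proximal-in-the-harmonic-sense action with a type I subgroup), and take $u_1$ so that $\varphi_1$ is not semiconjugate to any affine action; distinguishing them is the main obstacle. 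I would detect this via Lemma \ref{lema:struct1}\eqref{it:s3}: in any action semiconjugate to an affine one, elements of subgroups commensurate with $\langle t\rangle$ act by translations, so e.g. they commute up to the semiconjugacy with each other; choosing $u_1$ so that $\varphi_1(g t g^{-1})$-type elements (or $\varphi_1(t)$ and $\varphi_1(g^{k}t^{n^k}g^{-k})$-related elements in the commensurability class) fail to commute on the minimal set gives the distinction. Then Lemma \ref{lema:BS} produces uncountably many pairwise non-semiconjugate actions not of type I.

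Finally, upgrade to minimal proximal. Each such action, being cocompact ($G$ finitely generated), is semiconjugate to a minimal action (or cyclic, excluded since not type I); replace it by that minimal representative, which preserves non-semiconjugacy. By Theorem \ref{teo:tits} it is of type II or III. To exclude type II uncountably often: type II minimal actions of $G$ have centralizer $\Z$, and the restriction to the commensurated $\langle t\rangle$ is type I, so by Lemma \ref{lema:struct1}\eqref{it:s3} (the image of $t$ is cyclic, so that part does not immediately apply) we instead argue that type II actions correspond to actions on the circle $\R/\langle c\rangle$ on which $t$, acting with no fixed points and with a finite-index commensurated cyclic image, forces rotation-number constraints $m\rho=n\rho$; these give only countably many semiconjugacy classes, or we note from Lemma \ref{lema:noIII}'s type II argument that $\varphi(t)$ is a translation-like element whose behaviour rules it out. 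Discarding countably many classes leaves uncountably many type III, i.e. minimal proximal, pairwise non-conjugate actions. I expect the non-semiconjugacy of the endpoints and the exclusion of type II to be the delicate points.
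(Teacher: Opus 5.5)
The overall architecture is the paper's: a path $s\mapsto\varphi_s$ with $\varphi_s(t)=T_1$ and $\varphi_s(g)(x+n)=\varphi_s(g)(x)+m$, fed into Lemma~\ref{lema:BS}. Your exclusion of type I via $n\tau(t)=m\tau(t)$ is correct. However, the two steps you call delicate are left open, and they carry the content of the proof.

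\emph{Endpoints.} You never produce a $u_1$ with $\varphi_1$ not semiconjugate to $\varphi_0$. Your proposed invariant (elements commensurate with $t$, such as $t$ and $gtg^{-1}$, act by commuting translations in any affine model) only detects something on the minimal set of $\varphi_1$, and you do not control that set. For a generic $u_1$ you do not know whether $\varphi_1$ is minimal, and if its minimal set is a Cantor set the relevant commutators may act trivially on it. The paper supplies the missing rigidity by taking $\varphi_1=\varphi_{\mathrm{FF}}$, the Farb--Franks faithful action by analytic diffeomorphisms with $t\mapsto T_1$. Analyticity forces any element acting trivially on the non-discrete minimal set to be trivial, so the canonical model of $\varphi_{\mathrm{FF}}$ is faithful. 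The affine action is not faithful, since its image is solvable and $G$ is non-amenable. There is also an elementary fix inside your own family, using $m\ge 2$: choose $\varphi_1(g)$ fixing both $0$ and $1$. Since $\varphi_0$ is minimal, a semiconjugacy would yield a nondecreasing $h$ with $h\circ\varphi_1=\varphi_0\circ h$. Then $h(0)=h(1)=0$, the unique fixed point of $x\mapsto\frac{m}{n}x$, while $h(1)=h(\varphi_1(t).0)=h(0)+1$, a contradiction.

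\emph{Type II.} Your claim that type II actions give only countably many semiconjugacy classes is unsupported. The relation $(n-m)\rho(t)\in\Z$ on the quotient circle does not bound the number of classes. What is needed is that the classes produced by Lemma~\ref{lema:BS} are those of the $\varphi_s$ themselves (its proof exhibits them as $r(\varphi_s)$), together with the paper's observation about fixed sets. Whenever $\varphi(t)$ is fixed-point free, $\varphi(g)$ is coarsely $x\mapsto\frac{m}{n}x$. So $\mathrm{Fix}(\varphi(g))$ is compact and non-empty, and $\varphi(g)(x)<x$ for $x>b=\max\mathrm{Fix}(\varphi(g))$. Now suppose $h$ semiconjugates $\varphi$ to a minimal type II action $\psi$ with centralizer $\langle c\rangle$. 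Then $\mathrm{Fix}(\psi(g))\supseteq h(\mathrm{Fix}(\varphi(g)))$ is non-empty and $c$-invariant, hence unbounded. Choose $y\in\mathrm{Fix}(\psi(g))$ with $y>h(b+1)$, and $x$ with $h(x)>y$ (possible since $h(\R)$ is unbounded). Then $\psi(g)^k h(x)>y$ for all $k$. But $\psi(g)^kh(x)=h(\varphi(g)^kx)\le h(b+1)<y$ for $k$ large, because $\varphi(g)^kx\downarrow b$, a contradiction. Hence every $\varphi_s$ is of type III, and their canonical models are the required minimal proximal actions. In passing, $\varphi_0$ is itself minimal and proximal, contrary to your parenthetical remark.
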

\begin{proof}
Notice first that if $\varphi \in \mathrm{Rep}_\mathrm{irr}(G)$ is such that $\varphi(t)$ has no fixed points, the relation $gt^mg^{-1} = t^n$ implies that $\varphi_\mathrm{FF}(g)$ has a compact non-empty set of fixed points. This shows that $\varphi$ cannot be semiconjugate to a cyclic action or to a minimal type II action. Since the abelianization of $G$ is cyclic it cannot be semiconjugate to a minimal type I action, so it must be of type III.

Farb and Franks \cite[Theorem 1.5]{FarbFranks2020} construct a faithful action $\varphi_\mathrm{FF} \in \mathrm{Rep}_\mathrm{irr}(G)$ by analytic diffeomorphisms where $\varphi_\mathrm{FF}(t)$ is the translation $x \mapsto x + 1$. In particular $\varphi_\mathrm{FF}$ cannot be of type I by the previous paragraph. Moreover, the minimalization $r(\varphi_\mathrm{FF})$ of $\varphi_\mathrm{FF}$ is faithful: indeed if $r(\varphi_\mathrm{FF})(h)=\mathrm{id}_\R$ for some $h\in G$, then the analytic diffeomorphism $\varphi_\mathrm{FF}(h)$ fixes pointwise the minimal set of $\varphi_\mathrm{FF}$. This set cannot be discrete since $\varphi_\mathrm{FF}$ is not type I, hence $h=e_G$.

The affine action $\varphi_\mathrm{aff}$ of $G$ defined by setting  $\varphi_\mathrm{aff}(g).x = \frac{n}{m}x$ and $\varphi_\mathrm{aff}(t).x = x + 1$ for all $x \in \R$ is minimal and not faithful (because $G$ is non-amenable), so it is not conjugate to $r(\varphi_\mathrm{FF})$. Define a path of proximal actions $(\varphi_s)_{s \in [0,1]} \subseteq \mathrm{Rep}_\mathrm{irr}(G)$ as follows: choose a continuous path of orientation-preserving homeomorphisms $\psi \colon [0,1] \to \mathrm{Homeo}_0([0,m],[0,n])$ such that $\psi_0 = \restr{\varphi_ \mathrm{FF}(g)}{[0,n]}$ and $\psi_{1} = \restr{\varphi_\mathrm{aff}(g)}{[0,n]}$. For $s \in [0, 1]$, define $\varphi_s \in \mathrm{Rep}_\mathrm{irr}(G)$ by
\[
    \varphi_s(t) = \varphi_\mathrm{aff}(t)\quad \text{ and } \quad \varphi_s(g) = \widetilde{\psi_s}
\] where $\widetilde{\psi_s}$ is the extension of $\psi_s$ to $\R$ that satisfies $\widetilde{\psi_s}(x + n) = \widetilde{\psi_s}(x) + m$ for all $x \in \R$. The path $(\varphi_s)_{s \in [0,1]}$ verifies the hypotheses of Lemma \ref{lema:BS}, implying the desired conclusion.
\end{proof}

\section{Micro-supported and piecewise projective groups} \label{subsection:micro}
This section proves Theorem \ref{teo:PL} by showing a general result (Proposition \ref{prop:micro}) for micro-supported groups acting minimally on $\R$ and then specializing to piecewise projective groups. In this section, whenever given a group $G \subseteq \mathrm{Homeo}_0(\R)$ we will call this distinguished action on $\R$ the \emph{standard action} of $G$ and denote it by $(g,x) \in G \times \R \mapsto g.x$. For simplicity of exposition in the proofs we will use Theorem \ref{cor:criterio} for sequences of translations instead of sequences of homeomorphisms.

A subgroup $G \subseteq \mathrm{Homeo}_0(\R)$ is said to be \emph{micro-supported} if for every relatively compact interval $I \subseteq \R$ the subgroup
\[
	G_I = \{g \in G : g(x) = x \text{ for all }x \in \R \setminus I \}
\]
is non-trivial. Notice that specifying a micro-supported group also implies fixing an embedding $G \subseteq \mathrm{Homeo}_0(\R)$.

\begin{prop}[{\cite[Chapter 3]{BMRT}}] \label{prop:BMRTmicro}
Let $G \subseteq \mathrm{Homeo}_0(\R)$ be a countable group acting minimally on $\R$. Then $G$ is micro-supported if and only if it contains an element of relatively compact support.

In this case, $G$ admits a maximal normal subgroup $[G_c, G_c]$ where $G_c$ is the subgroup of compactly supported elements of $G$ for the standard action.
\end{prop}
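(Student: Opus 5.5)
The forward implication is immediate: if $G$ is micro-supported, then $G_I\neq\{\mathrm{id}\}$ for any relatively compact $I$, and a non-trivial element of $G_I$ has relatively compact support. All the work is in the converse and in the normal subgroup statement. Throughout I read ``maximal normal subgroup'' as the smallest non-trivial normal subgroup, i.e.\ every non-trivial normal subgroup of $G$ contains $[G_c,G_c]$; this interpretation should be checked against \cite[Chapter 3]{BMRT} before writing the proof.

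\emph{Converse: an element of relatively compact support gives micro-support.}
\begin{enumerate}
\item Fix $g\in G$ with $\mathrm{supp}(g)$ relatively compact, and let $(a,b)$ be a connected component of $\mathrm{supp}(g)$. Also fix a small open interval $I$.
\item By minimality, choose $h\in G$ with $h(b)\in I$. Then $f=hgh^{-1}$ is compactly supported and its support has a rightmost endpoint $p=h(b)$ inside $I$.
\item Again by minimality, choose $k'\in G$ with $k'(a)\in I$, so that $k=k'gk'^{-1}$ has a leftmost support component starting at $q=k'(a)\in I$. I would arrange $q<p$ with $[q,p]\subset I$, and $p-q$ as small as needed.
\item Use the two expressions $[f,k]=f\cdot(kf^{-1}k^{-1})$ and $[f,k]=(fkf^{-1})\cdot k^{-1}$. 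They give
\[
\mathrm{supp}[f,k]\subseteq\bigl(\mathrm{supp}f\cup k(\mathrm{supp}f)\bigr)\cap\bigl(\mathrm{supp}k\cup f(\mathrm{supp}k)\bigr).
\]
\item The support components of $f$ and $k$ reaching into $I$ only overlap near $[q,p]$. So the intersection above is trapped in an interval around $[q,p]$ whose size is controlled by how far $f$ and $k$ move points near $p$ and $q$. Shrinking the overlap, after possibly replacing $f,k$ by suitable conjugates, should put this interval inside $I$, so that $[f,k]\in G_I$.
\end{enumerate}

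The hardest step is proving that the commutator is non-trivial. I would handle this by choosing the overlap so that some point $x\in(q,p)$ satisfies $f(x)\neq x$, $k(x)\neq x$, and the images are ordered so that $fk(x)\neq kf(x)$. For instance, pick $x$ near $p$ where $k$ moves $x$ outside $\mathrm{supp} f$ while $f$ moves $x$ within it. If that fails, I would replace $k$ by $k^{\pm1}$ or by a further conjugate by a power of $f$. Failing that, I would fall back on the standard fact that two non-commuting compactly supported homeomorphisms with partially overlapping supports generate a non-abelian group; applying the construction once more then yields a non-trivial element in $G_I$.

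\emph{Normal subgroup statement.} Let $N\unlhd G$ be non-trivial and pick $n\in N$ moving some point. Then there is a small interval $J$ with $n(J)\cap J=\emptyset$. For $a,b\in G_J$, the element $nbn^{-1}$ is supported in $n(J)$, so it commutes with $a$ and with $a^{-1}$. This gives the Higman-type identity
\[
[a,b]=[a,[b,n]]\in N,
\]
since $[b,n]=b\,(nb^{-1}n^{-1})\in N$ by normality. Hence $[G_J,G_J]\subseteq N$.

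To pass to all of $[G_c,G_c]$, conjugate $J$ by elements of $G$, using minimality and the fact that $N$ is normal. Then $[G_{J'},G_{J'}]\subseteq N$ for every interval $J'$ conjugate to a small interval. A fragmentation argument should show that these subgroups, together with $[G_{J'},G_{J''}]$ for overlapping small intervals, generate $[G_c,G_c]$. The fragmentation step I would cite from \cite[Chapter 3]{BMRT} rather than redo, since it is the usual argument that an element of $[G_c,G_c]$ is a product of commutators of elements supported in intervals that can each be conjugated into a small interval.

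Finally, $[G_c,G_c]\neq\{\mathrm{id}\}$ follows from the micro-support part of the proposition: $G_I$ is non-abelian for small $I$, by the commutator constructed in step 4 above.
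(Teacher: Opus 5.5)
Your reading of ``maximal normal subgroup'' as ``every non-trivial normal subgroup contains $[G_c,G_c]$'' is the right one; the paper uses it later when it says an action is ``either faithful or factors through $G/[G_c,G_c]$''. The paper gives no proof and simply cites \cite[Chapter 3]{BMRT}. The standard argument goes through proximality. A minimal action with a non-trivial compactly supported element cannot be of type I, and cannot be of type II, since a bounded support cannot be invariant under a fixed-point-free centralizing element. So it is proximal, and every compact interval can then be mapped into any open interval. Micro-support follows immediately, and Higman's trick handles normal subgroups. Your plan to use only density of orbits can be made to work, but as written it has genuine gaps at exactly the delicate points.

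\textbf{Micro-support part.} First, take $a=\inf\mathrm{supp}\, g$ and $b=\sup\mathrm{supp}\, g$; a single component cannot serve both roles. One then has $\mathrm{supp}[f,k]\subseteq(\min(q,f(q)),\max(p,k(p)))$. Shrinking $p-q$ controls only one end of this interval. Continuity of $f$ at $p$ bounds $f(q)$, but $k$ is chosen after $q$. So nothing prevents $p$ from lying in a component of $\mathrm{supp}\, k$ that $k$ pushes far to the right, which puts $k(p)$ outside $I$. Choosing $k$ first just moves the problem to $f(q)$.

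The missing idea is to choose exponents after $p$ and $q$ are fixed. Since $\mathrm{supp}\, f^{-1}=\mathrm{supp}\, f$, you may replace $f$ by $f^{\pm1}$ so that $f(q)\ge q$, and $k$ by $k^{\pm1}$ so that $k(p)\le p$. Then $\mathrm{supp}[f,k]\subseteq(q,p)\subseteq I$, with no shrinking needed.

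Your non-triviality step is not yet an argument. The fallback ``non-commuting homeomorphisms generate a non-abelian group'' assumes exactly what must be shown. What works is this: if $fk=kf$, then $f$ preserves $\mathrm{supp}\, k$ and hence fixes $q=\inf\mathrm{supp}\, k$. So it suffices to use minimality to place $q$ in the open set $\mathrm{supp}\, f\cap I\cap(-\infty,p)$. This set is non-empty because $\mathrm{supp}\, f$ accumulates at $p$ from the left.

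\textbf{Normal-subgroup part.} The fragmentation step cannot be cited. For an arbitrary countable $G\subseteq\mathrm{Homeo}_0(\R)$, nothing forces elements of $G_c$ to factor into elements of $G$ with small support. Moreover, ``intervals that can each be conjugated into a small interval'' is precisely the contraction (proximality) property you never established.

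Fragmentation is also unnecessary. $\mathrm{Fix}(N)$ is closed and $G$-invariant, hence empty by minimality. So $\sup N.s=+\infty$ for every $s$, since a finite supremum would be an $N$-fixed point. Thus every bounded interval $L$ is displaced by some element of $N$. Your Higman identity then gives $[G_L,G_L]\subseteq N$ directly, and $[G_c,G_c]=\bigcup_L[G_L,G_L]$.

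With these repairs your proof is a correct alternative to the proximality route, and somewhat more elementary, since it needs only dense orbits. The corrected $[f,k]$, with $f,k\in G_c$, also shows $[G_c,G_c]\neq\{\mathrm{id}\}$.
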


One of the main theorems of \cite{BMRT} says that any faithful minimal action of a micro-supported $G$ that is not itself micro-supported is necessarily laminar. We need a more detailed version of this statement making explicit some conditions on the resulting lamination, and some more notation.

\begin{defn}
Given a group $G \subseteq \mathrm{Homeo}_0(\R)$, we say that a (possibly unbounded) non-empty interval $I \subsetneq \R$ is \emph{$G$-good} if, by denoting $\cO_I = \{g.I : g \in G\}$, the following properties are verified:
\begin{itemize}
	\item for every $I_1, I_2 \in \cO_I$ with $I_1 \cap I_2 \neq \emptyset$, either $I_1 \subseteq I_2$ or $I_2 \subseteq I_1$, and
	\item for every $I_1, I_2 \in \cO_I$ there exists $I_3 \in \cO_I$ such that $I_3 \supseteq I_1 \cup I_2$.
\end{itemize}
If $\varphi \in \mathrm{Rep}_\mathrm{III}(G)$, we say that $I$ \emph{generates a lamination for $\varphi$} if one of the following properties is verified.
\begin{itemize}
	\item Either for every (equivalently, for some) $K \in \cO_I$ the support $\mathrm{supp}_\varphi([G_K, G_K])$ has bounded connected components, or
	\item for every (equivalently, for some) $K \in \cO_I$ the group $\varphi([G_K, G_K])$ acts on $\R$ with no minimal set.
\end{itemize}
\end{defn}

If $I$ is a $G$-good interval generating a lamination for $\varphi \in \mathrm{Rep}_\mathrm{III}(G)$, the wandering intervals $\bigcup_{K \in \cO_I} \cW_\varphi([G_K, G_K])$ define a $\varphi$-invariant covering prelamination \cite[Proposition 8.3.4]{BMRT}.

\begin{lema}[{\cite[Section 9.1]{BMRT}}] \label{lema:BMRT}
	Let $G \subseteq \mathrm{Homeo}_0(\R)$ be a micro-supported finitely generated group acting minimally on $\R$, and let $\varphi \in \mathrm{Rep}_\mathrm{III}(G)$ be a faithful action that is not conjugate to the standard action. Then there exists a $G$-good interval $I \subsetneq \R$ such that $I$ generates a lamination for $\varphi$.
\end{lema}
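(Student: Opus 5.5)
The plan follows the strategy of \cite[Section 9.1]{BMRT}: study how $\varphi$ restricts to the rigid stabilizers $G_J$, where $J$ ranges over intervals for the standard action.

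\emph{Step 1.} Write $N=[G_c,G_c]$. By Proposition~\ref{prop:BMRTmicro} it is the minimal nontrivial normal subgroup, and $\varphi$ is faithful, so $\varphi(N)\neq 1$. The subgroup $\varphi(N)$ is normal in the minimal action $\varphi(G)$ and has trivial centralizer. Hence $\varphi(N)$ is irreducible and, if it has a minimal set, it acts minimally and is proximal (compare the argument in Proposition~\ref{prop:producto}).

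\emph{Step 2.} For each bounded interval $J$, consider $\varphi([G_J,G_J])$. Two cases arise.
\begin{enumerate}
\item If $J\cap J'=\emptyset$, then $[G_J,G_J]$ and $[G_{J'},G_{J'}]$ commute. Suppose $\varphi([G_J,G_J])$ had a minimal set with unbounded support for every bounded $J$. Then the commuting subgroups would force, through centralizer arguments (the centralizer of a minimal or proximal action is trivial or abelian), a contradiction unless $\varphi$ is conjugate to the standard action.
\item To make this precise, I would follow BMRT's dichotomy. Either for some $J$ the support $\mathrm{supp}_\varphi([G_J,G_J])$ has only bounded components, or it has no minimal set; in both cases the leaves are produced. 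Otherwise every such group is "totally" supported. In that case one builds an equivariant map $\R\to\R$ from the standard action to $\varphi$ by sending $x$ to a point determined by the nested supports $\mathrm{supp}_\varphi([G_{(x-\epsilon,x+\epsilon)},\dots])$, and this yields a conjugacy. That contradicts the hypothesis.
\end{enumerate}

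\emph{Step 3.} Choose the good interval $I$. Since $G$ is finitely generated and acts minimally, I would take $I$ to be a half-line $(-\infty,a)$, or a suitable bounded interval, whose $G$-orbit is totally ordered under inclusion. The typical candidate is a half-line, for which the orbit $\cO_I$ consists of nested half-lines, which are automatically non-crossing and directed. Under this choice the good-interval axioms are immediate. The "equivalently, for some/every $K$" clause then follows from conjugation invariance, since $[G_{g.K},G_{g.K}]=g[G_K,G_K]g^{-1}$, and from monotonicity in $K$ together with directedness of the orbit.

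The main obstacle is Step 2: showing that if no interval in the orbit generates a lamination, then $\varphi$ is conjugate to the standard action. Concretely, I would quote the detailed structure theorem in BMRT Chapter 9 rather than reprove it, and only verify that its hypotheses (finite generation, minimality, type III) match.
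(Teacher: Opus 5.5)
\paragraph{Verdict.} There is a genuine gap. The paper does not prove this lemma; it quotes it from \cite[Section 9.1]{BMRT}. Your plan also defers the decisive step to that source, and the part you do sketch would not work.

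\paragraph{What is fine.} Your Step 3 is correct as far as it goes. Half-lines are automatically $G$-good, and the ``some/every $K$'' clause follows from $[G_{g.K},G_{g.K}]=g[G_K,G_K]g^{-1}$.

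\paragraph{Where Step 2 fails.} Step 2 is run over bounded intervals $J$, and this cannot work, for three reasons.
\begin{enumerate}
\item For the standard action, $[G_J,G_J]$ is supported in $J$. So ``$\mathrm{supp}_\varphi([G_J,G_J])$ has only bounded components'' holds for the standard action itself, and cannot be what separates exotic actions from it. Moreover, bounded intervals are in general not $G$-good. For $J=(a,b)$, take an element supported near $a$ pushing $a$ to the left and an element supported near $b$ pushing $b$ to the left; they send $J$ to two crossing intervals. So no prelamination is ``produced'' from such $J$.
\item In your ``otherwise'' case, the supports of $\varphi([G_{(x-\epsilon,x+\epsilon)},G_{(x-\epsilon,x+\epsilon)}])$ have unbounded components. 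They do not localize near $x$, so they give no candidate for the image of $x$ and define no map $\R\to\R$.
\item Nothing proved for a bounded $J$ transfers to the half-line $K\supseteq J$ you pick in Step 3. The group $[G_K,G_K]$ is larger than $[G_J,G_J]$, and neither ``bounded components'' nor ``no minimal set'' passes from a subgroup to a bigger group.
\end{enumerate}
Step 1 is never used.

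\paragraph{The missing idea.} Work directly with complementary half-lines, whose rigid stabilizers commute.
\begin{itemize}
\item Fix $x$ and set $H_+=G_{(x,\infty)}$ and $H_-=G_{(-\infty,x)}$. These have disjoint supports and non-trivial commutator subgroups, and $\varphi$ is faithful.
\item Suppose $\varphi([H_+,H_+])$ has a global fixed point. Either every component of its support is bounded, which is the first option of the definition, or the support has a component $(a,\infty)$ or $(-\infty,a)$.
\item In the latter case, on the standard orbit of $x$, send $y$ to the endpoint of the corresponding unbounded component of $\mathrm{supp}_\varphi([G_{(y,\infty)},G_{(y,\infty)}])$. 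This map is monotone and equivariant. It therefore extends to a semiconjugacy from the standard action to $\varphi$, possibly orientation-reversing. Since both actions are minimal, this is a conjugacy, which contradicts the hypothesis. The same argument applies to $H_-$.
\item Suppose instead that both $\varphi([H_\pm,H_\pm])$ act without fixed points. Then $\varphi(H_+)$ has no minimal set $\Lambda$: otherwise Lemma~\ref{lema:minimalProducto} would force $[H_+,H_+]$ or $[H_-,H_-]$ to fix $\Lambda$ pointwise.
\item Hence $\varphi(H_+)$, and therefore also $\varphi([H_+,H_+])$, is irreducible and not cocompact. This is the second option of the definition.
\end{itemize}
This is essentially the argument written out in Claim~2 of the proof of Proposition~\ref{prop:irreduciblePL}. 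Without it, your Step 2 does not yield the lemma.
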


\begin{lema} \label{lema:laminarGgood}
Let $G \subseteq \mathrm{Homeo}_0(\R)$ be a finitely generated group acting irreducibly on $\R$, let $\varphi \in \mathrm{Rep}_\mathrm{III}(G) \cap \mathrm{Harm}(G)$ and let $(t_n)_{n \geq 0} \subseteq \R$ such that $\lim_{n \to \infty} \Psi^{t_n}(\varphi) = \varphi$. If there is a $G$-good interval that generates a lamination for $\varphi$, then $\lim_{n \to \infty} t_n = 0$.
\end{lema}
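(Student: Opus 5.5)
We argue as in the last case of the proof of Lemma \ref{lema:noIII}. Suppose, for a contradiction, that $(t_n)$ does not tend to $0$. The sequence $f_n = T_{t_n}$ almost centralizes $\varphi$. Since $\varphi$ is of type III, Lemma \ref{lema:preorden} shows that the only finite accumulation point of $(t_n)$ is $0$. Hence $|t_n|\to\infty$ along a subsequence, and by symmetry we may assume $t_n \to -\infty$, so that $f_n^{-1}(x) = x - t_n \to +\infty$. Put $\cO_I=\{g.I : g\in G\}$. Because $I$ is $G$-good and $I\ne\R$, the set $\cO_I$ is a directed family of pairwise non-crossing intervals. Hence the subgroups $[G_K,G_K]$, $K\in\cO_I$, form a directed family, and their union $N$ is normal in $G$ (it is invariant under conjugation, since $g[G_K,G_K]g^{-1}=[G_{g.K},G_{g.K}]$). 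By \cite[Proposition 8.3.4]{BMRT}, the family $\cL_0=\bigcup_{K\in\cO_I}\cW_\varphi([G_K,G_K])$ is a $\varphi$-invariant covering prelamination. Its closure $\cL$ is then a covering lamination, so $\varphi$ is laminar. Since $\varphi$ is minimal and $G$ is finitely generated, $\varphi$ is focal.

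Next we reproduce the two claims from Lemma \ref{lema:noIII} for $\cL$. By Remark \ref{rem:bounded-displacement}, every $\varphi(g)$ has bounded displacement, because $\varphi\in\mathrm{Harm}(G)$. Fix a maximal totally ordered family $\cS\subseteq\cL$ and a finite symmetric generating set $S$. Claims \ref{claim:stable} and \ref{claim:endpoints} then go through verbatim: their proofs used only bounded displacement, the covering property, and irreducibility of $\varphi$.

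The key step is to find an element $h$ that fixes the endpoints of all large leaves of $\cS$ and is not too small at a given point. In Lemma \ref{lema:noIII} this came from $H$, whose leaves are wandering. Here we take $h\in[G_K,G_K]$ for a suitable $K\in\cO_I$. Each leaf of $\cL_0$ is wandering for some $[G_{K'},G_{K'}]$. Choosing $K'\supseteq K$ in $\cO_I$, the group $[G_K,G_K]$ is contained in $[G_{K'},G_{K'}]$, so it either fixes such a leaf or moves it disjointly off itself. By closedness, the same dichotomy extends to limit leaves in $\cL$. The step I expect to be the main obstacle is arranging the open condition $\varphi(h).x>\varphi(s).x$ for all $s\in S$, which was previously given by irreducibility of $\varphi|_H$. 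Here $\varphi(N)$ is a nontrivial normal subgroup of $\varphi(G)$, since $\varphi$ is faithful or at least nontrivial on $N$. If $N$ has no global fixed point, then irreducibility of the directed union lets us choose $K$ large and $h\in[G_K,G_K]$ pushing $x$ beyond $\max_s\varphi(s).x$. If instead $\varphi(N)$ had a global fixed point, its fixed set would be a closed $\varphi(G)$-invariant set, hence all of $\R$ by minimality, which is a contradiction. We therefore get a neighborhood
\[U=\{\psi\in\mathrm{Rep}_\mathrm{III}(G):\psi(h).x>\psi(s).x\ \text{for all}\ s\in S\}\]
of $\varphi$.

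Finally, we run the endpoint argument exactly as before. Apply Claim \ref{claim:stable} to $\{h\}\cup S$. For leaves $l\supseteq l_{\{h\}\cup S}$ in $\cS$, the image $\varphi(h).l$ lies in $\cS$ and is comparable to $l$, so $\varphi(h).l=l$. For large $y$, pick leaves $l_-\subsetneq l_+$ whose right endpoints are within $C$ of $y$ on either side. Focality provides $s\in S$ with $\varphi(s).l_-\supseteq l_+$, and this gives $\varphi(h).y\le\varphi(s).y$. Since $f_n^{-1}(x)\to+\infty$, we get $\Psi^{t_n}(\varphi)\notin U$ for large $n$, contradicting $\Psi^{t_n}(\varphi)\to\varphi$. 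Therefore $t_n\to 0$.
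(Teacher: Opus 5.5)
\textbf{Gap: $\varphi(h)$ need not fix the large leaves of $\cS$.} The problem is the step where you conclude $\varphi(h).l=l$ for every leaf $l\in\cS$ containing $l_{\{h\}\cup S}$. That needs every such $l$ to be wandering for a group containing $h$. Your justification, ``choosing $K'\supseteq K$ in $\cO_I$'', is not available: the element $K'\in\cO_I$ with $l\in\cW_\varphi([G_{K'},G_{K'}])$ is dictated by the leaf. Membership in $\cW_\varphi([G_{K'},G_{K'}])$ does not pass to $\cW_\varphi([G_{K''},G_{K''}])$ for $K''\supsetneq K'$.

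Concretely, suppose $K'\subsetneq K$ and $h\in[G_K,G_K]$ moves $K'$ in the standard action. Then $\varphi(h).l\in\cW_\varphi([G_{h.K'},G_{h.K'}])$ lies in $\cS$ and is comparable to $l$. Nothing prevents it from strictly containing $l$ or being strictly contained in it. In that case the equality $y_+=\varphi(h).y_+$ in your final chain of inequalities fails.

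Going far out does not avoid such leaves. When the groups $\varphi([G_{K'},G_{K'}])$ have no minimal set, each $\cW_\varphi([G_{K'},G_{K'}])$ is a covering prelamination by Lemma \ref{lema:wandering}. By upward closure, every maximal chain $\cS$ then contains arbitrarily large leaves from every level $K'\subsetneq K$. Restricting $\cS$ to leaves from levels containing $K$ does not help either: that family is not $\varphi(G)$-invariant, so Claim \ref{claim:stable} breaks down. This is the real difference from Lemma \ref{lema:noIII}, where all leaves were wandering for the single subgroup $H$. The rest of your setup is fine: the reduction to $t_n\to-\infty$, focality, the two claims, and the existence of $h$.

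\textbf{How the paper handles it.} The paper deals with exactly this point. It fixes $\widetilde K$ and $g\in[G_{\widetilde K},G_{\widetilde K}]$ whose displacement at $x$ exceeds $\delta_S$. It then shows that every leaf $J$ can be enlarged to some $\hat J\supseteq J$ in $\cW_\varphi([G_K,G_K])$ with $K\supseteq\widetilde K$. Next it moves $\hat J$ only along $K$-admissible words $s_n\cdots s_1$, that is, words with $(s_n\cdots s_1).K\supseteq K$ in the standard action. Along such words all images stay wandering for a group containing $g$. Irreducibility of the standard action of $G$ is what supplies enough admissible words. This gives that $\mathrm{Fix}(\varphi(g))$ is $\delta_S$-dense outside a bounded set, which bounds $(t_n)$. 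Your argument never uses the hypothesis that the standard action of $G$ is irreducible, which signals that this ingredient is missing.
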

\begin{proof}
Fix $S \subseteq G$ a finite symmetric generating set. Let
\[
	\delta_S = \sup\{\abs{\varphi(s).y - y} : s \in S,\,  y \in \R\},
\]
which is finite since $\varphi \in \mathrm{Harm}(G)$ (see Remark \ref{rem:bounded-displacement}). Let $I \subsetneq \R$ be a $G$-good interval such that
\[
	\cL_\varphi = \bigcup_{K \in \cO_I} \cW_\varphi([G_K, G_K])
\]
is a covering prelamination.  Since $I$ is $G$-good, the set $\bigcup_{K \in \cO_I}[G_K, G_K]$ is a normal subgroup of $G$, whose image through $\varphi$ acts irreducibly on $\R$. Hence there exists $\widetilde{K} \in \cO_I$, $x \in \widetilde{K}$ and $g \in [G_{\widetilde{K}}, G_{\widetilde{K}}]$ such that $\abs{\varphi(g).x - x} > \delta_S$. By enlarging $\widetilde{K}$ if necessary, we may assume that $s.\widetilde{K}\cap\widetilde{K} \neq \emptyset$ for all $s \in S$.

\setcounter{claimnum}{0}
\begin{claimnum} \label{claim:JK}
	For any $J \in \cL_\varphi$ there exists $K \in \cO_I$ with $K \supseteq \widetilde{K}$ such that $\cW_\varphi([G_K, G_K])$ contains an interval $\hat{J} \supseteq J$.
\end{claimnum}
\begin{proof}[Proof of the claim]
Suppose first that the $\varphi([G_K, G_K]),\, K \in \cO_I$ act without a minimal set. Then $\cW_\varphi([G_{\widetilde{K}}, G_{\widetilde{K}}])$ is a covering prelamination for $\varphi([G_{\widetilde{K}}, G_{\widetilde{K}}])$ by Lemma \ref{lema:wandering}, so we may take $K = \widetilde{K}$.

If all the supports $\mathrm{supp}_\varphi([G_K, G_K]),\, K \in \cO_I$ have a bounded connected component, since $\cL_\varphi$ is a covering prelamination we may find $K' \in \cO_I$ such that there is a connected component $J'$ of $\varphi([G_{K'}, G_{K'}])$ containing $J$. Since $I$ is $G$-good, there is a $K \in \cO_I$ with $K \supseteq K'\cup \widetilde{K}$. If $\hat{J}$ is the connected component of $\mathrm{supp}_\varphi([G_K, G_K])$ containing $J'$ we are done.
\end{proof}

Now fix $\widetilde{J} \in \cL_\varphi$ such that $\varphi(s).\widetilde{J} \cap \widetilde{J} \neq \emptyset$ for all $s \in S$ and $x, \varphi(g).x \in \widetilde{J}$, so $\varphi(g).\widetilde{J} = \widetilde{J}$. Consider an arbitrary $J \in \cL_\varphi$ containing $\widetilde{J}$ and take intervals $K \in \cO_I,\, \hat{J} \supseteq J$ supplied by Claim \ref{claim:JK} applied to $J$. We say that a (finite or infinite) sequence $(s_n)_{n \geq 0} \subseteq S$ is \emph{$K$-admissible} if $(s_n\cdots s_1).K \supseteq K$ for all $n \geq 0$. Notice that when $(s_k \cdots s_1).K \supseteq K$, then the interval $\varphi(s_k \cdots s_1).\hat{J}$ is wandering for $\varphi([G_{K}, G_{K}])$ and is thus fixed by $\varphi(g)$ whenever $\varphi(s_k \cdots s_1).\hat{J} \supseteq \widetilde{J}$.
\begin{claimnum}
	Either there exists a $K$-admissible sequence $(s_n)_{n \geq 0}$ such that the intervals $\varphi(s_n\cdots s_1).\hat{J}$, $n \in \N$ are unbounded, or there exists a $K$-admissible sequence $(s_n)_{n \geq 0}$ and $m \in \N$ such that $\varphi(s_m \cdots s_1).\hat{J} \subsetneq \widetilde{J}$.
\end{claimnum}
\begin{proof}
Towards a contradiction, suppose that for every $K$-admissible sequence $(s_n)_{n \geq 0}$ the intervals $\varphi(s_n \cdots s_1).\hat{J}$ remain bounded and never lie inside $\widetilde{J}$. Let $J_M \in \cL_\varphi$ such that $\varphi(s_n \cdots s_1).\hat{J} \subseteq J_M$ for all $n \in \N$. By irreducibility of $\varphi$, we can find a finite sequence $d_1, \ldots, d_k \in S$ such that $\varphi(d_k \cdots d_1).J_M \subsetneq \widetilde{J}$ and $\varphi(d_j \cdots d_1).J_M \supseteq \widetilde{J}$ for all $0 \leq j < k$. By irreducibility of the standard action of $G$, we may find a finite sequence $e_1, \ldots, e_l \in S$  such that the sequence $e_1,\ldots, e_l, d_1,\ldots, d_k$ is $K$-admissible. But
\[
	\varphi(d_k \cdots d_1 e_l \cdots e_1). \hat{J} \subseteq \varphi(d_k \cdots d_1).J_M \subsetneq \widetilde{J},
\]
a contradiction.
\end{proof}

If for some $J \supseteq \widetilde{J}$ the intervals $K, \hat{J}$ verify the first option of the previous claim, then we conclude that $\mathrm{Fix}(\varphi(g))$ is $\delta_S$-dense outside of $\hat{J}$. If for every $J \supseteq \widetilde{J}$ the intervals $K, \hat{J}$ verify the second option, then again we conclude that $\mathrm{Fix}(\varphi(g))$ is $\delta_S$-dense outside of $\widetilde{J}$. In any case, $t_n - x$ cannot exit a bounded interval around $x, \varphi(g).x$ since $\abs{\Psi^{t_n}(\varphi)(g).x - x} > \delta_S$ for $n$ large enough. Thus $(t_n)_{n \geq 0}$ is bounded and $\lim_{n \to \infty} t_n = 0$ again by Theorem \ref{cor:criterio}.
\end{proof}

\begin{prop} \label{prop:micro}
	Let $G \subseteq \mathrm{Homeo}_0(\R)$ be a micro-supported finitely generated group acting minimally on $\R$. Then $G \in \cC$ if and only if $G/[G_c, G_c] \in \cC$.
\end{prop}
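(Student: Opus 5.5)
One direction is immediate: $\cC$ is closed under quotients, since minimal actions of $G/[G_c,G_c]$ pull back to minimal actions of $G$, conjugacy is preserved and reflected, and pulling back is continuous. So if $G\in\cC$ then $G/[G_c,G_c]\in\cC$. This can also be read off criterion \eqref{i-approxtrivial} of Theorem \ref{cor:criterio}.

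For the converse, assume $G/[G_c,G_c]\in\cC$. Since $G$ is finitely generated, we will verify condition \eqref{i-approxderoin} of Theorem \ref{cor:criterio}. Fix $\varphi\in\mathrm{Rep}_\mathrm{III}(G)\cap\mathrm{Harm}(G)$ and $t_n$ with $\Psi^{t_n}(\varphi)\to\varphi$. We split according to the kernel of $\varphi$, using that $[G_c,G_c]$ is the maximal normal subgroup of $G$ (Proposition \ref{prop:BMRTmicro}); in particular it is contained in every nontrivial normal subgroup, since it is simple-like/monolithic in the structure theory of \cite{BMRT}. I would use that every nontrivial normal subgroup of $G$ contains $[G_c,G_c]$.

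\textbf{Case 1:} $\ker\varphi\neq\{e\}$. Then $\ker\varphi\supseteq[G_c,G_c]$, so $\varphi$ factors through $G/[G_c,G_c]$. The sequence $\Psi^{t_n}$, i.e. translations $T_{t_n}$, almost centralizes a type III minimal action of $G/[G_c,G_c]$. Criterion \eqref{i-approxtrivial} for that quotient gives $T_{t_n}\to\mathrm{id}$, hence $t_n\to 0$.

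\textbf{Case 2:} $\varphi$ is faithful. If $\varphi$ is conjugate to the standard action, say $\varphi=h.\mathrm{std}$, we would use the micro-support. Pick $g\in G_I$ for a small bounded interval $I$ with nontrivial action. Then $\varphi(g)$ is supported in the bounded set $h(I)$, and $\Psi^{t_n}(\varphi)(g)$ is supported in $h(I)+t_n$. Convergence to $\varphi(g)$, which moves some point of $h(I)$, forces $h(I)+t_n$ to meet a fixed compact set for large $n$. So $(t_n)$ is bounded, which is condition \eqref{i-approxbounded} (with $f_n=T_{t_n}$, so $f_n^{-1}(0)=-t_n$ is bounded); hence $t_n\to0$ by the argument in the proof of Theorem \ref{cor:criterio} (type III forces the only accumulation point to be $0$ via Lemma \ref{lema:preorden}). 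Otherwise, Lemma \ref{lema:BMRT} provides a $G$-good interval generating a lamination for $\varphi$, and Lemma \ref{lema:laminarGgood} directly gives $t_n\to0$.

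The main obstacle is the claim in Case 1 that every nontrivial normal subgroup contains $[G_c,G_c]$. Proposition \ref{prop:BMRTmicro} as stated only gives that $[G_c,G_c]$ is \emph{maximal}/minimal normal. I would justify the containment by the standard micro-supported argument: if $1\neq g\in N\unlhd G$, choose a small interval $J$ with $g.J\cap J=\emptyset$. Then for $a,b\in G_J$, the commutator $[[a,g],b]=[a,b]$ lies in $N$. Hence $N\supseteq[G_J,G_J]$, and by minimality and conjugation $N\supseteq[G_c,G_c]$. A secondary point is checking in Case 2 that the standard-action subcase also yields type III and bounded displacement as needed. If the standard action is not type III, that subcase simply does not occur.
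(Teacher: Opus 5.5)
Your proposal is correct and follows the paper's proof essentially step by step: the dichotomy that $\varphi$ is either faithful or factors through $G/[G_c,G_c]$, the bound on $(t_n)$ from a compactly supported element when $\varphi$ is (conjugate to) the standard micro-supported action, and Lemma \ref{lema:BMRT} combined with Lemma \ref{lema:laminarGgood} otherwise. Your commutator argument for the dichotomy, which the paper simply recalls from \cite{BMRT}, is fine, with one correction: pushing an arbitrary bounded interval into $J$ needs the standard action to be proximal, not merely minimal; this holds because a minimal action containing a nontrivial compactly supported element can be of neither type I nor type II.
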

\begin{proof}
One implication is clear, so suppose that $G/[G_c, G_c] \in \cC$. Consider $\varphi \in \mathrm{Rep}_\mathrm{III}(G) \cap \mathrm{Harm}(G)$ and $(t_n)_{n \geq 0} \subseteq \R$ such that $\lim_{n \to \infty} \Psi^{t_n}(\varphi) = \varphi$. Recall that the action $\varphi$ is either faithful or factors through $G/[G_c, G_c]$, so we will prove the proposition by assuming that $\varphi$ is faithful and showing that $\lim_{n \to \infty} t_n = 0$ in this case.

Suppose first that $\varphi$ is micro-supported. Fix an open bounded interval $I \subseteq \R$, take a non-trivial element $g \in G$ with $\mathrm{supp}_\varphi(g) \subseteq I$ and let $x \in I$ such that $\varphi(g).x \neq x$. But if $x-t_n \not \in I$ we have $\Psi^{t_n}(\varphi)(g).x = x$, so $x - t_n \in I$ from some $n \in \N$ onwards and hence $(t_n)_{n \geq 0}$ is bounded. By Theorem \ref{cor:criterio} we conclude that $\lim_{n \to \infty} t_n = 0$.

Now suppose that $\varphi$ is not micro-supported, so $\varphi$ is not semiconjugate to the standard action. By Lemma \ref{lema:BMRT} there is a $G$-good interval that generates a lamination for $\varphi$, and Lemma \ref{lema:laminarGgood} shows that $\lim_{n \to \infty} t_n = 0$. We conclude that $G \in \cC$ by Theorem \ref{cor:criterio}.
\end{proof}

\begin{ejm}
Proposition \ref{prop:micro} is already enough to supply many new examples of elementary amenable subgroups of $\mathrm{Homeo}_0(\R)$ that are in $\cC$ and are not virtually solvable, for instance the Brin-Navas group $B$ defined in \cite{Navas2004, Brin2005}. The group $B$ can be defined as follows: fix a non-empty open bounded interval $(x,y) \subseteq \R$ and $f \in \mathrm{Homeo}_0(\R)$ such that the only fixed point of $f$ is a single point in $(x,y)$ and such that $f(x) < x < y < f(y)$. Choose a non-decreasing homeomorphism $w_0 \in \mathrm{Homeo}_0(\R)$ whose support is $(x,y)$ and such that $w_0(f^{-1}(x)) = f^{-1}(y)$. Then $B$ is the group generated by $f, w_0$, and its defining action on $\R$ is minimal and micro-supported. It is clear that these conditions can be met by piecewise affine homeomorphisms $f, w_0$.

The conditions on $w_0$ imply that $w_0$ and $w_1 = f w_0 f^{-1}$ are such that $w_0$ and $w_1 w_0 w_1^{-1}$ have disjoint support, so $w_0, w_1$ generate a wreath product $\Z \wr \Z$. Similarly, the subgroup generated by any finite subset of the $w_n = f^n w_0 f^{-n}, n \in \Z$ is an iterated wreath product $(\cdots ((\Z \wr \Z) \wr \Z) \cdots )\wr \Z$. Thus $B$ is non-virtually solvable and elementary amenable. See \cite{BleakBrinMoore2021} for more examples of this kind.
\end{ejm}

\begin{rmk}
Suppose $G \subseteq \mathrm{Homeo}_0(\R)$ is a group acting minimally on $\R$ that admits a generating set $\{s_1, s_2, \ldots, s_k\}$ where $s_1$ is supported on an interval $(-\infty, x)$, $s_k$ is supported on an interval $(y, \infty)$ and the $s_2, \ldots, s_{k-1}$ have relatively compact support. Then $G$ is micro-supported and writing $G/[G_c, G_c]$ as the extension \eqref{eq:extension} shows that $G/[G_c, G_c]$ is solvable, so by Proposition \ref{prop:micro} we have $G \in \cC$. This applies to the family of pre-chain groups studied in \cite{KimKoberdaLodha2019}, who were already known to lie in $\cC$ from \cite[Section 16.3.3]{BMRT}.
\end{rmk}

\begin{lema}[{\cite[Lemma 9.1.2]{BMRT}}] \label{lema:minimalProducto}
Let $\varphi \in \mathrm{Rep}_\mathrm{irr}(G)$ and $H_1, H_2 \subseteq G$ two commuting subgroups such that $\varphi(H_1)$ admits a minimal set $\Lambda$. Then either $[H_1, H_1]$ or $[H_2, H_2]$ fixes $\Lambda$ pointwise.
\end{lema}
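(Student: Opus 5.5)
We prove the statement in the form of Lemma 9.1.2 of BMRT. Let $\varphi\in\mathrm{Rep}_\mathrm{irr}(G)$, let $H_1,H_2\subseteq G$ commute, and let $\Lambda$ be a minimal set for $\varphi(H_1)$. The plan is to show that if $[H_1,H_1]$ does not fix $\Lambda$ pointwise, then $[H_2,H_2]$ does.

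\emph{Step 1: reduce to $\Lambda$ not a discrete orbit.} If $\Lambda$ is a discrete orbit, $\varphi(H_1)$ acts on $\Lambda\cong\Z$ by order-preserving bijections. Such bijections are translations of $\Z$, so the action on $\Lambda$ factors through an abelian group. Hence $[H_1,H_1]$ fixes $\Lambda$ pointwise and we are done. From now on $\Lambda$ is either $\R$ or a perfect unbounded set, and it is the unique minimal set of $\varphi(H_1)$.

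\emph{Step 2: pass to the minimal model.} Since $\varphi(H_2)$ commutes with $\varphi(H_1)$, it maps $\Lambda$ to a minimal set of $\varphi(H_1)$. By uniqueness, $\varphi(H_2)$ preserves $\Lambda$. Collapse the gaps of $\R\setminus\Lambda$ via the continuous monotone map $\pi$. This gives a minimal action $\bar\varphi_1$ of $H_1$ on $\R$. Because the collapse is canonical, $\varphi(H_2)$ descends to an action $\bar\varphi_2$ by homeomorphisms commuting with $\bar\varphi_1(H_1)$. So $\bar\varphi_2(H_2)\subseteq \mathrm{Cent}(\bar\varphi_1(H_1))$. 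An element of $G$ acts trivially on $\Lambda$ iff its image acts trivially after collapsing, because $\pi$ is injective on $\Lambda$ away from the countably many gap endpoints, and those endpoints are fixed as soon as everything else is.

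\emph{Step 3: use the structure of centralizers of minimal actions.} By the trichotomy of Theorem \ref{teo:tits}, the centralizer of the minimal action $\bar\varphi_1(H_1)$ is trivial, infinite cyclic, or conjugate to the group of translations. In every case it is abelian. Hence $\bar\varphi_2([H_2,H_2])$ is trivial, and by Step 2 $[H_2,H_2]$ fixes $\Lambda$ pointwise. This argument actually proves the stronger statement that $[H_2,H_2]$ always fixes $\Lambda$ when $\Lambda$ is non-discrete; the disjunction is only needed to cover the discrete case.

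\emph{Main obstacle.} The delicate point is Step 2. First, one must check that the commuting group $\varphi(H_2)$ really preserves $\Lambda$, which requires the uniqueness of non-discrete minimal sets. Second, one must check that its induced maps on the collapsed line are well-defined homeomorphisms. For this, note that $\varphi(H_2)$ permutes the gaps of $\Lambda$, so $\pi\circ\varphi(h)$ is constant on each fiber of $\pi$.
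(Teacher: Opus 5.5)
Your proof is correct. The paper gives no argument of its own for this lemma and only cites \cite[Lemma 9.1.2]{BMRT}; your proof is the standard one behind that citation. A discrete orbit forces $\varphi(H_1)$ to act on $\Lambda$ through $\Z$. A non-discrete $\Lambda$ is the unique minimal set, hence $\varphi(H_2)$-invariant, and after collapsing gaps $\varphi(H_2)$ lands in the centralizer of a minimal action, which is abelian.

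Step~3 does not need the full type I/II/III description of centralizers. A non-trivial element of that centralizer has a closed, $\bar\varphi_1(H_1)$-invariant fixed-point set, which is therefore empty, so the centralizer acts freely and is abelian by H\"older's theorem.
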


\begin{prop} \label{prop:irreduciblePL}
Let $G \subseteq \mathrm{PProj}_0(\R)$ be a finitely generated group acting irreducibly on $\R$. Then $G \in \cC$.
\end{prop}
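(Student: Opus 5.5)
The plan is to reduce to Proposition \ref{prop:micro}, using that the germs of piecewise projective maps at $\pm\infty$ form solvable groups. Let $G_+$ (resp. $G_-$) be the subgroup of elements of $G$ acting trivially on a neighbourhood of $-\infty$ (resp. $+\infty$), and note that $G_c = G_+ \cap G_-$. The germ of $g \in G$ at $+\infty$ is the germ of a single projective map fixing $\infty$, i.e. of an affine map $x \mapsto ax+b$. Hence $G/G_-$ embeds in $\mathrm{Aff}(\R)$, which is metabelian, and likewise for $G/G_+$. Therefore $G/G_c$ embeds in $G/G_- \times G/G_+$ and is solvable. Since $G_c/[G_c,G_c]$ is abelian, $G/[G_c,G_c]$ is solvable. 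Being finitely generated, it lies in $\cC$ by the Example following Corollary \ref{cor:stability}, which relies on \eqref{i-stability-extension} and Corollary \ref{cor:abeliano}.

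Next I pass from the standard action to a minimal one. Since $G$ is finitely generated and acts irreducibly, the standard action has a minimal set $\Lambda$. If $\Lambda$ is a discrete orbit, $G$ has a nontrivial morphism to $\Z$ and I would argue separately with the commutator subgroup, possibly invoking Corollary \ref{cor:extension}. Otherwise, collapsing the gaps of $\Lambda$ gives a minimal action $\psi \colon G \to \mathrm{Homeo}_0(\R)$. If some element of $G_c$ acts nontrivially on $\Lambda$, then $\psi(G)$ contains a nontrivial element of relatively compact support. By Proposition \ref{prop:BMRTmicro}, $\psi(G)$ is then micro-supported, and its compactly supported subgroup contains $\psi(G_c)$. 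Proposition \ref{prop:micro} applied to $\psi(G)$ gives $\psi(G) \in \cC$, since its quotient by $[\psi(G)_c,\psi(G)_c]$ is a quotient of a solvable group. Finally, to return to $G$, I would apply Corollary \ref{cor:extension} to the normal subgroup $\ker\psi$, after checking that $\ker\psi$ is in $\cC$: its elements fix $\Lambda$ pointwise, so they act only inside the gaps. Alternatively I would bypass the kernel by checking condition \eqref{i-approxderoin} of Theorem \ref{cor:criterio} directly. Take $\varphi \in \mathrm{Rep}_\mathrm{III}(G)\cap\mathrm{Harm}(G)$. If $\varphi$ does not factor through $\psi$, then $\ker\psi$ acts nontrivially; it is normal, so Lemma \ref{lema:noIII} or the laminar argument of Lemma \ref{lema:laminarGgood}, applied with $G$-good intervals taken among the gaps of $\Lambda$, should force $t_n \to 0$.

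The remaining case is when $G_c$ acts trivially on $\Lambda$. Here I would apply Lemma \ref{lema:minimalProducto} to a type III action $\varphi$ with the commuting subgroups $H_1, H_2$ chosen as subgroups of $G_+$ and $G_-$ with disjoint supports. The aim is to show that either $\varphi$ kills a commutator subgroup, so that it factors through a solvable quotient lying in $\cC$, or $\varphi$ is laminar. In the laminar case the same bounded-displacement argument as in Lemma \ref{lema:laminarGgood} gives $t_n \to 0$.

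I expect this last step to be the main obstacle. The difficulty is controlling $\ker\psi$, or the case $G_c|_\Lambda$ trivial, without the micro-support hypothesis, i.e. producing the right $G$-good intervals so that Lemma \ref{lema:laminarGgood} applies. The piecewise projective structure must enter here: finitely many breakpoints means each generator is projective near the ends of each gap, which should make the gaps of $\Lambda$ good intervals.
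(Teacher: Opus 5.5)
There is a genuine gap. Your first step is sound, and it is essentially the first claim of the paper's proof: $G/[G_c,G_c]$ is finitely generated and solvable, and $G/\ker\psi\in\cC$ follows either from Proposition~\ref{prop:micro} or, when $G_c$ acts trivially on $\Lambda$, from solvability, since then $\ker\psi=G_c$. The paper finds compactly supported elements via double commutators rather than via $G_c$. It also covers the discrete-orbit case uniformly: $N=\ker(G\curvearrowright\Lambda)$ is still compactly supported and $G/N$ is cyclic.

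The real content is the passage from $G/N$ to $G$, where $N=\ker\psi$, and none of your three routes closes it.
\begin{itemize}
\item Checking $N\in\cC$ is not available. $N$ is in general not finitely generated, so neither Theorem~\ref{teo:PL} nor Theorem~\ref{teo:extension} applies to it. Saying ``its elements act inside the gaps'' is not an argument, and membership in $\cC$ does not pass from finitely generated subgroups to the whole group (Proposition~\ref{prop:nolimites}).
\item Lemma~\ref{lema:noIII} only treats actions where $\varphi|_N$ is nontrivial and \emph{not} of type III. You give nothing for the remaining case.
\item Gaps of $\Lambda$ can never be $G$-good. The orbit of a gap consists of pairwise disjoint intervals, and it is not a singleton by irreducibility. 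So no member of $\cO_I$ contains the union of two others, which violates the second condition of the definition.
\end{itemize}
Your last paragraph states the right dichotomy (either $\varphi$ kills a commutator subgroup or it is laminar), but that dichotomy is exactly what has to be proved.

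Two ideas are missing.

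First, work with $[N,N]$ instead of $N$. Since $N/[N,N]$ is abelian, it lies in $\cC$ without any finite generation (Corollary~\ref{cor:abeliano}), so Theorem~\ref{teo:extension} gives $G/[N,N]\in\cC$. Only actions with $\varphi([N,N])\neq 1$ then remain. Conjugating supports by $G$ shows that $\varphi([N_{(x,\infty)},N_{(x,\infty)}])$ and $\varphi([N_{(-\infty,x)},N_{(-\infty,x)}])$ are nontrivial for every $x$.

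Second, the right $G$-good intervals are the half-lines $(x,\infty)$ and $(-\infty,x)$, together with the commuting subgroups $G_{(x,\infty)}$ and $G_{(-\infty,x)}$. The case analysis runs as follows.
\begin{itemize}
\item Suppose $\varphi([G_{(x,\infty)},G_{(x,\infty)}])$ has a global fixed point. Either its support has bounded components, which gives a lamination, or it has an unbounded component. In the latter case, $y\mapsto\inf\mathrm{supp}_\varphi([G_{(y,\infty)},G_{(y,\infty)}])$ on the standard orbit of $x$ gives a semiconjugacy from the standard action. This forces $\varphi$ to factor through $G/N$, a contradiction. The left half-line is handled symmetrically.
\item Suppose both commutator subgroups act irreducibly. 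Then Lemma~\ref{lema:minimalProducto} rules out a minimal set for $\varphi(G_{(x,\infty)})$, so $\varphi([G_{(x,\infty)},G_{(x,\infty)}])$ has no minimal set either.
\end{itemize}
In every case one half-line generates a lamination for $\varphi$, and Lemma~\ref{lema:laminarGgood} yields $t_n\to 0$.
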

\begin{proof}
Let $\Lambda \subseteq \R$ be a minimal set for the standard action of $G$. Let $N \subseteq G$ the kernel of the action on $\Lambda$, which is composed of compactly supported homeomorphisms.

\setcounter{claimnum}{0}
\begin{claimnum}
The group $G/[N,N]$ belongs to $\cC$. 
\end{claimnum}
\begin{proof}[Proof of the claim]
Set $Q = G/N$. We first prove that $Q \in \cC$. Notice that $Q$ is either abelian (and the conclusion follows) or admits a minimal faithful action on $\R$ which is a continuous factor of the standard action of $G$. In this case, the groups of germs of $Q$ at $-\infty, \infty$ are isomorphic to those of $G$ and both are 2-step solvable. Unless $Q$ is 2-step solvable (and the conclusion follows from Proposition \ref{teo:extension}), there exists $g_1, g_2, g_3, g_4 \in Q$ such that $[[g_1, g_2], [g_3, g_4]]$ is a non-trivial homeomorphism of relatively compact support. By Proposition \ref{prop:BMRTmicro} this implies that $Q$ is micro-supported, and Proposition \ref{prop:micro} shows that $Q \in \cC$ if and only if $Q/[Q_c, Q_c]$. But $Q/[Q_c, Q_c]$ is finitely generated and can be written as the extension
\begin{equation} \label{eq:extension}
	1 \longrightarrow Q_c/[Q_c, Q_c] \longrightarrow Q/[Q_c, Q_c] \longrightarrow Q/Q_c \longrightarrow 1
\end{equation}
where $Q_c/[Q_c, Q_c]$ is abelian and $Q/Q_c$ solvable since it is contained in the product of the groups of germs of $Q$ at $-\infty, \infty$, which are subgroups of $\mathrm{Aff}(\R)$. We conclude that $Q/[Q_c, Q_c] \in \cC$ by Proposition \ref{teo:extension}. Hence $Q \in \cC$ in any case.

By writing the finitely generated group $G/[N,N]$ as an extension of $Q$ by the abelian group $N/[N,N]$ we conclude that $G/[N,N] \in \cC$ by Proposition \ref{teo:extension}.
\end{proof}

Now let $\varphi \in \mathrm{Rep}_\mathrm{III}(G) \cap \mathrm{Harm}(G)$ and $(t_n)_{n \geq 0} \subseteq \R$ such that $\lim_{n \to \infty} \Psi^{t_n}(\varphi) = \varphi$. If $\varphi([N,N])$ is trivial, then $\lim_{n \to \infty} t_n = 0$ because $G/[N,N] \in \cC$. Since $N$ is composed of compactly supported homeomorphisms, we may assume that $\varphi([N_{(x, \infty)}, N_{(x, \infty)}])$ and $\varphi([N_{(-\infty, x)}, N_{(-\infty, x)}])$ are non-trivial for every $x \in \R$. 

Fix $x \in \R$.
\begin{claimnum}
One of the $G$-good intervals $(x, \infty), (-\infty, x)$ generates a lamination for $\varphi$.
\end{claimnum}
\begin{proof}[Proof of the claim]
Suppose first that $\varphi([G_{(x, \infty)}, G_{(x, \infty)}])$ has a global fixed point. If the support $\mathrm{supp}_\varphi([G_{(x, \infty)}, G_{(x, \infty)}])$ has a bounded connected component $C$, then the claim follows. If not, then one of the connected components of $\mathrm{supp}_\varphi([G_{(x, \infty)}, G_{(x, \infty)}])$ is an interval of the form $(h(x), \infty)$ or $(-\infty, h(x))$, and we may assume that it is of the form $(h(x),\infty)$ (the other case is analogous). Define a map $h$ from $\mathrm{Orb}_\mathrm{st}(x)$, the orbit of $x$ under the standard action of $G$, to $\R$ by setting
\[
	y \in \mathrm{Orb}_\mathrm{st}(x) \mapsto \inf \mathrm{supp}_\varphi([G_{(y, \infty)}, G_{(y, \infty)}]).
\]
The map $h$ intertwines the standard action of $G$ with $\varphi$ and is monotone. Up to replacing it by $x \mapsto -h(x)$ it extends to a semiconjugacy between $\varphi$ and the standard action of $G$. Since $\varphi$ is minimal, $h$ is continuous. Hence the standard action of $G$ does not have a discrete orbit and $\varphi$ is conjugate to the minimal action of $G/N$ which is a factor of the standard action of $G$. This is a contradiction, since $\varphi([N,N])$ is non-trivial.

By repeating the same argument as the previous paragraph with $\varphi([G_{(-\infty, x)}, G_{(-\infty, x)}])$, we may suppose then that the groups $\varphi([G_{(x, \infty)}, G_{(x, \infty)}]),  \varphi([G_{(-\infty, x)}, G_{(-\infty, x)}])$ act irreducibly. Suppose first that $\varphi(G_{(x, \infty)})$ admits a minimal set $\Lambda \subseteq \R$. Since the groups $\varphi(G_{(x, \infty)}), \varphi(G_{(-\infty, x)})$ commute, Lemma \ref{lema:minimalProducto} shows that one of the $\varphi([G_{(-\infty, x)}, G_{(-\infty, x)}]), \varphi([G_{(x, \infty)}, G_{(x, \infty)}])$ must fix $\Lambda$ pointwise, contradicting the irreducibility of their action. 

Suppose instead that $\varphi(G_{(x, \infty)})$ admits no minimal set, so its action on $\R$ is irreducible and not cocompact. Thus the action of $\varphi([G_{(x, \infty)}, G_{(x, \infty)}])$ is irreducible and not cocompact either. We deduce that $\varphi([G_{(x, \infty)}, G_{(x, \infty)}])$ admits no minimal set, which impĺies the claim in this case.
\end{proof}

By Lemma \ref{lema:laminarGgood} we conclude that $\lim_{n \to \infty} t_n = 0$. Hence $G \in \cC$ by Theorem \ref{cor:criterio}.
\end{proof}

\begin{cor}[Theorem \ref{teo:PL}] \label{cor:PL}
Let $G \subseteq \mathrm{PProj}_0(\R)$ be a finitely generated group. Then $G \in \cC$.
\end{cor}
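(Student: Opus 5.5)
The plan is to reduce Corollary \ref{cor:PL} to Proposition \ref{prop:irreduciblePL} by cutting the standard action into irreducible pieces. We verify criterion \eqref{i-approxtrivial} of Theorem \ref{cor:criterio} for the finitely generated group $G \subseteq \mathrm{PProj}_0(\R)$. If $G$ acts irreducibly on $\R$, Proposition \ref{prop:irreduciblePL} applies directly. Otherwise, $\mathrm{supp}(G) = \R \setminus \mathrm{Fix}(G)$ is open, and we look at its connected components.

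First we show that only finitely many components actually matter. Each generator in a finite symmetric generating set $S$ is piecewise projective with finitely many breakpoints. Hence $\mathrm{Fix}(s)$ is a finite union of points and closed intervals: a nontrivial projective map fixes at most two points, so on each piece a generator is either the identity or has at most two fixed points. Therefore $\mathrm{Fix}(G) = \bigcap_{s \in S} \mathrm{Fix}(s)$ is also a finite union of points and closed intervals, and $\mathrm{supp}(G)$ has finitely many components $J_1, \dots, J_k$. Each $J_i$ is an open interval on which $G$ acts irreducibly. After identifying $J_i$ with $\R$ by a projective (or affine) chart, the restricted group $G_i = G|_{J_i}$ is again a finitely generated subgroup of $\mathrm{PProj}_0(\R)$ acting irreducibly. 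For unbounded $J_i$ we use a translation; for bounded $J_i = (a,b)$ we use a projective map sending $(a,b)$ to $\R$, which conjugates piecewise projective maps to piecewise projective maps, with finitely many breakpoints preserved. By Proposition \ref{prop:irreduciblePL}, each $G_i \in \cC$.

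Next we assemble the pieces. The group $G$ embeds in $\prod_{i=1}^k G_i$. Let $N_i$ be the kernel of $G \to \prod_{j \le i} G_j$. This gives a finite normal series $G = N_0 \unrhd N_1 \unrhd \cdots \unrhd N_k = \{e\}$, whose successive quotients $N_{i-1}/N_i$ embed in $G_i$. The difficulty is that these quotients need not be finitely generated, and $\cC$ is not known to pass to subgroups, so Corollary \ref{cor:extension} cannot be used blindly. We therefore argue directly with Theorem \ref{cor:criterio}.

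Take $\varphi \in \mathrm{Rep}_\mathrm{III}(G)$ and an almost centralizing sequence $(f_n)$. Let $i$ be minimal such that $\varphi(N_i)$ is trivial. Then $\varphi$ factors through $G/N_i$, which is a finitely generated subgroup of $\prod_{j \le i} G_j$, and $\varphi(N_{i-1})$ is a nontrivial normal subgroup. We apply Lemma \ref{lema:noIII} with $H = N_{i-1}/N_i$, which is normal and hence commensurated. If $\varphi|_H$ is not of type III, then $f_n \to \mathrm{id}_\R$ by that lemma. If $\varphi|_H$ is of type III, then $\varphi(H)$ commutes with the images of the other coordinates. Arguing as in Proposition \ref{prop:producto}, this forces $\varphi$ to factor through the projection $G/N_i \to G_i$: the complementary factor lands in the centralizer of a type III action, which is trivial. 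So $\varphi$ is really an action of the finitely generated group $G_i \in \cC$, and hence $f_n \to \mathrm{id}_\R$.

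The main obstacle I expect is making this factorization precise. We must show that a type III action of a finitely generated subdirect product of $G_1, \dots, G_k$ factors through one coordinate. The natural tool is Lemma \ref{lema:minimalProducto}, applied to the commuting subgroups given by the kernels of the projections to different coordinates. Care is needed because these kernels are only normal, not direct factors, so we would combine that lemma with Proposition \ref{prop:fixComm} to kill the unwanted kernel.
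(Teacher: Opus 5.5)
Your proposal is correct and is essentially the paper's argument. The only change is that you replace the paper's induction on the number of components of the support with a flat normal series. In both, a single component is handled by Proposition~\ref{prop:irreduciblePL}, a non-type-III restriction to a normal kernel by Lemma~\ref{lema:noIII}, and the remaining case by the fact that kernels attached to disjoint sets of components commute. In your type III case, $\varphi(H)$ is minimal because $H$ is normal, so its centralizer is trivial and kills the complementary kernel. The general ``factors through one coordinate'' statement you flag as the main obstacle is therefore not needed.

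One small fix: no single projective map or translation sends a bounded interval or a half-line onto $\R$. Identify $J_i$ with $\R$ by a two-piece piecewise projective homeomorphism instead; this still conjugates $G|_{J_i}$ into $\mathrm{PProj}_0(\R)$.
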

\begin{proof}
To prove the general case when the standard action of $G$ is not necessarily irreducible, we argue by induction on the number $c$ of the connected components of its support $\mathrm{supp}_\mathrm{st}(G)$. This set is finite, since the endpoints of connected components of $\mathrm{supp}_\mathrm{st}(G)$ are included in the set $\bigcup_{s \in S} \partial \mathrm{Fix}(s)$ where $S \subseteq G$ is a finite generating set.

If $c = 1$, then Lemma \ref{prop:irreduciblePL} shows $G \in \cC$ because the group of piecewise projective homeomorphisms of any open interval is isomorphic to $\mathrm{PProj}_0(\R)$. Assume $c \geq 2$, denote by $I \subseteq \R$ the leftmost connected component of $\mathrm{supp}_\mathrm{st}(G)$ and define $J$ as the smallest open interval containing all connected components of $\mathrm{supp}_\mathrm{st}(G)$ except for $I$. Write $N_I$ for the kernel of the action of $G$ on $I$ and $G_I$ for the image of the restriction of $G$ to $I$, and likewise with $N_J, G_J$.

Consider $\varphi \in \mathrm{Rep}_\mathrm{III}(G) \cap \mathrm{Harm}(G)$ and $(t_n)_{n \geq 0} \subseteq \R$ such that $\lim_{n \to \infty} \Psi^{t_n}(\varphi) = \varphi$. If $\varphi(N_I)$ or $\varphi(N_J)$ have global fixed points then they are trivial and we conclude that $\lim_{n \to \infty} t_n = 0$ since $G_I = G/N_I$ and $G_J = G/N_J$ are in $\cC$ by the inductive hypothesis. Thus we may assume that $\varphi(N_I)$ and $\varphi(N_J)$ act irreducibly on $\R$. Notice that $\varphi(N_I)$ cannot be of type III, since it has a non-trivial centralizer $\varphi(N_J)$. We conclude that $\lim_{n \to \infty} t_n  = 0$ by Lemma \ref{lema:noIII}. Again $G \in \cC$ by Theorem \ref{cor:criterio}, finishing the proof. 
\end{proof}

\section{Groups at the doorstep of $\mathcal{C}$} \label{section:noC}
In this section we give an example of a (non-finitely generated) group $H\notin\cC$ which is a direct limit of groups in $\cC$. This shows that $\cC$ is not closed under extensions. We then modify this example to prove Theorem \ref{teo:noc}.

These constructions use the framework of the groups acting on  suspension flows of homeomorphisms of the Cantor set from \cite{MatteBonTriestino2020}, that we briefly recall now.  Let $\varphi\colon X\to X$ be a minimal homeomorphism of the Cantor space. 
Its suspension (or mapping torus) is $Y=(X\times \R)/\Z$, where $\Z$ acts on $X\times \R$ by $n\cdot (x, t)=(\varphi^n(x), t-n)$. We denote by $\pi\colon X\times \R\to Y$ the quotient projection. The suspension flow of $\varphi$ is the flow
\[\Phi\colon Y\times \R\to Y, \quad \quad \Phi^t(\pi(x, s))=\pi(x, s+t).\]

\begin{defn}
We denote by $\Hphi$ the group of all homeomorphisms $f\colon Y\to Y$ such that there exists a continuous function $\tau_f\colon Y\to \R$ with 
\[f(y)=\Phi^{\tau_f(y)}(y)\]
for every $y\in Y$.
\end{defn}
In particular, the group $\Hphi$ preserves every $\Phi$-orbit. Notice that the function $\tau_f$ associated to an element $f$ is unique since $\Phi$ has no periodic orbits, as we are assuming that $\Phi$ is minimal. 

Fix a countable subgroup $G\le \Hphi$. For every point $y\in Y$, we have a representation 
\[\rho_y\colon G\to \mathrm{Homeo}_0(\R)\]
given by 
\begin{equation} \rho_y(g).s=s+\tau_g(\Phi^s(y)). \label{e-rhoy} \end{equation}
In other words, $\rho_y$ is the action read on the $\Phi$-orbit of $y$, identified with $\R$ via the orbital map $\R\to Y, s\mapsto \Phi^s(y)$. It is apparent from \eqref{e-rhoy} that the  representation $\rho_y$ varies continuosly with $y$. It is irreducible provided  $G$ has no fixed point on the orbit of $y$. Recall that we denote by $T_t\colon \R\to \R$ the translation  $s\mapsto s+t$. The identity
\[\rho_{\Phi^t(y)}(g).s= s+\tau_g(\Phi^{s+t}(y))=\rho_y(g).(s+t)-t=T_{-t} \circ \rho_y(g) \circ T_t(s)\]
shows that the map $y\mapsto \rho_y$ intertwines the flow $\Phi$ with the inverse of translation flow $\Psi$  on the space of representations  (see Subsection \ref{sec:harmonic}), namely
\[\rho_{\Phi^t(y)}=\Psi^{-t}(\rho_y).\]
Let $C\subseteq X$ be a clopen subset. If $I\subseteq \R$ is any open interval of length $|I|<1$, then the restriction $\pi_{C, I}$ of the quotient projection to $C\times I$ is a homeomorphism onto an open subset $Y_{C, I}\subseteq Y$. 
We call the map \[\pi_{C, I}\colon C\times I\to Y_{C, I}\]
a \emph{chart}. By abuse of terminology we also call its image $Y_{C, I}$ a chart. Notice that changes of charts are locally of the form $(x, t)\mapsto (\varphi^n(x), T_{-n}(t))$. 

We say that a map $f\colon I\to \R$  is \emph{PL} if it is a {piecewise affine} homeomorphism onto its image, with finitely many discontinuity points for the derivative (henceforth \emph{breakpoints}). We further say that it is \emph{PL dyadic} if the affine maps have slope a power of $2$ and constant term in $\Z[\frac{1}{2}]$, and the breakpoints are in $\Z[\frac{1}{2}]$. We denote by $F_I$ the group of all PL dyadic homeomorphisms $f\colon I\to I$. Thus $F_{(0, 1)}$ is Thompson's group $F$, and $F_I$ is isomorphic to it whenever $J$ has endpoints in $\Z[\frac{1}{2}]$.
\begin{defn}[\cite{MatteBonTriestino2020}]

The group $T(\varphi)$ is the subgroup of $\Hphi$ of all homeomorphisms $g\colon Y\to Y$ such that for any $y\in Y$ and any sufficiently small charts $y\in Y_{C, I}, g(y)\in Y_{D, J}$ such that $g(Y_{C, I})\subseteq Y_{D, J}$, we have
\[\ \pi_{D, J}^{-1} \circ g\circ\pi_{C, I}(x, t)=(\varphi^n(x), f(t))\]
for some $n\in \Z$ and some PL dyadic map $f\colon I\to J$. 
\end{defn}
For every chart $\pi_{C, I}\colon C\times I\to Y_{C, I}$, we denote by $F_{C, I}$ the subgroup of $\Tphi$ obtained by letting $F_I$ act on $C\times I$ diagonally with trivial action on $C$ and transporting this action on $Y_{C, I}$ through the map $\pi_{C, I}$ (and extending it to the trivial action on $Y\setminus Y_{C, I}$). It is shown in \cite[Proposition 4.4]{MatteBonTriestino2020} that the subgroups $F_{C, I}$ generate $\Tphi$.

For a point $y\in Y$, we denote by $\Tphi^0_y$ its \emph{germ stabiliser}, consisting of all $g\in \Tphi$ that  fix pointwise some neighbourhood of $y$.

\begin{prop} \label{prop:nolimites}
Let $\varphi\colon X\to X$ be any minimal homeomorphism of the Cantor space, with suspension $Y$, and fix $z\in Y$. Then the group $H=\Tphi^0_z$ does not belong to $\cC$, but all its finitely generated subgroups belong to $\cC$.

In particular, the class $\cC$ is not closed under direct limits. 
\end{prop}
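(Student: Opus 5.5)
The plan is to prove the two assertions separately: the finitely generated subgroups are in $\cC$ by reduction to Theorem~\ref{teo:PL}, and $H\notin\cC$ by exhibiting an orbital action $\rho_y$ violating condition~\eqref{i-approxtrivial} of Theorem~\ref{cor:criterio}. The failure of closure under direct limits then follows, since $H$ is countable and is the increasing union of its finitely generated subgroups.

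\emph{Finitely generated subgroups.} Let $K=\langle g_1,\dots,g_k\rangle\le H$. Each $g_i$ fixes pointwise a neighbourhood of $z$, so $K$ fixes pointwise a chart $V=Y_{C,I}\ni z$.
\begin{enumerate}
\item \emph{Cutting orbits into segments.} By minimality of $\varphi$, the return time of $\Phi$ to $V$ is bounded (a Kakutani--Rokhlin argument). Hence every $\Phi$-orbit meets $Y\setminus V$ in segments of uniformly bounded length. Since elements of $\Hphi$ move points along $\Phi$-orbits and $K$ fixes $V$, each such segment is $K$-invariant.
\item \emph{Towers.} Refine $C$ into finitely many clopen pieces $C_1,\dots,C_r$ according to the return time and to the combinatorics of the finitely many generators. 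Then $Y\setminus V$ becomes a finite union of towers $C_j\times J_j$ with $J_j$ a bounded interval. On each tower every $g_i$ acts as $(x,t)\mapsto(x,f_{ij}(t))$ with $f_{ij}$ PL dyadic and independent of $x\in C_j$. This uses the local form in the definition of $\Tphi$ and compactness, which give finitely many PL pieces locally constant in $x$.
\item \emph{Embedding.} We get a faithful embedding $K\hookrightarrow\prod_j F_{J_j}$. Placing the intervals $J_j$ disjointly on $\R$, this realises $K$ as a finitely generated group of PL homeomorphisms of $\R$. Theorem~\ref{teo:PL} then gives $K\in\cC$.
\end{enumerate}

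\emph{$H\notin\cC$.} Pick $y\in Y$ whose $\Phi$-orbit does not contain $z$, and consider $\rho_y\colon H\to\mathrm{Homeo}_0(\R)$.
\begin{enumerate}
\item \emph{Minimality.} For every chart $Y_{C,I}$ whose closure avoids $z$, the group $F_{C,I}$ lies in $H$. On the orbit of $y$ it acts as copies of Thompson's group on the disjoint intervals where the orbit crosses $Y_{C,I}$. Such charts cover the orbit of $y$ by overlapping intervals, and $F_I$ acts minimally on $I$. Chaining overlaps shows that $\rho_y$ is minimal.
\item \emph{Type III.} The action $\rho_y$ contains elements of arbitrarily small compact support, so it is micro-supported. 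A homeomorphism commuting with a minimal micro-supported action must preserve the family of supports, and hence is trivial. Therefore $\mathrm{Cent}_{\rho_y}(H)$ is trivial and $\rho_y\in\mathrm{Rep}_\mathrm{III}(H)$.
\item \emph{Almost centralising sequence.} By minimality of $\Phi$, choose $t_n\to\infty$ with $\Phi^{t_n}(y)\to y$. Continuity of $y\mapsto\rho_y$ and the identity $\rho_{\Phi^{t}(y)}=\Psi^{-t}(\rho_y)$ give $T_{-t_n}.\rho_y\to\rho_y$. Pointwise convergence over the countable group $H$ is what is needed here, and it holds termwise.
\item \emph{Conclusion.} The translations $T_{-t_n}$ almost centralise $\rho_y$ but do not tend to $\mathrm{id}_\R$. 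This contradicts~\eqref{i-approxtrivial}, so $H\notin\cC$ by Theorem~\ref{cor:criterio}.
\end{enumerate}

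The main obstacle is the embedding step for finitely generated subgroups: one must check carefully that the local PL dyadic description is uniform over each clopen piece. For the other half, the most delicate point is verifying minimality of $\rho_y$ via the chaining of charts avoiding $z$.
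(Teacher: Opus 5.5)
Your treatment of the finitely generated subgroups is fine. It is a hands-on version of the embedding into finitely many Thompson-type groups, which the paper simply quotes from \cite[Lemma 7.2]{MatteBonTriestino2020}. Your proof that $\rho_y$ is minimal and the almost-centralising translations $T_{-t_n}$ are also fine.

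The genuine gap is the type III step: the claim that $\rho_y(H)$ contains elements of arbitrarily small compact support is false. For $g\in H$ one has $\mathrm{supp}\,\rho_y(g)=\{s\in\R : \Phi^s(y)\in \mathrm{supp}_Y(g)\}$, where $\mathrm{supp}_Y(g)\subseteq Y$ is open. Since $\Phi$ is a minimal flow on a compact space, both half-orbits of $y$ are dense, so this set is unbounded above and below whenever $g\neq\mathrm{id}$. Concretely, a non-trivial element of $F_{C,I}$ acts non-trivially on each of the infinitely many intervals where the orbit of $y$ crosses $Y_{C,I}$. Thus $\rho_y(H)$ has no non-trivial element of bounded support, it is not micro-supported, and your centraliser argument has nothing to work with. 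This step cannot be dropped. Condition \eqref{i-approxtrivial} of Theorem \ref{cor:criterio} only constrains type III actions. For a minimal action of type I or II, translations $T_{-t_n}$ with $t_n\to\infty$ can almost centralise it simply by approximating elements of its centraliser.

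The paper gets type III algebraically. By \cite[Lemma 7.2]{MatteBonTriestino2020} and Brin--Squier \cite{BrinSquier1985}, $H$ is perfect and contains no non-abelian free subgroup. So a minimal action of $H$ cannot be of type I, since its image would be abelian and hence trivial. It cannot be of type II either, by Theorem \ref{teo:tits}. Your own embedding of finitely generated subgroups into PL groups already gives the absence of free subgroups via Brin--Squier; perfectness needs the extra input.

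Alternatively, your idea of using supports can be repaired without any compactness. Suppose $c\in\mathrm{Homeo}_0(\R)$ commutes with $\rho_y(H)$. Then $c$ preserves each set $\{s : \Phi^s(y)\in \mathrm{supp}_Y(g)\}$. Take small charts $Y_{C',I'}$ with $I'$ dyadic and closure avoiding $z$. For suitable $g\in F_{C',I'}\subseteq H$ one has $\mathrm{supp}_Y(g)=Y_{C',I'}$, and these sets form a neighbourhood basis at every point of $Y\setminus\{z\}$. Since the orbit of $y$ avoids $z$, this forces $\Phi^{c(s)}(y)=\Phi^s(y)$ for every $s$. Hence $c(s)=s$ because $\Phi$ has no periodic orbits. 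So the centraliser is trivial and $\rho_y$ is of type III.
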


\begin{proof}
It follows from \cite[Lemma 7.2]{MatteBonTriestino2020} that every finitely generated subgroup of $H$ is isomorphic to a subgroup of a direct product of finitely many copies of Thompson's group $F$, and hence is isomorphic to a subgroup of $F$. Thus every finitely generated subgroup of $H$ belongs to $\cC$ by Theorem \ref{teo:PL}.

To see that $G$ is not itself in $\cC$, choose a point $y\in Y$ not in the $\Phi$-orbit of $z$. Then the corresponding action $\rho_y\colon H\to \mathrm{Homeo}_0(\R)$ given by \eqref{e-rhoy} is irreducible and minimal.  It is also not difficult to see that it is type III (for instance, it follows from \cite[Lemma 7.2]{MatteBonTriestino2020} and from the well-known result of Brin-Squier \cite{BrinSquier1985}, that $H$ is perfect and has no non-abelian free subgroup, hence it cannot have any minimal type I or II action on the line). Since the flow $\Phi$ is minimal, we can choose a sequence $(t_n)_{n \geq 0} \subseteq \R$ with $\lim_{n \to \infty} t_n = \infty$ such that $\lim_{n\to \infty} \Phi^{t_n}(y)=y$. Therefore
\[\rho_y=\lim_{n\to \infty} \rho_{\Phi^{t_n}(y)}=\lim_{n\to \infty} \Psi^{-t_n}(\rho_y),\]
and Theorem \ref{cor:criterio} implies that $H\notin \cC$.
\end{proof}

We now modify this example to prove Theorem \ref{teo:noc}. The idea is to take $G$ to be an extension of (a subgroup of) $H$ by $\Z$, by adding a well-chosen homeomorphism $f$ that fixes the point $z$, and fails to be PL dyadic at the point $z$ only. We first need an elementary lemma.

\begin{lema}\label{lema:sequencePL}
    Let $I=(-\frac{1}{4}, \frac{1}{4})\subset \R$. There exists a sequence of dyadic PL homeomorphisms $f_n\colon I\to I$ with the following properties. 
    \begin{enumerate}
        \item $f_n(x)>x$ for every $x\in I$ and every $n$.
        \item for every $n\ge m$, we have $f_n(x)=f_m(x)$ for every $x\notin (-\frac{1}{4^{m+1}}, \frac{1}{4^{m+1}})$.
        \item $f_n(0)\to 0$ as $n\to \infty$.
    \end{enumerate}
    In particular, $(f_n)_{n \geq 0}$ converges to a (non PL) homeomorphism $f_\infty \colon I \to I$ such that $f_\infty(0)=0$ and  $f_\infty(x)>x$ for every $x\in I\setminus\{0\}$. 
\end{lema}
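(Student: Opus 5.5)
Set $a_m = 4^{-m}$, so $I=(-a_1,a_1)$ and the shrinking windows are $W_m=(-a_{m+1},a_{m+1})$. I will build $f_n$ by induction on $n$, modifying $f_{n-1}$ only inside $W_n$. Each $f_n$ will satisfy the following invariant: on $W_n$ it is the single dyadic translation $x\mapsto x+c_n$ with $c_n=4^{-(n+2)}$, restricted to a neighbourhood of $[-a_{n+1},a_{n+1}]$. This is consistent with $f_n(x)>x$ and makes $f_n(0)=c_n\to 0$, so property (iii) holds.

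For the base case I take $f_0\colon I\to I$ to be any dyadic PL homeomorphism with $f_0(x)>x$ on $I$ that equals $x+c_0$ near $[-a_1',a_1']$ for a smaller dyadic window. To keep the indexing clean I will shift indices by one: $f_m$ is a translation by $c_m$ on $W_m$, and its modification $f_{m+1}$ agrees with $f_m$ outside $W_{m+1}$. Such an $f_0$ exists by gluing. Near $\pm\frac14$ the map must fix the endpoints, with dyadic slopes; in between it is an increasing PL map lying above the diagonal. Dyadic PL homeomorphisms between dyadic intervals exist as in Thompson's group $F$.

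The inductive step is the following. Given $f_m$, which equals $x+c_m$ on $W_m\supset \overline{W_{m+1}}$, I define $f_{m+1}$ on $W_m$ as a dyadic PL map $g$ with three properties. First, $g$ agrees with $x+c_m$ near the endpoints of $W_m$. Second, $g(x)>x$ everywhere. Third, $g$ equals $x+c_{m+1}$ on $W_{m+1}$. The main point is existence. Consider the graph of $g$ on the left piece $[-a_m,-a_{m+1}]$. It must go from the point $(-a_m,-a_m+c_m)$ to the point $(-a_{m+1},-a_{m+1}+c_{m+1})$. The second endpoint lies strictly above the diagonal, because $c_{m+1}>0$. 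Since all these numbers are dyadic, any two dyadic points $(p,q)$ and $(p',q')$ with $p<p'$ and $q<q'$ can be joined by a dyadic PL increasing arc. This is the standard fact that $F$ acts transitively on pairs of dyadics, equivalently that dyadic PL homeomorphisms $[p,p']\to[q,q']$ exist. I also need the arc to stay above the diagonal. I will ensure this by choosing the arc inside the region $\{y>x\}$. Concretely, I first use slope $1$ (translation by $c_m$) on the left part, then switch to slope $2^{-k}$ for a short dyadic segment until the vertical gap to the diagonal has shrunk to $c_{m+1}$, then slope $1$ again. The gap decreases monotonically from $c_m$ to $c_{m+1}>0$, so the arc stays above the diagonal. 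The choice $c_m=4^{-(m+2)}$, compared with the window widths $a_m-a_{m+1}=3\cdot4^{-(m+1)}$, leaves room for this, and the breakpoints can be taken dyadic. The right piece $[a_{m+1},a_m]$ is handled symmetrically: the gap goes from $c_{m+1}$ back up to $c_m$ using slope $2^k$.

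Properties (i)–(iii) then follow directly. For (ii), note that $W_{m+1}\subset W_m$, so once a point lies outside $W_{m+1}$, all later modifications leave $f_n$ unchanged there for $n\ge m$. For the final claim, $(f_n)$ stabilises on every compact subset of $I\setminus\{0\}$. Therefore $f_\infty$ exists, is continuous and increasing there, and satisfies $f_\infty(x)>x$ for $x\neq 0$. At $0$ we have $f_n(0)=c_n\to 0$, and the values $f_n$ on $W_n$ lie in $(-a_{n+1}+c_n,\,a_{n+1}+c_n)$, which shrinks to $0$. Hence $f_\infty$ extends continuously with $f_\infty(0)=0$, and it is a homeomorphism of $I$. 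It is not PL at $0$, because $0$ is an accumulation point of breakpoints.

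The only delicate step is the explicit dyadic interpolation staying above the diagonal. I will handle it by the slope-$2^{\mp k}$ gap-adjusting construction just described.
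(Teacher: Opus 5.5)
Your proof is correct in substance and has the same structure as the paper's. You obtain $f_{m+1}$ from $f_m$ by re-interpolating only inside $W_m=(-4^{-(m+1)},4^{-(m+1)})$ while staying above the diagonal. What differs is the inductive invariant. The paper carries only the inequalities $f_m(-4^{-(m+1)})<0$ and $f_m(0)<4^{-(m+1)}$, and leaves the step to ``standard interpolation arguments''. Your rigid normal form $f_m(x)=x+c_m$ near $\overline{W_m}$ turns each step into an explicit computation. On the left you use one segment of slope $\frac12$ and length $2(c_m-c_{m+1})$; on the right, one segment of slope $2$ and length $c_m-c_{m+1}$. You must take $k\in\{1,2\}$, since for $k\ge 3$ the required length $(c_m-c_{m+1})2^k/(2^k-1)$ is not dyadic. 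Your normal form also gives a genuine proof of the parenthetical ``non PL'': the breakpoints created in each annulus $W_m\setminus W_{m+1}$ survive in $f_\infty$ and accumulate at $0$. Properties (i)--(iii) alone would not rule out a limit equal to $x/2$ left of $0$ and $2x$ right of $0$.

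Your indexing, however, is inconsistent as written and needs repair. With $W_m=(-a_{m+1},a_{m+1})$ and $f_m=x+c_m$ near $\overline{W_m}$, property (ii) lets $f_{m+1}$ differ from $f_m$ only inside $W_m$.
\begin{itemize}
\item The phrases ``modifying $f_{n-1}$ only inside $W_n$'' and ``$f_{m+1}$ agrees with $f_m$ outside $W_{m+1}$'' would force a jump from $x+c_m$ to $x+c_{m+1}$ at $\pm a_{m+2}$.
\item Your left piece $[-a_m,-a_{m+1}]$ lies outside $W_m$, so modifying there violates (ii). The interpolation must instead take place on $[-a_{m+1},-a_{m+2}]$, running from $(-a_{m+1},-a_{m+1}+c_m)$ to $(-a_{m+2},-a_{m+2}+c_{m+1})$, and symmetrically on the right. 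This piece has length $3\cdot 4^{-(m+2)}$, twice the $6\cdot 4^{-(m+3)}$ you need, so the arithmetic survives.
\item Start the induction at $m=1$. For example, take $f_1$ of slope $2$ on $[-\frac14,-\frac{15}{64}]$, equal to $x+\frac1{64}$ on $[-\frac{15}{64},\frac7{32}]$, and of slope $\frac12$ on $[\frac7{32},\frac14]$. Then set $f_0=f_1$; condition (ii) is vacuous for $m=0$.
\end{itemize}
Finally, for continuity of $f_\infty$ at $0$, use that every $f_k$ with $k\ge n$ maps $W_n$ onto $f_n(W_n)=(-a_{n+1}+c_n,\,a_{n+1}+c_n)$. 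This holds because $f_k$ agrees with $f_n$ at $\pm a_{n+1}$.
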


\begin{proof}
Let $f_1\colon I\to I$ be any dyadic PL homeomorphism such that $f_1(x)>x$ for every $x\in I$, $f_1(-\frac{1}{16})<0$ and $f_1(0)<\frac{1}{16}$. Inductively, suppose to have constructed $f_1,\ldots, f_n$ PL dyadic such that $f_n(x)>x$ for every $x$, $f_m(-\frac{1}{4^{m+1}})<0$, $f_m(0)<\frac{1}{4^{m+1}}$, and $f_n(x)=f_m(x)$  for every $x\notin (-\frac{1}{4^{m+1}}, \frac{1}{4^{m+1}})$ and every $m\ge n$. Then, by standard interpolation arguments for PL maps, we can find $f_{n+1}$ PL dyadic which coincides with $f_n$ on $(-\frac{1}{4}, -\frac{1}{4^{n+1}})\cup(\frac{1}{4^{n+1}}, \frac{1}{4})$ and such that $f_n(x)>x$ for every $x$ and  $f_{n+1}(0)<\frac{1}{4^{n+2}}$. The sequence $(f_n)_{n \geq 0}$ satisfies the desired conclusion. \qedhere
\end{proof}

For the rest of the section, let $I=(-\frac{1}{4}, \frac{1}{4})$ and $(f_n)_{n \geq 0}$ be a sequence as in Lemma \ref{lema:sequencePL}. Fix $x_0\in X$ and set $z=\pi(x_0, 0)\in Y$. Let $(C_n)_{n \geq 0}$ be a sequence of disjoint clopen sets that partition $X\setminus\{x_0\}$. Then the charts $Y_{C_n, I}$ are pairwise disjoint, and \[\bigcup_{n \geq 0} Y_{C_n, I}=Y_{X, I}\setminus\pi(\{x_0\}\times I).\]
It follows that if we define
 $f\colon Y\to Y$  by 
    \begin{equation} f(y)=  \left\{\begin{array}{lr}
      \pi_{X, I}(x, f_n(t))     & \text{if } y=\pi_{X, I} (x, t),  \text{ for }(x, t)\in C_n\times I \\
         \pi_{X, I}(x_0, f_\infty(t))  &  \text{if } y=\pi(x_0, t) \text{ for }t\in I ,\\
        y & \text{if } y\notin Y_{X, I}.
      \end{array}\right. \label{e-elementf} \end{equation}
      Then $f$ is an element of $\Hphi$ supported in $Y_{X, I}$, that fixes $z$, and coincides with some element of $\Tphi$ on a neighbourhood of every point $y\neq z$.
      
Now let $J=(\frac{1}{16}, 1-\frac{1}{16})$. Then $|I\cup J|>1$, and $Y=Y_{X, I}\cup Y_{X, J}$. 
The following result implies Theorem \ref{teo:noc}.
\begin{prop}[Theorem \ref{teo:noc}]
Retain the notations above, and let $G=\langle f, F_{X, J}\rangle \subseteq \Hphi$, where $f$ is given by \eqref{e-elementf}. Then the following hold.
\begin{enumerate}
\item \label{i-Gfg} $G$ is finitely generated.
    \item \label{i-Gsemidirect} $G$ splits as a semi-direct product $G=G^0\rtimes \langle f\rangle$, where $G^0$ is a group whose finitely generated subgroups are all isomorphic to subgroups of Thompson's group $F$. In particular $G$ is amenable if and only if $F$ is amenable.
    
    \item \label{i-GnotinC} $G\notin \cC$.
\end{enumerate}
\end{prop}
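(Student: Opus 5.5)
For \eqref{i-Gfg}, I will use that $F_{X,J}\cong F_J$ is isomorphic to Thompson's group $F$, since $J$ has dyadic endpoints. Hence $F_{X,J}$ is finitely generated, and $G$ is generated by $f$ together with finitely many generators of $F_{X,J}$.

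For \eqref{i-Gsemidirect}, I will define $G^0$ as the subgroup of elements $g\in G$ that coincide with an element of $\Tphi$ near every point of $Y$. Equivalently, these are the elements that are PL dyadic at $z$ and hence also along the $\Phi$-orbit of $z$. Membership in $G^0$ is a local condition along that orbit, so $G^0$ is a subgroup, and it is normalised by $f$. The generators of $F_{X,J}$ lie in $G^0$, and $f$ is PL dyadic away from $z$, so $G=\langle\langle F_{X,J}\rangle\rangle_G\rtimes\langle f\rangle$ once I produce a homomorphism $G\to\Z$ that sends $f\mapsto 1$ and kills $G^0$.

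To build this homomorphism, I will read off the germ at $z$ and compare it with $f_\infty$. An element $g\in G$ fixes $z$ only up to $\Phi$-translates, so I will work with the germ of the action at the point $\pi(x_0,0)$ inside the orbit line, that is, with the germs of $\rho_z(g)$ at orbit points. By construction the germ of $g$ at $z$ is $f_\infty^k\circ(\text{PL dyadic})$. I will check that $k$ is well defined by showing that no nontrivial power $f_\infty^k$ is PL dyadic near $0$: since $f_\infty(x)>x$ for $x\neq 0$ and $f_\infty$ is a limit of maps whose values at $0$ tend to $0$, any PL germ commuting structure would give a contradiction. This derivation of $k$ is the step I expect to need the most care.

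Every finitely generated subgroup of $G^0$ lies in a group of elements that are locally in $\Tphi$ everywhere, which is essentially $\Tphi$. Moreover these elements fix $z$ with a PL dyadic germ, and after multiplying by a suitable element we may assume they fix a neighbourhood of $z$ or act by a germ in $F$. I will then adapt \cite[Lemma 7.2]{MatteBonTriestino2020} to conclude that such subgroups embed into $F\times F\times\cdots$, and hence into $F$. Amenability is preserved under directed unions and extensions by $\Z$, and $F$ embeds in $G^0$ via $F_{X,J}$. This gives the equivalence: $G$ is amenable if and only if $F$ is.

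For \eqref{i-GnotinC}, I will mimic the proof of Proposition \ref{prop:nolimites}. I pick $y\in Y$ whose $\Phi$-orbit avoids that of $z$; then $\rho_y$ is minimal. It is of type III because $F_{X,J}$ together with conjugates by $f$ act on the orbit of $y$ with compactly supported elements in every interval. Then I take $t_n\to\infty$ with $\Phi^{t_n}(y)\to y$, so that $\Psi^{-t_n}(\rho_y)=\rho_{\Phi^{t_n}(y)}\to\rho_y$ by continuity of $y\mapsto\rho_y$. Continuity holds here since $\tau_f$ is continuous, as $f\in\Hphi$. Theorem \ref{cor:criterio} then gives $G\notin\cC$.
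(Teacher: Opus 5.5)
Your overall outline (germs at $z$ give a map onto $\Z$, \cite[Lemma 7.2]{MatteBonTriestino2020} handles the kernel, and recurrence of $\Phi$ plus Theorem \ref{cor:criterio} finish) matches the paper's. However, two steps fail as written, and one is missing.

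\textbf{Part \eqref{i-Gsemidirect}.} The step you flag as delicate cannot be carried out. The properties in Lemma \ref{lema:sequencePL} do not prevent $f_\infty$ from being PL dyadic near $0$. For example, fix a dyadic PL homeomorphism $h$ of $I$ with $h(t)>t$ for $t\neq 0$, $h(t)=2t$ for small $t>0$ and $h(t)=t/2$ for small $t<0$. You may choose every $f_n$ to agree with $h$ outside $(-4^{-(n+1)},4^{-(n+1)})$, and then $f_\infty=h$. So the claim that no nontrivial power of $f_\infty$ is PL dyadic near $0$ is false in general, and your sketched ``commuting structure'' argument does not address this.

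The detour is also unnecessary, and its premise is wrong: every element of $G$ fixes $z$ exactly. Indeed $f(z)=\pi(x_0,f_\infty(0))=z$, and $F_{X,J}$ is supported in $Y_{X,J}$, whose closure misses $z$. So taking germs at $z$ is a homomorphism whose image is generated by the germ of $f$. That germ has infinite order simply because $f_\infty(t)>t$ for $t\neq 0$. Its kernel $G^0$, the elements fixing a neighbourhood of $z$, contains $F_{X,J}$, so $G=G^0\rtimes\langle f\rangle$ follows immediately. Every element of $G$ agrees with an element of $\Tphi$ near each $y\neq z$, and membership in $\Tphi$ is local, so $G^0\subseteq\Tphi^0_z$. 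Then \cite[Lemma 7.2]{MatteBonTriestino2020} applies verbatim, with no reduction or adaptation. (With your definition of $G^0$, the fact you need, that $f^k$ is not locally in $\Tphi$ at $z$ for $k\neq 0$, does hold, but for a transversal reason. Over $C_n$ near $x_0$ the map is $f_n^k$, which moves $0$, while over $x_0$ it is $f_\infty^k$, which fixes $0$.)

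\textbf{Part \eqref{i-GnotinC}.} Your justification of type III is false. For any nontrivial $g\in\Hphi$, the support of $g$ is a non-empty open subset of $Y$. Since $\Phi$ is minimal, the orbit of $y$ enters it at arbitrarily large positive and negative times, so $\rho_y(g)$ never has compact support. For instance, $\rho_y(F_{X,J})$ acts by the same element of $F_J$ on every translate $T_n(J)$ and commutes with $T_1$. Type III cannot be skipped, since Theorem \ref{cor:criterio} only tests type III actions. Argue as the paper does instead. The image is non-abelian, which excludes type I. Type II would yield a non-abelian free subgroup by Theorem \ref{teo:tits}, which is impossible by \eqref{i-Gsemidirect} and \cite{BrinSquier1985}.

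Finally, you assert minimality of $\rho_y$ but do not prove it. It needs an argument such as the paper's:
\begin{itemize}
\item on each $T_n(I)$, $\rho_y(f)$ acts as a conjugate of some $f_m$, hence without fixed points; it is never a conjugate of $f_\infty$, because the orbit of $y$ avoids $\pi(\{x_0\}\times I)$;
\item $\rho_y(F_{X,J})$ acts minimally on each $T_n(J)$;
\item $T_n(I)$ overlaps both $T_{n-1}(J)$ and $T_n(J)$.
\end{itemize}
Together these show that every orbit closure is open and closed, hence equal to $\R$.
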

\begin{proof}
Recall that the group $F_{X, J}$ is isomorphic to Thompson's group $F$, which is finitely generated, so \eqref{i-Gfg} is obvious. 

The group $G$ fixes $z$.  Let us denote by $G^0$ the normal subgroup consisting of elements that fix a neighbourhood of $z$.
Since $z\notin \overline{Y_{X,J}}$, we have $F_{X, J}\subseteq G^0$. Hence $G/G^0$ is infinite cyclic, generated by the image of $f$, and $G=G^0\rtimes \langle f\rangle$. Every element $g\in G$ coincides with some element of $\Tphi$ on a neighbourhood of every point $y\neq z$, since this  is true for $f$ and for  $F_{X, J}$, and the point $z$ is fixed. It follows that elements of $G^0$ actually coincide with some element of $\Tphi$ on the neighbourhood of every point. Since the definition of $\Tphi$ is local, i.e. membership to $\Tphi$ is defined by a local condition on a neighbourhood of every point,  it follows that $G^0\subseteq \Tphi^0_z$. Hence every finitely generated subgroup of $G^0$ is isomorphic to a subgroup of $F$ by \cite[Lemma 7.2]{MatteBonTriestino2020}, showing \eqref{i-Gsemidirect}.

To show \eqref{i-GnotinC}, fix a point of the form $y=\pi_{X, I}(x, 0)\in Y$, which is not in the $\Phi$-orbit of $z$, and consider the representation $\rho_y\colon G\to \mathrm{Homeo}_0(\R)$. We claim that $\rho_y$ is minimal. Notice that $\rho_y(F_{X, J})$ is a subgroup whose support is the union $\bigcup_{n\in \Z} T_n(J)$ of all integer translates of $J$, and which acts as $F_{T_n(J)}$ on each $T_n(J)$. On the other hand $\rho_y(f)$ is a homeomorphism supported on $\bigcup_{n\in \Z} T_n(I)$, which acts on each $T_n(I)$ as the conjugate $T_n \circ f_m \circ T_{-n}$ of one of the homeomorphisms $f_m$ from Lemma \ref{lema:sequencePL}, for some $m=m(n)$. Since each $f_m$ has no fixed point, and the intervals $T_n(I)$ overlaps with $T_{n-1}(J)$ and $T_n(J)$, it follows that the $\rho_y$-orbit of every $t\in \R$ intersects $T_n(J)$ for every $n$. Hence its closure, that we denote by $C_t$, contains $T_n(J)$ for every $n$, since $\rho_y(F_{X, J})$ acts minimally on each $T_n(J)$. The same reasoning shows that for every $s\in C_t$, there exists $g\in G$ such that $\rho_y(g).s\in J$, thus $s\in \rho_y(g^{-1}).J$ is an interior point of $C_t$. It follows that $C_t$ is open and closed, and thus $C_t=\R$.  This shows that $\rho_y$ is minimal, since $t\in \R$ was arbitrary.

The action $\rho_y$ is clearly not of type I, since it has non-abelian image. It is also not of type II, for instance because $G$ has no non-abelian free subgroup by \eqref{i-Gsemidirect} and by \cite{BrinSquier1985}. Hence $\rho_y$ is of type III. 
We conclude that $G\notin \cC$ as in the proof of Proposition \ref{prop:nolimites}. \qedhere

\end{proof}

\section{Complexity of the semiconjugacy relation} \label{section:complexity}
This section is devoted to the proof of Theorem \ref{teo:complexity}. Subsection \ref{subsection:semiconjugacy} provides the necessary background on preorders on groups and some preliminary lemmas. Subsection \ref{subsection:essentiallyCountable} proves the theorem by reducing semiconjugacy among cocompact actions of a group $G$ to conjugacy on a (Borel) subset of $\mathrm{Rep}_\mathrm{min}(G) \cup \mathrm{Rep}_\mathrm{cyc}(G)$ obtained as the dynamical realization of a certain space of preorders on $G$. The use of this space of preorders also makes apparent the essential countability of all the equivalence relations involved. Subsection \ref{subsection:deroinFinf} gives an example showing that Theorem \ref{teo:existenciaDeroin} on harmonic spaces of finitely generated groups cannot be extended to all non-finitely generated groups.

In this section $G$ is always a countable group. Recall that an irreducible action $\varphi \in \mathrm{Rep}_\mathrm{irr}(G)$ is \emph{cocompact} if there is a compact subset of $\R$ intersecting every $\varphi$-orbit. This condition is equivalent to the existence of a minimal set for $\varphi$, see e.g.~\cite[Lemma 2.1.11]{BMRT}. This in turn is equivalent to the fact that $\varphi$ is semiconjugate to a  minimal or cyclic action in $\mathrm{Rep}_\mathrm{irr}(G)$,  which is unique up to conjugacy. Such an action will be called a \emph{canonical model} for $\varphi$. The space of all cocompact actions of $G$ on $\R$ is denoted $\mathrm{Rep}_\mathrm{cc}(G)$. 

\subsection{Preorders and dynamical realizations} \label{subsection:semiconjugacy}
A \emph{left-preorder} on $G$ is a total, reflexive and transitive binary relation $\preceq$ on $G$ that is also left-invariant, that is, such that $h \preceq k$ implies $gh \preceq gk$ for all $g,h,k \in G$. A left-preorder $\preceq$ is determined by the data of its \emph{positive cone} $P_\preceq = \{g \in G : g \succeq e_G\}$ and its \emph{residue subgroup} $[1]_\preceq = \{g \in G : e_G \preceq g \preceq e_G\}$. We will use the word \emph{preorder} to designate a left-preorder that is not the \emph{trivial preorder} $\preceq_\mathrm{tr}$ with $[1]_{\preceq_\mathrm{tr}} = G$. Denote by $\mathrm{LPO}(G)$ the set of preorders on $G$ with the topology inherited from $\{\preceq, \npreceq\}^{G \times G}$, which becomes a locally compact and totally disconnected Polish space with the induced topology (see \cite[Theorem 2.30]{DecaupRond2019}).

There is a dictionary between preorders and irreducible actions that originates in the proof of the fact that a countable group is left-orderable (that is, admits a preorder where the residue subgroup is trivial) if and only if it embeds into $\mathrm{Homeo}_0(\R)$, see \cite[Theorem 6.8]{Ghys2001}. The interplay between both viewpoints has been exploited to obtain results on the orders of a group \cite{Navas2010} and to study groups acting faithfully on the line \cite{WitteMorris2006}. We refer to \cite{DeroinNavasRivas2016} for a complete treatment of the subject.

We introduce two operations to realize this translation. Given an action $\varphi \in \mathrm{Rep}_\mathrm{irr}(G)$ denote by $\preceq_\varphi \  \in \mathrm{LPO}(G)$ the preorder on $G$ where, for any $g,h \in G$, $g \preceq_\varphi h$ if and only $\varphi(g).0 \leq \varphi(h).0$. Given a preorder $\preceq \  \in \mathrm{LPO}(G)$, following \cite[Theorem 6.8]{Ghys2001} define the \emph{dynamical realization} $\phi_\preceq \ \in \mathrm{Rep}_\mathrm{irr}(G)$ of $\preceq$ as the action of $G$ constructed as follows: fix once and for all a numbering $(g_i)_{i \geq 0}$ of $G$ with $g_0 = e_G$. Define an order-preserving embedding $\iota_\preceq \colon G \to \R$ inductively by setting $\iota_\preceq(g_0) = 0$ and, given the values $\iota_\preceq(g_0),\ldots, \iota_\preceq(g_n)$, set
\[
	\iota_\preceq(g_{n+1}) = \begin{cases} \max\{\iota_\preceq(g_0),\ldots, \iota_\preceq(g_n)\} + 1 & \text{if }g_{n+1} \succ g_0,\ldots, g_n\\
					\min\{\iota_\preceq(g_0),\ldots, \iota_\preceq(g_n)\} - 1 & \text{if }g_{n+1} \prec g_0,\ldots, g_n\\
					(\iota_\preceq(g_i) + \iota_\preceq(g_{j}))/2 & \text{otherwise, }
			\end{cases}
\]
where $g_j = \max\{g_m : 0\leq m\leq n \text{ and } g_m\preceq g_{n+1}\}$ and $g_i = \min\{g_m : 0\leq m\leq n \text{ and } g_m \succeq g_{n+1}\}$. The group $G$ acts on $\iota_\preceq(G)$ by $g.\iota_\preceq(h) = \iota_\preceq(gh)$, and this action extends continuously to $\overline{\iota_\preceq(G)}$. We define $\phi_\preceq$ by extending the action of $G$ on $\overline{\iota_\preceq(G)}$ affinely on its complement in $\R$.

Given a preorder $\preceq$, a subgroup $H \subseteq G$ is said to be \emph{$\preceq$-convex} if whenever $h \in H$ and $g \in G$ are such that $e_G \preceq g \preceq h$, we have $g \in H$. The set of all $\preceq$-convex subgroups is totally ordered, and we denote $H_\preceq$ as the union of all proper $\preceq$-convex subgroups. When $H_\preceq \neq G$ (which is always the case when $G$ is finitely generated, see \cite[Example 2.1.2]{DeroinNavasRivas2016}), we define $\preceq_\ast$ as the preorder on $G$ where $P_{\preceq_\ast} = P_{\preceq} \setminus H_\preceq$ and $[1]_{\preceq_\ast} = H_\preceq$. If $H_\preceq = G$, we set $\preceq_\ast\  =\  \preceq_\mathrm{tr}$. When non-trivial, the preorder $\preceq_\ast$ is called the \emph{minimal model} of $\preceq$ and plays the same role as the canonical model of a cocompact action $\varphi \in \mathrm{Rep}_\mathrm{cc}(G)$. The situation $H_\preceq = G$ is the order-theoretic analogue of the non-existence of a minimal set for an action $\varphi \in \mathrm{Rep}_\mathrm{irr}(G)$.


\begin{prop}[{\cite[Section 14.3.2]{BMRT}}] \label{prop:preordenesBMRT1}
Let $\varphi \in \mathrm{Rep}_\mathrm{irr}(G)$ and $\preceq \ \in \mathrm{LPO}(G)$.

\begin{enumerate}
	\item\label{it:appi} The action $\varphi$ is semiconjugate to the dynamical realization of $\preceq_\varphi$.

	\item\label{it:appii} The dynamical realization of $\preceq$ is cocompact if and only if $\preceq$ has a maximal convex subgroup $H_\preceq \neq G$. In that case, the dynamical realization of $\preceq_\ast$ is a canonical model for the dynamical realization of $\preceq$.

	\item\label{it:appiii} The action $\varphi$ is cocompact if and only if $\preceq_\varphi$ has a minimal model.
\end{enumerate}
\end{prop}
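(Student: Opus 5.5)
We prove the three items in order. Item \eqref{it:appi} comes from comparing $\varphi$ with the dynamical realization through the map $\iota_{\preceq_\varphi}(g)\mapsto \varphi(g).0$. Item \eqref{it:appii} is a correspondence between convex subgroups and dynamics. Item \eqref{it:appiii} follows by combining the first two.

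For \eqref{it:appi}, write $\preceq\,=\,\preceq_\varphi$. Then $g\preceq h$ if and only if $\varphi(g).0\le\varphi(h).0$, so the assignment $\iota_\preceq(g)\mapsto\varphi(g).0$ is well defined on $\iota_\preceq(G)$. Indeed, $\iota_\preceq$ is constant exactly on the classes of $[1]_\preceq$, because the inductive construction sends $\preceq$-equivalent elements to the same value through the ``otherwise'' clause with $g_i=g_j$. The map is non-decreasing and $G$-equivariant. Next we extend it to a non-decreasing map $h\colon\R\to\R$: on $\overline{\iota_\preceq(G)}$ we use monotone limits, and on each complementary gap we take $h$ constant, equal to the value at the left endpoint. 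The affine extension of $\phi_\preceq$ on the gaps is compatible with this choice, since a gap is mapped to a gap and $h$ is constant on both. Hence $h$ intertwines $\phi_\preceq$ and $\varphi$. The only point to check is that $\phi_\preceq$ is irreducible, i.e. that $\iota_\preceq(G)$ is unbounded in both directions. This holds because $\varphi$ is irreducible, so $\varphi(G).0$ is unbounded in both directions. Consequently $G$ has elements $\succ$ and $\prec$ any given finite set, and by construction $\iota_\preceq$ takes the values $\max+1$ and $\min-1$ infinitely often.

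For \eqref{it:appii}, we would set up a dictionary between proper convex subgroups of $\preceq$ and bounded $\phi_\preceq$-invariant structures. If $H$ is convex, the set $\iota_\preceq(H)$ is an interval-like block, and its translates $\iota_\preceq(gH)$ are pairwise disjoint or equal and ordered, because $H$ is convex. The convex hulls of these translates then form a family of wandering intervals for the realization, each stabilised by a conjugate of $H$.

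Suppose the proper convex subgroups exhaust $G$, i.e. $H_\preceq=G$. Then the hulls form an increasing exhaustion of $\R$ by wandering intervals. By the horocyclic case of the laminar dichotomy, or more directly because no compact set meets every orbit, the action is not cocompact.

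Conversely, suppose $H_\preceq\neq G$. Consider the quotient order $\preceq_*$, whose residue is $H_\preceq$ and which has no proper nontrivial convex subgroups. We show that its realization is minimal or cyclic, using the standard Conrad-type fact: a preorder with no proper convex subgroups above the residue has a realization in which every orbit closure is everything, up to a $\Z$-factor. Then the map $\iota_\preceq(g)\mapsto\iota_{\preceq_*}(g)$ is non-decreasing and equivariant, since $\preceq_*$ is coarser than $\preceq$. As in the proof of \eqref{it:appi}, it extends to a semiconjugacy from $\phi_\preceq$ to $\phi_{\preceq_*}$. This shows that $\phi_\preceq$ is cocompact and that $\phi_{\preceq_*}$ is its canonical model.

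Item \eqref{it:appiii} then follows immediately: $\varphi$ is semiconjugate to $\phi_{\preceq_\varphi}$ by \eqref{it:appi}, cocompactness is invariant under semiconjugacy among irreducible actions because both sides are characterised by having a minimal or cyclic model, and \eqref{it:appii} applies to $\phi_{\preceq_\varphi}$.

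The main obstacle is the minimality step in \eqref{it:appii}. We must show that when $H_\preceq$ is the maximal proper convex subgroup, the dynamical realization of $\preceq_*$ has no exceptional minimal set and no wandering gaps beyond those forced by a cyclic action. We would argue by contradiction. A gap of an exceptional minimal set, pulled back through $\iota$, would define a convex subgroup strictly between $H_\preceq$ and $G$, namely the stabiliser of the gap containing $\iota(e_G)$. This is impossible by the maximality of $H_\preceq$. Checking that this stabiliser is genuinely $\preceq_*$-convex and proper is the delicate part.
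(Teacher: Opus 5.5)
The paper does not prove this proposition; it quotes it from \cite{BMRT}. Your proposal therefore has to stand on its own.

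\textbf{What works.} Item (i), item (iii), the implication ``$H_\preceq=G$ $\Rightarrow$ not cocompact'', and the semiconjugacy from $\phi_\preceq$ to $\phi_{\preceq_\ast}$ are all fine.

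\textbf{The gap.} It is the step you flag yourself: that $\phi_{\preceq_\ast}$ is minimal or cyclic. This is the real content of item (ii), and the ``standard Conrad-type fact'' you invoke is just a restatement of it. The mechanism you sketch does not work. Suppose $\Lambda$ is an exceptional minimal set. Then $\Lambda\subseteq\overline{\iota_{\preceq_\ast}(G)}$, and $0=\iota_{\preceq_\ast}(e_G)$ may lie in $\Lambda$ itself (for $\phi_{\preceq_\ast}$ it always does, see below), in which case no gap contains it. The stabiliser of a gap with endpoint $0$ fixes $0$, so it lies in $[1]_{\preceq_\ast}=H_\preceq$ and is not strictly larger. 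Convexity and properness of such stabilisers, which you call delicate, are actually easy; what fails is ``strictly larger than $H_\preceq$''.

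In fact, no argument using only that $g\mapsto\iota(g)$ is an equivariant order embedding with unbounded image can succeed. Let $\Z^2$ act on $\R$ by $x\mapsto x+1$ and $x\mapsto x+\sqrt2$. The induced order at $0$ has no convex subgroups besides $\{0\}$ and $\Z^2$. Yet Denjoy blowing up an orbit not containing $0$ gives an action that induces the same order at (the preimage of) $0$ and has an exceptional minimal set.

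\textbf{The missing ingredient.} What excludes this for $\phi_\preceq$ is a property of the midpoint construction: \emph{every gap of $\overline{\iota_\preceq(G)}$ has both endpoints in $\iota_\preceq(G)$}. Indeed, if an endpoint were not in $\iota_\preceq(G)$, each new value inserted between the current nearest values on either side of it would be their midpoint. Their distance, bounded below by the length of the gap, would then be halved infinitely often. Compare Lemma \ref{lema:iota}.

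\textbf{How the step then goes through.} Write $\phi=\phi_{\preceq_\ast}$ and $\iota=\iota_{\preceq_\ast}$.
\begin{enumerate}
\item The stabiliser of a bounded open interval $J\ni 0$ that is wandering for $\phi(G)$ is $\preceq_\ast$-convex and proper. Hence it equals $H_\preceq$, the only proper $\preceq_\ast$-convex subgroup.
\item If $\phi$ had no minimal set, Lemma \ref{lema:wandering} gives an irreducible wandering interval containing $0$. Its stabiliser moves $0$, so it is not contained in $[1]_{\preceq_\ast}=H_\preceq$, contradicting the previous point. So take a minimal set $\Lambda\subseteq\overline{\iota(G)}$.
\item Suppose $0$ lay in a gap $J$ of $\Lambda$. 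Then $\mathrm{Stab}(J)=H_\preceq$ forces $J\cap\overline{\iota(G)}=\{0\}$. The gaps of $\overline{\iota(G)}$ adjacent to $0$ would then have an endpoint in $\Lambda$, hence outside the orbit $\iota(G)$ of $0$, contradicting the endpoint property. Thus $0\in\Lambda=\overline{\iota(G)}$.
\item Suppose $\Lambda\neq\R$. By the endpoint property, translating a gap of $\Lambda$ gives a gap $(0,\iota(k))$, so $kH_\preceq$ is the immediate successor of $H_\preceq$. Left multiplication preserves immediate successors, so $\bigcup_{n\in\Z}k^nH_\preceq$ is a $\preceq_\ast$-convex subgroup strictly containing $H_\preceq$. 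It therefore equals $G$.
\item Hence $\iota(G)$ is discrete of order type $\Z$. Since $\phi$ is affine on gaps, it factors through the induced morphism $G\to\Z$, i.e.\ it is cyclic.
\end{enumerate}
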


The term \emph{dynamical realization} is usually used in the literature to denote any conjugate of $\phi_\preceq$ (see \cite[Section 1.1.3]{DeroinNavasRivas2016} for instance). The only property that we will need from this definition of $\phi_\preceq$, aside from the fact that $\iota_\preceq$ is explicit, is the following.

\begin{lema} \label{lema:iota}
If $x \in \Z[1/2]$ and $\preceq \  \in \mathrm{LPO}(G)$, then either $x \in \iota_\preceq(G)$ or $x \in \R \setminus \overline{\iota_\preceq(G)}$.
\end{lema}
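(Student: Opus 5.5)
We need to show: if $x \in \Z[1/2]$ lies in $\overline{\iota_\preceq(G)}$, then $x \in \iota_\preceq(G)$. Equivalently, no dyadic rational is an accumulation point of $\iota_\preceq(G)$ without itself being a value of $\iota_\preceq$. The plan is to follow the inductive construction of $\iota_\preceq$ and track the gaps it creates.

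\emph{Step 1: the gaps at each stage.} Let $A_n = \{\iota_\preceq(g_0),\dots,\iota_\preceq(g_n)\}$. By induction, $A_n$ consists of dyadic rationals (indeed integers at the extremes). Each new point is either $\max A_n+1$, $\min A_n-1$, or the midpoint of two consecutive points of $A_n$.

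For the midpoint case we must check that $g_i$ and $g_j$ are adjacent in $A_n$. This holds because $\iota_\preceq$ is order preserving on $\{g_0,\dots,g_n\}$. When $g_{n+1}$ is $\preceq$-equivalent to some $g_m$, that is $g_m \preceq g_{n+1}\preceq g_m$, then $g_i$ and $g_j$ are equivalent. One should then read the construction as giving $\iota_\preceq(g_{n+1})=\iota_\preceq(g_m)$, so that $\iota_\preceq$ is really defined on $G/[1]_\preceq$. In that case no new point is added and nothing changes.

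\emph{Step 2: how an open gap evolves.} Consider two consecutive points $a<b$ of $A_n$. The open gap $(a,b)$ either never receives new points, or at some later stage its midpoint $(a+b)/2$ is added. The same holds recursively for the subgaps thus created. The unbounded gaps $(\max A_n,\infty)$ and $(-\infty,\min A_n)$ only receive points at integer distances.

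Now suppose $x\in\Z[1/2]$ is not in $\iota_\preceq(G)$. Take $n$ large enough that $x$ lies strictly inside the range of $A_n$; if no such $n$ exists, $x$ lies outside $[\inf\iota_\preceq(G),\sup\iota_\preceq(G)]$ or at distance at least one from the relevant extreme, and the claim is immediate. Then $x$ lies in a bounded gap $(a,b)$ with $a,b$ dyadic, where $b-a=2^{-k}c$.

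\emph{Step 3: the key point.} We claim the gap length has the form $2^{-m}$ for integer $m\geq 0$. Bounded gaps are born as unit intervals between integers, from the extension steps, and each midpoint subdivision halves them. So every bounded gap is a standard dyadic interval $[p2^{-m},(p+1)2^{-m}]$ with $p\in\Z$.

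Since $x$ is dyadic, $x=q2^{-r}$ for some integers $q,r$. Once the nested gaps containing $x$ have length $2^{-m}$ with $m\geq r$, their endpoints include every point of $2^{-r}\Z$ lying in them. Hence $x$ would be an endpoint, contradicting $x\notin\iota_\preceq(G)$. So the chain of gaps containing $x$ is subdivided only finitely often. It stabilizes at a gap $(a',b')$ that is never subdivided, and whose points therefore never enter $\iota_\preceq(G)$. Thus $(a',b')$ is an open neighbourhood of $x$ disjoint from $\iota_\preceq(G)$, and $x\notin\overline{\iota_\preceq(G)}$.

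The main obstacle is Step 1: verifying rigorously that $g_i,g_j$ are adjacent in the order on $A_n$, and handling elements equivalent under the preorder. The rest is the standard-dyadic-interval bookkeeping of Step 3.
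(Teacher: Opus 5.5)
Your proof is correct and is essentially the paper's argument seen from the complementary side. Your invariant that the bounded gaps of $A_n$ are standard dyadic intervals $(p2^{-m},(p+1)2^{-m})$ is the gap version of the paper's claim that a point $x\in\iota_\preceq(G)$ of height $n$ has its neighbours $l_k(x),r_k(x)$, $k<n$, in $\iota_\preceq(G)$; the paper's inductive step $l_k(x)=l_k(y_1)$ implicitly uses that the midpoint is taken across such a gap. Likewise, your stabilisation of the gap containing $x$ is the contrapositive of the paper's argument with points $x_n\to x$ of height $>j$ having $x$ as height-$j$ neighbour.

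Two small remarks. First, in the case where $x$ never lies strictly inside the range of $A_n$, the real reason the claim is immediate is that $\max A_n$ and $\min A_n$ are integers, so if bounded they stabilise and the extreme value is attained; your ``distance at least one'' is not the right reason. Second, Step 1 needs no reinterpretation: if $g_{n+1}\sim g_m$, the paper's formula already returns $\iota_\preceq(g_m)$, because then $g_i\sim g_j\sim g_{n+1}$.
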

\begin{proof}
	Say that $y \in \Z[1/2]$ has \emph{height} $n \in \N$ if it can be written as $y = k/2^n$ for some $k \in \Z$. Define $l_n(y)$, the \emph{left neighbor of height $n$} of $y$, as the largest $z \in \Z[1/2]$ with height $n$ such that $z \leq y$. Define the right neighbor $r_n(y)$ of height $n$ of $y$ similarly.
\begin{claim}
	If $x \in \iota_\preceq(G)$ has height $n$, then $l_k(x)$ and $r_k(x)$ belong to $\iota_\preceq(G)$ for every $0 \leq k < n$.
\end{claim}
\begin{proof}[Proof of the claim]
	We argue that this holds true by induction at every stage of the construction of $\iota_\preceq(G)$. We may assume that $x$ does not have height $k$ for any $0 \leq k < n$, since $l_j(x) = r_j(x) = x$ whenever $x$ has height $j$. Thus if $x \in \Z$ then $n = 0$ and the statement follows, so we may assume that $n > 0$ and that $x$ was added as a midpoint of elements $y_1, y_2 \in \iota_\preceq(G)$ with $y_1 < x < y_2$. Then $l_{k}(x) = l_k(y_1),\, r_{k}(x) = r_k(y_2)$ for every $0 \leq k < n$, and $l_k(y_1), r_k(y_2) \in \iota_\preceq(G)$ by the inductive hypothesis.
\end{proof}
Now suppose that $x \in \Z[1/2]$ belongs to $\overline{\iota_\preceq(G)}$. Write $x = k/2^j$ for some $k \in \Z, j \in \N$. Let $(x_n)_{n \geq 0} \subseteq \iota_\preceq(G)$ be a sequence converging to $x$, all of whose elements may be assumed to be different from $x$. Consider $x_n \in ((k-1)/2^j, (k+1)/2^j)$ such that $x_n$ does not have height $j$, so either $r_j(x_n) = x$ or $l_j(x_n) = x$. Hence $x \in \iota_\preceq(G)$ by the previous claim.
\end{proof}

By identifying a subgroup of $G$ with its indicator function, the set $\mathrm{Sub}(G)$ of all subgroups of $G$ becomes a subset of $\{0,1\}^G$ and inherits a compact metrizable topology \cite{Chabauty1950}. Its Borel structure is generated by the sets $\{H \in \mathrm{Sub}(G) : g \in H\}$ where $g \in G$. A subgroup $H \subseteq G$ is \emph{proper} if $H \neq G$.

We now prove that the function assigning to a preorder its maximal convex subgroup is Borel.

\begin{lema} \label{lema:convexoMax}
	The map $\preceq \  \in \mathrm{LPO}(G) \mapsto H_\preceq \in \mathrm{Sub}(G)$ is Borel.
\end{lema}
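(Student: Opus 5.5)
It suffices to show that for each fixed $g \in G$ the set $\{\preceq \ \in \mathrm{LPO}(G) : g \in H_\preceq\}$ is Borel, since these sets generate the Borel structure of $\mathrm{Sub}(G)$. The plan is to describe this set by a countable Boolean combination of basic clopen sets in $\{\preceq,\npreceq\}^{G\times G}$.

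First step: describe the $\preceq$-convex subgroup generated by an element. For $g \in G$, let $C_\preceq(g)$ denote the smallest $\preceq$-convex subgroup containing $g$. The $\preceq$-convex subgroups are totally ordered, so $H_\preceq$ is the union of the proper ones. Hence $g \in H_\preceq$ if and only if $C_\preceq(g) \neq G$. I would also use that $G$ is countable and that each proper convex subgroup containing $g$ contains $C_\preceq(g)$. Thus the condition becomes: there exists $k \in G$ with $k \notin C_\preceq(g)$.

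Second step: give a countable description of the condition $k \notin C_\preceq(g)$. The key point is that $k \in C_\preceq(g)$ if and only if $k$ lies in the convex subgroup generated by $g$, which is the union over $n$ of the sets
\[
	B_n = \{h \in G : a \preceq h \preceq b \text{ for some } a,b \text{ obtained from } g \text{ by words of length } \le n \text{ in the current set}\}.
\]
More concretely, I would define $C_\preceq(g)$ as an increasing union of finite-stage sets $S_0=\{e_G,g,g^{-1}\}$. The next stage $S_{n+1}$ consists of products of elements of $S_n$ together with all $h$ that are $\preceq$-between $e_G$ and some element of $S_n$. Each stage is a countable set, and the condition ``$k \in S_n$'' is a countable union (over finite witness chains of group elements) of finite intersections of sets of the form $\{\preceq : a\preceq b\}$. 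Therefore $\{\preceq : k \in C_\preceq(g)\}=\bigcup_n\{\preceq : k\in S_n\}$ is Borel (in fact $F_\sigma$ or open). One has to check by induction that $\bigcup_n S_n$ is a subgroup and is convex, hence equals $C_\preceq(g)$. This is routine: closure under products and inverses holds by construction, and convexity also holds once one uses left-invariance to handle the negative side ($h\preceq e_G$ with $h \succeq s$).

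Conclusion: the set in question is
\[
	\{\preceq : g \in H_\preceq\}=\bigcup_{k\in G}\{\preceq : k\notin C_\preceq(g)\},
\]
which is a countable union of complements of Borel sets, hence Borel. The main obstacle I expect is the correctness of the description of the convex subgroup generated by $g$ in the presence of a nontrivial residue $[1]_\preceq$. The convex hull must include the residue subgroup, and the steps must be closed under both group operations and betweenness. I would handle this carefully via the inductive stages above, checking the negative-side convexity through left multiplication by inverses.
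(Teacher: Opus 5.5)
Your argument is correct once one small slip in the construction of $S_{n+1}$ (inverses, see the end of this paragraph) is repaired, and it takes a genuinely different route from the paper. The paper forms the Borel set $B_g$ of pairs $(\preceq,H)$ with $H$ proper, $\preceq$-convex and containing $g$. It notes that the sections of $B_g$ in $\mathrm{Sub}(G)$ are $K_\sigma$, and applies the Arsenin--Kunugui uniformization theorem to conclude that the projection $\{\preceq : g\in H_\preceq\}$ is Borel. That theorem is needed because the condition involves an existential quantifier over the uncountable space $\mathrm{Sub}(G)$, and projections of Borel sets are in general only analytic. You remove this quantifier by using the canonical witness $C_\preceq(g)$. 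It is built by a countable closure process in which each membership $k\in C_\preceq(g)$ is certified by finitely many order comparisons. This makes $\{\preceq : k\in C_\preceq(g)\}$ open, and hence $\{\preceq : g\in H_\preceq\}=\bigcup_k\{\preceq : k\notin C_\preceq(g)\}$ is an $F_\sigma$ set. Your proof is therefore elementary and gives an explicit, sharper complexity bound. The paper's proof is shorter given the uniformization theorem and avoids describing the convex subgroup generated by an element. The slip: as you define it, $S_{n+1}$ contains products of elements of $S_n$ and elements lying between $e_G$ and an element of $S_n$. Nothing puts $h^{-1}$ into a later stage when $h$ enters through the betweenness clause, so closure of $\bigcup_n S_n$ under inverses is not ``by construction''. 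Simply add $S_n^{-1}$ to $S_{n+1}$. The sets $\{\preceq : k\in S_n\}$ remain open, and $\bigcup_n S_n$ is then visibly a convex subgroup containing $g$. The inclusion $S_n\subseteq C_\preceq(g)$ follows by induction. With the paper's one-sided notion of convexity you may even restrict betweenness to $e_G\preceq h\preceq s$ and drop the negative-side argument entirely.
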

\begin{proof}
Fix $g \in G$ and consider the Borel set
\[
	B_g = \{(\preceq, H) \in \mathrm{LPO}(G) \times \mathrm{Sub}(G) : \text{$H$ is proper, $\preceq$-convex and contains $g$}\}.
\]
Let $\pi \colon \mathrm{LPO}(G) \times \mathrm{Sub}(G) \to \mathrm{LPO}(G)$ be the projection onto the first coordinate, and consider a section $S = B_g \cap \pi^{-1}(\preceq)$ for some fixed preorder $\preceq$. Then $S$ is homeomorphic to
\[
	\{H \in \mathrm{Sub}(G) : \text{$H$ is proper, $\preceq$-convex and contains $g$}\},
\]
which can be written as
\begin{equation} \label{eq:ksigma}
	\bigcup_{u \in G} \bigcap_{v \succeq e_G}\left( \{ H : u \not \in H\text{ and }v,g \in H\} \cup \{H : \text{$u \not \in H,$\,  $g \in H$, and $h \preceq v$ for all $h \in H$}\} \right).
\end{equation}
Indeed, a subgroup $H \subseteq G$ is proper and $\preceq$-convex if and only if there exists $u \in G \setminus H$, and for every $v \succeq e_G$ either $v \in H$ or $h \preceq v$ for all $h \in H$. The expression \eqref{eq:ksigma} shows that $S$ is $K_\sigma$ (that is, a countable union of compact sets), so Theorem \ref{teo:ksigma} implies that $\pi(B_g)$ is Borel and that there is a Borel map $\zeta_g \colon \mathrm{LPO}(G) \to \mathrm{Sub}(G)$ such that $(\preceq, \zeta_g(\preceq)) \in B_g$ if $\preceq \ \in \pi(B_g)$ and $\zeta_g(\preceq) = \{e_G\}$ if not.

The condition $\preceq \ \in \pi(B_g)$ is equivalent to the existence of a proper, $\preceq$-convex subgroup $H$ containing $g$. Thus we may write $H_\preceq = \bigcup_{g \in G}\zeta_g(\preceq)$ for every $\preceq \ \in \mathrm{LPO}(G)$. We conclude that $\preceq \ \mapsto H_\preceq$ is Borel too.
\end{proof}

\subsection{Proof of Theorem \ref{teo:complexity}} \label{subsection:essentiallyCountable}
Denote by $\mathrm{LPO}_\mathrm{can}(G)$ the set of preorders $\preceq$ on $G$ such that $H_\preceq = [1]_\preceq$. From Lemma \ref{lema:convexoMax} it is clear that $\mathrm{LPO}_\mathrm{can}(G) \subseteq \mathrm{LPO}(G)$ is Borel. To study semiconjugacy among cocompact actions (to show its essential countability, specifically) we introduce the equivalence relation $\cR$ on $\mathrm{LPO}_\mathrm{can}(G)$ by declaring $\preceq \cR \preceq'$ if the dynamical realizations $\iota_\preceq, \iota_{\preceq'} \in \mathrm{Rep}_\mathrm{min}(G) \cup \mathrm{Rep}_\mathrm{cyc}(G)$ are conjugate.

\begin{prop}  \label{prop:reduccion}
The relation of semiconjugacy on $\mathrm{Rep}_\mathrm{cc}(G)$ is Borel reducible to $\cR$, which is in turn Borel reducible to conjugacy on $\mathrm{Rep}_\mathrm{min}(G) \cup \mathrm{Rep}_\mathrm{cyc}(G)$ (and all the spaces involved are standard Borel).
\end{prop}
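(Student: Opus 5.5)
The plan is to build two Borel maps. The first is
\[
r_1\colon \mathrm{Rep}_\mathrm{cc}(G)\to \mathrm{LPO}_\mathrm{can}(G), \qquad \varphi \mapsto (\preceq_\varphi)_\ast .
\]
The second is
\[
r_2\colon \mathrm{LPO}_\mathrm{can}(G)\to \mathrm{Rep}_\mathrm{min}(G)\cup\mathrm{Rep}_\mathrm{cyc}(G), \qquad \preceq\ \mapsto \phi_\preceq .
\]
We then check that $r_1$ reduces semiconjugacy to $\cR$. The map $r_2$ reduces $\cR$ to conjugacy by the very definition of $\cR$, provided it is well defined and Borel.

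First we settle the standardness of the spaces. $\mathrm{LPO}(G)$ is Polish, and $\mathrm{LPO}_\mathrm{can}(G)$ is Borel by Lemma \ref{lema:convexoMax}, since the conditions $H_\preceq=[1]_\preceq$ and $H_\preceq\neq G$ are Borel in the pair $(H_\preceq,\preceq)$. For $\mathrm{Rep}_\mathrm{cc}(G)$ we use Proposition \ref{prop:preordenesBMRT1}\eqref{it:appiii}: $\varphi$ is cocompact iff $H_{\preceq_\varphi}\neq G$. The map $\varphi\mapsto\preceq_\varphi$ is Borel because each condition $\varphi(g).0\le\varphi(h).0$ is closed. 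Composing with Lemma \ref{lema:convexoMax} shows that $\mathrm{Rep}_\mathrm{cc}(G)$ is a Borel subset of the Polish space $\mathrm{Rep}_\mathrm{irr}(G)$. The set $\mathrm{Rep}_\mathrm{min}(G)$ is $G_\delta$, and $\mathrm{Rep}_\mathrm{cyc}(G)$ should be Borel by a similar explicit description.

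Next comes the Borelness of $r_1$. The preorder $\preceq_\ast$ is determined by $P_\preceq\setminus H_\preceq$ and $H_\preceq$. Each coordinate ``$g\preceq_\ast h$'' is therefore a Boolean combination of the conditions ``$g^{-1}h\in P_\preceq$'' and ``$g^{-1}h\in H_\preceq$'', and these are Borel by Lemma \ref{lema:convexoMax}. We also need $(\preceq_\ast)$ to lie in $\mathrm{LPO}_\mathrm{can}$. This holds because the convex subgroups of $\preceq_\ast$ are the convex subgroups of $\preceq$ containing $H_\preceq$, so $H_{\preceq_\ast}=H_\preceq=[1]_{\preceq_\ast}$.

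To show $r_1$ is a reduction, combine Proposition \ref{prop:preordenesBMRT1}\eqref{it:appi} and \eqref{it:appii}. Together they give that $\varphi$ is semiconjugate to $\phi_{\preceq_\varphi}$, whose canonical model is $\phi_{(\preceq_\varphi)_\ast}$. Since canonical models are unique up to conjugacy and semiconjugacy is an equivalence relation, $\varphi_1,\varphi_2$ are semiconjugate iff their canonical models $\phi_{r_1(\varphi_1)},\phi_{r_1(\varphi_2)}$ are conjugate, that is, iff $r_1(\varphi_1)\,\cR\, r_1(\varphi_2)$. For $r_2$ to land in minimal or cyclic actions we use Proposition \ref{prop:preordenesBMRT1}\eqref{it:appii} for $\preceq$ with $\preceq_\ast=\preceq$: $\phi_\preceq$ is its own canonical model, hence minimal or cyclic.

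The main obstacle I expect is proving that $\preceq\ \mapsto\phi_\preceq$ is Borel into $\mathrm{Homeo}_0(\R)^G$. It suffices to show that $\preceq\ \mapsto\phi_\preceq(g).x$ is Borel for each $g$ and each dyadic $x$, by monotonicity and density. For fixed $n$, the value $\iota_\preceq(g_n)$ is determined by the finitely many comparisons among $g_0,\dots,g_n$, so it is locally constant, and $\phi_\preceq(g).\iota_\preceq(g_n)=\iota_\preceq(gg_n)$ is locally constant too. The issue is points $x$ outside $\iota_\preceq(G)$. This is where Lemma \ref{lema:iota} enters: a dyadic $x$ is either in $\iota_\preceq(G)$ or in a gap of $\overline{\iota_\preceq(G)}$. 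In the second case $\phi_\preceq(g).x$ is an affine interpolation between the images of the gap endpoints. These endpoints are $\sup$ and $\inf$ of $\iota_\preceq(g_m)$ over $m$ with $\iota_\preceq(g_m)<x$, respectively $>x$. Both are countable sup/inf of Borel functions, and their images are limits of $\iota_\preceq(gg_m)$, again Borel. Which case occurs is decided by the Borel condition $\exists m\ \iota_\preceq(g_m)=x$. Hence $r_2$ is Borel, which completes the reductions.
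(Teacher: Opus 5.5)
Your proposal is correct and follows the paper's proof essentially step by step. It uses the same two maps $\varphi\mapsto(\preceq_\varphi)_\ast$ and $\preceq\mapsto\phi_\preceq$, proves the first Borel via Lemma \ref{lema:convexoMax}, proves the second Borel via Lemma \ref{lema:iota} and the dichotomy that a dyadic point lies either in the orbit or in a gap, and obtains the reduction from Proposition \ref{prop:preordenesBMRT1}. Your small variations are fine: you get $\mathrm{Rep}_\mathrm{cc}(G)$ Borel from the condition $H_{\preceq_\varphi}\neq G$, and you compute the actual values $\phi_\preceq(g).x$ as countable sups and infs rather than only the sets $\{\phi_\preceq(g).x>x\}$. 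The one point you leave vague, that $\mathrm{Rep}_\mathrm{cyc}(G)$ is Borel, follows at once from the paper's description $\bigcup_{h}\bigcap_{g}\bigcup_{n}\{\varphi(g)=\varphi(h)^n\}$.
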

\begin{proof}
By writing
\[
	\mathrm{Rep}_\mathrm{irr}(G) = \bigcap_{n \in \N} \bigcup_{g,h \in G} \{ \varphi \colon G \to \mathrm{Homeo}_0(\R) : \varphi(g).0> n,\, \varphi(h).0<-n\},
\]
\begin{align*}
	\mathrm{Rep}_\mathrm{cc}(G)  & = \{\varphi \in \mathrm{Rep}_\mathrm{irr}(G) : \text{there is }n\geq 1 \text{ such that } \varphi(G).[-n,n] \text{ covers }\R\} \\
	 & = \bigcup_{n \geq 1} \bigcap_{k \in \Z} \bigcup_{\substack{F \subseteq G\\ \text{ finite}} } \{\varphi \in \mathrm{Rep}_\mathrm{irr}(G) : \varphi(F).[-n,n] \supseteq [k,k+1]\}
\end{align*}
and
\[
	\mathrm{Rep}_\mathrm{cyc}(G) = \bigcup_{h \in G} \bigcap_{g \in G} \bigcup_{n \in \Z}\{ \varphi \in \mathrm{Rep}_\mathrm{irr}(G) : \varphi(g) = \varphi(h)^n\}
\]
we see that $\mathrm{Rep}_\mathrm{irr}(G)$ is $G_\delta$ in $\mathrm{Homeo}_0(\R)^G$ and that $\mathrm{Rep}_\mathrm{cc}(G), \mathrm{Rep}_\mathrm{cyc}(G) \subseteq \mathrm{Rep}_\mathrm{irr}(G)$ are Borel.

Consider the maps
\[
	\eta_1 \colon \varphi \in \mathrm{Rep}_\mathrm{cc}(G) \mapsto (\preceq_\varphi)_\ast \in \mathrm{LPO}_\mathrm{can}(G) \]
and
\[
	\eta_2 \colon \preceq \ \in \mathrm{LPO}_\mathrm{can}(G) \mapsto \phi_\preceq \in \mathrm{Rep}_\mathrm{min}(G) \cup \mathrm{Rep}_\mathrm{cyc}(G).
\]
Proposition \ref{prop:preordenesBMRT1}, \eqref{it:appiii} (resp. \eqref{it:appii}) implies that $\eta_1$ (resp. $\eta_2$) is well defined.

\setcounter{claimnum}{0}
\begin{claimnum}
$\eta_1$ is a Borel map.
\end{claimnum}
\begin{proof}[Proof of the claim]
Write $\eta_1$ as the composition of 
\[
	\varphi \in \mathrm{Rep}_\mathrm{cc}(G) \mapsto \ \preceq_\varphi \  \in \mathrm{LPO}(G) \quad \text{ and }\quad \preceq \ \in \mathrm{LPO}(G) \mapsto \ \preceq_\ast \ \in \mathrm{LPO}(G) \cup \{\preceq_\mathrm{tr}\}.
\]
The first map is Borel, since
\[
	\{\varphi \in \mathrm{Rep}_\mathrm{cc}(G) : e_G \preceq_\varphi g\} = \{\varphi \in \mathrm{Rep}_\mathrm{cc}(G) : \varphi(g).0 \geq 0\}
\]
for any $g \in G$. To see that the second map is Borel, notice that for any $g \in G$ we have
\[
	\{\preceq \ \in \mathrm{LPO}(G) : e_G \preceq_\ast g\} = \{\preceq \ \in \mathrm{LPO}(G) : e_G \preceq g \text{ or }g \in H_\preceq\}
\]
where $H_\preceq \subseteq G$ is the maximal convex subgroup of $\preceq$. Since $\preceq \ \mapsto H_\preceq$ is Borel by Lemma \ref{lema:convexoMax} we conclude that $\eta_1$ is also Borel.
\end{proof}

\begin{claimnum}
$\eta_2$ is a Borel map.
\end{claimnum}
\begin{proof}[Proof of the claim]
Let $x \in \R$ and fix $g \in G$. We want to prove that the set 
\[
	\{\preceq \ \in \mathrm{LPO}(G) : \phi_\preceq(g).x > x\}
\]
is Borel, and by density of $\Z[1/2]$ in $\R$ we may assume that $x \in \Z[1/2]$. By Lemma \ref{lema:iota}, we have that for every $\preceq \ \in \mathrm{LPO}(G)$ either $x \in \iota_\preceq(G)$ or $x \in \R \setminus \overline{\iota_\preceq(G)}$. Thus
\begin{align} \label{eq:borelSet}
\{\preceq \ \in \mathrm{LPO}(G) : \phi_\preceq(g).x > x\} & = \bigcup_{h \in G}\{\preceq \  : \iota_\preceq(h) = x \text{ and } gh \succ h\} \\
	& \cup \bigcup_{h_1, h_2 \in G} \{\preceq \ : \iota_\preceq(h_1) < x < \iota_\preceq(h_2),\, gh_1 \succ h_1 \nonumber \\  & \text{ where } h_1 = \max\{h \in G : h\prec h_2\}, h_2 = \min\{h \in G : h\succ h_1\} \}. \nonumber
\end{align}
The explicit construction of $\iota_\preceq$ from $\preceq$ implies that all the sets $\{\preceq \ : \iota_\preceq(h) = y\}$ where $h \in G$, $y \in \Z[1/2]$ are Borel, and we conclude that the set \eqref{eq:borelSet} is also Borel.
\end{proof}
Now let $\varphi \in \mathrm{Rep}_\mathrm{cc}(G)$. By definition we have that $\varphi$ is semiconjugate to $\phi_{\preceq_\varphi}$, whose canonical model is $\phi_{(\preceq_\varphi)_\ast} = \eta_2 \circ \eta_1(\varphi)$ by Proposition \ref{prop:preordenesBMRT1} \eqref{it:appii}. This implies that $\eta_1$ is a reduction of semiconjugacy on $\mathrm{Rep}_\mathrm{cc}(G)$ to $\cR$. By its very definition, $\cR$ is reducible to conjugacy on $\mathrm{Rep}_\mathrm{min}(G) \cup \mathrm{Rep}_\mathrm{cyc}(G)$.
\end{proof}

\begin{rmk}
 We will not need this, but the relation $\cR$ can be defined without fixing a particular definition of the dynamical realization of a preorder: $\cR$ coincides with the relation obtained by declaring $\preceq, \preceq' \  \in \mathrm{LPO}_\mathrm{can}(G)$ to be related if $G$ admits an order-preserving action on a preordered set $(\cX, <)$ on which the preordered sets $(G, \preceq)$ and $(G, \preceq')$ embed $G$-equivariantly and such that the images of $G$ under each embedding are unbounded in both directions for $<$. Here, a \emph{preordered set} is a set endowed with a total, reflexive and transitive binary relation.
\end{rmk}

\begin{prop} \label{prop:Rcountable}
The relation $\cR$ is essentially countable.
\end{prop}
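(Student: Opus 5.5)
The plan is to show that $\cR$ is Borel and that every $\cR$-class is countable. By Feldman--Moore, $\cR$ is then the orbit relation of a Borel action of a countable group, hence essentially countable. In fact we will aim to show directly that $\cR$ is the orbit equivalence relation of a countable family of Borel maps.

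The key observation is that for $\preceq\ \in \mathrm{LPO}_\mathrm{can}(G)$, the dynamical realization $\phi_\preceq$ is minimal or cyclic. By construction $\preceq$ is recovered as the preorder $\preceq_{\phi_\preceq}$ read at the point $0=\iota_\preceq(e_G)$. If $\preceq\cR\preceq'$, take a conjugacy $f$ with $f.\phi_\preceq=\phi_{\preceq'}$. Then $\preceq'$ is the preorder read from $\phi_\preceq$ at the point $x=f^{-1}(0)$. So every element of the class of $\preceq$ is of the form $\preceq^x$, given by $g\preceq^x h \iff \phi_\preceq(g).x\le\phi_\preceq(h).x$, for some $x\in\R$ satisfying the canonicity condition.

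I then want to show that only countably many $x$ give canonical preorders, and more precisely that such $x$ lie in $\overline{\iota_\preceq(G)}$ and that canonicity forces $x$ into the orbit $\phi_\preceq(G).0$. For $\preceq'$ canonical, the residue $[1]_{\preceq'}$ equals the maximal convex subgroup. For a minimal realization this means the stabilizer structure at $x$ has to match the one at $0$ in a way that forces $x$ to be an "atom" of the construction. This point needs care. An alternative that avoids it is to note that the preorder read at $x$ depends only on the position of $x$ relative to the orbit $\phi_\preceq(G).0$. The candidate preorders are then determined by $\preceq$ together with a cut in $(G,\preceq)$, and I must show that canonicity (residue equal to $H_{\preceq'}$, with no proper convex subgroups beyond it) pins the cut to the form $\{h: h\preceq g\}$ for some $g\in G$. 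That gives $\preceq'=g^{-1}\cdot\preceq$, the left translate $h\preceq' k\iff gh\preceq gk$.

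So the target statement is that $\preceq\cR\preceq'$ if and only if $\preceq'=g\cdot\preceq$ for some $g\in G$, where the action is $(g\cdot\preceq)$: $h\,(g\cdot\preceq)\,k\iff g^{-1}h\preceq g^{-1}k$. This is a continuous action of the countable group $G$ on $\mathrm{LPO}(G)$ preserving the Borel set $\mathrm{LPO}_\mathrm{can}(G)$. The backward direction is easy, since $\phi_{g\cdot\preceq}$ is conjugate to $\phi_\preceq$ via the map sending $0$ to $\phi_\preceq(g^{-1}).0$, or its inverse.

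The main obstacle is the forward direction: a conjugacy moving $0$ to a point not in the orbit of $0$ must produce a non-canonical preorder, or at least one equal to a translate. I would argue this using Lemma \ref{lema:preorden}-type reasoning. If $x\notin\phi_\preceq(G).0$, either $x$ lies in a gap of $\overline{\iota_\preceq(G)}$ or it is a limit point. In the first case $\phi_\preceq$ is not minimal there, which contradicts the fact that $\phi_{\preceq'}$ is conjugate to a minimal realization, since realizations of canonical preorders are minimal, not just semiconjugate to minimal. In the second case I would compare the preorders read at the two points. If two points of a minimal action give the same preorder modulo translate and the action is not cyclic, then the points are in the same orbit-closure cut, and Lemma \ref{lema:iota} together with the explicit dyadic construction should show that $\phi_{\preceq'}$ having $0$ in $\iota_{\preceq'}(G)$ forces $x=f^{-1}(0)\in\phi_\preceq(G).0$. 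The cyclic case is handled separately and directly, since there the classes are evidently countable.
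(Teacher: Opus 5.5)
\textbf{The target statement is false.} You set out to show that every $\cR$-class is countable, and more precisely that $\preceq\cR\preceq'$ forces $\preceq'=g\cdot\preceq$. This is stronger than the proposition and it fails, so the forward direction cannot be proved.

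Take $\preceq\in\mathrm{LPO}_\mathrm{can}(G)$ with $\phi_\preceq$ minimal of type III, e.g.\ a preorder read from the standard action of Thompson's group $F$. Then $x\mapsto\preceq^x$ injects $\R$ into the single $\cR$-class of $\preceq$, in two steps.
\begin{enumerate}
\item \emph{Every $\preceq^x$ is canonical.} Its residue is $\mathrm{Stab}(x)$. Suppose $H\neq G$ is a convex subgroup with $H\supsetneq\mathrm{Stab}(x)$. The convex hull of $\phi_\preceq(H).x$ is nondegenerate, and it is bounded, since an unbounded hull forces $H=G$ by convexity. By convexity, its interior $J$ satisfies $\phi_\preceq(g).J=J$ for $g\in H$ and $\phi_\preceq(g).J\cap J=\emptyset$ for $g\notin H$. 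So $J$ is a wandering interval, which is impossible for a minimal action. Hence each $\preceq^x$ is canonical and $\cR$-equivalent to $\preceq$.
\item \emph{The $\preceq^x$ are pairwise distinct.} If $\preceq^x=\preceq^y$, the map $\phi_\preceq(g).x\mapsto\phi_\preceq(g).y$ is well defined, order-preserving and equivariant. As in the proof of Lemma \ref{lema:preorden}, it extends to an element of $\mathrm{Cent}_{\phi_\preceq}(G)$ sending $x$ to $y$, and type III forces $x=y$.
\end{enumerate}
For type II the class is likewise parametrized by $\R$ modulo an infinite cyclic centralizer, so it is again uncountable.

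In particular, your claim that canonicity forces $f^{-1}(0)$ into $\phi_\preceq(G).0$ is wrong. A minimal realization has no gaps, and every point of the line, orbit point or not, yields a canonical preorder. The dyadic normalization of $\iota_{\preceq'}$ is internal to $\preceq'$ and says nothing about where $f^{-1}(0)$ sits relative to $\iota_\preceq(G)$.

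\textbf{The missing idea.} Essential countability really is weaker than countability here, and what you need is a \emph{complete countable Borel section}. This is a Borel set of preorders meeting every class in a nonempty countable set, obtained by marking countably many points of the line in a conjugation-invariant way. The paper proceeds as follows.
\begin{enumerate}
\item It first isolates the realizations of type I, where every $\phi_\preceq(g)$ is free or trivial. On these, $\cR$ is smooth by Corollary \ref{cor:abeliano} applied to $G/[G,G]$, together with the reduction of cyclic actions to equality in $\mathrm{Hom}(G,\Z)$.
\item Outside type I, some $\phi_\preceq(g)$ has a nonempty proper fixed set. The section consists of the preorders read at an endpoint of a component of $\mathrm{supp}(\phi_\preceq(g))$, for some $g\in G$.
\item This set is Borel, since it is expressible in terms of $\preceq$ alone, and it meets every class. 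It meets each class in a countable set, because a conjugacy must send $0$ to one of countably many such endpoints.
\end{enumerate}
Lusin--Novikov then gives essential countability.
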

\begin{proof}
Let $\cF \subseteq \mathrm{LPO}_\mathrm{can}(G)$ be the set of preorders $\preceq$ such that $\phi_\preceq$ is not of type I. Since the actions $\phi_\preceq$ are canonical models when $\preceq \in \mathrm{LPO}_\mathrm{can}(G)$, the set $\mathrm{LPO}_\mathrm{can}(G) \setminus \cF$ can be written as
\[
	\{ \preceq \ \in \mathrm{LPO}_\mathrm{can}(G) : \text{for every }g \in G, \mathrm{Fix}(\phi_\preceq(g)) = \emptyset \text{ or } \mathrm{Fix}(\phi_\preceq(g)) = \mathrm{id}_\R\}
\]
so $\cF$ is Borel by (the proof of) Proposition \ref{prop:reduccion}. 

It is clear that $\cF$ is $\cR$-invariant.

\begin{claim}
The relation $\cR$ restricted to $\mathrm{LPO}_\mathrm{can}(G) \setminus \cF$ is smooth.
\end{claim}
\begin{proof}[Proof of the claim]
By the definition of $\cF$, the relation $\cR$ restricted to $\mathrm{LPO}_\mathrm{can}(G) \setminus \cF$ Borel reduces to conjugacy in $\mathrm{Rep}_\mathrm{min}(G/[G,G]) \cup \mathrm{Rep}_\mathrm{cyc}(G/[G,G])$. Corollary \ref{cor:abeliano} shows that conjugacy on $\mathrm{Rep}_\mathrm{min}(G/[G,G])$ is smooth. The same is true for conjugacy on $\mathrm{Rep}_\mathrm{cyc}(G)$ (and \emph{a fortiori} on $\mathrm{Rep}_\mathrm{cyc}(G/[G,G])$): given $\varphi \in \mathrm{Rep}_\mathrm{cyc}(G)$ there exists a unique isomorphism between $\varphi(G) \subseteq \mathrm{Homeo}_0(\R)$ and $\Z$ sending the generator $f$ of $\varphi(G)$ with $f(0)> 0$ to $1 \in \Z$. We obtain a uniquely defined morphism $\pi_\varphi \colon G \to \Z$, and the assignment $\varphi \in \mathrm{Rep}_\mathrm{cyc}(G) \mapsto \pi_\varphi \in \mathrm{Hom}(G, \Z)$ is a Borel reduction of the conjugacy relation on $\mathrm{Rep}_\mathrm{cyc}(G)$ to equality on the (standard Borel) space $\mathrm{Hom}(G, \Z)$.
\end{proof}

Thus we may restrict our considerations to $\cF$. We will show that the subset
\[
	T = \{\preceq \ \in \cF : \text{there exists }g \in G \text{ such that }0 \in \partial \mathrm{Fix}(\phi_\preceq(g))\}.
\]
is a \emph{complete countable Borel section} for $\cR$ on $\cF$, that is, $T$ is Borel and intersects any $\cR$-class $[\preceq]$ in $\cF$ in a countable non-empty set. An application of the Lusin-Novikov theorem shows that $\cR$ restricted to $\cF$ is essentially countable (see \cite[Corollary 7.5.3]{Gao2009}, for instance).

To see that $T$ is Borel, notice that a preorder $\preceq \ \in \cF$ is in $T$ if and only if there is $g \in [1]_\preceq$ such that either:
\begin{itemize}
	\item there is $x \succ e_G$ such that for every $y \in G$ with $y \succ e_G$, there is $n \in \N$ with $g^nx \prec y$, or
	\item there is $x \prec e_G$ such that for every $y \in G$ with $y \prec e_G$, there is $n \in \N$ with $g^nx \succ y$.
\end{itemize}
This description readily implies that $T$ is Borel in $\cF$. 

Now fix $[\preceq]$ an $\cR$-class in $\cF$. Since $\phi_\preceq$ is not of type I there is $g \in G$ such that $\mathrm{Fix}(\phi_\preceq(g))$ is not $\R$ nor $\emptyset$. Choose $x \in \partial \mathrm{Fix}(\phi_\preceq(g))$, define $\psi = \Psi^{-x}(\phi_\preceq)$ and let $\preceq' \ = \ \preceq_\psi$ be the left preorder induced by the $\psi(G)$-orbit of 0. By Proposition \ref{prop:preordenesBMRT1}, \eqref{it:appii}, we have $\preceq' \ \in \mathrm{LPO}_\mathrm{can}(G)$ so $\preceq' \ \in T$. Then $\phi_\preceq$, $\psi$ and $\phi_{\preceq'}$ are all conjugate, hence $\preceq' \ \in T \cap [\preceq]$ and $T \cap [\preceq]$ is non-empty.

To see that $T \cap [\preceq]$ is at most countable, fo each $\preceq' \ \in T \cap [\preceq]$ consider $f \in \mathrm{Homeo}_0(\R)$ such that $\phi_\preceq = f \circ \phi_{\preceq'} \circ f^{-1}$. Notice that $f(0)$ belongs to the countable set $\bigcup_{g \in G} \partial \mathrm{Fix}(\phi_{\preceq}(g))$ because $\preceq' \ \in T$. For any $g \in G$ we have the equivalences
\[
	g \preceq' e_G \text{ if and only if }\phi_{\preceq'}(g).0 > 0 \text{ if and only if } \phi_\preceq(g).f(0) > f(0),
\]
showing that $\preceq'$ is the preorder induced from the $\phi_\preceq(G)$-orbit of $f(0)$. Thus there are countably many possibilities for $\preceq'$.
\end{proof}

We may now conclude the proof of Theorem \ref{teo:complexity}, which we restate for convenience.

\begin{cor}[Theorem \ref{teo:complexity}]
	Let $G$ be a countable group. The semiconjugacy relation between cocompact actions of $G$ on the line is essentially countable, and it is smooth if and only if $G \in \cC$.
\end{cor}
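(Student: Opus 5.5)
The plan is to combine Proposition \ref{prop:reduccion} with Proposition \ref{prop:Rcountable}, and then use the criterion of Theorem \ref{cor:criterio} to identify when the relation is smooth.

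\emph{Essential countability.} Proposition \ref{prop:reduccion} shows that semiconjugacy on $\mathrm{Rep}_\mathrm{cc}(G)$ Borel reduces to $\cR$ via $\eta_1$. Conversely, $\cR$ reduces back to semiconjugacy. Indeed, the map $\eta_2\colon \preceq\ \mapsto \phi_\preceq$ is Borel and lands in $\mathrm{Rep}_\mathrm{min}(G)\cup\mathrm{Rep}_\mathrm{cyc}(G)\subseteq \mathrm{Rep}_\mathrm{cc}(G)$. On this set semiconjugacy coincides with conjugacy, since canonical models are unique up to conjugacy. So $\eta_2$ is a reduction of $\cR$ to semiconjugacy, and the two relations are bireducible. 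Proposition \ref{prop:Rcountable} says $\cR$ is essentially countable, so semiconjugacy on $\mathrm{Rep}_\mathrm{cc}(G)$ is essentially countable as well.

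\emph{Smoothness.} Composing the reductions $\eta_2\circ\eta_1$, semiconjugacy on $\mathrm{Rep}_\mathrm{cc}(G)$ is bireducible with conjugacy on $\mathrm{Rep}_\mathrm{min}(G)\cup\mathrm{Rep}_\mathrm{cyc}(G)$. In one direction, the inclusion of this union into $\mathrm{Rep}_\mathrm{cc}(G)$ is a reduction. In the other, $\eta_2\circ\eta_1$ sends each action to its canonical model. The union is a Borel set, and conjugacy never relates a minimal action to a cyclic one. So conjugacy on the union is smooth if and only if it is smooth on each of the two pieces.

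The cyclic piece is always smooth: the claim inside the proof of Proposition \ref{prop:Rcountable} gives the reduction $\varphi\mapsto\pi_\varphi\in\mathrm{Hom}(G,\Z)$. Hence semiconjugacy on cocompact actions is smooth if and only if conjugacy on $\mathrm{Rep}_\mathrm{min}(G)$ is smooth, which is exactly the definition of $G\in\cC$. For the forward direction, note that the restriction of a smooth relation to the Borel subset $\mathrm{Rep}_\mathrm{min}(G)$ is smooth. For the converse, gluing the two smooth reductions on disjoint invariant Borel pieces gives a smooth reduction on the union.

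\emph{Main obstacle.} The only delicate point is checking that every map and set involved is Borel, with no finite generation hypothesis. This is the content of Lemmas \ref{lema:iota} and \ref{lema:convexoMax} and the two claims in Proposition \ref{prop:reduccion}, so I would simply invoke them. I would also check that $\mathrm{Rep}_\mathrm{min}(G)$ is invariant under the relation and Borel, which holds because it is $G_\delta$. Finally, I would check that the canonical model of a cocompact action is well defined up to conjugacy, which follows from Proposition \ref{prop:preordenesBMRT1}.
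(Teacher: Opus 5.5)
Your proposal is correct and follows the paper's proof. Like the paper, you reduce semiconjugacy on $\mathrm{Rep}_\mathrm{cc}(G)$ to $\cR$ via $\eta_1$ and invoke Proposition~\ref{prop:Rcountable}, then use bireducibility with conjugacy on $\mathrm{Rep}_\mathrm{min}(G)\cup\mathrm{Rep}_\mathrm{cyc}(G)$ and smoothness of the cyclic piece via $\varphi\mapsto\pi_\varphi$. You also spell out the reverse reduction $\eta_2$ and the splitting of that union into two invariant Borel pieces, which the paper leaves implicit but its argument needs.
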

\begin{proof}
Semiconjugacy on $\mathrm{Rep}_\mathrm{cc}(G)$ is Borel reducible to $\cR$ by the first statement of Proposition \ref{prop:reduccion}, and both are essentially countable by Proposition \ref{prop:Rcountable}.

Conjugacy on $\mathrm{Rep}_\mathrm{min}(G) \cup \mathrm{Rep}_\mathrm{cyc}(G)$ is clearly Borel reducible to semiconjugacy on $\mathrm{Rep}_\mathrm{cc}(G)$, and Proposition \ref{prop:reduccion} shows that both relations are bireducible to each other. The proof of Proposition \ref{prop:Rcountable} shows that conjugacy on $\mathrm{Rep}_\mathrm{cyc}(G)$ is always smooth, so semiconjugacy on $\mathrm{Rep}_\mathrm{cc}(G)$ is smooth if and only if $G \in \cC$.
\end{proof}

\begin{rmk}
 Recall that two cocompact actions $\varphi_1, \varphi_2 \in \mathrm{Rep}_\mathrm{cc}(G)$ are \emph{pointed semiconjugate} if there is an action $\eta \in \mathrm{Rep}_\mathrm{cc}(G)$ that is minimal or cyclic, and semiconjugacies $h_i \colon \R \to \R$ between $\varphi_i$ and $\eta$ for $i = 1,2$ such that $h_1(0) = h_2(0)$. It is not hard to see that the map $\eta_1$ from the proof of Proposition \ref{prop:reduccion} exhibits a Borel reduction of pointed semiconjugacy on $\mathrm{Rep}_\mathrm{cc}(G)$ to equality on $\mathrm{LPO}_\mathrm{can}(G)$. Therefore the relation of pointed semiconjugacy is always smooth.
\end{rmk}
\appendix
\section{There is no analogue of the space of harmonic actions for infinitely generated groups} \label{subsection:deroinFinf}
The finite generation assumption in Theorem \ref{teo:existenciaDeroin} comes from the fact that its proof is based on the study of random walk on $G$ driven by symmetric  measures supported on a finite generating set \cite{DKNP}. While this approach has many advantages, if one is only interested in proving the existence of a retract $\mathrm{Harm}(G)\subseteq \mathrm{Rep}_\mathrm{irr}(G)$ satisfying the conclusion of Theorem \ref{teo:existenciaDeroin}, it could be reasonable to expect that a more elementary proof could be given without using random walks and avoiding finite generation (possibly after relaxing compactness of $\mathrm{Harm}(G)$ to local compactness). 

However we show here that when $G$ is not finitely generated, there is in general no {continuous} retraction of $\mathrm{Rep}_\mathrm{irr}(G)$ onto a set or representatives of pointed semiconjugacy classes.

Denote by $\Gamma$ the group $\{f \in \mathrm{Homeo}_0(\R) : f(0) = 0\}$, so two minimal actions $\varphi, \psi \in \mathrm{Rep}_\mathrm{min}(G)$ are pointed semiconjugate if and only if they are conjugate by an element of $\Gamma$.
\begin{lema}\label{lema:Finfty}
Let $G = F_\infty$ be the free group on an infinite countable set and let $U,V \subseteq \mathrm{Rep}_\mathrm{min}(G)$ be non-empty open subsets that are invariant under conjugation by $\Gamma$. Then $\overline{U} \cap \overline{V} \neq \emptyset$.
\end{lema}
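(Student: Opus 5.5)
Since $G=F_\infty$ is free on generators $a_1,a_2,\dots$, an action is just a sequence of homeomorphisms $(\varphi(a_i))_{i\ge 1}$, and the topology on $\mathrm{Rep}_\mathrm{min}(G)$ is the product topology restricted to minimal actions. A basic open set around $\varphi$ only constrains finitely many generators $a_1,\dots,a_k$, and only on a compact set $[-M,M]$ up to some $\epsilon$. So pick $\varphi\in U$ and $\psi\in V$, and fix basic neighbourhoods $W_\varphi\subseteq U$ and $W_\psi\subseteq V$ that depend only on $a_1,\dots,a_k$ (both with the same $k$, after enlarging). I will build a sequence of minimal actions $\theta_n$, each lying in $U$ after a $\Gamma$-conjugation and also in $V$ after another $\Gamma$-conjugation, such that a single limit point exists in $\overline U\cap\overline V$.

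The idea is to realise $\varphi$ and $\psi$ far apart in one action. For large $n$, set $\theta_n(a_i)=\varphi(a_i)$ on $(-\infty, n]$ for $i\le k$, and let it equal $T_{2n}\circ\psi(a_i)\circ T_{-2n}$ on $[3n/2,\infty)$, interpolating monotonically on $[n,3n/2]$. We can first conjugate $\varphi$ and $\psi$ by elements of $\Gamma$ that fix $0$ and push the region $[-M,M]$ so that this gluing is harmless; this is where $\Gamma$-invariance of $U,V$ is used. The remaining generators $a_i$, $i>k$, are free, so choose them to make $\theta_n$ minimal. For instance take $a_{k+1},a_{k+2}$ to act as a pair generating a minimal action on $\R$, such as two translations with irrational ratio, or the pair from Example \ref{ejm:F2}. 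Choosing free generators this way is only possible because $G$ is infinitely generated.

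Then $\theta_n$ agrees with $\varphi$ on $a_1,\dots,a_k$ near $[-M,M]$, so $\theta_n\in W_\varphi\subseteq U$. Also $\Psi^{-2n}(\theta_n)=T_{-2n}\theta_nT_{2n}$ agrees with $\psi$ there, so it lies in $V$ up to the $\Gamma$-conjugation. That conjugation must fix $0$, so I would use $f_n\in\Gamma$ with $f_n=T_{-2n}$ on a neighbourhood of $[-M,M]+2n$ rather than an actual translation. Such $f_n$ exist: they only need to send $0$ to $0$ and be affine there. Hence $f_n.\theta_n\in V$.

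To get a common point of the closures, I would instead glue along a single action. Define $\theta$ by putting infinitely many copies of $\psi$-patterns at positions $2n$, with the $\varphi$-pattern at $0$, and extra generators $a_{k+j}$ dedicated to making it minimal. Then $\theta\in U$, and $f_n.\theta\in V$ for every $n$. I would next arrange that $f_n.\theta$ converges to $\theta$ on each generator. This is plausible because $f_n$ fixes $0$ and is the identity on larger and larger compact pieces away from $[2n-M,2n+M]$. But it is the delicate step: $f_n$ cannot be the identity near $0$ and also equal $T_{-2n}$ near $2n$ while fixing order. Because of this, I expect the main obstacle is the real claim: rather than convergence, I would show $\theta\in\overline V$ directly. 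Given any basic neighbourhood of $\theta$, which constrains finitely many generators on a compact set $K$, I would modify only the generators outside this finite set so that the resulting minimal action is $\Gamma$-conjugate to something in $W_\psi$. Infinite generation supplies unconstrained generators, which can act as a large "shift-like" element to relocate the $\psi$-pattern onto $[-M,M]$ via a $\Gamma$-conjugation trivial on $K$ — and that need not fix $0$ if $0\notin K$; handling $0\in K$ is where care is needed. Carrying out this perturbation argument cleanly, while keeping minimality, is where I expect the work to lie.
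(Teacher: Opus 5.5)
Your plan has a genuine gap, and it is more than the delicate step you flag at the end: the strategy as a whole aims at something false. Since $U$ and $V$ are $\Gamma$-invariant, an action that lies in $U$ and is $\Gamma$-conjugate to an element of $V$ already lies in $U\cap V$. Likewise, a $\theta\in U\cap\overline V$ forces $U\cap V\neq\emptyset$, because $U$ is a neighbourhood of $\theta$. But $U\cap V$ can be empty: $U=\{\varphi : \varphi(a_1).0>0\}$ and $V=\{\varphi : \varphi(a_1).0<0\}$ are non-empty, open, $\Gamma$-invariant and disjoint. Concretely, what breaks is the existence of your $f_n$. An orientation-preserving homeomorphism fixing $0$ sends $[2n-M,2n+M]\subseteq(0,\infty)$ into $(0,\infty)$, so it cannot agree with $T_{-2n}$ there and carry it onto $[-M,M]$. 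This is exactly what the pointed relation encodes: $\Gamma$-invariant sets remember where the constrained pattern sits relative to the basepoint $0$, so a $\psi$-pattern cannot be moved from $2n$ to $0$. The same objection applies to your final ``relocation by a $\Gamma$-conjugation trivial on $K$''.

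The missing idea is that the common point of $\overline U$ and $\overline V$ must lie outside both sets. You get it by making the constrained generators degenerate, not by placing two patterns side by side. Let $a_1,\dots,a_k$ be the generators constrained by basic sets $W_U\subseteq U$ and $W_V\subseteq V$, with all constraints living in $[-M+1,M-1]$. Let $\psi$ be any minimal action with $\psi(a_i)=\mathrm{id}_\R$ for $i\le k$; this is where infinite rank is used, since the other generators supply minimality. Given $\varphi\in W_U$, pick $f_n\in\Gamma$ with $f_n([-M,M])\subseteq(-1/n,1/n)$. For $i\le k$, let $\psi_n(a_i)$ be a homeomorphism supported in $[-M,M]$ that agrees with $\varphi(a_i)$ at the finitely many constrained points; for $i>k$, set $\psi_n(a_i)=f_n^{-1}\circ\psi(a_i)\circ f_n$. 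Then $\psi_n$ is minimal: the subgroup $\langle a_i : i>k\rangle$ acts through a conjugate of $\psi$, whose image is all of $\psi(G)$. Moreover $\psi_n\in W_U$, hence $f_n.\psi_n\in U$ by $\Gamma$-invariance. Finally $f_n.\psi_n\to\psi$, because $(f_n.\psi_n)(a_i)$ is supported in $(-1/n,1/n)$ for $i\le k$ and equals $\psi(a_i)$ for $i>k$. The same argument with $V$ gives $\psi\in\overline U\cap\overline V$; this Alexander-trick argument is the paper's proof.
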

\begin{proof}
Let $\cS$ be an infinite free generating set of $G$ and let $S \subset \cS$ be a finite subset such that $U$ contains an non-empty open set of the form
\[
	W = \{ \varphi \in \mathrm{Rep}_\mathrm{min}(G) : \varphi(s).(x_{i,s}) \in I_{i,s} \text{ for all }s \in S \text{ and }i = 1,\ldots, m\}
\]
where $x_{i,s} \in \R$ and $I_{i,s} \subset \R$ are open intervals for $i = 1, \ldots, m$ and $s \in S$. Let $\psi \in \mathrm{Rep}_\mathrm{min}(G)$ be such that $\psi(s) = \mathrm{id}_\R$ for all $s \in S$. Such an action exists because $G$ has infinite rank. We will show that $\psi \in \overline{U}$ by using a variant of the ``Alexander trick'', and since $\psi$ was arbitrary this will imply the claim.

Take $M > 0$ such that every $x_{i,s}, I_{i,s}$ is included in $[-M + 1,M - 1]$ and fix $\varphi \in W$. Let $n \in \N_+$ and choose $f_n \in \Gamma$ such that $\abs{f_n^{-1}(M)}, \abs{f_n^{-1}(-M)} < 1/n$. Define an action $\psi_n$ by setting $\psi_n(s) = f_n^{-1} \circ \psi(s) \circ f_n$ if $s \in \cS \setminus S$ and imposing
\[
	\restr{\psi_n(s)}{[-M + 1, M-1]} = \restr{\varphi(s)}{[-M + 1,M - 1]} \text{ and } \restr{\psi_n(s)}{\R \setminus [-M, M]}  = \mathrm{id}_{\R \setminus [-M,M]}
\]
for all $s \in S$. Then $\psi_n$ is minimal since $f_n^{-1} \circ \psi \circ f_n$ is minimal and $\psi_n \in W$ by construction.

We have $\lim_{n \to \infty} f_n. \psi_n = \psi$ and $f_n. \psi_n \in f_n. W \subseteq f_n.U = U$, so $\psi \in \overline{U}$ as desired. \qedhere
\end{proof}

\begin{prop}
There is no continuous map $r$ from $\mathrm{Rep}_\mathrm{min}(F_\infty)$ to a regular (e.g. metric) topological space $X$ such that the fibers of $r$ are the pointed semiconjugacy classes.
\end{prop}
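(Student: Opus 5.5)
Assume towards a contradiction that such a continuous map $r \colon \mathrm{Rep}_\mathrm{min}(F_\infty) \to X$ exists, with $X$ regular and the fibers of $r$ exactly the pointed semiconjugacy classes, that is, the orbits of $\Gamma$ acting by conjugation. The plan is to use regularity of $X$ to produce two disjoint open sets in the target, pull them back to $\Gamma$-invariant open subsets of $\mathrm{Rep}_\mathrm{min}(F_\infty)$ with disjoint closures, and then contradict Lemma \ref{lema:Finfty}.

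\textbf{Step 1: two distinct fibers.} Choose two minimal actions $\varphi_1, \varphi_2$ of $F_\infty$ that are not conjugate by an element of $\Gamma$. For instance, take any $\varphi_1 \in \mathrm{Rep}_\mathrm{min}(F_\infty)$ and set $\varphi_2 = \Psi^t(\varphi_1)$ with $t$ chosen so that $\varphi_2 \neq \gamma.\varphi_1$ for all $\gamma \in \Gamma$. The simplest route is to pick $\varphi_1$ of type III, so that its centralizer is trivial, and use $t \neq 0$. Alternatively, take two actions where some generator acts by translations by different amounts; these are not even conjugate, and this avoids any case analysis. Then $r(\varphi_1) \neq r(\varphi_2)$ in $X$.

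\textbf{Step 2: separate in $X$.} Points are closed in $X$, since each fiber is a single point and a regular space is taken here to be $T_1$; this is the convention needed, as with metric spaces. Regularity then gives disjoint open sets $O_1 \ni r(\varphi_1)$ and $O_2 \ni r(\varphi_2)$ with $\overline{O_1} \cap \overline{O_2} = \emptyset$. Concretely, first separate $r(\varphi_1)$ from the closed set $\{r(\varphi_2)\}$ by disjoint open sets $A, B$. Then apply regularity again at each point, shrinking to $O_1 \ni r(\varphi_1)$ with $\overline{O_1} \subseteq A$ and $O_2 \ni r(\varphi_2)$ with $\overline{O_2} \subseteq B$.

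\textbf{Step 3: pull back and conclude.} Let $U = r^{-1}(O_1)$ and $V = r^{-1}(O_2)$. These sets are open by continuity and non-empty since $\varphi_i$ lies in the respective set. They are $\Gamma$-invariant because $r$ is constant on $\Gamma$-orbits. Continuity also gives
\[
\overline{U} \subseteq r^{-1}(\overline{O_1}) \quad \text{and} \quad \overline{V} \subseteq r^{-1}(\overline{O_2}),
\]
so $\overline{U} \cap \overline{V} = \emptyset$. This contradicts Lemma \ref{lema:Finfty}.

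The only mildly delicate point is Step 1, namely exhibiting two minimal actions in different pointed-semiconjugacy classes. The translation example handles it trivially: let one generator act by $x \mapsto x+1$ in the first action and by $x \mapsto x+2$ in the second, with the remaining generators chosen to make both actions minimal. These two actions are not conjugate at all, since a conjugate of the translation $x \mapsto x+1$ cannot equal $x \mapsto x+2$ while every other generator is also matched. To make this airtight, take every generator to act by translations with $\Q$-linearly independent amounts, so the action is minimal. Then conjugacy between two translation actions with dense image is affine, and an affine conjugacy fixing $0$ is a scaling. Choosing the ratios of translation amounts differently in the two actions rules out any such scaling.
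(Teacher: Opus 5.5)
Your proof is correct and is essentially the paper's argument: separate $r(\varphi_1)\neq r(\varphi_2)$ by open sets with disjoint closures, pull them back to nonempty $\Gamma$-invariant open sets with disjoint closures, and contradict Lemma~\ref{lema:Finfty}. Your remark that ``regular'' must include a separation axiom ($T_1$, or at least $T_0$) is a useful clarification the paper leaves implicit, since otherwise the indiscrete topology on the set of classes would be a counterexample. One slip in Step~1: $x\mapsto x+1$ and $x\mapsto x+2$ \emph{are} conjugate (by $x\mapsto 2x$), so that sentence is false as stated. Your type~III choice with $t\neq 0$, or the version with different ratios of translation amounts, already repairs this.
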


\begin{proof}
If there were such a map $r$, take two non-conjugate actions $\varphi, \psi \in \mathrm{Rep}_\mathrm{min}(F_\infty)$ and let $U, V \subset X$ be two disjoint open neighborhoods of $r(\varphi), r(\psi)$ respectively such that $\overline{U} \cap \overline{V} = \emptyset$. Then $r^{-1}(U), r^{-1}(V) \subseteq \mathrm{Rep}_\mathrm{min}(F_\infty)$ would be non-empty open sets that are invariant by $\Gamma$ and with disjoint closures, contradicting the previous lemma.
\qedhere
\end{proof}

\providecommand{\bysame}{\leavevmode\hbox to3em{\hrulefill}\thinspace}
\providecommand{\MR}{\relax\ifhmode\unskip\space\fi MR }
\providecommand{\MRhref}[2]{%
  \href{http://www.ams.org/mathscinet-getitem?mr=#1}{#2}
}
\providecommand{\href}[2]{#2}


\typeout{get arXiv to do 4 passes: Label(s) may have changed. Rerun}
\end{document}